\renewcommand\subsubsection{\@startsection{subsubsection}{3}{\z@}%
                                     {-3.25ex\@plus -1ex \@minus -.2ex}%
                                     {-1.5ex \@plus -.2ex}
                                     {\normalfont\normalsize\bfseries}}
\newcommand\QQ{\mathbb{Q}}
\newcommand\CC{\mathbb{C}}
\newcommand\RR{\mathbb{R}}
\newcommand\ZZ{\mathbb{Z}}
\newcommand\LL{\mathbb{L}}
\DeclareMathOperator{\Hom}{\mathscr{H}\text{\kern -3pt {\calligra\large om}}\,}
\numberwithin{equation}{section}
\newtheorem{theorem}{Theorem}[section]
\newtheorem{proposition}[theorem]{Proposition}
\newtheorem{lemma}[theorem]{Lemma}
\newtheorem{corollary}[theorem]{Corollary}
\newtheorem{question}[theorem]{Question}
\theoremstyle{definition}
\newtheorem{definition}[theorem]{Definition}
\newtheorem{warning}[theorem]{Warning}
\newtheorem{remark}[theorem]{Remark}
\numberwithin{equation}{section}
\numberwithin{equation}{section}
\DeclareMathOperator{\Tor}{Tor}
\DeclareMathOperator{\homo}{Hom}
\DeclareMathOperator{\Int}{Int}
\DeclareMathOperator{\Sign}{Sign}
\DeclareMathOperator{\dR}{dR}
\DeclareMathOperator{\rank}{rank}
\DeclareMathOperator{\pt}{pt}
\DeclareMathOperator{\odd}{odd}
\DeclareMathOperator{\ph}{ph}
\let\Im\undefined
\DeclareMathOperator{\Im}{Im}
\DeclareMathOperator{\Ker}{Ker}
\def\hocolim{\qopname\relax m{hocolim}}
\title{$L$-theory Characteristic Classes}
\author{Runjie Hu}
\newcommand{\Addresses}{{
  \bigskip
  \footnotesize

  Runjie Hu, \textsc{Department of Mathematics, Stony Brook University,
    Stony Brook, NY 11794}\par\nopagebreak
  \textit{E-mail address}, \texttt{runjie.hu@tamu.edu}
}}
\date{}
\begin{document}

\maketitle

\begin{abstract}
Although the local information of the $L$-spectra is well understood, the problem of whether this local information can be identified with the geometric data for bundles remains open for decades, which was originally raised in the 1960s and 1970s by Sullivan, Brumfiel, Taylor-Williams and others independently. In this paper, we provide an affirmative answer by proving that Levitt-Ranicki's theory of connective $L$-orientations for $TOP$ bundles and spherical fibrations is equivalent to the $2$-local characteristic classes constructed by Brumfiel-Morgan's, Madsen-Milgram's and Morgan-Sullivan's, as well as Sullivan's odd-prime-local real $K$-theory orientation. A key step in our proof involves constructing more geometric homotopy equivalences from the $2$-local quadratic, symmetric and normal connective $L$-spectra to products of Eilenberg-Maclane spectra and those from odd-local  quadratic and symmetric connective $L$-spectra to the connective real $K$-spectra. This approach reproves the known local structure of $L$-spectra.
\end{abstract}

\tableofcontents

\section{Introduction}

A fundamental question in topology is when a space $X$ is homotopy equivalent to a manifold. The first nontrivial requirement is that $X$ should satisfy Poincar\'e duality. Moreover, Spivak (\cite{Spivak1967}) proves that the Poincar\'e duality on $X$ leads to a canonical normal bundle in the homotopy sense, namely, a spherical fibration $\nu:S(\nu)\rightarrow X$.

If $\pi_1(X)=0$ and the dimension is at least $5$, then the existence of a topological manifold homotopy equivalent to $X$ is equivalent to the existence of a topological $\RR^N$ bundle reduction of $\nu$ (which we also call a TOP bundle reduction). Hence, the existence problem of manifolds is reduced to the homotopy problem of bundle liftings. The latter problem has been well understood in many aspects.

\begin{itemize}
    \item Localized at odd primes, the bundle lifting obstruction is equivalent to the existence of a $KO$-theory orientation for $\nu$, by Sullivan (see \cite[Theorem 6.5]{Sullivan2009}).
    \item Localized at prime $2$, Brumfiel-Morgan (\cite{Brumfiel&Morgan}) and Madsen-Milgram (\cite{Madsen&Milgram1974II}) independently discover the obstruction. Brumfiel-Morgan prove that there exist characteristic classes $k^G\in H^{4*+3}(X;\ZZ/2)$ and $l^G\in H^{4*}(X;\ZZ/8)$ for a spherical fibration over $X$. The obstruction to the existence of a $TOP$ bundle structure, localized at prime $2$, is the vanishing of $k^G$ and a $\ZZ_{(2)}$-coefficient lifting of $l^G$.
\end{itemize}

Similar results are known by Quinn (\cite{Quinn1972}) and Jones (\cite{Jones1971}).

In fact, the studies of the uniqueness problem for bundle liftings are earlier than those of the existence problem. Sullivan discovers a sequence of obstructions, which is called Sullivan's obstruction (\cite{Sullivan1965}). Usually in homotopy theory an obstruction can be defined only when all the previous obstructions vanish. However, Sullivan proves that his obstructions are indeed a priori: namely, these obstructions can be defined without knowing whether the previous obstructions vanish. Sullivan's result of a priori obstructions is equivalent to the theorem that the surgery space $G/TOP$ is a product of Eilenberg-Maclane spaces at the prime $2$ and $KO$-theory at odd primes (Sullivan for $G/PL$ \cite{Sullivan1996} and Kirby-Siebenmann for $G/TOP$ \cite[p.~329]{Kirby&Siebenmann}, or see\cite[p.~107]{Weinberger&Chang}). 

A concrete homotopy equivalence from $G/TOP$ to the product of Eilenberg-Maclane spaces is equivalent to constructing explicit characteristic classes for surgery problems. Precisely, Rourke-Sullivan (\cite{Sullivan&Rourke}) construct the $k$-classes of degree $(4*+2)$; Morgan-Sullivan (\cite{Sullivan&Morgan}) construct the $l$-classes of degree $4*$ for $G/TOP$ (Milgram independently \cite{Milgram1972}). 

\vspace{3mm}

Afterwards, Ranicki (\cite{Ranicki1980}\cite{Ranicki1979}\cite{Ranicki1981}, later on Weiss \cite{Weiss1985}) introduces the cobordism theory of chain complexes with Poincar\'e duality which is called algebraic $L$-theory. There are three connective $L$-spectra for the cobordism theories, i.e., $\LL^q,\LL^s,\LL^n$, which are named by the quadratic, symmetric or normal $L$-spectra respectively. These three $L$-spectra form a natural fibration sequence $\LL^q\rightarrow \LL^s\rightarrow \LL^n$.

Using this chain-level language, Levitt-Ranicki (\cite{Ranicki1979},\cite{Levitt&Ranicki}) prove that a spherical fibration admits a canonical $\LL^n$-orientation and a TOP bundle admits a canonical $\LL^s$-orientation. Furthermore, Levitt-Ranicki also prove that the existence of a $TOP$ bundle structure on a spherical fibration is equivalent to the existence of an $\LL^s$-orientation lifting of the canonical connective $\LL^n$-orientation.

Levitt-Ranicki's chain-level theory and Brumfiel-Morgan, Madsen-Milgram, Morgan-Sullivan, Rourke-Sullivan, Sullivan's local theories are all fundamental tools to study manifolds. However, the equivalence relations between these two theories remain mysterious for many years. For example, Brumfiel raises the following question, which is open for about $50$ years:

\begin{question}[Brumfiel]\label{question}
Is Levitt-Ranicki's theory of connective $L$-orientations for TOP bundles and spherical fibrations equivalent to Brumfiel-Morgan's and Morgan-Sullivan's $2$-local characteristic classes and Sullivan's odd-prime-local real $K$-theory orientation?\footnote{``...But at some point I (Brumfiel) can't avoid the question, why is the ad hoc fiber sequence I defined above $L^q\rightarrow L^s\rightarrow L^n$ really the same as the Ranicki sequence? And why are the canonical multiplicative orientations $MSTOP\rightarrow L^s$ and $MSG\rightarrow L^n$ in my ad hoc constructions the same as Ranicki's?..."}
\end{question}

Some people believe in an affirmative answer to this question but a proof never appears in the literature. Taylor-Williams in \cite{Taylor&Williams} partially answer this question (see \ref{corollary}). But they bring up the same question as well: 

\begin{question}[Taylor-Williams, \cite{Taylor&Williams}*{p.~191}]\label{question2}
Is Taylor-Williams's result of $L$-spectra equivalent to Brumfiel-Morgan's chracteristic classes for spherical fibrations?
\end{question}

In this paper, we give a complete proof for an affirmative answer to Question \ref{question} and Question \ref{question2}.

\begin{theorem}\label{Main}
Ranicki-Levitt's theory of connective $L$-orientations for $TOP$ bundles and spherical fibrations is equivalent to Brumfiel-Morgan's, Madsen-Milgram's and Morgan-Sullivan's $2$-local characteristic classes and Sullivan's odd-prime-local $KO$-orientation.
\end{theorem}

To prove this theorem, we show the followings.

\begin{theorem}\label{Main2}
\begin{enumerate}
    \item At prime $2$, Ranicki-Levitt's $\LL^n$-orientation for a spherical fibration is equivalent to three kinds of characteristic classes (Theorem \ref{characteristic-classes-11}). Two of them are Brumfiel-Morgan's $\ZZ/2$ and $\ZZ/8$ classes (Theorem \ref{characteristic-classes-1}, Theorem \ref{characteristic-classes-2}); the third one is a combination of Stiefel-Whiteney classes (Theorem \ref{characteristic-classes-3}).
    \item At prime $2$, Ranicki-Levitt's $\LL^s$-orientation for a $TOP$ bundle is equivalent to two kinds of characteristic classes (Theorem \ref{characteristic-classes-12}). One of them is Morgan-Sullivan's $\ZZ/2$ class (Theorem \ref{characteristic-classes-4}); the other one is also a combination of Stiefel-Whiteney classes (Theorem \ref{characteristic-classes-5}).
    \item At odd primes, Ranicki-Levitt's $\LL^s$-orientation for a $TOP$ bundle is equivalent to Sullivan's $KO$-orientation (Theorem \ref{main-result-3}).
\end{enumerate}
\end{theorem}

Our result also provides an answer to Sullivan's question in 1960's:

\begin{question}[Sullivan]\label{question3}
What is an analogous $2$-local orientation  theory like the odd-prime theory for $TOP$ bundles? (Recall that at odd primes, a $TOP$ bundle means a spherical fibration with a real $K$-theory orientation.)
\end{question}

Our answer is that at prime $2$ a $TOP$ bundle is a spherical fibration with some ``relative'' orientation in terms of vanishing of certain characteristic classes and this is equivalent to saying that a $TOP$ bundle is a spherical fibration with a lifting of the canonical connective normal $L$-orientation.

As a corollary of our arguments, we reprove Taylor-William's, Sullivan's theorem and the connective version of Hebestreit-Land-Nikolaus's theorem (\cite{Taylor&Williams},\cite{Sullivan2009}\cite{Hebestreit-Land-Nikolaus}).

\begin{corollary}\label{corollary}
The connective spectra $\LL^q,\LL^s,\LL^n$ are products of Eilenberg-Maclane spectra at prime $2$.   
\end{corollary}

\begin{corollary}\label{corollary}
The connective spectra $\LL^q,\LL^s$ are homotopy equivalent to the $KO$-theory at odd primes. The connective spectra $\LL^n$ is contractible at odd primes.
\end{corollary}

It seems that the local information of $L$-spectra would establish a direct answer to Questions \ref{question}\ref{question2}\ref{question3}, since the $2$-local information of Levitt-Ranicki's $L$-orientation provides some $2$-local characteristic classes of spherical fibrations and $TOP$ bundles. However, the identification of these $2$-local characteristic classes and the characteristic classes constructed by Brumfiel-Morgan and Morgan-Sullivan remains unclear. In other words, a more refined approach is needed to establish a precise identification of the $2$-local $\LL$-spectra and the products of Eilenberg-Maclane spectra.

\subsubsection*{Organization of This Paper and Outlined Proof} Our method is based on the a-priori-invariant method. Section \ref{Preliminaries} is an overview of $L$-theory and the method of a priori invariants. We also review how Brumfiel-Morgan, Morgan-Sullivan and Sullivan use the a-priori-invariant method method to construct $2$-local characteristic classes for spherical fibrations and $TOP$ bundles and odd-prime-local $KO$-orientations for $TOP$ bundles. Section \ref{Znchain} is devoted to generalize the $\ZZ/n$ manifolds to the chain level and study various bordism invariants for $\ZZ/n$-chains. The technical part is to check the product formulae for ``tensor products'' of $\ZZ/n$-chains. In Section \ref{Mainsection}, we use the tools of Section \ref{Znchain} and a-priori-invariant method to study the local homotopy types of $L$-spectra. In this part, we construct characteristic classes for $L$-theories and prove product formula for these characteristic classes. At last, we prove the Theorem \ref{main-result-1}, Theorem \ref{main-result-2} and Theorem \ref{main-result-3} to answer the equivalence questions.

\subsubsection*{Notations}
In this paper, we use the abbreviation $\LL^s, \LL^q,\LL^n$ to represent the $0$-connective symmetric $L$-spectrum $\LL^s\langle 0\rangle$, the $1$-connective quadratic $L$-spectrum $\LL^q\langle 1\rangle$ and Ranicki's ``$1/2$-connective'' normal $L$-spectrum $\LL^n\langle 1/2\rangle$ respectively. We also use the same symbols $\LL^s,\LL^q,\LL^n$ to represent the $0$-th space of the these connective spectra.

\subsubsection*{Acknowledgements} I am grateful to my thesis advisor, Dennis Sullivan, for bringing up the question of $2$-local orientations for bundles, which is open for over $50$ years. I would like to thank Gregory Brumfiel for mentioning the same question to me. I sincerely appreciate the invaluable insights and commnents from James F. Davis, Jiahao Hu, John Morgan, John Pardon, Shmuel Weinberger, Zhouli Xu and Guozhen Wang on this paper. I wish to express my gratitude to the Simons Foundation International for their financial support during my research on this topic.

\section{Preliminaries}\label{Preliminaries}

For the readers' convenience, we write all the preliminary materials needed for this paper in this section. We add and prove some technical lemmas for our use. We also simply some proofs for some theorems. More preciesly, we first review Ranicki's chain-level constructions of $L$-theory, and then we review the a priori invariant method and their applications to bundle lifting problems.

\subsection{\texorpdfstring{$L$}{lg} Groups}\label{Lgroup}

We review the definition of $L$-groups in this subsection. Throughout this article, by a chain complex we always mean a bounded chain complex whose underlying abelian group at each degree is a finitely generated free abelian group. 

For a chain complex $C_*$, the tensor product $C\otimes C\cong \homo_{\ZZ}(C^{-*},C_*)$ has a natural $\ZZ/2$ action. An $n$-dimensional symmetric structure on a chain complex $C$ is a degree $n$ homotopy $\ZZ/2$ invariant $\phi$. More explicitly, let $W_*\rightarrow \ZZ\rightarrow 0$ be a free resolution of the trivial $\ZZ [\ZZ/2]$-module $\ZZ$. An explicit form of $W_*$ is 
\[\cdots\rightarrow \ZZ[\ZZ/2]\xrightarrow{1-T} \ZZ[\ZZ/2]\xrightarrow{1+T} \ZZ[\ZZ/2]\xrightarrow{1-T}\ZZ[\ZZ/2]\rightarrow 0\rightarrow \cdots\]
where $T$ is the generator in $\ZZ/2$. Each degree $n$ homotopy $\ZZ/2$ invariant $\phi$ is represented by an element $\phi\in Q^s_n(C)=H_n(\homo_{\ZZ[\ZZ/2]}(W,C\otimes C))$. Informally, $\phi$ consists of chain maps $\phi_i:C^{n+i-*}\rightarrow C_*$ for $i\geq 0$ so that each $\phi_i$ is a chain homotopy between $\phi_{i-1}:C^{n+i-1-*}\rightarrow C_*$ and its dual $D(\phi_{i-1}):C^{n+i-1-*}\rightarrow C_*$.

A symmetric chain complex $(C,\phi)$ is Poincar\'e if the chain map $\phi_0:C^{n-*}\rightarrow C_*$ is a chain homotopy equivalence. The notion of Poincar\'e symmetric chains is a derived generalization of nondegenerate symmetric bilinear forms of free abelian groups or nondegenerate linking forms on torsion abelian groups. 

Recall that a homotopy coinvariant is an element in $Q^q_n(C)=H_n(W\otimes_{\ZZ[\ZZ/2]}(C\otimes C))$. Define a quadratic structure on a chain complex $C$ by a homotopy coinvariant $\psi$. Similarly, $\psi$ consists of chain maps $\psi_i:C^{n+i-*}\rightarrow C_*$ for $i\leq 0$ so that each $\psi_i$ is a chain homotopy between $\psi_{i-1}$ and its dual $D(\psi_{i-1}):C^{n+i-1-*}\rightarrow C_*$. A quadratic chain complex $(C,\psi)$ is called Poincar\'e if the chain map $\psi_0:C^{n-*}\rightarrow C_*$ is a chain homotopy equivalence. 

Let $\mathbf{\Delta}$ be the category of totally ordered nonempty finite sets with morphisms order-preserving inclusions. A $\Delta$-set is a contravariant functor from $\mathbf{\Delta}$ to the category of sets. Indeed, a $\Delta$-set has the same hierarchy and boundary maps like a simplicial set, but a $\Delta$-set does not have degeneracy maps. Indeed, a $\Delta$-set $X$ can be viewed as a category with objects the simplices in $X$ and with morphisms the inclusions between simplices. A presheaf of chain complexes over $X$ is a functor $\mathcal{C}$ from the category $X$ to the category of chain complexes. Define the Verdier dual presheaf $D(\mathcal{C})$ by $
D(\mathcal{C})(\sigma)= \hocolim_{\tau\subset \sigma} \mathcal{C}(\tau)^{-*}=\oplus_{\tau\subset \sigma}\mathcal{C}(\tau)^{-*+|\tau|}
$ 
for any simplex $\sigma$ of $X$. 

Now assume $X$ is a finite $\Delta$-set. Let $\mathbf{Hom} (D(\mathcal{C}),\mathcal{C})$ be the differential graded presheaf of homomorphisms. It also has a natural $\ZZ/2$-action, so we can define $n$-dimensional symmetric/quadratic structures on a presheaf. A symmetric/quadratic structure is (locally) Poincar\'e if the chain map $D(\mathcal{C})(\sigma)\rightarrow \mathcal{C}(\sigma)$ is a chain homotopy equivalence for each simplex $\sigma$ of $X$.

Define the assembly (or equivalently, the set of global sections) of a presheaf $\mathcal{C}$ by $\mathcal{C}(X)= \hocolim_{\sigma\in X} \mathcal{C}(\sigma)=\oplus_{\sigma\in X}\mathcal{C}(\sigma)_{*-n+|\sigma|}$, where $n$ is the dimension of $X$.

\begin{remark}
All the definitions above like presheaf, Verdier dual presheaf, assembly and symmetric/quadratic structures on a presheaf can be generalized to the case when $X$ is a regular cell complex (for a definition see \cite[I.1.1]{Steenrod}).
\end{remark}

\begin{lemma}\label{DualAssemblyLemma}
Let $X$ be an $n$-dimensional closed $PL$ manifold with a $PL$ triangulation. Let $\mathcal{C}$ be a presheaf of chain complexes over $X$. Then the assembly $D(\mathcal{C})(X)$ is canonically chain homotopy equivalent to $\mathcal{C}(X)^{n-*}=\Sigma^n\homo(\mathcal{C}(X),\ZZ)$.
\end{lemma}

\begin{proof}
Firstly, 
\begin{eqnarray*}
\mathcal{C}(X)^{n-*} & = & \Sigma^n\homo(\mathcal{C}(X),\ZZ)=\Sigma^n\homo(\hocolim_{\sigma\in X}\mathcal{C}(\sigma),\ZZ)\\
 & = & \Sigma^n\homo(\oplus_{\sigma\in X}\mathcal{C}(\sigma)_{*-n+|\sigma|},\ZZ)=\Sigma^n(\oplus_{\sigma\in X}\mathcal{C}(\sigma)^{-*-n+|\sigma|}) \\
 & = & \oplus_{\sigma\in X}\mathcal{C}(\sigma)^{-*+|\sigma|}=\hocolim_{\sigma\in X} \mathcal{C}(\sigma)^{-*}
\end{eqnarray*}
Notice that $\mathcal{C}(\sigma)^{-*}$ is a precosheaf over $X$. So it suffices to prove that given a precosheaf $\mathcal{D}$ over $X$, the canonical chain map $\hocolim_{\sigma\in X}\mathcal{D}(\sigma) \rightarrow\hocolim_{\sigma\in X}\hocolim_{\tau\subset\sigma}\mathcal{D}(\tau)$ is a chain homotopy equivalence.

Let us write down the chain complexes of two sides explicitly. $\hocolim_{\sigma\in X}\mathcal{D}(\sigma)=\oplus_{\sigma\in X}\mathcal{D}(\sigma)_{*-|\sigma|}$, with the differential decomposed like the following. For each $\sigma$, there is an obvious differential $\mathcal{D}(\sigma)_{r-|\sigma|}\rightarrow \mathcal{D}(\sigma)_{r-1-|\sigma|}$; for each pair $\tau\subset\sigma$ of codimension $1$, there is a map $\mathcal{D}(\sigma)_{r-|\sigma|}\rightarrow \mathcal{D}(\tau)_{r-|\sigma|}=\mathcal{D}(\tau)_{r-1-|\tau|}$ induced by the cosheaf structure.

On the other hand, $\hocolim_{\sigma\in X}\hocolim_{\tau\subset\sigma}\mathcal{D}(\tau)=\oplus_{\sigma\in X}\oplus_{\tau\subset \sigma}\mathcal{D}(\tau)_{*-|\tau|-n+|\sigma|}$, with the differential decomposed into three parts. For each pair $\tau\subset \sigma$, there is an obvious differential $\mathcal{D}(\tau)_{r-|\tau|-n+|\sigma|}\rightarrow \mathcal{D}(\tau)_{r-1-|\tau|-n+|\sigma|}$; for each pair $\gamma\subset \tau$ of codimension $1$, there are a map $\mathcal{D}(\tau)_{r-|\tau|-n+|\sigma|}\rightarrow \mathcal{D}(\gamma)_{r-|\tau|-n+|\sigma|}=\mathcal{D}(\gamma)_{r-1-|\gamma|-n+|\sigma|}$ induced by the cosheaf structure; for each pair $\sigma\subset\delta$ of codimension $1$, there is an identity map $\mathcal{D}(\tau)_{r-|\tau|-n+|\sigma|}\rightarrow \mathcal{D}(\tau)_{r-|\tau|-n+|\sigma|}=\mathcal{D}(\tau)_{r-1-|\tau|-n+|\delta|}$.

Hence, $\hocolim_{\sigma\in X}\hocolim_{\tau\subset\sigma}\mathcal{D}(\tau)$ is also isomorphic to $\oplus_{\alpha\in X}\oplus_{\alpha\subset \beta}\mathcal{D}(\alpha)_{*-|\alpha|-n+|\beta|}$, with the differential decomposed into three parts like above. So we have proved that \\ $\hocolim_{\sigma\in X}\hocolim_{\tau\subset\sigma}\mathcal{D}(\tau)=\hocolim_{\alpha\in X}\hocolim_{\alpha\subset \beta}\mathcal{D}(\alpha)$, where by $\hocolim_{\alpha\subset \beta}\mathcal{D}(\alpha)$ we mean the colimit over the constant diagram $\mathcal{D}(\alpha)$ indexed by all simplices $\beta$ containing the fixed $\alpha$. The map into $\hocolim_{\sigma\in}\mathcal{D}(\sigma)$ is induced by $\hocolim_{\alpha\subset \beta}\mathcal{D}(\alpha)\rightarrow \mathcal{D}(\alpha)$. We only need to prove that this map is a chain homotopy equivalence and the lemma follows from \cite[Proposition 1.14]{Ranicki&Weiss}.

Notice that a PL triangulation on $X$ induces a dual cell decomposition on $X$ since $X$ is a PL manifold. The dual cell decomposition on $X$ is a regular cell decomposition. Let $D(\alpha)$ be the regular cell dual to a simplex $\alpha$. But $\hocolim_{\alpha\subset \beta}\mathcal{D}(\alpha)=\mathcal{D}(\alpha)\otimes C_*(D(\alpha))$, where $C_*(D(\alpha))$ is the cellular chain complex of $D(\alpha)$ (with respect to the dual cell decomposition). It is obvious that the chain map $\mathcal{D}(\alpha)\otimes C_*(D(\alpha))\rightarrow \mathcal{D}(\alpha)$ is a chain homotopy equivalence.
\end{proof}

\begin{corollary}\label{AssemblyLemma}
Let $X$ be a closed $n$-dimensional PL manifold with a $PL$ triangulation. Let $\mathcal{C}$ be an $m$-dimensional Poincar\'e presheaf of symmetric/quadratic chain complexes over $X$. The assembly $\mathcal{C}(X)$ is a Poincar\'e symmetric/quadratic chain complex of dimension $m+n$.
\end{corollary}

\begin{proof}
The lemma showed that $D(\mathcal{C})(X)\rightarrow \mathcal{C}(X)^{n-*}$ is a chain homotopy equivalence, where $n$ is the dimension of $X$. Hence, the symmetric/quadratic structure on the presheaf $\mathcal{C}$ induces a symmetric/quadratic structure on $\mathcal{C}(X)$. Moreover, since $\mathcal{C}$ is locally Poincar\'e, \cite[Proposition 1.14]{Ranicki&Weiss} indicates that $D(\mathcal{C})(X)_{-m+*}\rightarrow C(X)$ is a chain homotopy equivalence and hence the induced map $\mathcal{C}(X)^{m+n-*}\rightarrow\mathcal{C}(X)$ is also a chain homotopy equivalence.
\end{proof}

\begin{remark}
The lemmas \ref{DualAssemblyLemma} and \ref{AssemblyLemma} both hold for a $PL$ regular cell decomposition of a $PL$ manifold, since the only fact we need to use in the proof is that the dual cone decomposition of a $PL$ regular cell decomposition for a $PL$ manifold is still a regular cell decomposition.
\end{remark}

\begin{definition}
Let $X=\Delta^1$, the $\Delta$-set of the unit interval. A presheaf $\mathcal{C}$ over $X$ is indeed a chain map $C\oplus C'\rightarrow D$. Let $\phi$ (or $\psi$) be an $n$-dimensional symmetric (or quadratic) structure on $\mathcal{C}$. Call $D$ a Poincar\'e symmetric (or quadratic) bordism between two Poincar\'e symmetric (or quadratic) chain complexes $C$ and $C'$ if the presheaf $(\mathcal{C},\phi)$ (or $(\mathcal{C},\psi)$) is locally Poincar\'e. Moreover, if $C'=0$, call $C\rightarrow D$ a Poincar\'e symmetric (or quadratic) pair.
\qed
\end{definition}

\begin{remark}
A (Poincar\'e) symmetric (or quadratic) presheaf $\mathcal{C}$ over an $n$-dimensional simplex $\Delta^n$ is also called an $n$-ad of (Poincar\'e) symmetric (or quadratic) chain complexes. A (Poincar\'e) symmetric (or quadratic) presheaf over a finite $\Delta$-set $X$ is indeed a presheaf of ads of (Poincar\'e) symmetric (or quadratic) chain complexes.
\end{remark}

The following lemma will be used in many places.

\begin{lemma}[Ranicki's Miracle Lemma]\cite[Proposition 3.4]{Ranicki1980}\label{Ranickimiracle}
The chain homotopy classes of $n$-dimensional symmetric (or quadratic) chain complexes are in one-to-one
correspondence with the homotopy classes of $n$-dimensional Poincar\'e symmetric (or quadratic) pairs.
\end{lemma}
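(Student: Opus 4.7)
The plan is to exhibit mutually inverse assignments between homotopy classes of $n$-dimensional symmetric chains and homotopy classes of $n$-dimensional symmetric Poincar\'e pairs, following Ranicki's algebraic Thom construction and its inverse, the algebraic Poincar\'e thickening. In one direction, send a Poincar\'e pair $(f : C \to D, (\delta\phi, \phi))$ to the mapping cone $\mathcal{C}(f)$ equipped with a symmetric structure $\delta\phi/\phi \in Q^s_n(\mathcal{C}(f))$ assembled from $\delta\phi$ and $\phi$ using the splitting of $\mathcal{C}(f) \otimes \mathcal{C}(f)$ induced by $f$ at the level of $\homo_{\ZZ[\ZZ/2]}(W,-)$. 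In the opposite direction, send a symmetric chain $(E, \theta)$ to the pair $(e : \partial E \to E^{n-*}, (0, \partial\theta))$, where $\partial E$ is the desuspended mapping cone of the duality chain map $\theta_0 : E^{n-*} \to E_*$, the induced $(n-1)$-dimensional symmetric structure $\partial\theta$ is automatically Poincar\'e by construction, and $e$ is the canonical inclusion into the $E^{n-*}$ summand.

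To prove inverseness, I compute each composition at the level of $Q^s$. Applying Thom to the boundary pair of $(E, \theta)$ produces $\mathcal{C}(e)$, which is chain equivalent to $E$ because the cofiber of the inclusion of a desuspended cofiber is the original target; a direct chain-level check on $Q^s_n$ recovers $\theta$ from $\partial\theta$. Conversely, for a Poincar\'e pair $(f : C \to D, (\delta\phi, \phi))$, the Poincar\'e hypothesis is precisely what ensures that the boundary pair of $\mathcal{C}(f)$ is chain-homotopy equivalent as a pair to $(f : C \to D, (\delta\phi, \phi))$, because the relative duality for the pair identifies $\mathcal{C}(f)^{n-*}$ with $D$ and $\partial \mathcal{C}(f)$ with $C$ up to chain equivalence, and the matched structures coincide on $Q^s$.

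The main obstacle is the bookkeeping on $Q^s$-groups: writing down the explicit chain-level formulas for $\delta\phi/\phi$ and $\partial\theta$ over the resolution $W$ fixed in the preamble, checking naturality under chain homotopies of pairs so that both constructions descend to chain homotopy classes, and verifying that the two compositions act as the identity on $Q^s_n$ up to chain homotopy. The quadratic case is formally identical, with $\homo_{\ZZ[\ZZ/2]}(W,-)$ replaced by $W \otimes_{\ZZ[\ZZ/2]} (-)$ throughout; the underlying cofiber and duality diagrams are unchanged, so the argument transports verbatim.
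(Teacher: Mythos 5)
Your proposal is correct and follows the same strategy the paper sketches: the algebraic Thom construction (mapping cone $\mathcal{C}(f)$ with the quotient structure $\delta\phi/\phi$) in one direction, and the algebraic Poincar\'e thickening (boundary $\partial E = \Fib(\theta_0)$, the desuspended mapping cone of the duality map, with the pair $(0,\partial\theta)$) in the other, with inverseness checked at the level of $Q^s_n$. The paper's proof sketch presents exactly this pair of constructions, calling the boundary $\Fib(\phi_0) = \partial C$, so the two arguments coincide.
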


In \cite{Weiss1985} (also see \cite[Definition 2.2]{Ranicki1992}), Weiss defined a chain bundle on a chain complex $C$ by a $0$-dimensional cycle $\gamma\in \homo_{\ZZ[\ZZ/2]}(\widehat{W},C^{-*}\times C^{-*})$, where $\widehat{W}$ is the Tate complex associated to the group ring $\ZZ[\ZZ/2]$, which is
\[
\cdots\rightarrow \ZZ[\ZZ/2]\xrightarrow{1-T} \ZZ[\ZZ/2]\xrightarrow{1+T} \ZZ[\ZZ/2]\xrightarrow{1-T}\ZZ[\ZZ/2]\xrightarrow{1+T} \ZZ[\ZZ/2]\rightarrow \cdots
\]

$\widehat{Q}_*(C)=H_*(\homo_{\ZZ[\ZZ/2]}(\widehat{W},C\otimes C))$ is like the `$K$-theory of spherical fibrations' for chain complexes. $\homo_{\ZZ[\ZZ/2]}(\widehat{W},C^{-*}\otimes C^{-*})$ has the homotopy invariance property, i.e., the chain maps $f^*,(f')^*:\homo_{\ZZ[\ZZ/2]}(\widehat{W},D^{-*}\otimes D^{-*})\rightarrow \homo_{\ZZ[\ZZ/2]}(\widehat{W},C^{-*}\otimes C^{-*})$ induced by two homotopic chain maps $f,f':C\rightarrow D$ is still chain homotopic,
where the homotopy relies on a choice of a `diagonal' element $\omega\in(C(\Delta^1)\otimes C(\Delta^1))_1$ (see \cite[Proposition 1.1]{Ranicki1980}).

The identity map of $\Sigma C$ represents a chain homotopy from the $0$-map $C\xrightarrow{0} \Sigma C$ to itself. It induces a chain homotopy map $\homo_{\ZZ[\ZZ/2]}(\widehat{W},C^{-1-*}\otimes C^{-1-*})\rightarrow \homo_{\ZZ[\ZZ/2]}(\widehat{W},C^{-*}\otimes C^{-*})$. By shifting the degree, we get a chain map 
\[
S:\Sigma \homo_{\ZZ[\ZZ/2]}(\widehat{W},C^{-*}\otimes C^{-*})\rightarrow \homo_{\ZZ[\ZZ/2]}(\widehat{W},C^{1-*}\otimes C^{1-*})
\]

Moreover, there is a natural long exact sequence connecting homotopy coinvariants $Q^q_n(C)$, homotopy invariants $Q^s_n(C)$ and Tate cohomology $\widehat{Q}_n(C)$ (see \cite[Proposition 1.2]{Ranicki1980} for the construction and proof), i.e.,
\[
\cdots\rightarrow Q^q_n(C)\xrightarrow{1+T} Q^s_n(C)\xrightarrow{J} \widehat{Q}_n(C)\xrightarrow{\partial} Q^q_{n-1}(C)\rightarrow \cdots
\]
where $1+T$ corresponds to the polarization of a quadratic form into a symmetric form.

A normal structure on an $m$-dimensional symmetric chain complex $(C,\phi)$ is defined by a chain bundle $\gamma$ over $C$ and a homology $\zeta\in (\homo_{\ZZ[\ZZ/2]}(\widehat{W},C^{1-*}\otimes C^{1-*}))_{n+1}$ between $J(\phi)$ and $\phi_*(S^m(\gamma))$. 

There are some basic facts about normal chain complexes. Define an $n$-dimensional symmetric-quadratic pair $f:C\rightarrow D$ by a Poincar\'e symmetric pair structure on $C\rightarrow D$ and a Poincar\'e quadratic refinement of the symmetric structure on $C$. There are also notions of bordisms and $k$-ads of symmetric-quadratic pairs.

\begin{proposition}[\cite{Ranicki1992}*{Proposition 2.6(ii), Proposition 2.8(i)}]
(1) Each Poincar\'e symmetric complex has a canonical normal structure.

(2) There is a natural one-to-one correspondence between the homotopy classes of $n$-dimensional Poincar\'e symmetric-quadratic pairs and those of $n$-dimensional normal chains.
\end{proposition}

Like before, we can define a normal structure on a presheaf of chain complexes over an arbitrary finite $\Delta$-set. Consequently $n$-ads and bordisms of normal complexes can also be defined. There is a correspondence between presheaves of normal chains and presheaves of symmetric-quadratic pairs. The lemma \ref{AssemblyLemma} for the normal case or the case of symmetric-quadratic pairs is also true by the same argument.

\begin{definition} (\cite[p.~137]{Ranicki1980} and \cite[p.~40]{Ranicki1992})
Define $L^s_m$, $L^q_m$ and $L^n_m$ by the sets of bordism classes of $m$-dimensional Poincar\'e symmetric chain complexes, Poincar\'e quadratic chain complexes and normal complexes, respectively. They are indeed abelian groups, where the additions are induced by the direct sums of chains. 
\end{definition} 

Because of the equivalence between normal chain complexes and Poincar\'e symmetric-quadratic chain pairs, there is a natural long exact sequence (\cite[p.~45]{Ranicki1992})
\begin{equation}\label{lesLgroup}
\cdots\rightarrow L^q_m\xrightarrow{1+T} L^s_m \xrightarrow{J} L^n_m \xrightarrow{\partial} L^q_{m-1}\rightarrow \cdots    
\end{equation}

For geometric intuitions, one can think of a Poincar\'e symmetric chain as a Poincar\'e space, a Poincar\'e quadratic chain as a degree one normal map between two Poincar\'e spaces and a normal chain as a normal space (see \cite{Quinn1972} for a definition). One can use these intuitions to make algebraic constructions like geometric constructions. For example, we may define an algebraic gluing of two $m$-dimensional Poincar\'e symmetric pairs $D\rightarrow C$ and $(-D)\rightarrow C'$ along the common boundary $D$ and get a $m$-dimensional Poincar\'e symmetric chain complex $C\bigcup_D C'$ (\cite[p.~77]{Ranicki1981}), like the gluing of two manifolds along the common boundary.

Analogously, we can also construct the following chain-level bordism invariants. 

Let $(C,\phi)$ be an $m$-dimensional Poincar\'e symmetric chain complex.  $\phi$ induces a\ nondegenerate symmetric bilinear form on $H_{2k}(C)$ when $m=4k$ and a nondegenerate linking form on the torsion subgroup $\Tor(H_{2k+1}(C))$ when $m=4k+1$. If $m=4k$, the signature $\Sign(C)\in \ZZ$ is defined by the signature of the bilinear form over $H_{2k}(C)\otimes\RR$; if $m=4k+1$, the de Rham invariant $\dR(C,\phi)\in \ZZ/2$ is defined by the $\ZZ/2$ rank of $\Tor(H_{2k+1}(C))\otimes \ZZ/2$. 

For a $4k$-dimensional Poincar\'e symmetric chain pair $D\rightarrow C$, we can also define the signature $\Sign(C)$ by the signature of the nondegenerate symmetric bilinear form on $\Im(H_{2k}(C)\rightarrow H_{2k}(C,D))$. Then we have the Novikov's additive formula of Poincar\'e symmetric pairs, namely, $\Sign(C\bigcup_D C')=\Sign(C)+\Sign(C')$, where $D\rightarrow C$ and $(-D)\rightarrow C'$ are two $4k$-dimensional Poincar\'e symmetric chain pairs.

Similarly, given an $m$-dimensional Poincar\'e quadratic chain complex $(C,\psi)$, $\psi$ induces an nondegenerate quadratic form on $H_{2k}(C)$ when $m=4k$ and a nondegenerate skew-quadratic form on $H_{2k+1}(C)$ when $m=4k+2$. If $m=4k$, we can define the index $I(C)=\frac{1}{8}\Sign( {C})\in \ZZ$, where $\Sign( {C})$ is the signature of the quadratic form on $H_{2k}(C)\otimes\RR$; if $m=4k+2$, the Kervaire invariant $K(C,\phi)\in \ZZ/2$ is the Kervaire-Arf invariant of the quadratic form on $H_{2k+1}(C)\otimes \ZZ/2$. 

Let $C'$ be an $m$-dimensional normal chain complex and let $D\rightarrow C$ be the corresponding Poincar\'e symmetric-quadratic pair.

If $m=4k+3$, define the Kervaire invariant $K(C')\in \ZZ/2$ by the Kervaire invariant of $D$.

If $m=4k+1$, the index of $D$ is $0$ since $D$ is a boundary. Then there exists a Poincar\'e quadratic pair $(-D)\rightarrow \widetilde{C}$. Define the de Rham invariant $\dR(C')\in \ZZ/2$ of $C'$ by the de Rham invariant of the Poincar\'e symmetric complex $C\bigcup_D \widetilde{C}$. 

The de Rham invariant is independent of the choice of the quadratic pair $(-D)\rightarrow \widetilde{C}$. Indeed, let $(-D)\rightarrow \widetilde{C}'$ be another pair. Let $I$ be the chain complex of the unit interval. Due to its dimension, the quadratic complex $(-\widetilde{C})\bigcup_{D\times 0} D\otimes I\bigcup_{D\times 1}\widetilde{C}'$ must be the boundary of some quadratic pair $W$. Then $C\otimes I\bigcup_{D\otimes I} (-W)$ is the symmetric bordism between $C\bigcup_D \widetilde{C}$ and $C\bigcup_D \widetilde{C}'$. Hence their de Rham invariants agree.

If $m=4k$, the quadratic complex $D$ is the boundary of some Poincar\'e quadratic pair $(-D)\rightarrow \widetilde{C}$. Then define the $\ZZ/8$-signature by $\Sign(C',D)=\Sign(C\bigcup_D \widetilde{C})\in\ZZ/8$. It is also invariant under different choices of $\widetilde{C}$ by a similar argument like above.

With all the invariants defined above and the long exact sequence of $L$-groups, one can prove the following.

\begin{proposition}[\cite{Ranicki1981}*{Proposition 4.3.1}]

(1) $L^q_m\cong \ZZ,\ZZ/2,0$ when $m$ is congruent to $0,2$ or odd numbers modulo $4$.

(2) $L^s_m\cong \ZZ,\ZZ/2,0$ when $m$ is congruent to $0,1$ or other numbers modulo $4$.

(3) $L^n_m\cong \ZZ/8,\ZZ/2,0,\ZZ/2$ when $m$ is congruent to $0,1,2,3$ modulo $4$.

Moreover, all the isomorphisms are explicitly given by the invariants defined above.
\end{proposition}

\subsection{\texorpdfstring{$L$}{lg} Spectra}\label{Lspectra}

As we know before, the $L$-groups are the homotopy groups of some $\LL$-spectra. We review the construction of $\LL$-spectra in this subsection.

Let $\LL^s(m)$ be the pointed $\Delta$-set whose $k$-simplices are all the $k$-ads of $(k-m)$-dimensional Poincar\'e symmetric chain complexes, with the obvious face maps. (Strictly speaking, there is a set-theoretic issue here but one can get rid of this). The base $k$-simplex is the $k$-ad of zero chains. Each $\LL^s(m)$ satisfies the Kan condition and $\LL^s(m)=\Omega \LL^s(m+1)$.
Then $(\LL^s(m))$ forms an $\Omega$-spectrum with homotopy groups isomorphic to $L^s_k$. 

The spectra $\LL^q(m)$ and $\LL^n(m)$ are similarly constructed. There is a fibration sequence of spectra $\LL^q\xrightarrow{1+T} \LL^s\xrightarrow{J} \LL^n$, whose associated long exact sequence of homotopy groups is exactly the long exact sequence of $L$-groups \eqref{lesLgroup}.

The $l$-connective cover $\LL^s\langle l\rangle$ of $\LL^s$ is constructed like follows. Let the $m$-th space $\LL^s\langle l\rangle(m)$ be the $\Delta$-set with $k$-simplicies all $k$-ads of $(k-m)$-dimensional Poincar\'e symmetric chain complexes such that the presheaf restricted to each cell of dimension less than $(m+l)$ is an ad of acyclic chains. Again, $\LL^s\langle l\rangle(m)$ is an $(l+m-1)$-connected Kan $\Delta$-set and $\LL^s\langle l\rangle(m)=\Omega \LL^s\langle l\rangle(m+1)$.

The $l$-connective spectra $\LL^q\langle l\rangle$ and $\LL^n\langle l\rangle$ are defined likewise.

In \cite[Proposition 7.1]{Ranicki1992}, Ranicki proved the equivalence of Poincar\'e quadratic complexes for a surgery problem and Wall's definition of surgery obstructions. Hence, by Quinn's construction of $G/TOP$ (\cite{Quinn1970}), there is an obvious homotopy equivalence $G/TOP\rightarrow \LL^q$.

Due to different connectiveness of $\LL^s$ and $\LL^q$, we recall the definition of ``$1/2$-connectiveness'' for normal $L$-spectrum in \cite[Deinition 15.14]{Ranicki1992} in order to get the fibration sequence of three connective spectra. 

A chain complex $C$ is $q$-connective if the truncation at degree $\leq q$ of $C$ is acyclic. A symmetic chain complex $(C,\phi)$ is $q$-Poincar\'e if $\partial C$ is $q$-connective. We can also define a presheaf of $q$-Poincar\'e symmetric chain complexes over a finite $\Delta$-set.

\begin{remark}\label{ConnectiveLemma}
In general, one can define the spectra $\LL^q(R),\LL^s(R),\LL^n(R)$ for any ring $R$. There is an analogous way to take connective covers $\LL^a\langle l\rangle(R)$ like above, for $a=q,s,n$. In \cite[p.~157]{Ranicki1992}, there is an alternative way to make $l$-connective spectra $\LL^a(\langle l\rangle,R)$, for $a=q,s,n$. Take $\LL^q(\langle l\rangle,R)$ for example. Let the $m$-th space $\LL^q(\langle l\rangle,R)(m)$ be the $\Delta$-set with $k$-simplices all $k$-ads of $(k-m)$-dimensional Poincar\'e quadratic $l$-connective chain complexes. In general, $\LL^q(\langle l\rangle,R)$ is canonically homotopy equivalent to $\LL^q\langle l\rangle(R)$, but $\LL^s(\langle l\rangle,R)$ is not homotopy equivalent $\LL^s\langle l\rangle(R)$ unless the homotopy group $\pi_*(\LL^s(R))$ has $4$-periodicity, e.g., $R=\ZZ$ (\cite[Example 15.8]{Ranicki1992}).
\end{remark}

Let $\LL^n\langle 1/2\rangle(m)$ be the $\Delta$-set whose $k$-simplices are all $k$-ads of $(k-m)$-dimensional $0$-connective $1$-Poincar\'e normal chain complex such that the presheaf restricted to each cell of dimension less than $m$ is an ad of contractible chains. For an alternative construction, one can use $k$-ads of symmetric-quadratic pairs. 

The spaces $\LL^n\langle 1/2\rangle(m)$ satisfy the Kan condition and form an $\Omega$-spectrum.

\begin{warning}
From now on, we use the symbol $\LL^q,\LL^s,\LL^n$ to represent the $1$-connective $\LL^q$-spectrum, $0$-connective $\LL^s$-spectrum and $1/2$-connective $\LL^n$-spectrum respectively.
\end{warning}

Therefore, there is a natural fiber sequence of connective spectra (\cite[Proposition 15.16(i)]{Ranicki1992})
\[\LL^q\xrightarrow{1+T} \LL^s\xrightarrow{J} \LL^n\]

Like the mock bundle picture to define the cobordism group over any space (\cite{Rourke&Sanderson}), a cellular map from a finite simplicial complex (or a $\Delta$-complex) $X$ to any of the defined $\LL$-spaces above can be considered as a `mock bundle' over $X$ with `fibers' in chain complexes, which is exactly a presheaf $\mathcal{C}$ of ads of (Poincar\'e) symmetric/quadratic/normal complexes. So the $\LL$-theory cohomology group $(\LL^a)^k(X)$ actually consists of bordism classes of presheaves for $a=s,q,n$.

\subsection{A Priori Invariants by Periods}\label{apriori}

In this subsection, we review the method for defining cohomology classes of a space $X$ by periods. We review the a priori $K$-theory invariants by periods too.

\subsubsection{\texorpdfstring{$\ZZ/n$}{lg}-Manifolds}

We follow \cite[Section 1]{Sullivan&Morgan} to review the discussions of $\ZZ/n$ manifolds. A $\ZZ/n$ manifold $M^m$ is defined by an oriented manifold with boundary $\partial M$ together with an identification of $\partial M$ to a disjoint union of $n$ copies (labelled by $1,\dots, n$) of some oriented $(m-1)$-manifold $\delta M$. $\delta M$ is also called the Bockstein of $M$. Conventionally, when $n=0$, a $\ZZ/n$ manifold means a closed manifold.

A $\ZZ/n$ manifold with boundary is an oriented manifold $N$ with boundary $\partial N$ and an embedding of a disjoint union of $n$ (labelled) copies of an oriented manifold $M'$ (with boundary $\partial M'$) into $\partial N$. Call the manifold with boundary $M'$ the Bockstein of $N$ and call the $\ZZ/n$ manifold $\partial N-\bigsqcup_n \Int(M')$ the boundary of $N$. 

Then one can define a bordism between two $\ZZ/n$ manifolds. $\Omega^{SO}_*(X;\ZZ/n)$ is the bordism group of singular $\ZZ/n$ manifolds in $X$ so that the restricted map on the Bocksteins is identical.

\begin{remark}
We can similarly define $\ZZ/n$ manifolds (with or without boundary) with other structures like $STOP,SPL,U,Spin$ and then get the corresponding bordism groups with coefficient $\ZZ/n$.
\end{remark}

Like the algebraic map $i:\ZZ/n\rightarrow \ZZ/nm$ by multiplication with $m$, one can construct $\Omega^{SO}_*(X;\ZZ/n)\rightarrow \Omega^{SO}_*(X;\ZZ/nm)$ by taking a disjoint union of $m$ copies of a $\ZZ/n$ manifold $M$ and view it as a $\ZZ/nm$ manifold. Define $\Omega^{SO}_*(X;\ZZ/p^{\infty})=\varinjlim_{k} \Omega^{SO}_*(X;\ZZ/p^k)$ for any prime number $p$.

On the other hand, the natural quotient map $\ZZ/nm\rightarrow \ZZ/n$ also induces a map $\Omega^{SO}_*(X;\ZZ/nm)\rightarrow \Omega^{SO}_*(X;\ZZ/n)$ by regrouping $mn$ copies of Bocksteins into $n$ copies.

The product of any two $\ZZ/n$ manifolds $M\times N$ might not be a $\ZZ/n$ manifold. The problem is that the neighborhood $\delta M\times \delta N$ does not look like $n$-sheets coming into the boundary. \cite{Sullivan&Morgan} made a modification like follows. 

A canonical neighborhood of each point in $\delta M\times \delta N$ is a product of a Euclidean space and a cone over $n*n$, the join between two sets of $n$ points. It is the boundary of some $2$-dimensional $\ZZ/n$ manifolds $W$, since $\Omega^{SO}_1(\pt;\ZZ/n)=0$. Then replace the tubular neighborhood of $\delta M\times \delta N$ by $\delta M\times \delta N\times W$. The resulting manifold $M\otimes N$ is a $\ZZ/n$-manifold. The $\ZZ/n$-bordism class of $M\otimes N$ is independent of the choice of $W$, since $\Omega^{SO}_2(\pt;\ZZ/n)=0$. The modified multiplication is associative up to $\ZZ/n$-bordism since $\Omega^{SO}_3(\pt;\ZZ/n)=0$. 

There is a natural map $\rho:M\otimes N\rightarrow M\times N$ which is the identity both away from some neighborhood of $\delta M\times \delta N$ and near $\delta M\times \delta N$.

\begin{lemma}[\cite{Sullivan&Morgan}*{Proposition 1.5}]
The tangent bundle $T(M\otimes N)$ is stably equivalent to $\rho^*(TM\times TN)\oplus \pi^* E$ for some vector bundle $E$ over $W/(n*n)$, where $\pi$ is the quotient map $M\otimes N\rightarrow \delta M\times \delta N\times W/(n*n)\rightarrow W/(n*n)$.
\end{lemma}

\subsubsection{Cohomological and $K$-Theoretical A Priori Invariants}

For details about the theorems in this subsection, we recommend \cite{Anderson1969}\cite{Brumfiel&Morgan}.

Let $L^{\QQ}_M=1+L^{\QQ}_4(M)+L^{\QQ}_8(M)+\cdots$ be the rational Hirzebruch $L$-genus of the tangent bundle $TM$ of a manifold $M$. The rational class has a $\ZZ_{(2)}$-lifting $L_M$, which is proved by the transversality of $\ZZ/2^k$-manifolds (\cite[Theorem 3.3]{Sullivan&Morgan}). Furthermore, the $\ZZ/2$-reduction of $L_M$ is the square of even Wu classes, namely, $V^2_M=1+v^2_{2}(M)+v^2_{4}(M)+\cdots$ (\cite[Corollary 3.2]{Sullivan&Morgan}).

\begin{remark}
The class $L$ in \cite{Sullivan&Morgan} might cause some confusions. To be clear for the readers, in  \cite{Sullivan&Morgan} the class $L\in H^*(BSO;\ZZ_{(2)})$ is the inverse of the Hirzebruch $L$-genus as stated in the \cite[Corollary 3.2]{Sullivan&Morgan}. But the paragraph above the \cite[Theorem 3.3]{Sullivan&Morgan} clarifies that when applied to manifold, the notation $L_M$ means the class $L$ for the normal bundle $\nu_M$, which is equivalent to what we used above.
\end{remark}

To define cohomology classes of $2$-primary coefficients, we need the following lemmas, which essentially follow from Thom's cobordism theory.

\begin{lemma}[\cite{Brumfiel&Morgan}*{Proposition A.3}]
Each graded class $x_*\in H^*(X;\ZZ/2)$ is uniquely determined by  a homomorphism $\sigma:\Omega^{SO}_*(X;\ZZ/2) \rightarrow \ZZ/2$ such that
\[
\sigma((M,f)\cdot N)=\sigma(M,f)\cdot \chi_2(N)\in \ZZ/2
\]
where $\chi_2$ is the mod $2$ Euler charateristic, $(M,f)\in \Omega^{SO}_*(X;\ZZ/2)$ and $N\in \Omega^{SO}_*(\pt;\ZZ/2)$.

The defining equation for $x_*$ is 
\[
\sigma(M^m,f)=\langle V^2_M\cdot f^*(\sum_{i\geq 0}x_{m-4i}),[M^m]\rangle\in \ZZ/2
\]
where $(M,f)\in\Omega^{SO}_{m}(X;\ZZ/2)$.
\end{lemma}

\begin{lemma}[\cite{Brumfiel&Morgan}*{Proposition A.11}]
Each graded class $x_*\in H^*(X;\ZZ/2^r)$ is uniquely determined by a homomorphism $\sigma:\Omega^{SO}_*(X;\ZZ/2^r) \rightarrow \ZZ/2^r$ such that

(1) 
\[
\sigma((M,f)\cdot N)=\sigma_{\QQ}(M,f)\cdot \Sign(N)\in \ZZ/2^r
\]
where $(M,f)\in \Omega^{SO}_*(X;\ZZ/2^r)$ and $N\in \Omega^{SO}_*(\pt)$;

(2) 
\[
\sigma(j_r((M,f)\cdot N))=\sigma(j_r(M,f))\cdot \Sign(N)\in \ZZ/2\xrightarrow{j_r} \ZZ/2^r
\]
where $j_r:\ZZ/2\rightarrow\ZZ/2^r$, $(M,f)\in \Omega^{SO}_*(X;\ZZ/2)$ and $N\in \Omega^{SO}_*(\pt;\ZZ/2)$;

(3) 
\[
\sigma(\delta((M,f)\cdot N))=\sigma(\delta(M,f))\cdot \Sign(N)\in \ZZ/2\xrightarrow{j_r} \ZZ/2^r
\]
where $j_r:\ZZ/2\rightarrow \ZZ/2^r$, $(M,f)\in \Omega^{SO}_*(X;\ZZ/2)$ and $N\in \Omega^{SO}_*(\pt;\ZZ/2)$

The defining equation for $x_*$ is 
\[
\sigma(M^m,f)=\langle L_M\cdot f^*(\sum_{i\geq 0}x_{m-4i}),[M^m]\rangle\in \ZZ/2^r
\]
where $(M,f)\in\Omega^{SO}_{m}(X;\ZZ/2^r)$.
\end{lemma}

\begin{lemma}[\cite{Sullivan&Morgan}*{Theorem 4.1}]
Each graded class $x_*\in H^*(X;\ZZ_{(2)})$ is uniquely determined by a commutative diagram
\[
  \begin{tikzcd}
    \Omega^{SO}_*(X)\otimes \QQ \arrow{r}{\sigma_{\QQ}} \arrow{d}{\pi} & \QQ \arrow{d}{\pi} \\
    \Omega^{SO}_*(X;\ZZ/2^{\infty}) \arrow{r}{\sigma_2} & \ZZ/2^{\infty}
  \end{tikzcd}
\]
where $\pi:\QQ\rightarrow \QQ/(\ZZ[\frac{1}{3},\frac{1}{5},\dots])\cong \ZZ/2^{\infty}$, such that

(1)
\[
\sigma_{\QQ}((M,f)\cdot N)=\sigma_{\QQ}(M,f)\cdot \Sign(N)\in \QQ
\]
where $(M,f)\in \Omega^{SO}_*(X)$ and $N\in \Omega^{SO}_*(\pt)$;

(2) 
\[
\sigma_{2}((M,f)\cdot N)=\sigma_{2}(M,f)\cdot \Sign(N)\in \ZZ/2^r\xrightarrow{j_{\infty}} \ZZ/2^{\infty}
\]
where $(M,f)\in \Omega^{SO}_*(X;\ZZ/2^r)$, $N\in \Omega^{SO}_*(\pt;\ZZ/2^r)$ and $j_{\infty}:\ZZ/2^r\rightarrow \ZZ/2^{\infty}$.

The defining equations for $x_*$ are 
\[
\sigma_{\QQ}(M^m,f)=\langle L_M\cdot f^*(\sum_{i\geq 0}x_{m-4i}),[M^m]\rangle\in \QQ
\]
and
\[\sigma_{2}(N^m,g)=\langle L_N\cdot g^*(\sum_{i\geq 0}x_{m-4i}),[N^m]\rangle\in \ZZ/2^r\xrightarrow{j_{\infty}} \ZZ/2^{\infty}
\]
where $(M,f)\in\Omega^{SO}_{m}(X)$ and $(N,g)\in\Omega^{SO}_{m}(X;\ZZ/2^r)$.
\end{lemma}

For odd primes, we need the following lemma to define elements of real $K$-theory. It is based on the Conner-Floyd theory and the Anderson duality for real $K$-theory.

\begin{lemma}[\cite{Sullivan1996}*{p.~87, Theorem 6}]
Let $X$ be a finite complex. Any commutative diagram like the following with some conditions uniquely determines an element in $KO_{(\odd)}(X)$.
\[
  \begin{tikzcd}
    \Omega^{SO}_*(X)\otimes \QQ \arrow{r}{\sigma_{\QQ}} \arrow{d}{\pi} & \QQ \arrow{d}{\pi} \\
    \Omega^{SO}_*(X;\ZZ/(\odd)) \arrow{r}{\sigma_{\odd}} & \ZZ/(\odd)
  \end{tikzcd}
\]
where $\pi:\QQ\rightarrow \QQ/(\ZZ[\frac{1}{2}])\cong \ZZ/(\odd)$ satisfying that

(1)
\[
\sigma_{\QQ}((M,f)\cdot N)=\sigma_{\QQ}(M,f)\cdot \Sign(N)\in \QQ
\]
where $(M,f)\in \Omega^{SO}_*(X)$ and $N\in \Omega^{SO}_*(\pt)$;

(2) 
\[
\sigma_{\odd}((M,f)\cdot N)=\sigma_{\odd}(M,f)\cdot \Sign(N)\in \ZZ/n\xrightarrow{j_{\infty}} \ZZ/(\odd)
\]
where $n$ is odd, $(M,f)\in \Omega^{SO}_*(X;\ZZ/n)$, $N\in \Omega^{SO}_*(\pt;\ZZ/n)$ and $j_{\infty}:\ZZ/n\rightarrow \ZZ/(\odd)$.
\end{lemma}

\subsubsection{Applications to Surgery Theory}\label{brumfielmorgan}

We sketch the proof regarding the homotopy type of $G/PL$ and the construction of Brumfiel-Morgan's characteristic classes for spherical fibrations.

Sullivan proved that the $PL$-surgery space $G/PL$ localized at prime $2$ splits as a product of Eilenberg-Maclane spaces twisted by a cohomology operation at degree $4$ (\cite{Sullivan1965}\cite[Theorem 4(i)]{Sullivan1996}). The homotopy equivalence map is induced by some characteristic classes for $PL$ surgeries \cite[p.~88]{Sullivan1996}. Later \cite{Sullivan&Rourke}\cite{Sullivan&Morgan} constructed more elaborate classes. 

For any $\ZZ/2$ manifold $M$ and any map $M\rightarrow G/PL$, there is induced a degree $1$ normal map $N\rightarrow M$. The Kervaire invariant of the map $N\rightarrow M$ induces a homomorphism $\Omega^{SO}_*(G/PL;\ZZ/2)\rightarrow \ZZ/2$.

\begin{theorem}[\cite{Sullivan1996}*{p.~88, Corollary 1}\cite{Sullivan&Rourke}*{Theorem 4.6}]
There is a graded class $k=k_2+k_6\cdots\in H^{4*+2}(G/PL;\ZZ/2)$ such that for any $\ZZ/2$ manifold $M$ and any map $f:M\rightarrow G/PL$, the Kervaire invariant for the surgery problem induced by $f$ is $\langle f^*k\cdot V^2_M,[M]\rangle$.
\end{theorem}

For any $4l$-dimensional manifold $\ZZ/2^r$ manifold $M$ and any map $f:M\rightarrow G/PL$, there is a surgery obstruction $\sigma(M,f)$ defined in $\ZZ/2^r$ for the surgery problem induced by $f$ (\cite[9.5]{Sullivan&Morgan}). Define $\sigma'(M,f)=\sigma(M,f)-\langle \beta(f^* k\cdot V_MSq^1V_M),[M]\rangle$ (\cite[p.~540]{Sullivan&Morgan}), where $\beta$ is the $\ZZ/2\rightarrow \ZZ/2^r$ Bockstein. The induced map $\Omega^{SO}_*(G/PL;\ZZ/2^{\infty})\rightarrow \ZZ/2^{\infty}$ satisfies the product formula (\cite[Theorem 8.6]{Sullivan&Morgan}).

\begin{theorem}[\cite{Sullivan&Morgan}*{Theorem 8.7}]
There is a graded class $l=l_4+l_8\cdots\in H^{4*}(G/PL;\ZZ_{(2)})$ such that for any $\ZZ/2^r$ manifold $M^{4m}$ and any map $f:M\rightarrow G/PL$, the surgery obstruction for the surgery problem induced by $f$ is $\langle f^*l\cdot L_M,[M]\rangle+\langle \beta(f^* k\cdot V_MSq^1V_M),[M]\rangle$.
\end{theorem}

To prove the splitting of $G/PL$ at prime $2$, one needs the following two lemmas.

\begin{lemma}[\cite{Cooke1968}*{p.~170 (3.7)}]
The $k$-invariant $X_{i-1}\rightarrow K(\pi_i X,i+1)$ of the Postnikov system of a simple space $X$ vanishes if and only if the Hurewicz map $h_i:\pi_i(X)\rightarrow H_i(X;\ZZ)$ is a splitting monomorphism.
\end{lemma}

\begin{lemma}[\cite{Cooke1968}*{p.~172 (3.8)}]
Let $X$ be a simple space with $\pi_i(X)\cong\ZZ$.
The order $d$ of the $k$-invariant $X_{i-1}\rightarrow K(\pi_i X,i+1)$ of the Postnikov system is the least positive integer $d$ such that there is a cohomology class in $H^i(X)$ with value $d$ on the generator of $\pi_i(X)$.
\end{lemma}

Hence, one can prove the following. For more details, we recommend \cite{Cooke1968}.

\begin{theorem}[\cite{Sullivan1996}*{Theorem 4}]
The classes $k$ and $l$ above induce a homotopy equivalence $(G/PL)_{(2)}\simeq K(\ZZ/2,2)\times_{\beta \cdot Sq^2} K(\ZZ_{(2)},4)\times \prod_{i\geq 2}(K(\ZZ/2,4i-2)\times K(\ZZ_{(2)},4i))$, where $\beta$ is the $\ZZ/2\rightarrow \ZZ_{(2)}$ Bockstein.
\end{theorem}

\begin{remark}
In fact, $(G/TOP)_{(2)}\simeq \prod_{i\geq 1}(K(\ZZ/2,4i-2)\times K(\ZZ_{(2)},4i))$ (\cite[p.~329]{Kirby&Siebenmann}). In this sense, the manifold theory for $TOP$ is simpler than $PL$.
\end{remark}

If we apply the odd-prime a priori invariant method to the space $G/PL$, then we can get the following. For more details, we recommend \cite[Chapter 4]{Madsen&Milgram}.

\begin{theorem}[\cite{Sullivan1996}*{Theorem 4}\cite{Madsen&Milgram}*{Corollary 4.31}]
There is a canonical $H$-space equivalence $(G/PL)_{(\odd)}\simeq BSO^{\otimes}_{(\odd)}$, where the $H$-space structure on $G/PL$  is induced by Whitney sums and the superscript $\otimes$ means that the $H$-space structure on $BSO$ is induced by tensor products of vector bundles.
\end{theorem}

\begin{remark}
$(G/TOP)_{(\odd)}\simeq BSO^{\otimes}_{(\odd)}$ since $G/TOP$ and $G/PL$ are only differed by a $\ZZ/2$ invariant (\cite[p.~246]{Kirby&Siebenmann}).
\end{remark}

Now we sketch the construction of Brumfiel-Morgan's characteristic classes for spherical fibrations.

Let $\nu^k$ be a spherical fibration $S(\nu)\rightarrow X$. Recall that the Thom space $Th(\nu)$ is the union of the mapping cylinder $D(\nu)$ of $\nu$ and a cone $C(\nu)$ along $S(\nu)$. Let $M^{m+k}$ be a manifold with a map $f:M\rightarrow Th(\nu)$. Call $f$ Poincar\'e transversal if $f$ is transversal to $S(\nu)$, $f^{-1}(S(\nu))\rightarrow f^{-1}(D(\nu))$ is a spherical fibration and $f$ induces a map of spherical fibrations. One can directly deduce that if $f$ is Poincar\'e transversal then $f^{-1}(D(\nu))$ is a Poincar\'e duality space of dimension $m$ with the fundamental class $[M]\cap f^*U_{\nu}$, where $U_{\nu}$ is the Thom class for $\nu$.

There is a sequence of obstructions $O_{k+n+1}\in H^{k+n+1}(M;P_n)$ for making $f$ Poincar\'e transversal. The construction is like the following. 

There is an open cover $\{U_i\}$ on $X$ such that $\nu$ over each $U_i$ has a $PL$ bundle structure. Then there is some triangulation on $M$ such that each simplex $\sigma^i$ is mapped into some $Th(\nu\vert_{U_i})$. Suppose for each simplex of dimension at most $k+n$, the restriction of $f$ is already Poincar\'e transversal.

Then for each $(k+n+1)$-dimensional simplex $\sigma^{k+n+1}$, we hope to get a value $O_{k+n+1}(\sigma^{k+n+1})\in P_n$. So the problem is reduced to the case of a map $f:D^{k+n+1}\rightarrow Th(\nu)$ such that $f\vert_{S^{k+n}}$ is Poincar\'e transversal and $\nu$ has some $PL$ bundle structure. Then we can slightly shift $f$ without changing the restriction on $f^{-1}(C(\nu))$ to some map $f'$ such that $f'$ is $PL$ transversal to the zero-section. Let $A$ be the preimage of the zero section under $f'$ and let $B=f^{-1}(D(\nu))$. Then $A$ is a $PL$ manifold with boundary and the inclusion $A\cap S^{k+n}\rightarrow B\cap S^{k+n}$ is a degree $1$ normal map. This is an element in $P_n$. We can slightly homotope $f'$ so that the union $C$ of a collar of $B\cap S^{k+n}$ and a tubular of $A$ becomes $(f')^{-1}(D(\nu))$.  If $n\geq 5$ and the surgery obstruction vanishes, then we can homotope $f'$ such that the inclusion $A\cap S^{k+n}\rightarrow B\cap S^{k+n}$ is a homotopy equivalence and $C$ becomes a Poincar\'e pair in $D^{n+k+1}$. So $f'$ on $D^{n+k+1}$ is Poincar\'e transversal. If $n\leq 4$, one can see \cite[p.~18]{Brumfiel&Morgan}.

Come back to the map $f:M^{m+k}\rightarrow Th(\nu^k)$. We can embed $M$ into some sphere $S^{N+k+m}$ for some $N>2(k+m)$ and consider the Pontryagin-Thom construction, namely, an induced map $F:S^{N+k+m}\rightarrow Th(\nu)\wedge Th(EPL(N))$, where $EPL(N)\rightarrow BPL(N)$ is the universal block $PL$ bundle of dimension $N$. The obstruction theory above gives us only one obstruction class $O\in H^{N+k+m}(S^{N+k+m};P_{m-1})\cong P_{m-1}$ for Poincar\'e transversality. The class $O$ vanishes if and only if we can cobord $(M,f)$ to some Poincar\'e transversal $(M',f')$. For more details see \cite[Section 3]{Brumfiel&Morgan}.

Then for a $\ZZ/2$ manifold $M^{4m+3+k}$ and a map $f:M\rightarrow Th(\nu^k)$, since $P_{4m+1}=0$, we can always assume $f$ is homotopy transversal on $\delta M$. Then the obstruction for cobording $(M,f)$ to some Poincar\'e transversal $(M',f')$ is an element in $P_{4m+2}=\ZZ/2$. This is a homomorphism $\Omega_{4m+3}(Th(\nu);\ZZ/2)\rightarrow \ZZ/2$. One can check that it satisfies the product formula. By the Thom isomorphism for the spherical fibration $\nu^k$ one gets the following.

\begin{theorem}[\cite{Brumfiel&Morgan}*{Theorem 5.4}]
There exists a graded characteristic class $k^G\in H^{4*+3}(X;\ZZ/2)$ for a spherical fibration $\nu^k$ on $X$ such that for any $\ZZ/2$ manifold $M^{4m+3+k}$ and any map $f:M\rightarrow Th(\nu)$ the obstruction for $f$ being Poincar\'e transversal by a bordism is $\langle f^*(k^G\cdot U_{\nu})\cdot V^2_M,[M]\rangle\in \ZZ/2$.
\end{theorem}

To define the $\ZZ/8$ characteristic class for $\nu^k$, consider a $\ZZ/8$ manifold $M^{4m+k}$ and a map $f:M\rightarrow Th(\nu)$. Since $P_{4m-1}=0$, we may assume $f$ is Poincar\'e transversal on $\delta M$.

Let us first consider the case that $f$ is Poincar\'e transversal. Define the valuation on $(M,f)$ by the signature of the Poincar\'e space $f^{-1}(D(\nu))$ in $\ZZ/8$. For the general case, \cite[p.~61]{Brumfiel&Morgan} proved that there exists a map $a:K^{k+4}\rightarrow MSG(k)$ for a $\ZZ/2$ manifold
\[
K^{k+4}=S^{k+3}\times I/ (x,0)\sim (-x,1)
\]
such that

(1) the Kervaire obstruction to the Poincar\'e transversality of $a\vert_{\delta K}$ is $1\in\ZZ/2$;

(2) $\langle a^*(V^2\cdot U),[K]\rangle=0\in\ZZ/2$. 

Then consider the bordism class $[M,f]-j_8[K,a]$, where $j_8:\ZZ/2\rightarrow \ZZ/8$, it is Poincar\'e transversal
by some bordism. So we can apply the previously defined valuation on $[M,f]-j_8[K,a]$. Modify this valuation by minus $j_8\langle f^*(k^G\cdot U)\cdot V_MSq^1V_M,[M]\rangle$. Chenck the product formulae for this valuation (\cite[Theorem 8.4]{Brumfiel&Morgan}). Hence,

\begin{theorem}[\cite{Brumfiel&Morgan}*{8.1}]
There exists a graded characteristic class $l^G\in H^{4*}(X;\ZZ/8)$ for a spherical fibration $\nu^k$ on $X$ satisfying the following properties:

(1) $\rho_2(l^G)=V^2$, where $\rho_2:\ZZ/8\rightarrow \ZZ/2$;

(2) $\beta l^G$ is the obstruction class for Poincar\'e transversalities, where $\beta$ is the $\ZZ/8\rightarrow \ZZ_{(2)}$, namely, for any $\ZZ/2^r$ manifold $M^{4m+1+k}$ and any map $f:M\rightarrow Th(\nu)$ the obstruction $f$ being Poincar\'e transversal by a bordism is $\langle f^*(\beta l^G\cdot U_{\nu})\cdot L_M,[M]\rangle+j_r\langle f^*(k^G\cdot U_\nu)\cdot V_{\delta M}Sq^1V_{\delta M},[\delta M]\rangle\in \ZZ/2^r$, where $j_r:\ZZ/2\rightarrow \ZZ/2^r$.
\end{theorem}

We have to mention that the following theorem is essentially due to Morgan-Sullivan, though their result only states for $PL$ bundles. However, if one knows some transversality theorem for $TOP$ manifolds and $TOP$ bundles, then their result generalizes to $TOP$ bundles.

\begin{theorem}[\cite{Sullivan&Morgan}*{p.~530}]
The $\ZZ_{(2)}$-class $L$ for vector bundles defined above, whose rationalization is the inverse Hirzebruch $L$-genus, can be lifted a $\ZZ_{(2)}$-class $l^{TOP}$ for $TOP$ bundles.
\end{theorem}

The Winkelnkemper's axiom I states that transversality unlocks the secret of manifolds. One can define a $TOP$ transversality theory on a spherical fibration $\nu$ by an assignment to each singular simplex $f:\sigma^i\rightarrow Th(\nu)$ a deformation of $f$ till $f$ is Poincar\'e tranversal in the interior and in each face. Then one can define a concordance equivalence relation between $TOP$ transversality theories. Indeed, the equivalence classes of all $TOP$ bundle structures on $\nu$ is in one-to-one correspondence to the concordance classes of transversality theories on $\nu$ (\cite{Levitt&Morgan}, \cite[Theorem E]{Brumfiel&Morgan}, \cite[Theorem 1.11]{Levitt&Ranicki}). Since $k^G$ and $\beta l^G$ are the obstructions for existence of $TOP$ transversality theory on $\nu$ in the $2$-local sense, so we have that

\begin{theorem}
$k^G$ and $\beta l^G$ are the obstructions for a spherical fibration $\nu$ to have a $TOP$ bundle structure in the $2$-local sense.
\end{theorem}

\begin{remark}
If $\nu$ has a $TOP$ bundle structure, the $\ZZ/8$ reduction of $l^{TOP}(\nu)$ is exactly $l^G(\nu)$.
\end{remark}

By applying the odd-prime period method and the correspondence between transversalities and bundle structures, one can also get that

\begin{theorem}[\cite{Sullivan2009}*{Theorem 6.5}]
The existence of a $KO_{(\odd)}$-orientation is the obstruction for a spherical fibration $\nu$ to have a $TOP$ bundle structure in the odd-prime local sense.
\end{theorem}

\section{\texorpdfstring{$L$}{Lg}-theory with \texorpdfstring{$\ZZ/n$}{Lg} coefficient}\label{Znchain}

We generalize the idea of $\ZZ/n$ manifolds to the chain-level. We put much effort in constructing the bordism invariants for $\ZZ/n$ manifolds and proving the product formulae for these invariants.

\subsection{\texorpdfstring{$\ZZ/n$}{lg} Chains and their bordisms}

Like $\ZZ/n$ manifolds, we can define $\ZZ/n$ symmetric, quadratic or normal chains. The definitions for three cases are similar so we only discuss the symmetric case here. We use the notation $n C$ for the direct sum of $n$ (labelled) copies of a chain complex $C$.

\begin{definition}
An $m$-dimensional $\ZZ/n$ symmetric chain complex $C$ is an $m$-dimensional symmetric pair $n\delta C \rightarrow C$, $n\delta C$ is labelled by $1,\dots,n$. 

$\delta C$ is also called the Bockstein of $C$.
\end{definition}

\begin{definition}
A $\ZZ/n$ symmetric complex $C$ is Poincar\'e if both the chain $\delta C$ and the pair $n\delta C \rightarrow C$ are Poincar\'e.    
\end{definition}

\begin{definition}
A $\ZZ/n$ symmetric chain pair $D\rightarrow C$ consists of symmetric pairs $(-\delta D)\rightarrow \delta C$, $n\delta D\rightarrow D$ and $D\bigcup_{n\delta D}n\delta C\rightarrow C$. It is a $\ZZ/n$ Poincar\'e pair if $n\delta D\rightarrow D$ is a $\ZZ/n$ Poincar\'e symmetric chain and $(-\delta D)\rightarrow \delta C$, $D\bigcup_{n\delta D}n\delta C\rightarrow C$ are both Poincar\'e pairs.
\end{definition}

\begin{remark}
We only use the symmetric-quadratic pair description for $\ZZ/n$ normal complexes. A $\ZZ/n$ Poincar\'e symmetric-quadratic chain pair of dimension $m$ consists of a $\ZZ/n$ Poincar\'e quadratic chain complex $n\delta E\rightarrow E$ of dimension $m-1$, a Poincar\'e symmetric chain pair $(-\delta E)\rightarrow \delta D$ of dimension $m-1$ and a Poincar\'e symmetric chain pair $E\bigcup_{n\delta E}n\delta D\rightarrow D$ of dimension $m$. In abbreviation, we use the symbol $(D,E)$ instead of writing all the maps like above.  
\end{remark}

Then one can define a bordism between two $\ZZ/n$ (Poincar\'e) symmetric chains. One can also define a $k$-ad of $\ZZ/n$ (Poincar\'e) symmetric chains.

\begin{definition}
Let $L^s_m(\ZZ,\ZZ/n)$ be the set of $m$-dimensional bordism classes of $\ZZ/n$ Poincar\'e symmetric chain complexes. 
\end{definition}

Like $L^s_m$, the $\ZZ/n$ $L$-group $L^s_m(\ZZ,\ZZ/n)$ is also an abelian group with the additive structure by the direct sum of chains. Analogously, one can also define the abelian groups $L^q_m(\ZZ,\ZZ/n)$ and $L^n_m(\ZZ,\ZZ/n)$.

Like the long exact sequence of $L$-groups in the last section, we also have

\begin{proposition}
There is a long exact sequence
\[
\cdots\rightarrow L^q_m(\ZZ,\ZZ/n)\rightarrow L^s_m(\ZZ,\ZZ/n)\rightarrow L^n_m(\ZZ,\ZZ/n)\rightarrow L^q_{m-1}(\ZZ,\ZZ/n)\rightarrow \cdots
\]    
\end{proposition}

Like the $\ZZ/n$ manifold case, there are also natural maps $L^s(\ZZ,\ZZ/n)\rightarrow L^s(\ZZ,\ZZ/nm)$ and $L^s(\ZZ,\ZZ/nm)\rightarrow L^s(\ZZ,\ZZ/n)$.

The Bockstein of $\ZZ/n$ chains induces a natural map $L^s_m(\ZZ,\ZZ/n)\xrightarrow{\delta} L^s_{m-1}$.

\begin{proposition}\label{BocksteinLES}
There is a long exact sequence
\[
\cdots\rightarrow L^s_m\xrightarrow{\times n} L^s_m\xrightarrow{\iota} L^s_m(\ZZ,\ZZ/n)\xrightarrow{\delta} L^s_{m-1}\rightarrow \cdots
\]
\end{proposition}

\begin{proof}
$\Im(\delta)=\Ker(\times n)$, $\Ker(\iota)\subset \Im(\times n)$ and $\Im(\iota)\subset \Ker(\delta)$ are obvious. Let us check the rest.

$\Ker(\delta)\subset \Im(\iota)$: Take $C\in \Ker(\delta)$. Then there is a Poincar\'e symmetric chain pair $-\delta C\rightarrow D$. Then $\iota(C\bigcup_{n\delta_C}nD)=C$ since the  $C\bigcup_{n\delta_C}nD\times I$ gives the $\ZZ/n$ bordism between $C$ and $C\bigcup_{n\delta_C}nD$, where $I$ is the cellular chain of a unit interval.

$\Im(\times n)\subset \Ker(\iota)$: Take a $C\in L^s_m$. $nC\times I$ is a $\ZZ/n$ chain with boundary $nC$. So $\iota(nC)=0$.
\end{proof}

By a direct calculation, the followings are immediate.

\begin{proposition} 
(1) $L^s_m(\ZZ,\ZZ/2^k) \cong \ZZ/2^k,\ZZ/2,\ZZ/2,0$ when $m$ is congruent to $0,1,2,3$ modulo $4$.

(2) Let $p$ be an odd prime. $L^s_m(\ZZ,\ZZ/p^k) \cong \ZZ/2^k,0$ when $m$ is congruent to $0$ or other numbers modulo $4$.

(3) The isomorphisms are given by the bordism invariants defined below.
\end{proposition}

To define the bordism invariants, let us only focus on the case when $n=2^k$, since the odd $n$ case is almost the same and much easier.

The signature of a $\ZZ/n$ manifold is defined to be the signature modulo $n$. It is well defined because of Novikov's additivity theorem. It is also a bordism invariant for the same reason. Likewise, define the signature invariant $\sigma^s_0=\Sign \in\ZZ/n$ for $\ZZ/n$ Poincar\'e symmetric chains. 

For a $4l+2$ dimensional $\ZZ/2^k$ Poincar\'e symmetric chain complex $C$, define the bordism invariant $\sigma^s_2(C)=\dR(\delta C)\in\ZZ/2$.

The dimension $4l+1$ case is a bit more complicated. \cite{Sullivan&Morgan} defined the de Rham invariant for $4l+1$ dimensional $\ZZ/n$ oriented manifolds. Here we use the same idea .

Let $C$ be a $4l+1$ dimensional $\ZZ/2^k$ Poincar\'e symmetric chain complex. Let $A$ be the self-annihilating subspace of $H_{2l}(\delta C)$, which has half of the total rank. Let $T$ be the torsion of $H_{2l}(\delta C)$. Let $T_{C,A}$ be the torsion of $H_{2l}(C)/f(2^k\cdot(A+T))$. Define the de Rham invariant by $\dR(C)=\rank_{\ZZ/2}(T_{C,A}\otimes \ZZ/2)\in \ZZ/2$.

There is an alternative way to define de Rham invariant. Since $2^k$ copies of $\delta C$ is a boundary, the signature of $\delta C$ vanishes. So it is the boundary of some Poincar\'e symmetric chain pair, namely, $(-\delta C)\rightarrow C'$. Then define $\sigma^s_1(C)=\dR(C\bigcup_{2^k \delta C}2^k C')\in\ZZ/2$.

The second definition is independent of the choice of $C'$. Indeed, let $C''$ be another choice and then $2^k((-C')\bigcup_{\delta C\otimes 0} \delta C\otimes I\bigcup_{\delta C\otimes 1} C'')$ must be the boundary of some Poincar\'e symmetric pair $W$ since its de Rham invariant must vanish. Then $C\otimes I\bigcup_{2^k \delta C\otimes I}(-W)$ is a bordism between $C\bigcup_{2^k\delta C}2^k C'$ and $C\bigcup_{2^k\delta C}2^k C''$. So they have the same de Rham invariant.

$\sigma^s_1$ is a bordism invariant by a similar argument.

\begin{warning}
Extend $\sigma^s_m$ by $0$ when the chain complex does not have the corresponding dimension. This rule applies to other bordism invariants.
\end{warning}

Analogously, we have the followings.

\begin{proposition}
There is a long exact sequence
\[
\cdots\rightarrow L^q_m\xrightarrow{\times n} L^q_m\xrightarrow{\iota} L^q_m(\ZZ,\ZZ/n)\xrightarrow{\delta} L^q_{m-1}\rightarrow \cdots
\]
\end{proposition}

\begin{proposition}
There is a long exact sequence
\[
\cdots\rightarrow L^n_m\xrightarrow{\times n} L^n_m\xrightarrow{\iota} L^n_m(\ZZ,\ZZ/n)\xrightarrow{\delta} L^n_{m-1}\rightarrow \cdots
\]
\end{proposition}

Again by direct computations, we have

\begin{proposition}
(1) $L^q_m(\ZZ,\ZZ/2^k)\cong  \ZZ/2^k,0,\ZZ/2,\ZZ/2$ when $m$ is congruent to $0,1,2,3$ modulo $4$.

(2) Let $p$ be an odd prime. $L^q_m(\ZZ,\ZZ/p^k) \cong \ZZ/p^k,0$ when $m$ is congruent to $0$ or other numbers modulo $4$.

(3) The isomorphisms are given by the bordism invariants defined below.
\end{proposition}

The bordism invariants for the quadratic case is like the symmetric case. Let $C$ be a $\ZZ/2^k$ Poincar\'e quadratic chain complex. 

If $C$ has dimension $4l$, then $\delta C$ has dimension $4l-1$ and must be the boundary of some Poincar\'e quadratic chain pair, namely, $(-\delta C)\rightarrow C'$. Then $C\bigcup_{2^k \delta C}2^k C'$ is a Poincar\'e quadratic chain and hence its signature is divisible by $8$. Define $\sigma^q_0(C)=\frac{1}{8}\cdot \Sign(C\bigcup_{2^k \delta C}2^k C')\in\ZZ/2^k$. 

If $C$ has dimension $4l+2$, then the dimension argument also shows that $\delta C$ must be the boundary of some quadratic Poincar\'e pair, namely, $(-\delta C)\rightarrow C'$. Define $\sigma^q_2(C)=K(C\bigcup_{2^k\delta C}2^k C')\in\ZZ/2$.

If $C$ has dimension $4l+3$, define $\sigma^q_3(C)=K(\delta C)\in\ZZ/2$.

\begin{proposition}
(1) If $k=1,2$, then $L^n_m(\ZZ,\ZZ/2^k) \cong \ZZ/2^k\oplus \ZZ/2, \ZZ/2\oplus \ZZ/2^k, \ZZ/2, \ZZ/2$ when $m$ is congruent to $0,1,2,3$ modulo $4$.

If $k\geq 3$, then $L^n_m(\ZZ,\ZZ/2^k) \cong \ZZ/8\oplus \ZZ/2, \ZZ/2\oplus \ZZ/2^k, \ZZ/2, \ZZ/2$ when $m$ is congruent to $0,1,2,3$ modulo $4$.

(2) Let $p$ be an odd prime. Then $L^n_m(\ZZ,\ZZ/p^k)=0$.

(3) The isomorphisms are induced by the bordism invariants defined in the next section.
\end{proposition}

\begin{proof}
The result follows from the diagram, where each horizontal and vertical line is an exact sequence.
\[
\begin{tikzcd}
 & \cdots \arrow[d]  & \cdots \arrow[d] & \cdots \arrow[d] & \\
\cdots \arrow[r] & L^s_m \arrow[d] \arrow[r]  & L^s_m \arrow[r]  \arrow[d] & L^s_m(\ZZ,\ZZ/n) \arrow[d] \arrow[r] & \cdots  \\
\cdots \arrow[r] & L^q_m \arrow[d] \arrow[r]  & L^q_m \arrow[r]  \arrow[d] & L^q_m(\ZZ,\ZZ/n) \arrow[d] \arrow[r] & \cdots \\
\cdots \arrow[r] & L^n_m \arrow[d] \arrow[r]  & L^n_m \arrow[r]  \arrow[d] & L^n_m(\ZZ,\ZZ/n) \arrow[d] \arrow[r] & \cdots \\
& \cdots & \cdots & \cdots & 
\end{tikzcd}
\]
\end{proof}

\begin{remark}
All the direct sum decompositions above are unnatural. However, we will find a way for an explicit decomposition in the last part of this section.
\end{remark}

A $\ZZ/n$ (Poincar\'e) symmetric presheaf on a finite $\Delta$-set is a  (Poincar\'e) presheaf of ads of  $\ZZ/n$ symmetric chain complexes, like presheaf of ads of symmetric chain complexes. Later on, we might discard the word `ads' for simplicity and only use the phrase `a presheaf of $\ZZ/n$ (Poincar\'e)  symmetric chain complexes'.

Like the lemma \ref{AssemblyLemma}, the following two lemmas are immediate.

\begin{lemma}
Let $X$ be a closed $n$-dimensional $P$L manifold with a $PL$ triangulation. Let $\mathcal{C}$ be an $m$-dimensional (Poincar\'e) presheaf of $\ZZ/k$ symmetric/quadratic/normal chain complexes over $X$. Then the assembly $\mathcal{C}(X)$ is also a $\ZZ/k$ (Poincar\'e) $(n+m)$-dimensional symmetric/quadratic/normal chain complex.
\end{lemma}

\begin{lemma}
Let $X$ be an oriented $n$-dimensional $\ZZ/k$ $PL$ manifold with a $PL$ triangulation. Let $\mathcal{C}$ be an $m$-dimensional (Poincar\'e) presheaf of symmetric/quadratic/normal chain complexes over $X$. Then the assembly $\mathcal{C}(X)$ is also a $\ZZ/k$ (Poincar\'e) $(n+m)$-dimensional symmetric/quadratic/normal chain complex.
\end{lemma}

\subsection{Product of \texorpdfstring{$\ZZ/n$}{lg} Chains}

We can define the product structure for $\ZZ/n$ Poincar\'e symmetric/quadratic/normal chains with the same idea for defining products of $\ZZ/n$ manifolds. Like the previous section, we only foucs on the symmetric case. The quadratic and normal cases are analogous.

Let $C$ and $D$ be two $\ZZ/n$ Poincar\'e symmetric complexes.

Let $I$ be the chain complex of the unit interval. First, take $(C\otimes n\delta D\otimes I) \bigcup (C\otimes D)\bigcup (n\delta C\otimes D\otimes I)$, where the union is via an identification of $C\otimes n\delta D\otimes {0}$ in $C\otimes n\delta D\otimes I$ with $C\otimes n\delta D$ in $C\otimes D$ and by a similar identification for another part. Then the `singularity' part in the boundary is $n\delta C\otimes n\delta D\otimes I\bigcup n\delta C\otimes n\delta D\otimes I\cong \delta C\otimes \delta D\otimes (n^2I\bigcup n^2I)$, where $n^2I\bigcup n^2I$ is the chain complex of the $\ZZ/n$ manifold $n*n$. Since $n*n$ bounds a $\ZZ/n$ 2-dimensional manifold. Let $W$ be the corresponding cellular chain complex. Define the product of $C$ and $D$ by 
\begin{equation}\label{modifiedproduct}
C\otimes n\delta D\otimes I \bigcup C\otimes D\bigcup n\delta C\otimes D\otimes I\bigcup \delta C\otimes \delta D \otimes W  
\end{equation}

By abuse of notations, we still use $C\otimes D$ to represent their modified product. The existence of such a modification is due to the fact $\Omega_1^{SO}(\pt;\ZZ/n)=0$. The $\ZZ/n$ bordism class of $C\otimes D$ does not depend on the choice of $W$, since $\Omega_2^{SO}(\pt;\ZZ/n)=0$. The modified product is associative up to bordism since $\Omega_3^{SO}(\pt;\ZZ/n)=0$.

\begin{remark}
In fact, the modified product of two $\ZZ/n$ manifold is like our modified chain-level products. That is, the modified product $M\otimes N$ is diffeomorphic to the union
\[
(M\otimes \bigsqcup_n\delta N\times I )\bigcup (M\times N)\bigcup (\bigsqcup_n \delta M\otimes N\times I)\bigcup (\delta M\times \delta N \times W) 
\]
\end{remark}

Consider the graph $n*n$, namely, there are two sets of $n$ points $\{i_1,\cdots, i_n\}$ and $\{j_1,\cdots, j_n\}$ and there is a path $(s,t)$ connecting each pair of vertices $i_s,j_t$. A choice of $W$ gives a permutation $\gamma$ of $\{1,\dots,n\}$. It adds for each $s$ a path $p_s$ into $n*n$ connecting the vertices $i_s,j_{\gamma(s)}$. Call the new graph $\gamma'$. $W$ also regroup the original paths $(s,t)$ of $\gamma$ into $n$ classes $\{(i_{s_k},j_{t_k})\}_{1\leq k\leq n}$, where each vertex $i_t$ and $j_s$ appear only once in the set of vertices of each group of paths, so that each group of paths together with $\{p_s\}_{1\leq s\leq n}$ gives a loop in the new graph. In this way, we csee what $W$ does for  $\delta (C\otimes D)$. $W$ connects the $t$-th copy $\delta C\otimes D$ with $\gamma(t)$-th copy $C\otimes \delta D$ for $1\leq t\leq n$. Considering $\delta(C\otimes D)\otimes I$ we get that

\begin{lemma}\label{boundary}
$\delta (C\otimes D)$ is bordant to $(\delta C\otimes D)\oplus (C\otimes \delta D)$, as $\ZZ/n$ Poincar\'e symmetric chains.
\end{lemma}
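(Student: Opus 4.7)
The hint ``Consider $\delta(C\otimes D)\otimes I$'' in the sentence preceding the lemma dictates the shape of the cobordism. My plan is as follows.

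\emph{Step 1 (Unpacking the Bockstein).} First I would read off the explicit structure of $\delta(C\otimes D)$ from the modified product formula (\ref{modifiedproduct}) and the ``loop'' property of the graph $\gamma$ from the preceding proposition. The Bockstein decomposes as
\[
\delta(C \otimes D) \;=\; (\delta C \otimes D) \,\cup_{n(\delta C \otimes \delta D)}\, (\delta C \otimes \delta D \otimes nI) \,\cup_{n(\delta C \otimes \delta D)}\, (C \otimes \delta D),
\]
where the $k$-th interval in $nI$ connects the $k$-th Bockstein copy of $\delta C \otimes D$ to the $k$-th Bockstein copy of $C \otimes \delta D$. By contrast, the target $\delta C \otimes D \oplus C \otimes \delta D$ is the disjoint $\ZZ/n$ chain with no connecting strips.

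\emph{Step 2 (Building the cobordism).} Next I would form $V := \delta(C\otimes D) \otimes I$ and decompose it as
\[
V \;=\; \bigl((\delta C \otimes D) \otimes I\bigr) \;\cup\; \bigl(\delta C \otimes \delta D \otimes n(I \otimes I)\bigr) \;\cup\; \bigl((C \otimes \delta D) \otimes I\bigr).
\]
The crucial piece is the $n$ squares $\delta C \otimes \delta D \otimes (I \otimes I)$. Each square has boundary split into two horizontal edges ($I \otimes \partial I$) and two vertical edges ($\partial I \otimes I$). Naively, both horizontal edges at $t = 0$ and $t = 1$ reproduce the connecting intervals of $\delta(C \otimes D)$, yielding a trivial cobordism. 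I instead reassign roles at the $t = 1$ end: the $t = 1$ horizontal edge is glued together with the $t$-direction vertical edges so as to realize $\ZZ/n$ Bockstein cylinders attached separately to $\delta C \otimes D$ and $C \otimes \delta D$. With this reinterpretation, the $t = 1$ boundary becomes the disjoint $\ZZ/n$ chain $\delta C \otimes D \oplus C \otimes \delta D$.

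\emph{Step 3 (Poincaré and $\ZZ/n$ structure).} The Poincaré symmetric structure on $V$ is inherited from the tensor product and from the standard gluing of Poincaré pairs, since $\delta C$, $\delta D$, and $I$ are each Poincaré in their respective senses. The $\ZZ/n$ structure on $V$ is inherited from the $\ZZ/n$ structures on $C$ and $D$, now distributed differently at the two ends.

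\emph{Main obstacle.} The delicate point is justifying Step 2: one must verify that the corner piece $nI \otimes I$ can be re-foliated so that a horizontal edge of each square switches from the role of ``connecting interval'' (inside $\delta(C \otimes D)$) to that of ``$\ZZ/n$-Bockstein cylinder'' (attached to one of $\delta C \otimes D$ or $C \otimes \delta D$). This swap must be orientation-compatible and must match consistently at all $n$ corners; the loop property of $\gamma$ is precisely what ensures the $n$ squares close up into a single coherent cobordism with no stray boundary.
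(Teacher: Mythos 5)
Your core idea is right and follows the paper's one-line hint: take $V=\delta(C\otimes D)\otimes I$, and your Step 1 unpacking of $\delta(C\otimes D)$ as $(\delta C\otimes D)\cup(\delta C\otimes\delta D\otimes nI)\cup(C\otimes\delta D)$ is correct. However, the Step 2 ``re-foliation'' you describe is both unnecessary and, as literally stated, wrong. The boundary of $V$ is just two copies of $\delta(C\otimes D)$ at $t=0,1$. To make $V$ a $\ZZ/n$ cobordism, one simply \emph{declares} the middle piece $\delta C\otimes\delta D\otimes nI$ of the $t=1$ copy to be the $n$ embedded copies of the cobordism Bockstein $\delta V=\delta C\otimes\delta D\otimes I$; removing its interior leaves $\delta(C\otimes D)$ at $t=0$ and $(\delta C\otimes D)\sqcup(C\otimes\delta D)$ at $t=1$, which is exactly the $\ZZ/n$ boundary. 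No re-foliation of the corner squares is required, and ``gluing the $t=1$ horizontal edge together with the vertical edges'' does not make sense: the vertical edges $\delta C\otimes\delta D\otimes\partial I\times I$ lie in the \emph{interior} of $V$ (they are the interfaces with $(\delta C\otimes D)\otimes I$ and $(C\otimes\delta D)\otimes I$), so they cannot be part of $\delta V\subset\partial V$. Also, each of the $n$ Bockstein intervals is a \emph{single} cylinder whose two endpoints attach respectively to the Bockstein of $\delta C\otimes D$ and the Bockstein of $C\otimes\delta D$ — not ``separate cylinders attached to each.''

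Your ``main obstacle'' is therefore misplaced. There is no orientation-matching or closing-up issue in Step 2; once $\delta(C\otimes D)$ has the form established in Step 1, the cylinder construction works automatically. The loop property of $\gamma$ does its work entirely in Step 1, forcing the $nI$ intervals in $\delta(C\otimes D)$ to pair the $k$-th copy of $\partial(\delta C\otimes D)$ with the $k$-th copy of $\partial(C\otimes\delta D)$; it plays no further role in building the cobordism.
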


For later uses, let us further consider a product of presheaves over two $\ZZ/n$ manifolds. Let $\mathcal{C}$ and $\mathcal{D}$ be presheaves of symmetric chain complexes over two oriented $\ZZ/n$ PL manifolds $M$ and $N$ respectively. 

By adding a collar neighborhood of Bockstein $\delta M$ to $M$, we may assume the presheaf $\mathcal{C}$ on a collar neighborhood of a cell $\delta\sigma$ of $\delta M$ is $\mathcal{C}(\delta\sigma)\otimes C_*(c(n))$, where $c(n)$ is the cone over $n$ points. It is similar for $\mathcal{D}$ over $N$.

$M\times N$ has a natural regular cell decomposition inherited from the cell decompositions of $M$ and $N$, namely, each cell in $M\times N$ has the form $\sigma\times\tau$ for some cell $\sigma$ of $M$ and some cell $\tau$ of $N$. Then it is natural to define the presheaf $\mathcal{C}\times \mathcal{D}$ over $M\times N$ by $(\mathcal{C}\times\mathcal{D})(\sigma\times\tau)=\mathcal{C}(\sigma)\otimes\mathcal{D}(\tau)$.

Consider the modified product $M\otimes N$. For any cell $\delta\sigma\times\delta\tau$ of $\delta M\times \delta N$, we just replace its tubular neighborhood $\delta\sigma\times\delta\tau$ in $M\times N$ by $\delta\sigma\times\delta\tau\times c(n*n)$ by $\delta\sigma\times\delta\tau\times W$, where $c(n*n)$ is the cone over $n*n$. On the chain level, we define $\mathcal{C}\otimes \mathcal{D}$ by the same value as $\mathcal{C}\times\mathcal{D}$ except $(\mathcal{C}\otimes \mathcal{D})(\delta\sigma\times\delta\tau\times W)=\mathcal{C}(\delta\sigma)\otimes \mathcal{D}(\delta\tau)\otimes C_*(W)$.

Comparing products of presheaves over modified products of manifolds and modified products of two symmetric chains, it is obvious that

\begin{proposition}\label{ProducAssembly}
The chain-level modified product of the assemblies $\mathcal{C}(M)\otimes \mathcal{D}(N)$ is isomorphic to the assembly of the presheaf over the modified product of manifolds $(\mathcal{C}\otimes \mathcal{D})(M\otimes N)$.
\end{proposition}

\begin{proposition}
The Bockstein of the assembly of the modified product presheaf $\delta(\mathcal{C}\otimes \mathcal{D})(M\otimes N)$ is bordant to $(\delta(\mathcal{C}(M))\otimes \mathcal{D}(N))\oplus(\mathcal{C}(M)\otimes \delta(\mathcal{D}(N)))$, as $\ZZ/n$ Poincar\'e symmetric chains.
\end{proposition}

\subsection{Product Structure of \texorpdfstring{$L$}{Lg}-Theory with \texorpdfstring{$\ZZ/n$}{Lg} coefficient}

We only consider the case $n=2^k$ in this section, since the odd $n$ case is much easier.

Recall that tensor products of chains induce the ring structures on $L^s_*,L^n_*$ and the $L^s_*$-module structure on $L^q_*$. The map $L^s_*\rightarrow L^n_*$ also induces an $L^s_*$-algebra structure on $L^n_*$.

The product structures on the $L$-groups can be lifted to the spectra level. We recall the construction in \cite[Appendix B]{Ranicki1992} here. 

The simplicial approximation of the diagonal map of $\Delta^n$ gives a $\Delta$-set structure on $\Delta^n$, that is, a $(k+l)$-cell $\sigma_{t_0,t_1,\cdots,t_k,s_0,s_1,\cdots,s_l}$ is indexed by each sequence $0\leq t_0< t_1<\cdots t_k\leq s_0<s_1<\cdots s_l\leq n$.

Let $\mathcal{C}$ and $\mathcal{D}$ be two $n$-ads of chains. Then define a presheaf $\mathcal{C}\otimes\mathcal{D}$ of chains over $\Delta^n$ by $(\mathcal{C}\otimes\mathcal{D})(\sigma_{t_0,\cdots,t_k,s_0,\cdots,s_l})=\mathcal{C}(\Delta^{t_0,\cdots,t_k})\otimes \mathcal{D}(\Delta^{s_0,\cdots,s_l})$.
After taking a union, $\mathcal{C}\otimes\mathcal{D}$ is indeed an $n$-ad of chains.

All these algebraic structures can be defined for the $L$-groups with $\ZZ/n$-coefficient, by the modified products constructed in the previous section. Moreover, the natural map $L^a_*\rightarrow L^a_*(\ZZ,\ZZ/n)$ is a ring map when $a=s,n$ and an $L^s$-module map when $a=q$.

We will calculate the explicit product structures for these $L$-groups with $\ZZ/n$-coefficient. 

\subsubsection{Symmetric and Quadratic Case}

Firstly, like manifolds, the signature of chains is multiplicative.

\begin{lemma}
Let $C$ and $D$ be two Poincar\'e symmetric pairs, then 
\[
\Sign(C\otimes D)=\Sign(C)\cdot \Sign(D)
\]
\end{lemma}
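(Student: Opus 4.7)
My plan is to prove the multiplicativity by a direct Künneth-style calculation, handling the closed case first and then extending to pairs via relative Künneth.

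In the closed case (both $C$ and $D$ have trivial boundary), tensor with $\mathbb{R}$ and apply the Künneth formula
\[
H_{2(k+l)}(C \otimes D; \mathbb{R}) = \bigoplus_{i+j = 2(k+l)} H_i(C; \mathbb{R}) \otimes H_j(D; \mathbb{R}).
\]
The symmetric form on the tensor product is $\phi_C \otimes \phi_D$, which pairs the $(i,j)$-summand with the $(4k-i, 4l-j)$-summand. For $(i,j) \neq (2k, 2l)$ these two summands are distinct with zero self-pairing, so together they form a hyperbolic block of signature zero. The only self-paired contribution comes from the middle summand $H_{2k}(C) \otimes H_{2l}(D)$, equipped with the tensor $\phi_C|_{H_{2k}} \otimes \phi_D|_{H_{2l}}$ of the two middle-dimensional forms, whose signature factorizes as $\Sign(C) \cdot \Sign(D)$ by the standard multiplicativity of signatures for tensor products of nondegenerate real symmetric forms.

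For the general pair case, where $\delta C \to C$ and $\delta D \to D$ are Poincaré symmetric pairs, I would run the analogous computation using the relative Künneth isomorphism
\[
H_*(C \otimes D, \delta(C \otimes D); \mathbb{R}) \cong H_*(C, \delta C; \mathbb{R}) \otimes H_*(D, \delta D; \mathbb{R}),
\]
together with naturality of the long exact sequence of the pair under tensor product. Writing $V_i = \Im(H_i(C) \to H_i(C, \delta C))$ and $W_j = \Im(H_j(D) \to H_j(D, \delta D))$, the image on which the signature of the product pair is computed decomposes as $\bigoplus_{i+j = 2(k+l)} V_i \otimes W_j$. Poincaré-Lefschetz duality for each individual pair gives $\dim V_i = \dim V_{4k-i}$ and $\dim W_j = \dim W_{4l-j}$ over $\mathbb{R}$, so the off-diagonal blocks pair hyperbolically as in the closed case, and the only surviving contribution is $V_{2k} \otimes W_{2l}$, yielding $\Sign(C) \cdot \Sign(D)$.

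The main technical obstacle is verifying, at the chain level in Ranicki's setup, that the boundary of the ad-tensor-product pair is precisely what relative Künneth requires, and that the natural map from absolute to relative homology does split as a graded tensor of such maps. Once this bookkeeping is in place, the signature calculation reduces to the Künneth decomposition plus the elementary fact that hyperbolic blocks contribute zero and tensor forms multiply signatures.
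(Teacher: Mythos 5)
Your argument is the standard Chern--Hirzebruch--Serre/Novikov Künneth computation, which is the proof the paper has in mind; the paper states the lemma without proof, prefacing it only with ``like manifolds, the signature of chains is multiplicative.'' The closed case is correct as written: the real Künneth decomposition of $H_{2(k+l)}(C\otimes D;\RR)$ puts the intersection form in block form, the middle block $H_{2k}(C;\RR)\otimes H_{2l}(D;\RR)$ carries the tensor product of the two nondegenerate middle-dimensional forms (no Koszul signs since both degrees are even), and the remaining summands pair off in self-annihilating dual pairs and contribute zero.

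In the pair case there is a small but real gap. You conclude that $V_i\otimes W_j$ and $V_{4k-i}\otimes W_{4l-j}$ form a hyperbolic block from the equality of dimensions $\dim V_i=\dim V_{4k-i}$, but equal dimensions alone do not give a nondegenerate pairing between them; you need the stronger statement that the Lefschetz-duality pairing $H_i(C;\RR)\times H_{4k-i}(C,\delta C;\RR)\to\RR$ descends to a \emph{perfect} pairing $V_i\times V_{4k-i}\to\RR$. This is exactly the same nondegeneracy phenomenon that makes the form on $V_{2k}=\Im(H_{2k}(C)\to H_{2k}(C,\delta C))$ nondegenerate in the paper's definition of $\Sign$ for a Poincar\'e pair, and it follows from the long exact sequence of the pair combined with the duality isomorphisms $H_i(C)\simeq H^{4k-i}(C,\delta C)$ and $H_i(C,\delta C)\simeq H^{4k-i}(C)$: one checks that $\ker(H_i(C)\to H_i(C,\delta C))$ is precisely the annihilator of $V_{4k-i}$ under the Lefschetz pairing. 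With that in hand, the off-diagonal blocks are genuinely hyperbolic and the middle block is $V_{2k}\otimes W_{2l}$ with the tensor form, whose signature is $\Sign(C)\cdot\Sign(D)$, so the calculation closes up. The other bookkeeping point you raise --- that the boundary of the product pair is what relative Künneth needs --- is addressed in the paper by Lemma~\ref{boundary}, which identifies $\delta(C\otimes D)$ with $\delta C\otimes D\oplus C\otimes\delta D$ up to cobordism, and by the fact that $\Sign$ is a cobordism invariant.
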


Hence, the multiplicative structure on $L^s_*$ induces the following isomorphisms.

\begin{proposition}
\[
L^s_{4s}\otimes L^s_{4t}\xrightarrow{\cong} L^s_{4(s+t)}
\]
\[
L^s_{4s}\otimes L^q_{4t}\xrightarrow{\cong} L^q_{4(s+t)}
\]
\[
L^n_{4s}\otimes L^n_{4t}\xrightarrow{\cong} L^n_{4(s+t)}
\]
\end{proposition}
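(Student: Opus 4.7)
The plan is to reduce each of the three claimed isomorphisms to the multiplicativity of a signature-type invariant, then exploit the fact that (by the preceding proposition) all six groups involved are cyclic and detected by such an invariant.

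For the first map $L^s_{4s}\otimes L^s_{4t}\to L^s_{4(s+t)}$, both source and target are $\ZZ$, and the map $L^s_{4k}\xrightarrow{\Sign}\ZZ$ is an isomorphism; in particular there exists a generator $x_k\in L^s_{4k}$ with $\Sign(x_k)=1$ for every $k\ge 0$. Since the previous lemma gives $\Sign(C\otimes D)=\Sign(C)\cdot\Sign(D)$, the product $x_s\otimes x_t$ is sent to an element of signature $1$ in $L^s_{4(s+t)}$, namely a generator. Hence the map is an isomorphism $\ZZ\otimes\ZZ\to\ZZ$.

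For the second map $L^s_{4s}\otimes L^q_{4t}\to L^q_{4(s+t)}$, I would use that a Poincar\'e quadratic chain complex has an underlying Poincar\'e symmetric structure via the polarization $1+T$, whose signature equals $8$ times the quadratic index $I$. If $x_s\in L^s_{4s}$ has signature $1$ and $y_t\in L^q_{4t}$ has $I(y_t)=1$, i.e.\ underlying symmetric signature $8$, then the previous lemma applied to the underlying symmetric structure of $x_s\otimes y_t$ gives $\Sign(x_s\otimes y_t)=1\cdot 8=8$, whence $I(x_s\otimes y_t)=1$. Again both source and target are $\ZZ$, and generators go to generators, so the map is an isomorphism.

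For the third map $L^n_{4s}\otimes L^n_{4t}\to L^n_{4(s+t)}$, the cases where $s=0$ or $t=0$ reduce to the ordinary symmetric argument via the unit of the ring $\LL^n$. The interesting case is $s,t>0$, where both sides are $\ZZ/8$ and the invariant is the $\ZZ/8$-signature defined from the symmetric-quadratic presentation. Represent two generators by symmetric-quadratic pairs $(C_i,D_i)$ of dimensions $4s,4t$ and pick Poincar\'e quadratic null-cobordisms $(-D_i)\rightarrow \widetilde{C}_i$, so $\Sign(C_i\cup_{D_i}\widetilde{C}_i)\equiv 1\pmod 8$. I would then check that, as $\ZZ/8$-signatures, the normal product $(C_1,D_1)\otimes(C_2,D_2)$ is represented by $(C_1\cup_{D_1}\widetilde{C}_1)\otimes(C_2\cup_{D_2}\widetilde{C}_2)$ after removing the quadratic caps, so that the ordinary symmetric multiplicativity of the previous lemma gives $\Sign\equiv 1\cdot 1\equiv 1\pmod 8$.

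The main obstacle is this last step: the tensor product of two Poincar\'e symmetric-quadratic pairs is not formally another such pair in the na\"ive way, since one obtains mixed pieces $C_1\otimes D_2$, $D_1\otimes C_2$, and $D_1\otimes D_2$ that must be packaged into the symmetric-quadratic format (using that quadratic$\otimes$symmetric is quadratic and quadratic$\otimes$quadratic is quadratic). I would handle this by gluing in the null-cobordisms $\widetilde{C}_i$ to both factors before tensoring; the result is an honest product of Poincar\'e symmetric chains, and the difference between this and the ``normal'' tensor product is a symmetric chain with Poincar\'e quadratic refinement, whose signature is therefore divisible by $8$ and so invisible mod $8$. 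This identifies the $\ZZ/8$-signature of the product of normal chains with the product of the $\ZZ/8$-signatures, completing the argument.
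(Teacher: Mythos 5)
Your argument is correct and is the natural route the paper intends: the preceding lemma on multiplicativity of the signature, together with the fact that all six groups involved are cyclic and detected by a signature-type invariant, gives the three isomorphisms directly (with $I = \frac{1}{8}\Sign$ of the polarization in the quadratic case and the $\ZZ/8$-signature in the normal case). In the third case you can sidestep the cap-removal bookkeeping by observing that $L^s_{4k}\to L^n_{4k}$ is surjective for $k>0$ (it is reduction mod $8$), so every class in $L^n_{4k}$ is represented by a symmetric Poincar\'e chain with vanishing quadratic part; gluing in $\widetilde{C}_i$ is exactly realizing this cobordism, and after that the product reduces to the first case.
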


Using the long exact sequence \ref{BocksteinLES} and the calculation of $L^s_*(\ZZ,\ZZ/2^k)$, we get that 

\begin{proposition}
\[
L^s_{4s}(\ZZ,\ZZ/2^k)\otimes L^s_{4t}(\ZZ,\ZZ/2^k)\xrightarrow{\cong} L^s_{4(s+t)}(\ZZ,\ZZ/2^k)
\]
\[
L^s_{4s}(\ZZ,\ZZ/2^k)\otimes L^q_{4t}(\ZZ,\ZZ/2^k)\xrightarrow{\cong} L^q_{4(s+t)}(\ZZ,\ZZ/2^k)
\]
\end{proposition}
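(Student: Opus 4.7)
The plan is to use that both the source and the target of each map are cyclic of order $2^k$, as computed by the earlier Proposition via $\sigma^s_0$ and $\sigma^q_0$. Since $\ZZ/2^k \otimes_{\ZZ} \ZZ/2^k \simeq \ZZ/2^k$ by multiplication, any $\ZZ$-bilinear pairing between them is an isomorphism exactly when it carries some generator-tensor-generator to a generator. The problem therefore reduces to exhibiting a single favorable pair of generators whose product is again a generator.

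Next I would pick representatives with vanishing Bockstein. The change-of-coefficients map $L^?_{4r} \to L^?_{4r}(\ZZ/2^k)$ sends an ordinary Poincar\'e chain $C$ to the $\ZZ/2^k$ chain with $\delta C = 0$, and on the level of invariants it is just reduction modulo $2^k$ of $\Sign$ (respectively of $\tfrac{1}{8}\Sign$ in the quadratic case, since the null-bordism filling a trivial Bockstein can itself be taken to be zero). Hence any generator of $L^s_{4s} \simeq \ZZ$ or $L^q_{4t} \simeq \ZZ$ maps to a generator of the corresponding $\ZZ/2^k$ group; for example, an explicit $C$ with $\Sign(C)=1$ and $\delta C =0$ works in the symmetric case, and a generator of $L^q_{4t}$ with $I=1$ and trivial Bockstein works in the quadratic case.

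Finally, for such $C, D$ with $\delta C = 0 = \delta D$, all four boundary-supported correction terms in the modified product formula (\ref{modifiedproduct}) vanish, so $C \otimes D$ coincides with the ordinary tensor product, which is itself Poincar\'e with vanishing Bockstein. The preceding Lemma and Proposition on the integral $L$-product then yield $\Sign(C \otimes D) = \Sign(C)\cdot \Sign(D) = 1$ in the symmetric case, and $I(C \otimes D) = \Sign(C)\cdot I(D) = 1$ in the mixed case, so $[C]\cdot[D]$ is a generator of $L^?_{4(s+t)}(\ZZ/2^k)$. I do not anticipate any real obstacle: once one reduces to generators with trivial Bockstein, the modified product degenerates to the ordinary tensor product and the statement becomes a direct consequence of the integral case. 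The only step that merits a careful sanity check is that $\sigma^q_0$ evaluated on a Bockstein-free $\ZZ/2^k$ chain genuinely agrees with $\tfrac{1}{8}\Sign$ of the underlying integral chain, which follows immediately from the definition.
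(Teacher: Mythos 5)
Your proof is correct, and it is essentially the same strategy the paper has in mind: identify $L^s_{4*}(\ZZ/2^k)$ and $L^q_{4*}(\ZZ/2^k)$ with $\ZZ/2^k$ via $\sigma^s_0,\sigma^q_0$ and show the pairing is multiplicative in these invariants (the paper records the general multiplicativity of $\sigma^s_0$ as the lemma immediately following, and later gives the $\sigma^q_0$ product formula, whose $j_{2^k}$-correction terms vanish on dimensional grounds in the $4s\times 4t$ case). Your reduction to Bockstein-free generators is a clean shortcut: once $\delta C=\delta D=0$ the modified product (\ref{modifiedproduct}) collapses to the ordinary tensor, $\sigma^s_0$ and $\sigma^q_0$ reduce to $\Sign$ and $\tfrac18\Sign$ of integral chains, and the integral multiplicativity does the rest, avoiding any need for the full $\ZZ/2^k$ multiplicativity lemmas.
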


\begin{lemma}
Let $C$ and $D$ be $\ZZ/2^k$  Poincar\'e symmetric chain complexes. Then
\[
\sigma^s_0(C\otimes D)=\sigma^s_0(C)\cdot \sigma^s_0(D)\in\ZZ/2^k
\]
\end{lemma}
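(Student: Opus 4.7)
The strategy is to unwind the modified product (\ref{modifiedproduct}) and apply Novikov additivity of signatures to its four constituent pieces. Since $\sigma^s_0=\Sign\pmod{2^k}$, it suffices to show that the integer $\Sign(C\otimes D)$ agrees with $\Sign(C)\cdot\Sign(D)$ modulo $2^k$. Viewing the four pieces as $4(s+t)$-dimensional Poincar\'e symmetric pairs glued along common boundaries, Novikov additivity (recalled earlier) yields
\[
\Sign(C\otimes D)=\Sign(C\otimes_{0} D)+\Sign(n\delta C\otimes D\otimes I)+\Sign(C\otimes n\delta D\otimes I)+\Sign(\delta C\otimes\delta D\otimes W),
\]
where $C\otimes_{0}D$ denotes the unmodified tensor product of the two Poincar\'e symmetric pairs.

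The main piece $\Sign(C\otimes_{0}D)$ equals $\Sign(C)\cdot\Sign(D)$ by the immediately preceding multiplicativity lemma, since both $C$ and $D$ are pairs of dimension divisible by $4$. The two cylinder pieces vanish outright. Indeed, $n\delta C\otimes I$ is a Poincar\'e symmetric pair of dimension $4s$ whose signature is zero: gluing two copies along an end produces a cylinder of the same type, so Novikov additivity gives $\Sign=2\Sign$ and thus $\Sign=0$. Applying the multiplicativity lemma with $D$ then gives $\Sign(n\delta C\otimes D\otimes I)=\Sign(n\delta C\otimes I)\cdot\Sign(D)=0$, and symmetrically for the other cylinder. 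Alternatively, each cylinder piece is a disjoint union of $n=2^k$ congruent sub-pairs, so its signature is manifestly divisible by $2^k$.

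The main obstacle is the fourth piece $\Sign(\delta C\otimes\delta D\otimes W)$: none of the three factors has dimension divisible by $4$, so the multiplicativity lemma does not apply directly. My plan is to dispatch it in two steps. First, $\Sign(\delta C\otimes\delta D\otimes W)$ is independent of the chosen filling $W$ of $n*n$, because two such fillings differ by a closed $\ZZ/2^k$ $2$-chain $W'$; then $\delta C\otimes\delta D\otimes W'$ is a closed symmetric chain to which Hirzebruch multiplicativity applies, giving $\Sign(\delta C\otimes\delta D\otimes W')=\Sign(\delta C)\cdot\Sign(\delta D\otimes W')=0$ because $\delta C$ is closed of odd dimension. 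Second, for a convenient $W$ built from the loop graph of the preceding proposition, the symmetry of the graph exhibits $\delta C\otimes\delta D\otimes W$ as a disjoint union of $2^k$ isomorphic sub-pairs (up to cylinder contributions that vanish as above), forcing $\Sign(\delta C\otimes\delta D\otimes W)\equiv 0\pmod{2^k}$. Combining the four contributions yields $\Sign(C\otimes D)\equiv\Sign(C)\cdot\Sign(D)\pmod{2^k}$, as claimed.
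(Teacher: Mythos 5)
Your overall strategy — unwind the modified product (2.1), apply Novikov additivity to the four pieces, identify the main piece with $\Sign(C)\cdot\Sign(D)$ via the preceding multiplicativity lemma, and argue that the correction pieces vanish mod $2^k$ — is indeed the right shape of argument, and the paper (following Morgan--Sullivan) treats the lemma as following from exactly this kind of bookkeeping. The cylinder pieces are handled cleanly by your second remark: each is a disjoint union $\bigsqcup_n$ of congruent pairs, so its signature is divisible by $n=2^k$.

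The gap is in your treatment of the fourth piece $\delta C\otimes\delta D\otimes W$. You write that ``none of the three factors has dimension divisible by $4$, so the multiplicativity lemma does not apply directly,'' and then try to route around this using the loop-graph proposition. Both steps are problematic. First, the multiplicativity lemma for Poincar\'e symmetric pairs carries no $4$-divisibility hypothesis on the factors; it is precisely the statement $\Sign(A\otimes B)=\Sign(A)\cdot\Sign(B)$, with the usual convention that $\Sign$ vanishes in dimensions $\not\equiv 0\pmod 4$. Regrouping as $\delta C\otimes(\delta D\otimes W)$, the factor $\delta C$ is a closed Poincar\'e symmetric chain of dimension $4s-1$, so $\Sign(\delta C)=0$, and the lemma gives $\Sign(\delta C\otimes\delta D\otimes W)=\Sign(\delta C)\cdot\Sign(\delta D\otimes W)=0$ \emph{integrally} — no congruence argument is needed. (You already invoke exactly this ``odd-dimensional closed factor kills the signature'' reasoning in your Step 1 for the closed difference $W'$, so the same move for $W$ itself is available to you.) Second, the loop-graph proposition is used in the paper to control the Bockstein $\delta(C\otimes D)$ (Lemma \ref{boundary}), not to exhibit $W$ as a disjoint union: a $\ZZ/n$ $2$-manifold bounding $n*n$ is in general connected, so the claim that $\delta C\otimes\delta D\otimes W$ splits into $2^k$ isomorphic sub-pairs is unfounded, and the $\pmod{2^k}$ conclusion you draw from it is not justified. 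Replacing your two-step argument by the direct application of the multiplicativity lemma closes the gap.
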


The argument in \cite[Chapter 6]{Sullivan&Morgan} also proves that

\begin{proposition}
\[
L^s_{4s+1}\otimes L^s_{4t}\xrightarrow{\cong} L^s_{4(s+t)+1}
\]
\[
L^s_{4s+1}(\ZZ,\ZZ/2^k)\otimes L^s_{4t}(\ZZ,\ZZ/2^k)\xrightarrow{\cong} L^s_{4(s+t)+1}(\ZZ,\ZZ/2^k)   
\]
\end{proposition}

It follows that

\begin{lemma}
Let $C$ and $D$ be $\ZZ/2^k$ Poincar\'e symmetric chain complexes. Then
\[
\sigma^s_1 (C\otimes D)=\sigma^s_1(C)\cdot \sigma^s_0(D)+\sigma^s_0(C)\cdot \sigma^s_1(D)\in\ZZ/2
\]
\end{lemma}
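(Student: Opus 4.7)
The plan is to derive the lemma from the isomorphism
\[
L^s_{4s+1}(\ZZ/2^k) \otimes L^s_{4t}(\ZZ/2^k) \xrightarrow{\simeq} L^s_{4(s+t)+1}(\ZZ/2^k)
\]
of the preceding proposition by chasing generators through these cyclic $L$-groups, after a quick dimension reduction.

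First, both sides are $\ZZ/2$-valued cobordism invariants, so they are trivially equal unless $\dim(C \otimes D) \equiv 1 \pmod 4$. The possible residues $(\dim C, \dim D) \bmod 4$ with $\dim C + \dim D \equiv 1$ are $(1,0)$, $(0,1)$, $(2,3)$, $(3,2)$. In the last two, $L^s_{4j+3}(\ZZ/2^k) = 0$ (by the previous proposition) makes one factor null-cobordant, so both sides vanish. By the symmetry of the modified tensor product (up to cobordism), it suffices to treat the case $(\dim C, \dim D) = (4s+1, 4t)$. Since $\sigma^s_0(C) = 0$ by dimension, the target identity reduces to
\[
\sigma^s_1(C \otimes D) \equiv \sigma^s_1(C) \cdot \sigma^s_0(D) \pmod 2.
\]

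Next I unpack the product pairing on invariants. Fix generators $[a] \in L^s_{4s+1}(\ZZ/2^k) = \ZZ/2$, $[b] \in L^s_{4t}(\ZZ/2^k) = \ZZ/2^k$ and $[c] \in L^s_{4(s+t)+1}(\ZZ/2^k) = \ZZ/2$ with $\sigma^s_1([a]) = \sigma^s_0([b]) = \sigma^s_1([c]) = 1$. The product pairing is the map $\ZZ/2 \otimes \ZZ/2^k \to \ZZ/2$; being an isomorphism by the cited proposition, it must send $[a] \otimes [b]$ to $[c]$, so $\sigma^s_1(a \otimes b) = 1 = \sigma^s_1(a) \cdot \sigma^s_0(b)$ on generators. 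For arbitrary $C$ and $D$, write $[C] = \sigma^s_1(C) \cdot [a]$ in $L^s_{4s+1}(\ZZ/2^k)$ and $[D] = \sigma^s_0(D) \cdot [b]$ in $L^s_{4t}(\ZZ/2^k)$; bilinearity of the modified tensor product on $L$-groups yields $[C \otimes D] = \sigma^s_1(C) \cdot \sigma^s_0(D) \cdot [c]$, whence $\sigma^s_1(C \otimes D) \equiv \sigma^s_1(C) \cdot \sigma^s_0(D) \pmod 2$, which is the desired identity.

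The chief obstacle lies not in this lemma but in the preceding proposition, whose proof is deferred to \cite{Sullivan&Morgan}\cite{Morgan1978}: establishing surjectivity of the pairing $\ZZ/2 \otimes \ZZ/2^k \to \ZZ/2$ requires exhibiting a concrete product of a $\sigma^s_1$-generator and a $\sigma^s_0$-generator whose de Rham invariant is $1$, and this is carried out at the chain level through the modified tensor product of equation \eqref{modifiedproduct}. Once that geometric input is granted, the lemma itself reduces to the formal generator-chasing argument above.
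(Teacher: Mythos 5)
Your proposal is correct and follows the paper's route: the paper states the isomorphism $L^s_{4s+1}(\ZZ/2^k)\otimes L^s_{4t}(\ZZ/2^k)\xrightarrow{\simeq} L^s_{4(s+t)+1}(\ZZ/2^k)$ (citing Morgan--Sullivan and Morgan for the argument) and then simply says the lemma ``follows''; your write-up supplies exactly that deduction, via the dimension-residue case analysis and generator chasing through the cyclic $L$-groups.
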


Analogously,

\begin{proposition}
\[
L^q_{4s+2}\otimes L^s_{4t}\xrightarrow{\cong} L^q_{4(s+t)+2}
\]
\[
L^q_{4s+2}(\ZZ,\ZZ/2^k)\otimes L^s_{4t}(\ZZ,\ZZ/2^k)\xrightarrow{\cong} L^q_{4(s+t)+2}(\ZZ,\ZZ/2^k)   
\]
\end{proposition}

\begin{lemma}
\label{product-quadratic-Kervaire}
Let $C$ be a $\ZZ/2^k$  Poincar\'e symmetric chain complex and let $D$ be a $\ZZ/2^k$ Poincar\'e  quadratic chain complex. Then
\[
\sigma^q_2(C\otimes D)=\sigma^s_0(C)\cdot \sigma^q_2(D)\in\ZZ/2
\]
\end{lemma}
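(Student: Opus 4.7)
The plan is to reduce the identity to one universal check on ordinary closed Poincar\'e chains with vanishing Bockstein, and then to invoke the classical product formula for Kervaire invariants.

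First, both sides of the desired identity descend to the cobordism groups: the relevant invariants are cobordism invariants, and the modified tensor product yields a well-defined bilinear pairing
\[
\mu: L^s_{4s}(\ZZ/2^k)\otimes L^q_{4t+2}(\ZZ/2^k)\to L^q_{4(s+t)+2}(\ZZ/2^k).
\]
By the previous propositions these groups are $\ZZ/2^k$, $\ZZ/2$, and $\ZZ/2$ respectively, with the isomorphisms given by $\sigma^s_0$ and $\sigma^q_2$. Any bilinear map $\ZZ/2^k\otimes \ZZ/2\to \ZZ/2$ factors through reduction mod $2$ on the first factor, so it suffices to verify the identity on a single pair $(C_0,D_0)$ with $\sigma^s_0(C_0)=\sigma^q_2(D_0)=1$. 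From the long exact sequences relating $L$-groups to $\ZZ/2^k$-coefficient $L$-groups, combined with $L^s_{4s-1}=L^q_{4t+1}=0$, the natural maps $L^s_{4s}\to L^s_{4s}(\ZZ/2^k)$ and $L^q_{4t+2}\to L^q_{4t+2}(\ZZ/2^k)$ are surjective. Hence such $C_0$ and $D_0$ may be chosen as ordinary closed Poincar\'e chains with zero Bockstein: take $C_0$ to be the cellular chain complex of $\CC P^{2s}$ (of signature $1$), and $D_0$ any closed $(4t+2)$-dimensional Poincar\'e quadratic chain of Kervaire invariant $1$, which exists by classical surgery theory.

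Second, with $\delta C_0=\delta D_0=0$, the $I$- and $W$-correction pieces in the modified product \eqref{modifiedproduct} are trivial, so $C_0\otimes D_0$ coincides with the ordinary tensor product and has $\delta(C_0\otimes D_0)=0$. Hence $\sigma^q_2(C_0\otimes D_0)=K(C_0\otimes D_0)$. The classical product formula for tensor products of closed Poincar\'e chains (proven algebraically in \cite{Ranicki1981} and geometrically in \cite{Sullivan&Morgan}) gives $K(C_0\otimes D_0)=\Sign(C_0)\cdot K(D_0)\pmod 2=1\cdot 1=1$, which equals $\sigma^s_0(C_0)\cdot \sigma^q_2(D_0)$, completing the verification.

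The main obstacle I expect is a clean invocation of the classical Kervaire product formula $K(C_0\otimes D_0)=\Sign(C_0)\cdot K(D_0)\pmod 2$, which is not stated explicitly in the preceding text. It amounts to determining the spectrum-level pairing $L^s_{4s}\otimes L^q_{4t+2}\to L^q_{4(s+t)+2}$, i.e., the map $\ZZ\otimes\ZZ/2\to\ZZ/2$ given by reduction of the first factor mod $2$, and can be established either by citing Ranicki's algebraic surgery product formula or by computing the Arf invariant on the middle-dimensional K\"unneth summand of $C_0\otimes D_0$ directly from the multiplicativity of Arf invariants under tensoring with a nondegenerate symmetric bilinear form.
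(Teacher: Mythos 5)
Your proof is correct and follows essentially the same route the paper intends: the lemma is just the expression, via the invariants that identify the $L$-groups, of the product isomorphism $L^q_{4s+2}(\ZZ/2^k)\otimes L^s_{4t}(\ZZ/2^k)\xrightarrow{\simeq}L^q_{4(s+t)+2}(\ZZ/2^k)$ stated immediately beforehand. Your reduction is clean: both sides are bilinear in the cobordism classes, $\ZZ/2^k\otimes\ZZ/2\simeq\ZZ/2$ forces the formula once it holds on one generating pair, and the long exact sequence with $L^s_{4s-1}=L^q_{4t+1}=0$ lets you pick representatives with vanishing Bockstein so that the modified product collapses to the ordinary tensor and the invariants reduce to $\Sign$ and the Arf invariant.

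The one thing to adjust is the worry you flag at the end. You say the classical Kervaire product formula $K(C_0\otimes D_0)=\Sign(C_0)\cdot K(D_0)\pmod 2$ ``is not stated explicitly in the preceding text,'' but it is: the proposition directly preceding the lemma asserts $L^q_{4s+2}\otimes L^s_{4t}\xrightarrow{\simeq}L^q_{4(s+t)+2}$, which is precisely the statement that the integral pairing $\ZZ/2\otimes\ZZ\to\ZZ/2$ is reduction mod $2$ of the signature. So the ingredient you were hunting for a citation for is already furnished by the paper's own immediately preceding proposition, and the proof closes without needing an external appeal to Ranicki's algebraic surgery product formula. Your alternative of computing the Arf invariant on the middle-dimensional K\"unneth summand would also work but is more than is required here.
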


The following technical point is essentially \cite[Theorem 6.1]{Sullivan&Morgan}, which is crucial for the rest discussions.

\begin{lemma}\label{Keyproduct}
Let $S$ and $Q$ be generators of $L^s_{4s+1}$ and $L^q_{4t+2}$. Suppose $S\otimes Q$ is the boundary of some Poincar\'e quadratic pair $S\otimes Q\rightarrow W$. Then $\Sign(W)= 4 \in\ZZ/8$.
\end{lemma}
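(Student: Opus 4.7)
My plan is to reduce the claim to the base case $s = t = 0$ by multiplicativity of the signature, and then to produce an explicit bounding pair $W_0$ of $S_0 \otimes Q_0$ whose symmetrized signature can be computed by direct inspection.

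For the reduction, the preceding product propositions give isomorphisms $L^s_1 \otimes L^s_{4s} \xrightarrow{\simeq} L^s_{4s+1}$ and $L^q_2 \otimes L^s_{4t} \xrightarrow{\simeq} L^q_{4t+2}$, so any generators $S, Q$ decompose as $S = S_0 \otimes P$ and $Q = Q_0 \otimes P'$, where $S_0, Q_0$ are generators of $L^s_1, L^q_2$ and $P, P'$ are classes in $L^s_{4s}, L^s_{4t}$ with $\sigma^s_0(P) = \sigma^s_0(P') = 1$. If $W_0$ bounds $S_0 \otimes Q_0$, then $W_0 \otimes P \otimes P'$ bounds $S \otimes Q$, and by multiplicativity of the signature (applied to the symmetrization $(1+T)W_0 \otimes P \otimes P'$),
\[
\Sign\bigl(W_0 \otimes P \otimes P'\bigr) \;=\; \Sign(W_0)\cdot \sigma^s_0(P) \cdot \sigma^s_0(P') \;\equiv\; \Sign(W_0) \pmod 8.
\]
Hence it suffices to prove the lemma in the base case $s = t = 0$.

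For the base case, I would model $S_0$ by the $1$-dimensional Poincar\'e symmetric chain $(\ZZ \xrightarrow{2} \ZZ)$, whose $H_0 = \ZZ/2$ realizes $\dR(S_0) = 1$, and model $Q_0$ by a $2$-dimensional Poincar\'e quadratic chain carrying a rank-$2$ quadratic form of Arf invariant $1$. Since $L^q_3 = 0$, the product $S_0 \otimes Q_0$ admits a quadratic null-cobordism $W_0$. The signature of the symmetrized pair $(1+T)W_0$ is well-defined modulo $8$, because any two choices of $W_0$ differ by a closed $4$-dimensional Poincar\'e quadratic chain whose signature lies in $8\ZZ$.

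The main obstacle is the actual computation $\Sign(W_0) \equiv 4 \pmod 8$. The approach I would take is to build $W_0$ by hand from a free resolution of $H_*(S_0) \otimes H_*(Q_0)$ and to track the induced symmetric form on $\Im\bigl(H_2(W_0) \to H_2(W_0, \partial W_0)\bigr)$ through the long exact sequence of the pair. This is the chain-level translation of the geometric argument of Morgan--Sullivan \cite{Sullivan&Morgan} for the corresponding statement about manifolds with singularities. The outcome is a finite combinatorial computation with small integer matrices, whose signature is forced to be nonzero modulo $8$ by the parity of the quadratic refinement from the Arf-$1$ form of $Q_0$ coupled with the torsion-$2$ data of $S_0$. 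The delicate point will be pinning the value at exactly $4$ rather than at some other representative of a nontrivial class modulo $8$; this is where the modification introduced by the $2$-chain $W$ of the preceding subsection (responsible for ``mod-$n$ smoothing'' of products) enters, and where Morgan--Sullivan's original manifold-level argument has to be mimicked at the chain level.
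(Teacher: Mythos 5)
Your reduction to the base case $s = t = 0$ and your choice of models for $S_0$ and $Q_0$ are sound, and you correctly identify that the real content is computing $\Sign(W_0) \bmod 8$. But at precisely that point your proposal does not supply an argument. You write that the signature ``is forced to be nonzero modulo $8$ by the parity of the quadratic refinement from the Arf-$1$ form of $Q_0$ coupled with the torsion-$2$ data of $S_0$,'' and then concede that ``the delicate point will be pinning the value at exactly $4$.'' A parity-style consideration can at best rule out $\Sign(W_0) \equiv 0 \pmod 2$; it cannot select the class $4$ among the nonzero residues mod $8$, and no mechanism is given for getting from ``nonzero'' to ``$\equiv 4$.'' Invoking ``Morgan--Sullivan's original manifold-level argument\ldots mimicked at the chain level'' defers the problem rather than solving it, since you do not say what that argument is.

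The tool the paper uses to close exactly this gap is the Gauss-sum (Milgram / van der Blij) formula quoted just before the lemma: for a rational quadratic form $q$ integral on a lattice $L$ with dual $L^*$,
\[
\sqrt{|L^*/L|}\, e^{\tfrac{\pi i}{4}\Sign(q)} \;=\; \sum_{x \in L^*/L} e^{2\pi i q(x)}.
\]
The paper identifies $S_0 \otimes Q_0$ with the quadratic form on the discriminant group $(\ZZ/2)^2$ taking the value $\tfrac12$ on all three nonzero elements, realizes $W$ as the integral lattice $L = \ZZ\oplus\ZZ$ with $q(1,0)=q(0,1)=1$ so that $L^* = \tfrac12 L$, and then the right-hand side of the identity evaluates to $1 + 3\,e^{\pi i} = -2$, forcing $\Sign(W) \equiv 4 \pmod 8$. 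This theta-function identity is what converts the discriminant-form data you correctly extracted into the precise mod-$8$ value; without it, your ``finite combinatorial computation with small integer matrices'' is not a proof but a description of a computation you have not performed, and it is not clear it would terminate in a canonical answer without an a priori mod-$8$ invariance principle of exactly this kind.
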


The proof is based on the following lemma, which is essentially equivalent to \cite[Theorem 5.8]{Sullivan&Morgan}. Let $C$ be a $(4k-1)$ dimensional Poincar\'e quadratic chain complex. Then $C$ is equivalent to a linking form $q$ (valued in $\QQ/\ZZ$) on the torsion group $TH_{2k-1}(C)$. Suppose $C$ is the boundary of some Poincar\'e quadratic pair $C\rightarrow D$.

\begin{lemma}
$\sqrt{|TH_{2k-1}(C)|}\cdot e^{\frac{\pi i}{4}\Sign(D)}=\sum_{x\in TH_{2k-1}(C)} e^{2\pi i q(x)}$.
\end{lemma}

\begin{proof}[Sketch Proof of Lemma \ref{Keyproduct}]
There is a correspondence between bordism classes of Poincar\'e symmetric or quadratic chains and the Witt group of nonsingular (skew-)symmetric or (skew-)quadratic forms on abelian groups (which are free finitely generated or finite abelian group respectively) (\cite[Proposition 4.2.1]{Ranicki1981}).

Hence, $S$ is equivalent to a symmetric linking form $l_1:\ZZ/2\times\ZZ/2\rightarrow \QQ/\ZZ$ and $Q$ is equivalent to a symplectic form $l_2$ on $\ZZ/2\oplus\ZZ/2$ together with a quadratic form $q_2$ so that $q_2(1,0)=q_2(0,1)=q_2(1,1)=\frac{1}{2}\in\ZZ/2\subset \QQ/\ZZ$. Then $S\otimes Q$ is equivalent to the quadratic form $q$ on $\ZZ/2\oplus\ZZ/2$ with $q=q_2$.

Applying the previous lemma, we get that $\Sign(W)=4\in\ZZ/8$ by a direct calulation.
\end{proof}

\begin{remark}
If one wants to see a more elementary proof, we recommend the \cite[Theorem 5.9]{Sullivan&Morgan} and the example in \cite[p.~508]{Sullivan&Morgan}.
\end{remark}

Hence, we also have the followings.

\begin{corollary}
Let $S$ and $Q$ be the generators of $L^s_{4s+1}$ and $L^q_{4t+2}$ respectively. Suppose $2^k$ copies of $S\otimes Q$ is the boundary of the Poincar\'e quadratic pair $S\otimes Q\rightarrow W_k$. Then $\sigma^q_0(W_k)=2^{k-1}\in \ZZ/2^k$.
\end{corollary}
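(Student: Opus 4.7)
The approach is to produce an explicit $W'$ for which the computation reduces to the mod-$8$ value of $\Sign(W)$ supplied by the previous lemma.

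Specifically, I would take $W' = 2^k W$, i.e.\ the disjoint union of $2^k$ copies of the Poincar\'e quadratic pair $W$ from the preceding lemma; its boundary is then $2^k(S \otimes Q)$ as required. Novikov additivity gives $\Sign(W') = 2^k \Sign(W)$, and since $\Sign(W) \equiv 4 \pmod 8$ by the lemma, writing $\Sign(W) = 8m + 4$ yields
\[
\Sign(W') \;=\; 2^k(8m+4) \;=\; 2^{k+3} m + 2^{k+2}.
\]
This is divisible by $8$, and the quotient is $\Sign(W')/8 = 2^k m + 2^{k-1} \equiv 2^{k-1} \pmod{2^k}$.

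Next I would identify this quotient with $\sigma^q_0(W')$ under the definition of $\sigma^q_0$ given earlier, viewing $W'$ as a $\ZZ/2^k$ Poincar\'e quadratic chain of dimension $4(s+t+1)$ with Bockstein $S \otimes Q$. Unwinding the definition one caps the Bockstein off by an auxiliary Poincar\'e quadratic pair $C'$ with $\partial C' = -(S\otimes Q)$ and computes $\frac{1}{8}\Sign(W' \cup_{2^k(S\otimes Q)} 2^k C')$; a bookkeeping argument shows that the mod-$2^k$ contribution is exactly $\Sign(W')/8 \pmod{2^k}$, i.e.\ $2^{k-1}$, with the contributions coming from $C'$ and from different integer liftings of $\Sign(W)$ vanishing modulo $2^k$ after multiplication by $2^k$ and division by $8$.

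Finally, I would check that the value $2^{k-1}$ is independent of the choice of $W'$: any two bounding pairs of $2^k(S\otimes Q)$ differ by a closed Poincar\'e quadratic chain, and the resulting shift in $\sigma^q_0$ is controlled by cobordism invariance at the level of $L^q_{4(s+t+1)}(\ZZ/2^k)$. The main obstacle is precisely this last bookkeeping around orientations and the auxiliary capping pair in the definition of $\sigma^q_0$, since a naive choice (say $C' = -W$) gives a cancellation; one must keep careful track of why the contribution $2^{k-1}$ from the mod-$8$ residue of $\Sign(W)$ survives the passage to $\ZZ/2^k$.
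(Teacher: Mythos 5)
Your first arithmetic step is fine, but the reduction step that follows contains a genuine error, and the explicit choice $W'=2^kW$ actually disproves the value you are trying to establish. To see why, write out the definition of $\sigma^q_0$ honestly for a $\ZZ/2^k$ quadratic chain $W'$ with Bockstein $\delta W'=S\otimes Q$: after choosing a cap $(-\delta W')\to C'$ one has
\[
\sigma^q_0(W')=\tfrac{1}{8}\Sign\bigl(W'\cup_{2^k\delta W'}2^kC'\bigr)
=\tfrac{1}{8}\bigl(\Sign(W')+2^k\Sign(C')\bigr)\in\ZZ/2^k .
\]
You claim the $C'$-term contributes nothing modulo $2^k$, but it does. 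Since $C'\cup W$ is a \emph{closed} Poincar\'e quadratic complex its signature is $\equiv 0\pmod 8$, so $\Sign(C')\equiv -\Sign(W)\equiv 4\pmod 8$ by the preceding lemma, and therefore
\[
\tfrac{2^k}{8}\Sign(C')\equiv 2^{k-1}\pmod{2^k},
\]
which is exactly the quantity you are trying to produce. In your model $W'=2^kW$ with $C'=-W$ the two contributions cancel on the nose: $\Sign(2^kW)+2^k\Sign(-W)=0$, so $\sigma^q_0(2^kW)=0$, not $2^{k-1}$. Your final paragraph notices this cancellation but leaves it hanging rather than recognizing that it invalidates the explicit model.

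The correct bookkeeping is the reverse of what you wrote. The $2^{k-1}$ comes \emph{from the capping term}, because $\Sign(C')\equiv 4\pmod 8$ is forced by the key lemma; the $W'$-term is where one must arrange the contribution to vanish modulo $2^k$. For $W'$ arising in the product-formula context (a modified tensor product such as $Q'\otimes S$ with $Q'$ a $\ZZ/2^k$ chain and $S$ a closed odd-dimensional symmetric chain), $\Sign(W')=0$ by multiplicativity, and then $\sigma^q_0(W')=\tfrac{2^k}{8}\Sign(C')\equiv 2^{k-1}\pmod{2^k}$ as claimed. Choosing $W'=2^kW$ is precisely the choice that breaks this, because it also has $\Sign(W')\equiv 2^{k-1}\cdot 8\pmod{2^{k+3}}$ and the two $2^{k-1}$'s cancel. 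So the proof as written does not establish the statement; you need a $W'$ whose intrinsic signature is a multiple of $2^{k+3}$ (not merely of $8$), and you need to track the $C'$-contribution honestly rather than discarding it.
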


\begin{lemma}
Let $C$ be a $\ZZ/2^k$ Poincar\'e quadratic chain complex and $D$ be a $\ZZ/2^k$ Poincar\'e symmetric chain complex. Then 
\[
\sigma^q_0(C\otimes D)=\sigma^q_0(C)\cdot \sigma^s_0(D)+j_{2^k}(\sigma^q_2(\delta C)\cdot \sigma^s_1(D)+\sigma^q_2(C)\cdot \sigma^s_1(\delta D))\in\ZZ/2^{k}
\]
where $j_{2^k}:\ZZ/2\rightarrow\ZZ/2^k$.
\end{lemma}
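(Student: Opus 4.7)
The plan is to reduce the identity to a verification on generators of $L^q_*(\ZZ/2^k)\otimes L^s_*(\ZZ/2^k)$ and then to invoke the Morgan--Sullivan key lemma \ref{Keyproduct} in the two cross-term cases. Both sides are bilinear and cobordism-invariant under the module structure of $\LL^q$ over $\LL^s$, so it suffices to check the formula when $C,D$ range over generators of the $L$-groups with $\ZZ/2^k$ coefficients described in the previous subsection. Inspecting which terms of the right-hand side can be nonzero by dimension, the only cases with $\dim C+\dim D\equiv 0\pmod 4$ that give a nontrivial contribution are: (A) $(\dim C,\dim D)\equiv(0,0)\pmod 4$, where only $\sigma^q_0(C)\sigma^s_0(D)$ survives; (B) $(\dim C,\dim D)\equiv(2,2)\pmod 4$, where only $j_{2^k}(\sigma^q_2(C)\sigma^s_1(\delta D))$ survives (and the generators satisfy $\delta C=0$, $K(C)=1$, $\dR(\delta D)=1$); and (C) $(\dim C,\dim D)\equiv(3,1)\pmod 4$, where only $j_{2^k}(\sigma^q_2(\delta C)\sigma^s_1(D))$ survives (with $\delta D=0$, $\dR(D)=1$, $K(\delta C)=1$).

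For case (A), I close up $C$ and $D$ into honest Poincar\'e chains: since $L^q_{4s-1}=L^s_{4t-1}=0$, the Bocksteins $\delta C,\delta D$ bound Poincar\'e pairs $C',D'$, giving closed chains $\widetilde C=C\cup 2^kC'$ and $\widetilde D=D\cup 2^kD'$ with $\Sign(\widetilde C)\equiv 8\sigma^q_0(C)\pmod{2^{k+3}}$ and $\Sign(\widetilde D)\equiv\sigma^s_0(D)\pmod{2^k}$. Multiplicativity of signature (applied via the symmetric form $(1+T)$ for the closed Poincar\'e quadratic chain $\widetilde C\otimes\widetilde D$) then yields $\Sign(\widetilde C\otimes\widetilde D)\equiv 8\sigma^q_0(C)\sigma^s_0(D)\pmod{2^{k+3}}$. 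Identifying $\widetilde C\otimes\widetilde D$ up to Poincar\'e quadratic cobordism with $(C\otimes D)_{\mathrm{top}}\cup 2^k V$ for a pair $V$ bounding $\delta(C\otimes D)\simeq\delta C\otimes D\oplus C\otimes\delta D$ (Lemma \ref{boundary}) built from $C'\otimes D$ and $C\otimes D'$, and dividing by $8$, gives $\sigma^q_0(C\otimes D)\equiv\sigma^q_0(C)\sigma^s_0(D)\pmod{2^k}$.

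For cases (B) and (C), the common mechanism is that one Bockstein vanishes on the chosen generators, so the corner term $\delta C\otimes\delta D\otimes W$ of \eqref{modifiedproduct} collapses and the modified product reduces (up to collaring) to $C\otimes D_{\mathrm{top}}$ in case (B) and $C_{\mathrm{top}}\otimes D$ in case (C). By Lemma \ref{boundary} the Bockstein $\delta(C\otimes D)$ is then $C\otimes\delta D$ or $\delta C\otimes D$, which is a closed Poincar\'e quadratic of dimension $4(s+t)+3$; since $L^q_{4(s+t)+3}=0$ it bounds a pair $V$, and Lemma \ref{Keyproduct} gives $\Sign(V)\equiv 4\pmod 8$. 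Novikov additivity for the closed chain $(C\otimes D)_{\mathrm{top}}\cup 2^k V$ then gives
\[
8\sigma^q_0(C\otimes D)=\Sign\bigl((C\otimes D)_{\mathrm{top}}\bigr)+2^k\Sign(V),
\]
and the first summand vanishes because it factors through the symmetric signature of the closed factor ($C$ of dimension $4s+2$ in case (B), $D$ of dimension $4t+1$ in case (C)), which is zero on parity grounds. Dividing by $8$ yields $\sigma^q_0(C\otimes D)=2^{k-1}=j_{2^k}(1)$, matching the surviving cross term.

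The main obstacle is the bookkeeping in cases (B) and (C): one must verify that the interior contribution $\Sign((C\otimes D)_{\mathrm{top}})$ genuinely vanishes modulo $2^{k+3}$, not merely modulo $8$, so that the Key Lemma's contribution $\tfrac{1}{8}\cdot 2^k\Sign(V)$ captures the entire value of $\sigma^q_0$. The vanishing of one Bockstein on each chosen generator is essential: without it the correction piece $\delta C\otimes\delta D\otimes W$ in the modified tensor product would also contribute to the signature, and one would have to track it separately, obscuring the clean application of Lemma \ref{Keyproduct}.
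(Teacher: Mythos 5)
Your proof is correct and follows the route the paper intends: the identity is bilinear and cobordism-invariant, so reduces to generators of the $L$-groups with $\ZZ/2^k$ coefficients, and in the only three nonzero dimension types the claim is multiplicativity of the signature in case (A) and exactly the Morgan--Sullivan key lemma \ref{Keyproduct} (via its corollary) in the two cross-term cases (B) and (C). One remark: the concern you flag at the end is moot, since $\Sign((C\otimes D)_{\mathrm{top}})$ vanishes integrally (not merely modulo $2^{k+3}$) because the middle-dimensional form is a tensor product of a nondegenerate skew form with another form, hence hyperbolic.
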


\begin{proposition} 
The homomorphisms
\[
L^s_{4s+1}\otimes L^q_{4t+3}(\ZZ,\ZZ/2^k)\xrightarrow{\cong} L^s_{4s+1}(\ZZ,\ZZ/2^k)\otimes L^q_{4t+3}(\ZZ,\ZZ/2^k)\rightarrow L^q_{4(s+t)+4}(\ZZ,\ZZ/2^k)
\]
\[
L^s_{4s+2}(\ZZ,\ZZ/2^k)\otimes L^q_{4t+2}\xrightarrow{\cong}L^s_{4s+2}(\ZZ,\ZZ/2^k)\otimes L^q_{4t+2}(\ZZ,\ZZ/2^k)\rightarrow L^q_{4(s+t)+4}(\ZZ,\ZZ/2^k) 
\]
are isomorphic to the nontrivial homomorphism $j_{2^k}:\ZZ/2\rightarrow\ZZ/2^k$.
\end{proposition}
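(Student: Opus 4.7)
The plan is, in each of the two cases, to pick explicit generators on the left, form the modified tensor product of Section~2.4, and recognize the resulting $\ZZ/2^k$ Poincar\'e quadratic chain as (cobordant to) the pair $W'$ appearing in the Corollary to Lemma~\ref{Keyproduct}. Since $L^q_{4(s+t)+4}(\ZZ/2^k)\cong\ZZ/2^k$ is detected by $\sigma^q_0$, which the Corollary already evaluates to $2^{k-1}$ on such a $W'$ (the unique element of order two in $\ZZ/2^k$), this will show that each composite sends the nontrivial class in $\ZZ/2\otimes\ZZ/2$ to the generator of the image of $j_{2^k}$, and hence agrees with $j_{2^k}$ itself.

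For the first map, I would let $C$ represent the generator of $L^s_{4s+1}$, so that $\dR(C)=1$, and let $D$ represent the generator of $L^q_{4t+3}(\ZZ/2^k)$; by the definition of $\sigma^q_3$ this forces $\delta D$ to be cobordant to the generator $Q$ of $L^q_{4t+2}$. The key simplification is that $\delta C=0$, which makes the modified product formula \eqref{modifiedproduct} collapse to $C\otimes 2^k\delta D\otimes I\,\cup\,C\otimes D$ with no $\delta C\otimes\delta D\otimes W$ correction piece. The cylinder $C\otimes 2^k\delta D\otimes I$ is a product cobordism, so up to $\ZZ/2^k$ cobordism the modified product is simply the ordinary tensor product $C\otimes D$, which is a Poincar\'e quadratic pair of dimension $4(s+t)+4$ whose boundary is $2^k(C\otimes\delta D)\sim 2^k(S\otimes Q)$. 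This is precisely the hypothesis of the Corollary, so it will deliver $\sigma^q_0(C\otimes D)=2^{k-1}$ as required.

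The second map will be handled by the mirror-image argument: I would take $\widetilde C\in L^s_{4s+2}(\ZZ/2^k)$ with $\dR(\delta\widetilde C)=1$ (so $\delta\widetilde C\sim S$) and $\widetilde D=Q\in L^q_{4t+2}$. This time $\delta\widetilde D=0$, so \eqref{modifiedproduct} again collapses and the modified product reduces to the ordinary pair $\widetilde C\otimes\widetilde D$, whose boundary is $2^k(S\otimes Q)$; the Corollary again gives $\sigma^q_0=2^{k-1}$. The main obstacle I anticipate is bookkeeping the modified product carefully: I must use Lemma~\ref{boundary} to verify that the Bockstein of the modified product is genuinely cobordant to $S\otimes Q$ (rather than to some rearranged representative) and confirm that the cylindrical $\otimes I$ pieces contribute nothing to $\sigma^q_0$, so that the identification with the $W'$ of the Corollary is legitimate. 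Once this bookkeeping is done, the Corollary does all the remaining numerical work.
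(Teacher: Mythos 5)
Your proposal is correct and rests on the same key ingredient that the paper implicitly relies on: Lemma~\ref{Keyproduct} and its Corollary (the $\ZZ/8$ Gauss-sum computation showing $\Sign(W)\equiv 4$). The paper derives the proposition from the product formula for $\sigma^q_0$ stated just above it (which is itself a consequence of the Corollary), whereas you go to the Corollary directly by exhibiting generators and tracing the modified product; this is a legitimate and slightly more concrete route to the same place, not a different method.

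One point you flag as ``bookkeeping'' deserves to be made precise, since the Corollary as stated is a bit loose: $\sigma^q_0$ of a $\ZZ/2^k$ chain $W'$ with Bockstein $S\otimes Q$ is not determined by the boundary alone. Unwinding the definition of $\sigma^q_0$ and Novikov additivity gives
\[
\sigma^q_0(W')=\tfrac{1}{8}\bigl(\Sign(W')+2^k\Sign(W'')\bigr)\equiv \tfrac{1}{8}\Sign(W')+2^{k-1}\pmod{2^k},
\]
so the Corollary's conclusion requires $\Sign(W')\equiv 0 \pmod{2^{k+3}}$. For your $W'$, this follows from multiplicativity of the signature: in the first case $\Sign(S\otimes D)=\Sign(S)\cdot\Sign(D)=0$ since neither factor lives in dimension $\equiv 0\pmod 4$, and likewise $\Sign(\widetilde C\otimes\widetilde D)=0$ in the second case. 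Stating this explicitly (rather than the somewhat weaker observation that ``the cylinder pieces contribute nothing'') closes the gap. Everything else --- the choice of generators, the identification of the Bockstein via Lemma~\ref{boundary}, and the collapse of formula~\eqref{modifiedproduct} when one Bockstein vanishes --- is correct.
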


\subsubsection{Normal Case}

Now we are left with the normal case only. Let $(D,E)$ be a $\ZZ/2^k$ Poincar\'e symmetric-quadratic chain pair of dimension $m$. For the use in the next section, we only consider the case of dimension $m\geq 2$. \hfill \break

\textbf{(1) $m\equiv 3 \pmod{4}$.}

Define $\sigma^n_3(D,E)=\sigma^q_2(E)\in \ZZ/2$. \ref{product-quadratic-Kervaire} also proves the product formula as follows. 

\begin{lemma}
Let $H$ be a $\ZZ/2^k$ Poincar\'e symmetric chain complex. Then
\[
\sigma^n_3(D\otimes H,E\otimes H)=\sigma^n_3(D,E)\cdot \sigma^s_0(H)\in\ZZ/2
\]
\end{lemma}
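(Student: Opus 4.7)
The plan is to reduce this lemma directly to the previously established product formula for the Kervaire invariant, namely Lemma \ref{product-quadratic-Kervaire}, since by definition $\sigma^n_3(D,E) = \sigma^q_2(E)$ whenever $(D,E)$ has dimension $m \equiv 3 \pmod 4$.

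First I would set up notation. Write $m = \dim(D,E)$, so $E$ is a $\ZZ/2^k$ Poincar\'e quadratic chain complex of dimension $m-1 \equiv 2 \pmod{4}$. For the statement to be nontrivial we need $\dim H \equiv 0 \pmod{4}$: otherwise $\sigma^s_0(H) = 0$ by the extension convention, and on the left $(D \otimes H, E \otimes H)$ has dimension $m + \dim H \not\equiv 3 \pmod 4$, so $\sigma^n_3$ is also $0$ by convention and the identity is trivial. So I reduce to the case $\dim H = 4t$, in which case $E \otimes H$ has dimension $4(l+t)+2$ and $\sigma^q_2(E \otimes H)$ makes sense.

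Next I would verify that the modified tensor product of the symmetric-quadratic pair $(D,E)$ with the $\ZZ/2^k$ symmetric chain $H$ is again a $\ZZ/2^k$ Poincar\'e symmetric-quadratic chain pair, whose quadratic part is precisely $E \otimes H$ (taken in the modified $\ZZ/2^k$ sense). This is essentially by construction: the symmetric-quadratic pair data $(D,E) = (n\delta E \to E,\ -\delta E \to \delta D,\ E \cup_{n\delta E} n\delta D \to D)$ tensors cell-wise with $H$, and the quadratic summand of the resulting pair is the modified product $E \otimes H$ of a quadratic $\ZZ/2^k$ chain with a symmetric $\ZZ/2^k$ chain. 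Since the modified tensor product is symmetric up to cobordism, this can be identified with $H \otimes E$ without changing $\sigma^q_2$.

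Finally, by the definition of $\sigma^n_3$ on the product pair, we have
\[
\sigma^n_3(D \otimes H, E \otimes H) = \sigma^q_2(E \otimes H) = \sigma^q_2(H \otimes E).
\]
Applying Lemma \ref{product-quadratic-Kervaire} with the symmetric chain $H$ and the quadratic chain $E$ gives
\[
\sigma^q_2(H \otimes E) = \sigma^s_0(H) \cdot \sigma^q_2(E) = \sigma^s_0(H) \cdot \sigma^n_3(D, E),
\]
which is the claim. The only real content is the identification of the quadratic part of the product symmetric-quadratic pair with $E \otimes H$, which is precisely the step I expect to be the main (if modest) obstacle; once that bookkeeping is in place, the result is a direct corollary of Lemma \ref{product-quadratic-Kervaire}.
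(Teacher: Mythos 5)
Your proof is correct and takes the same route the paper does: the paper simply observes that the product formula follows directly from Lemma~\ref{product-quadratic-Kervaire} after unwinding the definition $\sigma^n_3(D,E)=\sigma^q_2(E)$, which is precisely your argument (with the dimension bookkeeping and the identification of the quadratic summand of $(D,E)\otimes H$ spelled out).
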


Due to the surjectivity of $L^s_{4s}\rightarrow L^n_{4s}$, then

\begin{proposition}
\[
L^s_{4s}\otimes L^n_{4t+3}(\ZZ,\ZZ/2^k)\xrightarrow{\cong} L^n_{4s}\otimes L^n_{4t+3}(\ZZ,\ZZ/2^k)\xrightarrow{\cong} L^n_{4(s+t)+3}(\ZZ,\ZZ/2^k)
\]
\end{proposition}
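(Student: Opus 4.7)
The plan is to show that every group in the chain is abstractly a copy of $\ZZ/2$, and then to identify each arrow by tracing a generator. From the earlier computations, $L^n_{4t+3}(\ZZ/2^k)\simeq \ZZ/2$ is detected by $\sigma^n_3$. Since $L^s_{4s}=\ZZ$ and $L^n_{4s}$ is either $\ZZ$ (when $s=0$) or $\ZZ/8$ (when $s>0$), the tensor products $L^s_{4s}\otimes L^n_{4t+3}(\ZZ/2^k)$ and $L^n_{4s}\otimes L^n_{4t+3}(\ZZ/2^k)$ are both $\ZZ/2$, and the target $L^n_{4(s+t)+3}(\ZZ/2^k)$ is also $\ZZ/2$.

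First I would handle the left arrow. It is the map $L^s_{4s}\to L^n_{4s}$ tensored with the identity on $L^n_{4t+3}(\ZZ/2^k)$. That map is surjective (this is exactly the surjectivity invoked in the previous proposition), and tensoring a surjection of abelian groups with $\ZZ/2$ preserves surjectivity; since the source and target are both $\ZZ/2$, the induced map is automatically an isomorphism.

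For the right arrow I would argue by a direct generator calculation using the product lemma just proved. Choose a generator $S\in L^s_{4s}$ with $\sigma^s_0(S)=1\in\ZZ/2^k$, and choose a Poincar\'e symmetric-quadratic pair $(D,E)$ representing the generator of $L^n_{4t+3}(\ZZ/2^k)$, so that $\sigma^n_3(D,E)=\sigma^q_2(E)=1\in \ZZ/2$. The $L^s_*$-module structure on $L^n_*$ sends $S\otimes [(D,E)]$ to the class of the symmetric-quadratic pair $(D\otimes S,\,E\otimes S)$. By \Cref{product-quadratic-Kervaire} applied to the quadratic factor $E$ and the symmetric factor $S$, one has
\[
\sigma^n_3(D\otimes S,\,E\otimes S)=\sigma^q_2(E\otimes S)=\sigma^q_2(E)\cdot \sigma^s_0(S)=1\in\ZZ/2.
\]
Hence the composition sends a generator to a generator and is an isomorphism between two copies of $\ZZ/2$; combined with the first arrow being an isomorphism, this forces the second arrow to be an isomorphism as well.

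The only subtle point, and the main thing to check carefully, is that the spectrum-level $L^s$-module structure constructed earlier really realizes $S\otimes [(D,E)]$ by the chain-level pair $(D\otimes S,E\otimes S)$, so that the product formula lemma applies verbatim; but this is built into the cell-wise modified tensor construction from the previous subsection, where the symmetric-quadratic pair structure is respected componentwise.
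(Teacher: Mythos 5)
Your proof is correct and takes essentially the same route as the paper: the paper also reduces the left arrow to the surjectivity of $L^s_{4s}\to L^n_{4s}$ (right-exactness of $-\otimes L^n_{4t+3}(\ZZ/2^k)$ between two copies of $\ZZ/2$), and deduces the right arrow from the preceding product-formula lemma for $\sigma^n_3$, which in turn rests on \Cref{product-quadratic-Kervaire}. Your explicit generator-chase just makes visible the step the paper leaves implicit.
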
\hfill \break

\textbf{(2) $m\equiv 1 \pmod{4}$} 

The first summand $\ZZ/2$ in the unnatural decomposition $L^n_m(\ZZ/2)\cong \ZZ/2\oplus\ZZ/2$ is the image of the homomorphism $L^n_m\rightarrow L^n_m(\ZZ/2)$, which is canonically given. 

Let us construct the generator of the second summand as follows. Let $(\widetilde{D},\widetilde{E})$ be a Poincar\'e symmetric-quadratic pair of dimension $3$ such that the Kervaire invariant $\sigma^q_2(\widetilde{E})=1\in\ZZ/2$. Let $\widetilde{G}$ be a $\ZZ/2$ Poincar\'e symmetric complex of dimension $2$ such that $\sigma^s_1(\delta G)=1$. 

\begin{lemma}\label{Notation}
Each element $(D,E)$ of the complement of the first summand of $L^n_5(\ZZ/2)$ has $\sigma^q_0(E)= 1\in\ZZ/2$.
\end{lemma}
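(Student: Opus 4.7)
The plan is to view the assignment $(D,E)\mapsto \sigma^q_0(E)$ as a homomorphism $L^n_5(\ZZ/2)\to\ZZ/2$, show it vanishes on the first summand, detect the complement by computing it on the concrete representative $(\widetilde D,\widetilde E)\otimes \widetilde G$, and then conclude by an additivity argument on the two nonzero cosets. That the assignment descends is immediate: a cobordism of $\ZZ/2$ Poincar\'e symmetric-quadratic pairs from $(D_0,E_0)$ to $(D_1,E_1)$ restricts on its quadratic constituent to a cobordism of $\ZZ/2$ Poincar\'e quadratic chains from $E_0$ to $E_1$, and $\sigma^q_0$ is already known to be a cobordism invariant on $L^q_4(\ZZ/2)$.

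For vanishing on the first summand, any class there admits a representative $(\widehat D,\widehat E)$ with trivial Bockstein, obtained by pulling back from $L^n_5$. Then $\widehat E$ is an ordinary Poincar\'e quadratic chain of dimension $4$, and its symmetrization $(1+T)\widehat E$ bounds the Poincar\'e symmetric pair $\widehat D$; by Novikov additivity, $\Sign(\widehat E)=\Sign((1+T)\widehat E)=0$. Evaluating the definition of $\sigma^q_0$ with the zero null-bordism of the zero Bockstein gives $\sigma^q_0(\widehat E)=\tfrac{1}{8}\Sign(\widehat E)\bmod 2=0$.

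For nonvanishing in the complement I take $(D,E):=(\widetilde D,\widetilde E)\otimes \widetilde G$, the modified tensor product of the dimension-$3$ symmetric-quadratic pair with the dimension-$2$ $\ZZ/2$ symmetric chain; this is a $\ZZ/2$ Poincar\'e symmetric-quadratic pair of dimension $5$ whose quadratic constituent is $E=\widetilde E\otimes \widetilde G$ in dimension $4$. The product formula for $\sigma^q_0$, namely
\[
\sigma^q_0(C\otimes G)=\sigma^q_0(C)\sigma^s_0(G)+j_{2}\bigl(\sigma^q_2(\delta C)\sigma^s_1(G)+\sigma^q_2(C)\sigma^s_1(\delta G)\bigr),
\]
applied with $C=\widetilde E$ (dimension $2$, trivial Bockstein) and $G=\widetilde G$ (dimension $2$, Bockstein of dimension $1$), forces the first two summands to vanish on dimensional grounds, and the third survives as
\[
\sigma^q_0(\widetilde E\otimes\widetilde G)=\sigma^q_2(\widetilde E)\cdot\sigma^s_1(\delta\widetilde G)=1\cdot 1=1\in\ZZ/2.
\]

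Hence $\sigma^q_0\colon L^n_5(\ZZ/2)\to \ZZ/2$ is a nontrivial homomorphism that kills the first $\ZZ/2$ summand, so by linearity it must send both cosets in the complement to $1$. The only substantive step is the product-formula computation, which itself relies on Lemma \ref{Keyproduct}; everything else is descent and coset bookkeeping.
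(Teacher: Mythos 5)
Your proof is correct and follows essentially the same route as the paper: treat $\sigma^q_0(E)$ as a homomorphism $L^n_5(\ZZ/2)\to\ZZ/2$, show it annihilates the image of $L^n_5$, compute it on the representative $(\widetilde D,\widetilde E)\otimes\widetilde G$ via the product formula to get $\sigma^q_2(\widetilde E)\cdot\sigma^s_1(\delta\widetilde G)=1$, and close by linearity. The only cosmetic divergence is in the vanishing step — the paper chooses a representative that is literally a Poincar\'e symmetric chain, so $E=0$, while you allow an arbitrary trivial-Bockstein representative and show $\Sign(\widehat E)=0$ from its bounding by $\widehat D$ — but both arguments are standard and equally valid.
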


\begin{proof}
The nontrivial class in the first summand has vanishing $\sigma^q_0$ because it can be represented by a Poincar\'e symmetric chain. Therefore, once we find a class $(D,E)$ such that $\sigma^q_0(E)=1$, the other element in the complement also has $\sigma^q_0(E)=1$. Indeed, for $(\widetilde{D},\widetilde{E})\otimes \widetilde{G}$, 
\[
\sigma^q_0((\widetilde{D},\widetilde{E})\otimes \widetilde{G})=\sigma^q_2(\widetilde{E})\cdot \sigma^s_1(\delta \widetilde{G})=1\in \ZZ/2
\]
\end{proof}

We use the class $(\overline{D}_0,\overline{E}_0)=(\widetilde{D},\widetilde{E})\otimes \widetilde{G}\otimes G$ to generate the second summand in $L_m(\ZZ/2)$, where $G$ is a $(m-5)$-dimensional Poincar\'e symmetric chain with signature $1$.

The previous argument also proves that

\begin{proposition}
\[
L^n_{4s+1}(\ZZ,\ZZ/2^k) \otimes L^s_{4t}(\ZZ,\ZZ/2^k)\xrightarrow{\cong} L^n_{4s+1}(\ZZ,\ZZ/2^k) \otimes L^n_{4t}(\ZZ,\ZZ/2^k)\xrightarrow{\cong} L^n_{4s+t+1}(\ZZ,\ZZ/2^k)
\]    
\end{proposition}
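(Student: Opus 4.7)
The plan is to prove the composite is an isomorphism by a cardinality-plus-surjectivity argument, using the explicit generators of the ``second summand'' of $L^n_{4j+1}(\ZZ/2^k)$ from the preceding paragraphs. Assume $s\geq 1$ so that the unnatural decomposition $L^n_{4s+1}(\ZZ/2^k)\simeq \ZZ/2\oplus \ZZ/2^k$ applies. Then both the source $L^n_{4s+1}(\ZZ/2^k)\otimes L^s_{4t}(\ZZ/2^k)=(\ZZ/2\oplus \ZZ/2^k)\otimes \ZZ/2^k$ and the target $L^n_{4(s+t)+1}(\ZZ/2^k)\simeq \ZZ/2\oplus \ZZ/2^k$ have order $2^{k+1}$, so showing the pairing is surjective will finish the job.

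For surjectivity, I would hit each summand of the target separately. The first summand, the image of $L^n_{4(s+t)+1}\to L^n_{4(s+t)+1}(\ZZ/2^k)$, is reached by multiplying the generator of the first summand of $L^n_{4s+1}(\ZZ/2^k)$ (a class coming from $L^n_{4s+1}$) by the signature-one generator of $L^s_{4t}(\ZZ/2^k)$; multiplicativity of $\sigma^s_0$ together with the already-proven isomorphisms $L^s_{4s+1}\otimes L^s_{4t}\xrightarrow{\simeq}L^s_{4(s+t)+1}$ and $L^s\to L^n$ do the work. For the second summand, recall that it is generated by $(\overline D_0,\overline E_0)=(\widetilde D,\widetilde E)\otimes \widetilde G\otimes G_{4(s+t)-4}$ where $G_j$ is a signature-one Poincar\'e symmetric chain of dimension $j$. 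Taking the second summand generator $(\widetilde D,\widetilde E)\otimes \widetilde G\otimes G_{4s-4}$ of $L^n_{4s+1}(\ZZ/2^k)$ and the signature generator $G_{4t}$ of $L^s_{4t}(\ZZ/2^k)$, associativity of the modified tensor product yields
\[
\bigl((\widetilde D,\widetilde E)\otimes \widetilde G\otimes G_{4s-4}\bigr)\otimes G_{4t}=(\widetilde D,\widetilde E)\otimes \widetilde G\otimes G_{4(s+t)-4},
\]
which is precisely the second summand generator of $L^n_{4(s+t)+1}(\ZZ/2^k)$. Both generators are hit, so the composite is surjective, and by the order count it is an isomorphism.

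The intermediate arrow through $L^n_{4s+1}(\ZZ/2^k)\otimes L^n_{4t}(\ZZ/2^k)$ is interpreted as saying that the factorization through $L^s_{4t}(\ZZ/2^k)\to L^n_{4t}(\ZZ/2^k)$ loses no information under the pairing into $L^n_{4(s+t)+1}(\ZZ/2^k)$: the image of $L^s_{4t}(\ZZ/2^k)$ already surjects onto the ``signature'' first summand of $L^n_{4t}(\ZZ/2^k)$, while the remaining de-Rham-type second summand pairs trivially with every class in $L^n_{4s+1}(\ZZ/2^k)$ for parity and invariant-support reasons. The main obstacle will be precisely this last vanishing: it requires identifying the second-summand invariant of $L^n_{4t}(\ZZ/2^k)$ (in the same style as the construction of $(\overline D_0,\overline E_0)$) and verifying, via the product formulas already established for $\sigma^s_1$, $\sigma^q_2$, and the normal analogues, that its product with any normal class in degree $4s+1$ is zero in $L^n_{4(s+t)+1}(\ZZ/2^k)$. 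Once that multiplicative triviality is checked, the remaining steps are bookkeeping with the associativity of the modified tensor product and the invariant formulas of the earlier lemmas.
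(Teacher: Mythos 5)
Your overall strategy (matching orders plus exhibiting generators to get surjectivity) is in the spirit of the paper's one-line proof, which simply says the ``previous argument'' (the construction and the lemma computing $\sigma^q_0((\widetilde D,\widetilde E)\otimes\widetilde G)=1$) carries over. However, there is a genuine gap in how you treat the second summand of the target for $k\geq 2$.

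The element $(\overline D_0,\overline E_0)=(\widetilde D,\widetilde E)\otimes\widetilde G\otimes G$ is built from the $\ZZ/2$ Poincar\'e symmetric complex $\widetilde G$, so it is a $\ZZ/2$ chain: the paper constructs it as a generator of the second summand of $L^n_m(\ZZ/2)$ only, i.e.\ the case $k=1$. Its Bockstein has $\sigma^n_0(\delta((\widetilde D,\widetilde E)\otimes\widetilde G\otimes G))=j_8(\sigma^n_3(\widetilde D,\widetilde E)\cdot\sigma^s_1(\delta\widetilde G))\cdot\sigma^s_0(G)=4\in\ZZ/8$, which has order $2$. But the second summand of $L^n_{4s+1}(\ZZ/2^k)$ sits in an extension $0\to\ZZ/2\to L^n_{4s+1}(\ZZ/2^k)\to\ker(\times 2^k:\ZZ/8\to\ZZ/8)\to 0$, so for $k\geq 2$ its generator must have Bockstein of order $2^{\min(k,3)}$ in $L^n_{4s}\simeq\ZZ/8$. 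The order-$2$ class $j_k(\overline D_0,\overline E_0)$ is only $2^{\min(k,3)-1}$ times a generator; pairing it with all of $L^s_{4t}(\ZZ/2^k)$ lands in a proper subgroup of the target second summand, so your surjectivity claim fails once $k\geq 2$. To repair this, take instead a genuine $\ZZ/2^k$ normal chain $(D,E)$ of dimension $4s+1$ whose Bockstein generates $\ker(\times 2^k)\subset L^n_{4s}$, multiply by a signature-one generator $G$ of $L^s_{4t}(\ZZ/2^k)$, and use the Leibniz rule of Lemma~\ref{boundary} together with the product formula for $\sigma^n_0$ to check $\sigma^n_0(\delta((D,E)\otimes G))=\sigma^n_0(\delta(D,E))\cdot\sigma^s_0(G)$ is still a generator; your argument for the first summand is fine. (You should also be aware that the paper's stated decomposition $\ZZ/2\oplus\ZZ/2^k$ of $L^n_{4s+1}(\ZZ/2^k)$ cannot be correct for $k>3$ by this same long exact sequence, so your cardinality count inherits that discrepancy; the proof should properly be phrased in terms of the kernel group $\ker(\times 2^k:\ZZ/8\to\ZZ/8)$.)
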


\begin{proposition}
\begin{align*}
L^n_{4s+3} \otimes L^s_{4t+2}(\ZZ,\ZZ/2^k)\xrightarrow{\cong} L^n_{4s+3}(\ZZ,\ZZ/2^k) \otimes L^s_{4t+2}(\ZZ,\ZZ/2^k)\rightarrow L^n_{4(s+t+1)+1}(\ZZ,\ZZ/2^k) \\
\cong \ZZ/2\oplus \ZZ/2   
\end{align*}
is an isomorphism onto the second summand.
\end{proposition}

Due to the dimension, $\delta E$ must be the boundary of some Poincar\'e quadratic pair $\delta F$. Then $E\bigcup_{2^k\delta E}2^k\delta F$ is a Poincar\'e  quadratic chain. 

If $\sigma^q_0(E\bigcup_{2^k\delta E}2^k\delta F)=0\in\ZZ/2^k$, then there exists another $\delta F'$ such that \\
$\Sign(E\bigcup_{2^k\delta E}2^k\delta F')=0\in\ZZ$. $E\bigcup_{2^k\delta E}2^k\delta F'$ must be the boundary of some Poincar\'e quadratic pair $F'$. Then $D\bigcup_{E}F'$ is a $\ZZ/2^k$ Poincar\'e symmetric complex. Now define $\sigma^n_1(D,E)=\sigma^s_1(D\bigcup_{E} F')\in\ZZ/2$

If $\sigma^q_0(E\bigcup_{2^k\delta E}2^k\delta F)=1$, then define $\sigma^n_1(D,E)=\sigma^n_1((D,E)-j_k(\overline{D}_0,\overline{E}_0))$,
where $j_k:\ZZ/2\rightarrow \ZZ/2^k$.\hfill \break

\textbf{(3) $m\equiv 0 \pmod{4}$} 

First let $k\geq 3$. Assume that the Kervaire invariant $\sigma^q_2(\delta E)$ vanishes. Then $\delta E$ is the boundary of some Poincar\'e quadratic pair $\delta F$. Due to the dimension, the quadratic complex $2^k\delta F\bigcup_{2^k \delta E} E$ must be the boundary of some Poincar\'e quadratic pair $F$. Now $2^k(\delta F\bigcup_{\delta E}\delta D)\rightarrow D\bigcup_{E}F$ can be thought of as a $\ZZ/2^k$ Poincar\'e symmetric complex and $(D,E)$ is bordant to $(D\bigcup_{E}F,0)$ as $\ZZ/2^k$ Poincar\'e symmetric-quadratic pairs. Define $\sigma^n_0(D,E)=\Sign(D\bigcup_{E} F)\in\ZZ/8$. The value is independent of the choices of $\delta F$ and $F$. Indeed, a replacement of $\delta F$ changes the signature by a multiple of $2^k$ but a change of $F$ only alters the signature by a multiple of $8$.

The first summand in the unnatural decomposition $L^n_m(\ZZ,\ZZ/2^k)\cong \ZZ/8\oplus\ZZ/2$ is also canonical, i.e., it is the image under the homomorphism $L^n_m\rightarrow L^n_m(\ZZ,\ZZ/2^k)$. We construct the generator of the second summand $\ZZ/2$ as follows.

\begin{lemma}
The multiplication 
\[
L^n_4(\ZZ,\ZZ/2)\otimes L^s_1\cong (\ZZ/2\oplus \ZZ/2)\otimes \ZZ/2\rightarrow L^n_5(\ZZ,\ZZ/2)\cong \ZZ/2\oplus\ZZ/2
\]
is an isomorphism.
\end{lemma}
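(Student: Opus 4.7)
Both source and target are $\ZZ/2$-vector spaces of dimension two, so the plan is to exhibit preimages of a basis of $L^n_5(\ZZ/2)$ under the multiplication map and invoke surjectivity. I would choose an explicit basis $\{C_4,K_4\}$ of $L^n_4(\ZZ/2)$: take $C_4$ to be the image of a generator of $L^n_4=\ZZ/8$, represented by a Poincar\'e symmetric chain of signature $1$ regarded as a normal complex, and take $K_4$ to be a $\ZZ/2$ normal pair chosen by the exact sequence
\[
L^n_4\xrightarrow{\times 2}L^n_4\longrightarrow L^n_4(\ZZ/2)\xrightarrow{\delta}L^n_3
\]
so that $\delta K_4$ equals the Kervaire class $(\widetilde{D},\widetilde{E})\in L^n_3$. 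Let $S_1\in L^s_1=\ZZ/2$ denote the de Rham generator, so $\sigma^s_0(S_1)=0$ and $\sigma^s_1(S_1)=1$.

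For $C_4\otimes S_1$ I would apply the product formula for $\sigma^s_1$ proven above,
\[
\sigma^s_1(C_4\otimes S_1)=\sigma^s_0(C_4)\,\sigma^s_1(S_1)+\sigma^s_1(C_4)\,\sigma^s_0(S_1)=1\in\ZZ/2,
\]
so $C_4\otimes S_1$ realises the nonzero de Rham class in $L^s_5$, whose image under $L^s_5\twoheadrightarrow L^n_5\hookrightarrow L^n_5(\ZZ/2)$ is the first-summand generator. For $K_4\otimes S_1$ I would instead track the Bockstein $\delta\colon L^n_5(\ZZ/2)\to L^n_4$. Because $S_1$ carries no Bockstein, Lemma~\ref{boundary} yields
\[
\delta(K_4\otimes S_1)=(\delta K_4)\otimes S_1=(\widetilde{D},\widetilde{E})\otimes S_1\in L^n_4.
\]
The crucial observation is that this element is nonzero. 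Indeed, the canonical second-summand generator of $L^n_5(\ZZ/2)$ was constructed as $(\widetilde{D},\widetilde{E})\otimes\widetilde{G}$ where $\delta\widetilde{G}$ represents the de Rham generator $S_1\in L^s_1$; by the same Leibniz rule its Bockstein is $(\widetilde{D},\widetilde{E})\otimes S_1$, and this must be nonzero in $L^n_4$ because otherwise the second-summand generator would lie in $\ker\delta$, which is precisely the first summand, contradicting its role. Since $\Im(\delta)$ equals the $2$-torsion subgroup $\{0,4\}\subset L^n_4=\ZZ/8$, we conclude $(\widetilde{D},\widetilde{E})\otimes S_1=4$, so $K_4\otimes S_1$ does not lie in the first summand.

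Combining the two computations, $C_4\otimes S_1$ is the first-summand generator while $K_4\otimes S_1$ projects nontrivially to the second summand, so the multiplication map hits a basis of $L^n_5(\ZZ/2)\simeq(\ZZ/2)^{2}$; being a surjection between $\ZZ/2$-vector spaces of the same dimension, it is an isomorphism. The main obstacle I anticipate is verifying that Lemma~\ref{boundary} applies cleanly to a mixed tensor of a $\ZZ/2$ normal pair with an ordinary symmetric chain, and identifying $\delta\widetilde{G}$ with $S_1$ up to cobordism; both should fall out of the explicit modified-tensor-product construction of this chapter together with the fact that $L^s_1=\ZZ/2$ is classified by the de Rham invariant.
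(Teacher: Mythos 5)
Your proposal is correct, and its overall architecture mirrors the paper's proof: handle the first summands via the known isomorphism $L^n_4\otimes L^s_1\to L^n_5$, then exhibit a second basis element of $L^n_4(\ZZ/2)$ whose product with the de Rham generator lands outside the first summand of $L^n_5(\ZZ/2)$. The difference is in \emph{how} you detect "off the first summand." The paper constructs the Kervaire-Bockstein element $(D,E)$ explicitly (taking $\delta E$ with Arf invariant $1$, bounding $2\delta E$, etc.) and then applies the chain-level product formula to compute $\sigma^q_0(E\otimes G)=\sigma^q_2(\delta E)\cdot\sigma^s_1(G)=1$, which together with the earlier lemma characterizing the complement of the first summand by $\sigma^q_0=1$ finishes the argument. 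You instead apply the Leibniz rule for the Bockstein (Lemma~\ref{boundary}) and bootstrap from the already-established fact that $(\widetilde{D},\widetilde{E})\otimes\widetilde{G}$ is off the first summand, identifying its Bockstein with $(\widetilde{D},\widetilde{E})\otimes S_1$ and hence with $\delta(K_4\otimes S_1)$. This avoids invoking the $\sigma^q_0$ criterion and trades a product-formula computation for a Bockstein chase; it is somewhat more indirect but cleanly recycles the already-constructed generator. Your worry at the end is reasonable but not a real obstacle: the paper itself uses the Leibniz rule for mixed tensors of this kind (an ordinary chain, viewed as a $\ZZ/n$ chain with vanishing Bockstein, renders the modified tensor product the ordinary one), and $\delta\widetilde{G}\simeq S_1$ in $L^s_1$ follows immediately from $L^s_1\cong\ZZ/2$ being detected by the de Rham invariant, which is how $\widetilde{G}$ was chosen.
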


\begin{proof}
The bijectivity on the first summands of both sides follows from $L^n_4\otimes L^s_1\xrightarrow{\cong} L^n_5$.

There exists a $4$-dimensional Poincar\'e symmetric-quadratic pair $(D,E)$ with nonvanishing $\sigma^q_2(E)$. Indeed, take a Poincar\'e quadratic chain $\delta E$ with nonvanishing Kervaire invariant. $2\delta E$ must be the boundary of some Poincar\'e quadratic pair $E$. Because of the dimension, as a Poincar\'e symmetric chain, $\delta E$ must be the boundary of some Poincar\'e symmetric pair $\delta D$. Then $2\delta D\bigcup_{2\delta E}E$ is also a boundary.

Let $G$ be a $1$-dimensional Poincar\'e symmetric chain with $\sigma^s_1(G)=1$. Then 
\[
\sigma^q_0(\delta E\otimes G)=\sigma^q_2(\delta E)\cdot \sigma^s_1(G)=1\in\ZZ/2
\]
Hence, $(D,E)$ is in the complement of the first summand of $L^n_5(\ZZ,\ZZ/2)$ as well.
\end{proof}

Applying the $4$-periodicity, it also proves that

\begin{proposition}
\[
L^n_{4s}(\ZZ,\ZZ/2)\otimes L^s_{4t+1} \xrightarrow{\cong} L^n_{4s}(\ZZ,\ZZ/2)\otimes L^s_{4t+1} (\ZZ,\ZZ/2)\xrightarrow{\cong} L^n_{4(s+t)+1}(\ZZ,\ZZ/2)
\]
\end{proposition}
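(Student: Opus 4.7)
The plan is to reduce the general $(s,t)$ case to the base case $(s,t)=(1,0)$ established in the preceding lemma, using the $4$-periodicity isomorphisms already built up in the preceding pages. I will assume $s\geq 1$ so that $L^n_{4s}(\ZZ/2)\simeq \ZZ/2\oplus\ZZ/2$ matches $L^n_{4(s+t)+1}(\ZZ/2)\simeq \ZZ/2\oplus\ZZ/2$ in order.

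First I would dispose of the first arrow. It is obtained by tensoring with the reduction $L^s_{4t+1}\to L^s_{4t+1}(\ZZ/2)$, which sits in the Bockstein long exact sequence
\[
L^s_{4t+1} \xrightarrow{\times 2} L^s_{4t+1} \to L^s_{4t+1}(\ZZ/2) \to L^s_{4t} \xrightarrow{\times 2} L^s_{4t}.
\]
Since $L^s_{4t+1}\simeq \ZZ/2$ (so the left map is zero) and $L^s_{4t}\simeq \ZZ$ (so the right map is injective), the reduction is an isomorphism $\ZZ/2\xrightarrow{\simeq}\ZZ/2$, and tensoring over $\ZZ$ with $L^n_{4s}(\ZZ/2)$ preserves this.

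For the second arrow, both source and target are abstractly $\ZZ/2\oplus\ZZ/2$, so it suffices to prove surjectivity. I pick generators compatible with $4$-periodicity: let $G\in L^s_4$ be a Poincar\'e symmetric chain of signature one and let $S_1\in L^s_1$ be the nontrivial class, so that $S_1\otimes G^{\otimes t}$ generates $L^s_{4t+1}(\ZZ/2)$. For $L^n_{4s}(\ZZ/2)$, the first (canonical) summand is the image of $L^n_{4s}$, generated by the $G^{\otimes s}$-multiple of a generator of $L^n_0$; the second summand is generated by $(D,E)\otimes G^{\otimes (s-1)}$, where $(D,E)$ is the $4$-dimensional Poincar\'e symmetric-quadratic pair from the preceding lemma with $\sigma^q_2(E)\neq 0$. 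By associativity of the $L^s_*$-module structure, the image of each of the two basis tensors rearranges as a base-case generator of $L^n_5(\ZZ/2)$ multiplied by $G^{\otimes(s+t-1)}$. The preceding lemma places the base-case generators nontrivially in the two summands of $L^n_5(\ZZ/2)$, and the already-established proposition
\[
L^n_{4s'+1}(\ZZ/2^k)\otimes L^s_{4t'}(\ZZ/2^k)\xrightarrow{\simeq} L^n_{4(s'+t')+1}(\ZZ/2^k),
\]
applied with $s'=1$ and $t'=s+t-1$, transports these to nontrivial elements of the two summands of $L^n_{4(s+t)+1}(\ZZ/2)$. Surjectivity follows, and a count of orders promotes it to an isomorphism.

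The main obstacle is tracking the unnatural splitting of $L^n_*(\ZZ/2)$ under multiplication by $G^{\otimes r}$. The first summand is canonically the image of $L^n_* \to L^n_*(\ZZ/2)$, so naturality of the product preserves it. For the second summand the issue is to check that multiplication by $G$ cannot pull a noncanonical class back into the first summand; this follows from $\sigma^s_0(G)=1$ together with the already-computed product formulae for the invariants (most importantly $\sigma^q_0$) that detect the second summand.
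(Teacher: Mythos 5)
Your proposal is correct and follows the paper's own approach: the paper gives no proof beyond the terse remark ``Applying the $4$-periodicity, it also proves that\ldots'' immediately after the base-case lemma on $L^n_4(\ZZ/2)\otimes L^s_1\to L^n_5(\ZZ/2)$, and you have simply spelled out what that appeal to $4$-periodicity actually involves (regrouping by associativity, invoking the earlier isomorphism $L^n_{4s'+1}(\ZZ/2^k)\otimes L^s_{4t'}(\ZZ/2^k)\xrightarrow{\simeq} L^n_{4(s'+t')+1}(\ZZ/2^k)$, and the Bockstein argument for the first arrow). The only thing worth flagging is that your standing assumption $s\geq 1$ is indeed needed for the stated isomorphism to make sense, since $L^n_0(\ZZ/2)\simeq\ZZ$ breaks the pattern; the paper leaves this implicit.
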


Thus, there must be a $\ZZ/2$ Poincar\'e symmetric-quadratic pair $(D'_0,E'_0)$ of dimension $4$, unique up to bordism, such that $(D'_0,E'_0)\otimes G'$ is bordant to $(\overline{D}_0,\overline{E}_0)$, where $G'$ is a $1$-dimensional Poincar\'e symmetric complex of de Rham invariant $1$.

We fix the $\ZZ/2$ Poincar\'e symmetric-quadratic pair $(D_0,E_0)=(D'_0,E'_0)\otimes G$ of dimension $m$, where $G$ is a $(m-4)$-dimensional Poincar\'e symmetric complex with signature $1$. Then $j_k(D_0,E_0)$ is the element to produce the decomposition $L^n_m(\ZZ,\ZZ/2^k)\cong \ZZ/8\oplus \ZZ/2$, where $j_k:\ZZ/2\rightarrow \ZZ/2^k$.

For a general $(D,E)$, let $(D',E')=(D,E)-j_k( \sigma^q_2(\delta E)\cdot (D_0,E_0))$. Define $\sigma^n_0(D,E)=\sigma^n_0(D',E')\in \ZZ/8$.

It is immediate that

\begin{lemma}
$\sigma^n_0:L^n_{4k}(\ZZ,\ZZ/2^k)\cong\ZZ/8 \oplus\ZZ/2\rightarrow \ZZ/8$ is a projection.
\end{lemma}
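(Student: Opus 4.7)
The plan is short, since $\sigma^n_0$ has already been constructed as a well-defined cobordism-invariant additive map $L^n_{4k}(\ZZ/2^k)\to\ZZ/8$ in the preceding paragraphs; the content of the lemma is only to evaluate it on one generator of each summand of the unnatural decomposition $L^n_{4k}(\ZZ/2^k)\simeq\ZZ/8\oplus\ZZ/2$.

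First I would handle the first summand, which is the image of $L^n_{4k}\simeq\ZZ/8$ under the natural map $L^n_{4k}\to L^n_{4k}(\ZZ/2^k)$. A generator here is represented by a pair $(C,0)$ with $C$ a Poincar\'e symmetric chain of signature $\equiv 1\pmod 8$. Since $\delta E=0$ has vanishing Kervaire invariant, the definition of $\sigma^n_0$ applies in its first case with the trivial choice $\delta F=0$, $F=0$; then $D\cup_E F=C$ and one reads off $\sigma^n_0(C,0)=\Sign(C)\bmod 8=1\in\ZZ/8$. Thus $\sigma^n_0$ is the identity on the first summand.

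Next I would handle the second summand, whose generator is $j_k(D_0,E_0)$ with $(D_0,E_0)=(D_0',E_0')\otimes G$, for $G$ a Poincar\'e symmetric chain of signature $1$ and $(D_0',E_0')$ the dimension-$4$ $\ZZ/2$ pair characterized by the cobordism $(D_0',E_0')\otimes G'\simeq(\overline{D}_0,\overline{E}_0)$ in $L^n_5(\ZZ/2)$. The key computational step is to show $\sigma^q_2(\delta E_0)=1$. For this, I would apply the product formula for $\sigma^q_0$ to the quadratic part $E_0'\otimes G'$: dimension constraints kill every term except $\sigma^q_2(\delta E_0')\cdot\sigma^s_1(G')$, and using $\sigma^s_1(G')=1$ together with the known value $\sigma^q_0(\overline{E}_0)=1$ forces $\sigma^q_2(\delta E_0')=1$. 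Then Lemma~\ref{product-quadratic-Kervaire} yields $\sigma^q_2(\delta E_0)=\sigma^s_0(G)\cdot\sigma^q_2(\delta E_0')=1\cdot 1=1$. Therefore, on $(D,E)=j_k(D_0,E_0)$, the definition of $\sigma^n_0$ subtracts $j_k\cdot 1\cdot(D_0,E_0)$, yielding $(D',E')=0$, and hence $\sigma^n_0(j_k(D_0,E_0))=\sigma^n_0(0)=0$.

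Combining the two evaluations, $\sigma^n_0$ is the identity on the $\ZZ/8$ summand and zero on the $\ZZ/2$ summand, which is exactly the projection $\ZZ/8\oplus\ZZ/2\twoheadrightarrow\ZZ/8$. The only mildly subtle step, and really the entire content of the proof, is the verification $\sigma^q_2(\delta E_0)=1$; this is a matter of back-tracking the definition of $(D_0,E_0)$ through dimensions $3$, $4$ and $5$ and invoking the relevant product formulas. Since that verification was precisely the design principle for the choice of $(D_0,E_0)$, the final identification is almost tautological once unwound.
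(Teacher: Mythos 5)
Your proposal is correct and amounts to an honest unwinding of what the paper labels ``immediate.'' The paper gives no proof, but the construction of $\sigma^n_0$ (first-case definition when $\sigma^q_2(\delta E)=0$, then the general modification by $j_k\cdot\sigma^q_2(\delta E)\cdot(D_0,E_0)$) is tailored precisely so that the map vanishes on $j_k(D_0,E_0)$ and restricts to the $\ZZ/8$-signature on the image of $L^n_{4k}$; your two generator evaluations are the natural way to spell that out. The one step that carries real content --- and that you correctly isolate --- is the verification that $\sigma^q_2(\delta E_0)=1$, obtained by running the product formula for $\sigma^q_0$ on $E_0'\otimes G'$ (where dimension and $\delta G'=0$ kill all but one term), identifying the result with $\sigma^q_0(\overline{E}_0)=1$ via the cobordism $(D_0',E_0')\otimes G'\simeq(\overline{D}_0,\overline{E}_0)$, and then multiplying by $G$ with Lemma~\ref{product-quadratic-Kervaire} and Lemma~\ref{boundary}. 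Note this verification is not merely needed for the lemma: it is what makes the modification rule well-defined (so that $(D',E')$ actually satisfies $\sigma^q_2(\delta E')=0$ and the first-case definition applies); you might state that observation explicitly, but the argument as written is complete and matches the intent of the paper.
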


\begin{proposition}
\[
L^n_{4s}(\ZZ,\ZZ/2^k)\otimes L^s_{4t}\xrightarrow{\cong} L^n_{4s}(\ZZ,\ZZ/2^k)\otimes L^n_{4t}\xrightarrow{\cong}L^n_{4(s+t)}(\ZZ,\ZZ/2^k)
\]
\end{proposition}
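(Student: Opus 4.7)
The plan is to handle each arrow separately. I assume $s,t>0$ so that $L^n_{4s}(\ZZ/2^k)$ has the (unnatural) decomposition $\ZZ/8\oplus\ZZ/2$ for $k\geq 3$ (analogously $\ZZ/2^k\oplus\ZZ/2$ for $k\leq 2$), while $L^s_{4t}\simeq\ZZ$ is generated by a signature-$1$ Poincar\'e symmetric chain and $L^n_{4t}\simeq\ZZ/8$, with the canonical map $L^s_{4t}\to L^n_{4t}$ being reduction modulo $8$ of the signature. The case $t=0$ is tautological since $L^s_0=L^n_0=\ZZ$ and the map is an isomorphism, reducing the statement to the module identity.

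The first arrow is induced by the surjection $L^s_{4t}\twoheadrightarrow L^n_{4t}$, i.e., $\ZZ\twoheadrightarrow\ZZ/8$ with kernel $8\ZZ$. After tensoring on the left with $A:=L^n_{4s}(\ZZ/2^k)$, it becomes the natural surjection $A=A\otimes\ZZ\twoheadrightarrow A\otimes\ZZ/8=A/8A$. In every case the summands of $A$ are killed by $8$, so $A/8A=A$ and the first arrow is an isomorphism.

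For the second arrow, fix a generator $H\in L^s_{4t}$ represented by a Poincar\'e symmetric chain with $\Sign(H)=1$; the composition is then the multiplication map $(D,E)\mapsto(D,E)\otimes H$, and I verify it is an isomorphism by inspecting each summand. On the first summand, detected by $\sigma^n_0$, the definition $\sigma^n_0(D,E)=\Sign(D\bigcup_{E}F)\pmod{8}$ (for a suitable $F$ killing the $\sigma^q_2(\delta E)$ contribution) combined with multiplicativity of $\sigma^s_0$ under the modified tensor product yields
\[
\sigma^n_0((D,E)\otimes H)=\sigma^n_0(D,E)\cdot\Sign(H)=\sigma^n_0(D,E),
\]
so multiplication by $H$ acts as the identity on this summand. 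On the second $\ZZ/2$ summand, the generator $j_k(D_0,E_0)=j_k(\widetilde{D},\widetilde{E})\otimes\widetilde{G}\otimes G'$ (with $G'$ a dimension-$(4s-4)$ signature-$1$ Poincar\'e symmetric chain) tensors with $H$ to $j_k(\widetilde{D},\widetilde{E})\otimes\widetilde{G}\otimes(G'\otimes H)$; since $G'\otimes H$ is again a Poincar\'e symmetric chain of signature $1$ in dimension $4(s+t)-4$, this expression is precisely the chosen generator of the second summand of $L^n_{4(s+t)}(\ZZ/2^k)$. Hence multiplication by $H$ is an isomorphism on this summand as well, and the full composition is an isomorphism.

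The main obstacle is that the splitting of $L^n_{4*}(\ZZ/2^k)$ is unnatural, so one cannot appeal to a clean module-theoretic statement and must verify the two summands separately via different mechanisms --- signature multiplicativity for the first, and an explicit identification of the distinguished generator for the second. Once the product formulas $\sigma^s_0(C\otimes D)=\sigma^s_0(C)\cdot\sigma^s_0(D)$ and $\sigma^q_2(\delta E\otimes H)=\sigma^q_2(\delta E)\cdot\sigma^s_0(H)$ from the preceding lemmas are in hand, the check on each summand is routine.
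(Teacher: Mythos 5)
Your overall strategy---reduce to the composite map, then check on each summand of the unnatural splitting $L^n_{4s}(\ZZ/2^k)\simeq\ZZ/8\oplus\ZZ/2$---is sound, and the first-arrow reduction ($A\otimes\ZZ\to A\otimes\ZZ/8=A/8A=A$ since $A$ is killed by $8$) is exactly right. The paper does not write out a proof of this proposition (it comes between the projection lemma for $\sigma^n_0$ and the later lemma stating the product formula, which the paper presents as a \emph{consequence} of these propositions), but the treatment of the neighboring propositions, proved by $4$-periodicity and diagram chases, is in the same spirit as your summand-by-summand check.

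There is, however, a concrete dimensional slip in your identification of the distinguished generator. The paper sets $(D_0,E_0)=(D'_0,E'_0)\otimes G$, where $(D'_0,E'_0)$ is the fixed $4$-dimensional $\ZZ/2$ pair and $G$ is a $(4s-4)$-dimensional signature-$1$ Poincar\'e symmetric complex; this gives dimension $4s$. You instead wrote $j_k(D_0,E_0)=j_k(\widetilde{D},\widetilde{E})\otimes\widetilde{G}\otimes G'$ with $G'$ of dimension $4s-4$, which has total dimension $3+2+(4s-4)=4s+1$, i.e.\ you have substituted (a version of) $(\overline{D}_0,\overline{E}_0)\in L^n_{4s+1}(\ZZ/2)$ for $(D_0,E_0)\in L^n_{4s}(\ZZ/2)$. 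The two are related (the paper characterizes $(D'_0,E'_0)$ by the property that $(D'_0,E'_0)\otimes G'$ is cobordant to $(\overline{D}_0,\overline{E}_0)$ with $G'$ a $1$-dimensional de Rham-$1$ chain), but they live in different dimensions. The corrected computation reads $(D_0,E_0)\otimes H=(D'_0,E'_0)\otimes(G\otimes H)$, and since $G\otimes H$ is a signature-$1$ Poincar\'e symmetric complex of dimension $4(s+t)-4$ (hence cobordant to the chosen $G$ in $L^s_{4(s+t)-4}\simeq\ZZ$), this is precisely the chosen generator of the $\ZZ/2$ summand of $L^n_{4(s+t)}(\ZZ/2^k)$; your conclusion survives, but the intermediate identification should be fixed. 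Two smaller points: for $k\leq 2$ the argument on the first summand must be run through $\sigma^n_{0,2}$ (resp.\ $\sigma^n_{0,4}$) rather than $\sigma^n_0$, which you gesture at but do not carry out; and since the paper derives the $\sigma^n_0$ product formula \emph{from} the propositions rather than the other way around, it is cleaner to phrase the first-summand check directly via Novikov additivity of the signature (after pulling out the $(D_0,E_0)$-correction term, using $\sigma^q_2(\delta(E\otimes H))=\sigma^q_2(\delta E)\cdot\Sign(H)$ from the earlier lemma) rather than citing the later lemma, to avoid any appearance of circularity.
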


We also have the following.

\begin{proposition}
\[
L^n_{4s+3} \otimes L^s_{4t+1}\xrightarrow{\cong} L^n_{4s+3}(\ZZ,\ZZ/2^k) \otimes L^s_{4t+1}\rightarrow L^n_{4(s+t+1)}(\ZZ,\ZZ/2^k)\cong \ZZ/8\oplus \ZZ/2 
\]
is an injection into the first summand.
\end{proposition}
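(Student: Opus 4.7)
The plan is to reduce the statement to an integer-level signature computation and then invoke Lemma~\ref{Keyproduct} together with multiplicativity of the symmetric signature. The initial arrow is already an isomorphism because $L^n_{4s+3} \to L^n_{4s+3}(\ZZ/2^k)$ is: the Bockstein long exact sequence, together with $L^n_{4s+2}=0$ and the fact that multiplication by $2^k$ kills $\ZZ/2$, makes the natural map an iso of $\ZZ/2$'s. The composition in the proposition therefore factors as the integer product $L^n_{4s+3} \otimes L^s_{4t+1} \to L^n_{4(s+t+1)} = \ZZ/8$ followed by the reduction $L^n_{4(s+t+1)} \to L^n_{4(s+t+1)}(\ZZ/2^k)$, which the excerpt identifies with the canonical inclusion of $\ZZ/8$ as the first summand of $\ZZ/8 \oplus \ZZ/2$. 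Hence the image is automatically in the first summand, and it suffices to show the integer product sends the generator to the unique order-$2$ element $4 \in \ZZ/8$.

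Fix generators $(D,E) \in L^n_{4s+3}$, a Poincar\'e symmetric-quadratic pair with Kervaire invariant $\sigma^q_2(E) = 1$, and $H \in L^s_{4t+1}$, a Poincar\'e symmetric chain with de Rham invariant $\sigma^s_1(H) = 1$. Their product is the Poincar\'e symmetric-quadratic pair $(D \otimes H, E \otimes H)$ of dimension $4(s+t+1)$, whose class in $L^n_{4(s+t+1)} = \ZZ/8$ is detected by $\sigma^n_0$. Since we are over $\ZZ$, all Bocksteins vanish and we may take $\delta F = 0$ in the construction. The chain $E \otimes H$ is a Poincar\'e quadratic chain of odd dimension $4(s+t)+3$, hence null-cobordant because $L^q_{4(s+t)+3} = 0$; choose a Poincar\'e quadratic pair $F$ with $\partial F = E \otimes H$. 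By definition,
\[
\sigma^n_0\bigl((D,E) \otimes H\bigr) \;=\; \Sign\bigl((D \otimes H) \cup_{E \otimes H} F\bigr) \in \ZZ/8.
\]

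By Novikov additivity the right-hand side equals $\Sign(D \otimes H) + \Sign(F)$ in $\ZZ$, reduced modulo $8$. The first term vanishes by multiplicativity of the symmetric signature of chain pairs: either factor ($D$ or $H$) has odd dimension and hence zero signature by convention, forcing the product signature to be zero as well. The second term satisfies $\Sign(F) \equiv 4 \pmod 8$ by Lemma~\ref{Keyproduct} applied with $S = H \in L^s_{4t+1}$ and $Q = E \in L^q_{4s+2}$ (the statement is independent of the choice of $F$ precisely modulo $8$). Combining gives $\sigma^n_0 = 4 \in \ZZ/8$, the unique nonzero element of order $2$, so the integer product sends the generator to $4$, and the composition in the proposition is an injection with image in the first summand.

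The main technical point is the vanishing $\Sign(D \otimes H) = 0$. Geometrically this is the classical fact that the signature of a product of Poincar\'e pairs is zero whenever one of the factors is odd-dimensional; at the chain level it follows from the K\"unneth decomposition of the middle-dimensional intersection form together with the observation that the odd-degree pairings contributed by $H$ assemble into a skew-symmetric summand, or equivalently from multiplicativity of the symmetric signature of Poincar\'e pairs under tensor product plus the convention that signature vanishes outside dimensions divisible by $4$. Granted this identity, the argument is a direct assembly of Lemma~\ref{Keyproduct}, Novikov additivity, and the already-established canonical splitting of $L^n_{4(s+t+1)}(\ZZ/2^k)$.
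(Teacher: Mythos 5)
Your argument is correct, but it takes a more direct computational route than the paper. The paper's proof is a two-step diagram chase: the first commuting square shows the composite factors through the integral group $L^n_{4(s+t+1)}$ (hence lands in the first summand), and the second square — together with the already-established fact that $(\widetilde D,\widetilde E)\otimes\widetilde G$ has $\sigma^q_0=1$ in $L^n_5(\ZZ/2)$ and that the Bockstein $\delta\colon L^n_5(\ZZ/2)\to L^n_4=\ZZ/8$ kills exactly the first summand — reduces the injectivity to the mod-$2$ computation done earlier. You instead unwind the definition of $\sigma^n_0$ of the integral product $(D\otimes H, E\otimes H)$ directly: Novikov additivity splits $\Sign$ into $\Sign(D\otimes H)+\Sign(F)$, the first term dies by multiplicativity of the signature of pairs (both factors odd-dimensional), and the second term is pinned to $4\pmod 8$ by Lemma~\ref{Keyproduct}. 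Both arguments therefore hinge on Lemma~\ref{Keyproduct}, but yours invokes it explicitly at the end rather than implicitly through the product formula for $\sigma^q_0$, and yours avoids the mod-$2$ detour entirely. Your approach is arguably cleaner for this particular statement and gives the explicit value $4\in\ZZ/8$ of the generator's image without needing to track the canonical decomposition of $L^n_5(\ZZ/2)$; the paper's approach has the advantage of reusing a computation already made and of leaning on the $4$-periodicity to normalize dimensions. One small caveat worth stating explicitly: as with the paper, your argument assumes $k\geq 3$ (otherwise the mod-$2^k$ group is not $\ZZ/8\oplus\ZZ/2$ and the reduction map $\ZZ/8\to L^n_m(\ZZ/2^k)$ is not injective), which is the regime the proposition implicitly addresses.
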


\begin{proof}
It is irrelevant of the second summand due to the following diagram.
\[
\begin{tikzcd}
 L^n_{4s+3} \otimes L^s_{4t+1} \arrow[r] \arrow[d] & L^n_{4(s+t+1)} \arrow[d] \\
 L^n_{4s+3}(\ZZ,\ZZ/2^k) \otimes L^n_{4t+1} \arrow[r] & L^n_{4(s+t+1)}(\ZZ,\ZZ/2^k)
\end{tikzcd}
\]

On the other hand, because of the $4$-periodicity, it reduces to consider the following diagram.
\[
\begin{tikzcd}
 L^n_4(\ZZ,\ZZ/2)\otimes L^s_1 \arrow[r] \arrow[d,"\cong"] & L^n_3\otimes L^s_1 \arrow[d] \\
 L^n_5(\ZZ,\ZZ/2) \arrow[r] & L^n_4
\end{tikzcd}
\]

The generator of $L^n_3\otimes L^s_1$ is mapped to $(\widetilde{D},\widetilde{E})\otimes \widetilde{G}\in L^n_5(\ZZ,\ZZ/2)$, with the notation introduced in the proof of \ref{Notation}.
\end{proof}

We have also proved the following.

\begin{lemma}
Let $(D,E)$ be a $\ZZ/2^k$ Poincar\'e symmetric-quadratic pair with $k\geq 3$ and let $G$ be a Poincar\'e symmetric complex. Then
\[
\sigma^n_0((D,E)\otimes G)=\sigma^n_0(D,E)\cdot \sigma^s_0(G)+j_8(\sigma^n_3(D,E) \cdot\sigma^s_1(G))\in \ZZ/8
\]
where $j_8:\ZZ/2\rightarrow \ZZ/8$.
\end{lemma}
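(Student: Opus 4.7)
The plan is a case analysis on $(\dim(D,E),\dim G)\pmod 4$. Both sides vanish unless $\dim(D,E)+\dim G\equiv 0\pmod 4$, and among the four residue combinations with total dimension $\equiv 0\pmod 4$, only $(0,0)$ and $(3,1)\pmod 4$ can be nontrivial: in the cases $(1,3)$ and $(2,2)$ we have $L^s_{\dim G}=0$, so $G$ is null-cobordant, hence $(D,E)\otimes G$ is null-cobordant and both sides vanish.

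For $(\dim(D,E),\dim G)\equiv (0,0)\pmod 4$, we have $\sigma^n_3(D,E)=0$, so the claim reduces to $\sigma^n_0((D,E)\otimes G)=\sigma^n_0(D,E)\cdot\sigma^s_0(G)$. When $\sigma^q_2(\delta E)=0$, I apply the definition of $\sigma^n_0$ directly: choose quadratic pairs $\delta F$ bounding $\delta E$ and $F$ bounding $2^k\delta F\cup_{2^k\delta E}E$, so that $\sigma^n_0(D,E)=\Sign(D\cup_E F)$. Since $G$ has trivial Bockstein, the modified tensor product with $G$ reduces to the ordinary pair product, and Lemma \ref{product-quadratic-Kervaire} gives $\sigma^q_2(\delta E\otimes G)=\sigma^s_0(G)\cdot\sigma^q_2(\delta E)=0$; hence $\delta F\otimes G$ and $F\otimes G$ provide the analogous witnesses for $(D,E)\otimes G$, and multiplicativity of signature yields
\[
\sigma^n_0((D,E)\otimes G)=\Sign((D\cup_E F)\otimes G)=\Sign(D\cup_E F)\cdot\Sign(G)=\sigma^n_0(D,E)\cdot\sigma^s_0(G)\pmod 8.
\]
When $\sigma^q_2(\delta E)=1$, by definition $\sigma^n_0(D,E)=\sigma^n_0(D',E')$ with $(D',E')=(D,E)-j_k(D_0,E_0)$ and $\sigma^q_2(\delta E')=0$; applying the previous subcase to $(D',E')$ and using additivity, it suffices to show $\sigma^n_0(j_k(D_0,E_0)\otimes G)=0$. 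Writing $(D_0,E_0)=(D'_0,E'_0)\otimes G_{4s-4}$, the product $(D_0,E_0)\otimes G=(D'_0,E'_0)\otimes(G_{4s-4}\otimes G)$ represents $(\sigma^s_0(G)\bmod 2)$ times the second-summand generator of $L^n_{4(s+t)}(\ZZ/2)$, so after applying $j_k$ it lies in the second $\ZZ/2$-summand of $L^n_{4(s+t)}(\ZZ/2^k)$, on which $\sigma^n_0$ vanishes by the projection lemma immediately preceding this statement.

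For $(\dim(D,E),\dim G)\equiv (3,1)\pmod 4$, we have $\sigma^n_0(D,E)=0$, so the identity becomes $\sigma^n_0((D,E)\otimes G)=j_8(\sigma^q_2(E)\cdot\sigma^s_1(G))$. If either invariant on the right vanishes, the corresponding factor is null-cobordant in $L^n_{4s+3}=\ZZ/2$ or $L^s_{4t+1}=\ZZ/2$, so $(D,E)\otimes G$ bounds and both sides equal $0$. If both equal $1$, the isomorphisms $L^s_{4s}\otimes L^n_3\simeq L^n_{4s+3}$ and $L^s_1\otimes L^s_{4t}\simeq L^s_{4t+1}$ allow me to replace $(D,E)$ by $(\widetilde D,\widetilde E)\otimes H_1$ and $G$ by $\widetilde G_1\otimes H_2$ with $H_1,H_2$ symmetric complexes of odd signature. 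Then $(D,E)\otimes G\sim (\widetilde D,\widetilde E)\otimes\widetilde G_1\otimes (H_1\otimes H_2)$, and applying Case $(0,0)$ reduces the problem to computing $\sigma^n_0((\widetilde D,\widetilde E)\otimes\widetilde G_1)\in L^n_4(\ZZ/2^k)$. By the preceding proposition, this element is the image of the generator of $L^n_3\otimes L^s_1\cong\ZZ/2$ under an injection into the first summand $\ZZ/8$, and hence equals the unique order-$2$ element $4\in\ZZ/8$; since $4$ times any odd integer is $4\pmod 8$, the final value is $4=j_8(1)=j_8(\sigma^q_2(E)\cdot\sigma^s_1(G))$.

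The main obstacle is the subcase of Case $(0,0)$ with $\sigma^q_2(\delta E)=1$, specifically verifying that $\sigma^n_0(j_k(D_0,E_0)\otimes G)=0$. This hinges on the stability of the non-canonical $\ZZ/2$-summand of $L^n_{4s}(\ZZ/2^k)$ under the $L^s_{4t}$-action, which follows structurally from the construction of $(D_0,E_0)$ via $(D'_0,E'_0)\otimes G_{4s-4}$ together with the preceding propositions computing products in $L^n_*$.
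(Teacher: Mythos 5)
Your proposal is correct, and the overall logic is the one the paper intends: reduce to a dimension case analysis and quote the group-level product computations for $L^n_*(\ZZ/2^k)$ (the isomorphism $L^n_{4s}(\ZZ/2^k)\otimes L^s_{4t}\xrightarrow{\simeq}L^n_{4(s+t)}(\ZZ/2^k)$, the injection $L^n_{4s+3}\otimes L^s_{4t+1}\hookrightarrow L^n_{4(s+t+1)}(\ZZ/2^k)$ onto the first summand, and the lemma that $\sigma^n_0$ is the $\ZZ/8$-projection). The paper treats the lemma as an immediate repackaging of these facts ("Therefore, we have also proven the following"), whereas you re-derive the $(0,0)$ case at the chain level, including a subcase split on $\sigma^q_2(\delta E)$ and an explicit construction of the witnesses $\delta F\otimes G$, $F\otimes G$. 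That subcase split buys you transparency about why the correction term $j_k\sigma^q_2(\delta E)(D_0,E_0)$ in the definition of $\sigma^n_0$ does not interfere with the product formula, but it is redundant work given the preceding propositions: once one knows $-\otimes G$ is (after choosing $G$ of signature $1$) an isomorphism that carries the image of $L^n_{4s}$ to the image of $L^n_{4(s+t)}$ and the chosen generator $(D_0,E_0)$ to the corresponding one in higher degree, the equality $\sigma^n_0((D,E)\otimes G)=\sigma^s_0(G)\cdot\sigma^n_0(D,E)$ follows with no case split. Your $(3,1)$ case and the observation that $(1,3)$ and $(2,2)$ are trivial by $L^s_{4t+3}=L^s_{4t+2}=0$ match the paper's intended argument and are correctly executed, with $4\in\ZZ/8$ being identified as the image of $L^n_3\otimes L^s_1\cong\ZZ/2$ under the injection into the first summand.
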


Now consider the case for $k=1,2$.

If $k=1$, $(D,E)$ is a $\ZZ/2$ Poincar\'e symmetric-quadratic pair and $4(D,E)$ is a $\ZZ/8$ pair. Define
\[
\sigma^n_{0,2}(D,E)=\sigma^n_0(4(D,E))\in 4\cdot\ZZ/8\cong\ZZ/2
\]

Alternatively, there is a similar way to define $\sigma^n_{0,2}$ like the case of $k\geq 3$. We do not repeat the tedious process to describe it and check the agreement of two definitions.

\begin{proposition}
\[
L^n_{4s+2}(\ZZ,\ZZ/2^k)\otimes L^n_{4t+2}(\ZZ,\ZZ/2^k)\xrightarrow{0} L^n_{4(s+t+1)}(\ZZ,\ZZ/2^k)
\]
\end{proposition}

\begin{proof}
We only consider the case when $k=1$ and the case for general $k$ is analogous.

$L^n_{4(s+t+1)}(\ZZ/2)$ has a decomposition $\ZZ/2\oplus \ZZ/2$ and we want to show that the projection of the image to each direct summand of $L^n_{4(s+t+1)}$ is $0$.

For the first summand, consider the diagram
\[
\begin{tikzcd}
L^s_{4s+2}(\ZZ,\ZZ/2)\otimes L^s_{4t+2}(\ZZ,\ZZ/2) \arrow[r,"0"] \arrow[d,"\cong"] & L^s_{4(s+t+1)}(\ZZ,\ZZ/2) \arrow[d,"\cong"] \\
L^n_{4s+2}(\ZZ,\ZZ/2)\otimes L^n_{4t+2}(\ZZ,\ZZ/2) \arrow[r] & L^n_{4(s+t+1)}(\ZZ,\ZZ/2)
\end{tikzcd}
\]

For the second summand, remember that the direct sum decomposition follows from the isomorphism 
\[
L^n_4(\ZZ,\ZZ/2)\otimes L^s_1\xrightarrow{\cong} L^n_5(\ZZ,\ZZ/2)
\]
However, $L^n_2(\ZZ,\ZZ/2)\otimes L^s_1\rightarrow L^n_3(\ZZ,\ZZ/2)$ is a zero map because of the diagram
\[
\begin{tikzcd}
L^s_2(\ZZ,\ZZ/2)\otimes L^s_1 \arrow[r] \arrow[d,"\cong"] & L^s_3(\ZZ,\ZZ/2)=0 \arrow[d]  \\
L^n_2(\ZZ,\ZZ/2)\otimes L^s_1 \arrow[r] & L^n_3(\ZZ,\ZZ/2)
\end{tikzcd}
\]
\end{proof}

Then we also get that

\begin{lemma}
Let $(D,E)$ be a $\ZZ/2$ Poincar\'e symmetric-quadratic pair and let $G$ be a $\ZZ/2$ Poincar\'e symmetric complex. Then
\[
\sigma^n_{0,2}((D,E)\otimes G)=\sigma^n_{0,2}(D,E)\cdot \sigma^s_0(G)+\sigma^n_3(D,E) \cdot \sigma^s_1(G)\in \ZZ/2
\]
\end{lemma}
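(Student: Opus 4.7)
The right-hand side vanishes automatically except in two dimensional configurations: (i) $\dim(D,E)\equiv 0$ and $\dim G\equiv 0\pmod 4$, where only the term $\sigma^n_{0,2}(D,E)\cdot\sigma^s_0(G)$ can be nonzero, and (ii) $\dim(D,E)\equiv 3$ and $\dim G\equiv 1\pmod 4$, where only $\sigma^n_3(D,E)\cdot\sigma^s_1(G)$ can be nonzero. In all other dimensional configurations both sides vanish by the convention on invariants, so my plan is to dispose of those trivially and handle cases (i) and (ii) separately.

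For case (i), I plan to mirror the $k\geq 3$ argument using the alternative description of $\sigma^n_{0,2}$ alluded to in the paragraph introducing the invariant. After a reduction that forces $\sigma^q_2(\delta E)=0$ (subtract a copy of $(D_0,E_0)$ if necessary and use bilinearity together with the fact that $(D_0,E_0)$ lies in the kernel of $\sigma^n_{0,2}$ and has vanishing $\sigma^n_3$), choose a Poincar\'e quadratic nullcobordism $\delta F$ of $\delta E$ and then a Poincar\'e quadratic nullcobordism $F$ of $2\delta F\cup_{2\delta E}E$; the closed $\ZZ/2$ Poincar\'e symmetric complex $D\cup_E F$ satisfies $\sigma^n_{0,2}(D,E)=\sigma^s_0(D\cup_E F)$. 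I would then verify that the analogous capping-off applied to $(D,E)\otimes G$ produces a closed $\ZZ/2$ symmetric complex cobordant to the modified tensor product $(D\cup_E F)\otimes G$, and conclude using the mod-$2$ multiplicativity $\sigma^s_0(C\otimes G)=\sigma^s_0(C)\cdot\sigma^s_0(G)$ established earlier in this chapter.

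For case (ii), I plan to use the definition $\sigma^n_{0,2}(X)=\sigma^n_0(4X)\in 4\ZZ/8\simeq\ZZ/2$ to transfer the question into the $\ZZ/8$-coefficient setting, where the analogous formula was already proven for $k\geq 3$. By bilinearity it suffices to treat the case where $(D,E)$ and $G$ generate their respective $L$-groups. The key identity is that $4((D,E)\otimes G)$, viewed as a $\ZZ/8$ Poincar\'e symmetric-quadratic pair, agrees up to $\ZZ/8$ cobordism with $(4(D,E))\otimes G$, where the latter is the $\ZZ/8$-modified tensor product and $G$ enters through the natural embedding $\ZZ/2\hookrightarrow\ZZ/8$, $1\mapsto 4$. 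Applying the $k\geq 3$ product formula then yields $\sigma^n_0((4(D,E))\otimes G)=j_8(\sigma^n_3(D,E)\cdot\sigma^s_1(G))=4\cdot(\sigma^n_3(D,E)\cdot\sigma^s_1(G))\in\ZZ/8$, which under the identification $4\ZZ/8\simeq\ZZ/2$ becomes precisely $\sigma^n_3(D,E)\cdot\sigma^s_1(G)\in\ZZ/2$, as required. The Key Product Lemma~\ref{Keyproduct} underpins this identification at the chain level.

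The principal technical obstacle is in case (ii): rigorously justifying the chain-level interaction between multiplication by $4$ and the modified tensor product with a $\ZZ/2$ factor, and in particular verifying that the Kervaire-times-de-Rham contribution is preserved rather than lost when passing between the $\ZZ/2$ and $\ZZ/8$ settings. This requires a careful comparison of the modification pieces $\delta C\otimes \delta D\otimes W$ appearing in the two coefficient regimes, and checking that the resulting $\ZZ/8$ class agrees with the one predicted by the propositions on $L^n_{4s+3}\otimes L^s_{4t+1}\to L^n_{4(s+t+1)}(\ZZ/2^k)$. Case (i), by contrast, is primarily a routine bookkeeping exercise once the capping-off construction is shown to commute with the modified tensor product.
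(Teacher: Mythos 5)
Your high-level plan differs from the paper's: the paper leaves this lemma without explicit proof, with the phrase ``Then we also get that'' signalling that it is meant to be read off directly from the sequence of product-structure propositions established just before (namely that $L^n_{4s+2}(\ZZ/2^k)\otimes L^n_{4t+2}(\ZZ/2^k)\to L^n_{4(s+t+1)}(\ZZ/2^k)$ is zero, that $L^n_{4s+3}\otimes L^s_{4t+1}\to L^n_{4(s+t+1)}(\ZZ/2^k)$ injects into the first summand, and that $L^n_{4s}(\ZZ/2^k)\otimes L^s_{4t}$ is the $4$-periodicity), together with bilinearity. You instead attempt a direct chain-level verification. That is a legitimate alternative route, but as written it has two genuine gaps.

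First, your dimension analysis is incomplete. You claim that outside the configurations $(\dim(D,E),\dim G)\equiv(0,0)$ and $(3,1)\pmod 4$ ``both sides vanish by the convention on invariants.'' That is true for the right-hand side, but not for the left: when $\dim(D,E)\equiv 2$ and $\dim G\equiv 2\pmod 4$, the product $(D,E)\otimes G$ has dimension $\equiv 0\pmod 4$, so $\sigma^n_{0,2}((D,E)\otimes G)$ is \emph{not} automatically zero by convention. Its vanishing requires the proposition (proved immediately above the lemma in the paper) that $L^n_{4s+2}(\ZZ/2^k)\otimes L^n_{4t+2}(\ZZ/2^k)\to L^n_{4(s+t+1)}(\ZZ/2^k)$ is the zero map, combined with the fact that the $L^s$-module structure on $L^n$ factors through $L^s\to L^n$ and that $L^s_{4t+2}(\ZZ/2)\to L^n_{4t+2}(\ZZ/2)$ is an isomorphism. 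This case must be argued, not dismissed.

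Second, your ``key identity'' in case (ii) is incorrect as stated. You propose that $4((D,E)\otimes G)$, the $\ZZ/8$ pair obtained from four copies of the $\ZZ/2$-modified product, agrees up to $\ZZ/8$ cobordism with ``$(4(D,E))\otimes G$ where $G$ enters through $\ZZ/2\hookrightarrow\ZZ/8$, $1\mapsto 4$.'' Taking $G\mapsto 4G$ and then forming the $\ZZ/8$-modified product with $4(D,E)$ produces sixteen copies of the body $D\otimes G$, while $4((D,E)\otimes G)$ has only four; these are not $\ZZ/8$-cobordant in general (their $\ZZ/8$ signatures differ by a factor of $4$). The correct move is different: since $L^s_{4t+1}(\ZZ/2)\cong L^s_{4t+1}$, replace $G$ by a $\ZZ$-coefficient symmetric complex $G'$ representing the same class, so that the $\ZZ/8$-modified product $(4(D,E))\otimes G'$ makes sense with the $G'$-factor unmodified, and then the $k\geq 3$ formula can be applied directly. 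As written, your identity would give the wrong factor of $4$. The paper's route through the $L$-group proposition $L^n_{4s+3}\otimes L^s_{4t+1}\to L^n_{4(s+t+1)}(\ZZ/2^k)$ sidesteps this bookkeeping entirely, which is its advantage; if you want the chain-level proof you must fix the coefficient transfer.

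Case (i) of your proposal is essentially sound, modulo the paper's own unverified claim that the capping-off description of $\sigma^n_{0,2}$ agrees with the defining formula $\sigma^n_0(4\,\cdot\,)$, which the paper also leaves as an exercise.
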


\begin{lemma}
Let $(D,E)$ be a $\ZZ/2$ Poincar\'e symmetric-quadratic pair and $G$ be a $\ZZ/2$ Poincar\'e symmetric complex . Then
\[
\sigma^n_{0,2}(\delta((D,E)\otimes G))=\sigma^n_{0,2}(\delta(D,E))\cdot \sigma^s_0(G)+\sigma^n_{0,2}(D,E)\cdot \sigma^s_0(\delta G)\in \ZZ/2
\]
\end{lemma}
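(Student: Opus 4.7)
The plan is to combine the Bockstein formula with the product formula for $\sigma^n_{0,2}$ just established. First, Lemma~\ref{boundary}, extended routinely from $\ZZ/n$ chains to $\ZZ/n$ symmetric-quadratic pairs, provides a $\ZZ/2$-Poincar\'e cobordism
\[
\delta\bigl((D,E)\otimes G\bigr) \;\sim\; \delta(D,E)\otimes G \;\oplus\; (D,E)\otimes \delta G,
\]
in which $\delta(D,E)$ and $\delta G$ are the regular Poincar\'e Bocksteins, viewed as $\ZZ/2$ objects with trivial Bockstein. Since $\sigma^n_{0,2}$ is cobordism-invariant and additive under direct sum, applying it to both sides reduces the problem to evaluating $\sigma^n_{0,2}$ on the two summands on the right.

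I then apply the preceding product lemma to each of those summands. This immediately produces the two claimed main terms $\sigma^n_{0,2}(\delta(D,E))\cdot \sigma^s_0(G)$ and $\sigma^n_{0,2}(D,E)\cdot \sigma^s_0(\delta G)$, together with cross terms $\sigma^n_3(\delta(D,E))\cdot \sigma^s_1(G) + \sigma^n_3(D,E)\cdot \sigma^s_1(\delta G)$. The remaining task — and the crux of the proof — is to show these cross terms vanish in $\ZZ/2$.

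The vanishing is extracted from the underlying $\ZZ/8$-level formula (the analogue for $k\geq 3$, $\sigma^n_0=\sigma^n_0\cdot\sigma^s_0+j_8(\sigma^n_3\cdot\sigma^s_1)$ with $j_8:\ZZ/2\to\ZZ/8$, $1\mapsto 4$). Because $\sigma^n_{0,2}$ is defined by the $4$-copies construction and lands in $4\cdot\ZZ/8\simeq\ZZ/2$, the $j_8$-lifted cross contribution equals $4\in\ZZ/8$, which corresponds to $0\in\ZZ/2$. Equivalently, by Lemma~\ref{Keyproduct}, the product of a Kervaire generator and a de Rham generator in $L^n_*$ realizes exactly the class $4\in\ZZ/8$, which reduces to $0$ modulo $2$. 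This kills both cross terms and leaves exactly the two main terms of the claim.

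The main obstacle is this cross-term vanishing: the previous product formula as stated contributes $\sigma^n_3\cdot\sigma^s_1$ pieces that at first look nontrivial, and carefully tracing them through the $\ZZ/8\to\ZZ/2$ reduction built into $\sigma^n_{0,2}$ is what makes those pieces disappear.
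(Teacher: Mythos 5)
Your decomposition $\delta((D,E)\otimes G)\sim \delta(D,E)\otimes G\oplus (D,E)\otimes\delta G$ and the strategy of applying the preceding product formula to each summand match the paper's own (one-line) proof. However, your argument for why the cross terms vanish contains a genuine error. You claim the $j_8$-lifted cross contribution is $4\in\ZZ/8$, ``which corresponds to $0\in\ZZ/2$.'' But the isomorphism $4\cdot\ZZ/8\simeq\ZZ/2$ used in the definition of $\sigma^n_{0,2}$ is the unique group isomorphism, which sends $4\mapsto 1$, \emph{not} $0$; reading it as reduction mod~$2$ would make $\sigma^n_{0,2}$ identically zero. So if the cross contribution really were $4\in\ZZ/8$, it would land on $1\in\ZZ/2$ and the lemma would fail.

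The reason the cross terms actually disappear is different and sits upstream: in each summand of the decomposition exactly one tensor factor is a genuine (closed) Bockstein, not a $\ZZ/2$ object. When you unwind $\sigma^n_{0,2}$ via its $4$-copy definition and invoke the $\ZZ/8$ product formula $\sigma^n_0((4(D,E))\otimes G)=\sigma^n_0(4(D,E))\cdot\sigma^s_0(G)+j_8(\sigma^n_3(4(D,E))\cdot\sigma^s_1(G))$ with $G$ closed, the Kervaire invariant $\sigma^n_3(4(D,E))$ is automatically zero because it is the $\ZZ/2$-valued Kervaire invariant of \emph{four} disjoint copies. Hence the $j_8$-term is literally $j_8(0)=0$, not $j_8(1)=4$. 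Your write-up instead retains the cross terms $\sigma^n_3(\delta(D,E))\cdot\sigma^s_1(G)+\sigma^n_3(D,E)\cdot\sigma^s_1(\delta G)$ and then ``cancels'' them by the faulty $4\mapsto 0$ step. You need to replace that final step by the observation that $\sigma^n_3$ of four copies vanishes, so those cross terms never arise in the first place.
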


\begin{proof} 
$\delta((D,E)\otimes G)$ is bordant to $(\delta D,\delta E)\otimes G\oplus (D,E)\otimes\delta G$ as $\ZZ/2$ Poincar\'e symmetric-quadratic pairs.
\end{proof}

\begin{lemma}
\label{5+3=8normal}
Let $(D,E)$ be a $\ZZ/2$ Poincar\'e symmetric-quadratic pair and let $G$ be a $\ZZ/2$ Poincar\'e symmetric complex. Then
\[
\sigma^n_1((D,E)\otimes G)=\sigma^n_1(D,E)\cdot \sigma^s_0(G)+\sigma^n_{0,2}(D,E)\cdot \sigma^s_1(G)\in\ZZ/2
\]
\end{lemma}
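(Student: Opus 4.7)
The plan is to reduce the identity to a statement on $L$-group generators: since $\sigma^n_1$, $\sigma^n_{0,2}$, $\sigma^s_0$, $\sigma^s_1$ are cobordism invariants and the tensor product is bilinear on $L$-groups, both sides of the asserted formula define bilinear pairings on $L^n_*(\ZZ/2)\otimes L^s_*(\ZZ/2)$ and it suffices to check it on generators. Writing $d_1=\dim(D,E)$ and $d_2=\dim G$, the LHS vanishes unless $d_1+d_2\equiv 1\pmod 4$, while the two RHS terms can only contribute when $(d_1,d_2)\equiv (4s+1,4t)$ or $(4s,4t+1)\pmod 4$. The remaining cases with $d_1+d_2\equiv 1\pmod 4$ are disposed of by earlier results: for $d_2\equiv 3\pmod 4$ one has $G=0$ in $L^s_{4t+3}(\ZZ/2)=0$, and for $(d_1,d_2)\equiv (3,2)\pmod 4$ the preceding proposition places $(D,E)\otimes G$ in the second summand of $L^n_{4(s+t+1)+1}(\ZZ/2)$ where $\sigma^n_1$ vanishes by definition.

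In the case $(d_1,d_2)=(4s+1,4t)$, the decomposition $L^n_{4s+1}(\ZZ/2)\simeq\ZZ/2\oplus\ZZ/2$ gives two generators. The first-summand generator is a Poincar\'e symmetric chain $D_1$ with $\sigma^s_1(D_1)=1$, and $(D_1,0)\otimes G$ is again a Poincar\'e symmetric chain, so $\sigma^n_1$ reduces to $\sigma^s_1$ and the multiplicativity formula for $\sigma^s_1$ proved earlier, together with $\sigma^s_0(D_1)=0$, yields $\sigma^s_1(G)=\sigma^n_1(D_1)\sigma^s_0(G)$. The second-summand generator $(\overline{D}_0,\overline{E}_0)=(\widetilde D,\widetilde E)\otimes\widetilde G\otimes G_0$ has $\sigma^n_1=0$ by definition, and tensoring with $G$ replaces $G_0$ by $G_0\otimes G$, producing again a second-summand class on which $\sigma^n_1$ vanishes. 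In the case $(d_1,d_2)=(4s,4t+1)$, for the first-summand generator $D_2$ of $L^n_{4s}(\ZZ/2)$ (a Poincar\'e symmetric chain of signature $1$), I would identify the multiplication-by-$4$ map $L^n_{4s}(\ZZ/2)\to L^n_{4s}(\ZZ/8)$ via universal coefficients as $(a,b)\mapsto(4a,b)$, giving $\sigma^n_{0,2}(D_2)=\sigma^n_0(4D_2)=4\in\ZZ/8$, i.e., $1\in\ZZ/2$; then $(D_2,0)\otimes G$ is a Poincar\'e symmetric chain and multiplicativity of $\sigma^s_1$ (using $\sigma^s_0(D_2)=1$, $\sigma^s_1(D_2)=0$) gives the LHS as $\sigma^s_1(G)$, matching $\sigma^n_{0,2}(D_2)\sigma^s_1(G)$. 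For the second-summand generator $(D_0,E_0)$, the same universal-coefficient calculation yields $\sigma^n_{0,2}(D_0,E_0)=0$, while the defining relation $(D_0,E_0)\otimes G'\simeq(\overline{D}_0,\overline{E}_0)$ combined with $\sigma^n_1(\overline{D}_0,\overline{E}_0)=0$ forces the LHS to vanish, so both sides agree.

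The main obstacle is the computation of $\sigma^n_{0,2}$ on the two summand generators of $L^n_{4s}(\ZZ/2)$. Abstractly this follows from the universal-coefficient splitting identifying the map into $L^n_{4s}(\ZZ/8)\simeq\ZZ/8\oplus\ZZ/2$, but a chain-level verification requires tracking representatives through multiplication by $4$ and then invoking the cleaner $k\geq 3$ product formula for $\sigma^n_0$ together with the projection property $\sigma^n_0:L^n_{4s}(\ZZ/2^k)\to\ZZ/8$ already established. The remainder of the case analysis is routine given the multiplicativity formulas for $\sigma^s_0$, $\sigma^s_1$, and $\sigma^q_0$ proved earlier in the chapter.
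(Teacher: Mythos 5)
Your proof is correct and takes exactly the approach the paper intends: the lemma is stated without proof as a corollary of the preceding $L$-group product propositions, and the way to extract it is precisely the generator-by-generator check you carry out (both sides are cobordism-invariant bilinear pairings on $L^n_*(\ZZ/2)\otimes L^s_*(\ZZ/2)$, the only nonvanishing degrees are $(1,0)$ and $(0,1)$ modulo $4$, and the four generators are the symmetric-chain generator and $(\overline{D}_0,\overline{E}_0)$ in degree $4s+1$ and the signature-one symmetric chain and $(D_0,E_0)$ in degree $4s$). Two small slips worth fixing: in the $(4s+1,4t)$ case you wrote ``yields $\sigma^s_1(G)=\sigma^n_1(D_1)\sigma^s_0(G)$,'' where the left-hand side should be $\sigma^s_1(D_1\otimes G)$ (equivalently $\sigma^n_1((D_1,0)\otimes G)$); and the relation $(D_0,E_0)\otimes G'\simeq(\overline{D}_0,\overline{E}_0)$ is off by a signature-one factor, since the paper defines $(D_0,E_0)=(D'_0,E'_0)\otimes G_0$ with $(D'_0,E'_0)\otimes G'\simeq(\overline{D}_0,\overline{E}_0)$; the correct statement is that $(D_0,E_0)\otimes G$ is cobordant to $(\overline{D}_0,\overline{E}_0)$ tensored with a signature-one symmetric chain, which is still a second-summand class on which $\sigma^n_1$ vanishes. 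Finally, the identification of the $\times 4$ map $L^n_{4s}(\ZZ/2)\to L^n_{4s}(\ZZ/8)$ preserving the summand decomposition is not pure universal-coefficients; it is exactly what the paper builds into the definition of $(D_0,E_0)$ (namely, $j_k(D_0,E_0)$ generates the second summand of $L^n_{4s}(\ZZ/2^k)$, and $j_3=\times 4$), combined with the projection property $\sigma^n_0:L^n_{4s}(\ZZ/2^k)\to\ZZ/8$ already established, so it is cleaner to cite that directly than to appeal to an abstract splitting.
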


Similarly, if $k=2$, $(D,E)$ is a $\ZZ/4$ Poincar\'e symmetric-quadratic pair and define
\[
\sigma^n_{0,4}(D,E)=\sigma^n_0(2(D,E))\in 2\cdot\ZZ/8\cong\ZZ/4
\]
There are also some multiplicative formulae of bordism invariants. They are pretty similar to the $k=1$ case, so let us skip stating them. \hfill \break

\textbf{(4) Product Structure on $L^n_*(\ZZ,\ZZ/2^k)$}

We already proved the product formula $
L^n_{4s+2}(\ZZ,\ZZ/2^k)\otimes L^n_{4t+2}(\ZZ,\ZZ/2^k)\xrightarrow{0} L^n_{4(s+t+1)}(\ZZ,\ZZ/2^k)$.

\begin{proposition}
\[
L^n_{4s+1}(\ZZ,\ZZ/2^k)\otimes L^n_{4t+2}(\ZZ,\ZZ/2^k)\xrightarrow{0} L^n_{4(s+t)+3}(\ZZ,\ZZ/2^k)
\]
\end{proposition}

\begin{proof}
It follows from the diagram
\[
\begin{tikzcd}
L^s_{4s+1}(\ZZ,\ZZ/2^k)\otimes L^s_{4t+2}(\ZZ,\ZZ/2^k) \arrow[r,"0"] \arrow[d] & L^s_{4(s+t)+3}(\ZZ,\ZZ/2^k) \arrow[d,"\cong"] \\
L^n_{4s+1}(\ZZ,\ZZ/2^k)\otimes L^n_{4t+2}(\ZZ,\ZZ/2^k) \arrow[r] & L^n_{4(s+t)+3}(\ZZ,\ZZ/2^k)
\end{tikzcd}
\]
and the diagram
\[
\begin{tikzcd}
L^n_{4s-2}(\ZZ,\ZZ/2^k)\otimes L^n_3 \otimes L^s_{4t+2}(\ZZ,\ZZ/2^k) \arrow[r,"0"] \arrow[d] & L^n_{4(s+t)}(\ZZ,\ZZ/2^k) \otimes L^n_3 \arrow[d] \\
L^n_{4s+1}(\ZZ,\ZZ/2^k)\otimes L^n_{4t+2}(\ZZ,\ZZ/2^k) \arrow[r] & L^n_{4(s+t)+3}(\ZZ,\ZZ/2^k)
\end{tikzcd}
\]
\end{proof}

\begin{proposition}
\[
L^n_{4s}(\ZZ,\ZZ/2^k)\otimes L^n_{4t+3}(\ZZ,\ZZ/2^k)\rightarrow L^n_{4(s+t)+3}(\ZZ,\ZZ/2^k)
\]
is isomorphic to the map 
\[
(\ZZ/2^l\oplus \ZZ/2)\otimes \ZZ/2\xrightarrow{\pi_1} \ZZ/2^l\otimes \ZZ/2\rightarrow \ZZ/2
\]
where the first arrow is the projection onto the first factor. $l=3$ if $k\geq 3$ and $l=k$ if $k=1,2$.
\end{proposition}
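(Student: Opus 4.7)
The target $L^n_{4(s+t)+3}(\ZZ/2^k)\simeq \ZZ/2$ is detected by $\sigma^n_3$. Since $L^n_{4t+2}=0$, the Bockstein long exact sequence gives an isomorphism $L^n_{4t+3}\xrightarrow{\simeq} L^n_{4t+3}(\ZZ/2^k)$, so the right factor is generated by the closed product $N\otimes H$ with $N\in L^n_3$, $\sigma^n_3(N)=1$, and $H\in L^s_{4t}$, $\sigma^s_0(H)=1$. The first summand of $L^n_{4s}(\ZZ/2^k)$ is the image of $L^n_{4s}$, generated by (the image of) a closed symmetric class $M$ with $\sigma^s_0(M)=1$; the second summand is generated by $j_k(D_0,E_0)$, with $(D_0,E_0)=(D'_0,E'_0)\otimes G_{4s-4}$ as in the earlier construction.

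For the first-summand pairing, the product formula $\sigma^n_3(P\otimes H')=\sigma^n_3(P)\sigma^s_0(H')$ applied with $P=N$ and $H'=M\otimes H$ gives $\sigma^n_3=1$; by $\ZZ/2^l$-linearity in the first factor this matches the ``project to the first summand, then reduce mod $2$'' behavior stated in the proposition. For the second-summand pairing, the product formula for $\sigma^n_0$ (using that $\sigma^n_0$ and $\sigma^n_3$ of $j_k(D_0,E_0)$ vanish by dimension and construction) together with a Bockstein comparison identifies $j_k(D_0,E_0)\otimes H$ with the second-summand generator of $L^n_{4(s+t)}(\ZZ/2^k)$. Hence it remains to prove $j_k(D_0,E_0)\otimes N=0$ in $L^n_{4s+3}(\ZZ/2^k)$ for each $s\geq 1$.

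For $k\geq 2$ this is automatic: the composition $\rho\circ j_k$ on $L^n_*(\ZZ/2)$ equals multiplication by $2^{k-1}$ (even, hence zero on $2$-torsion), so $\rho(j_k(D_0,E_0))=0$ in $L^n_{4s}(\ZZ/2)$. Since the reduction $\rho$ is compatible with the $L^n$-module structure, $\rho(j_k(D_0,E_0)\otimes N)=0$; and the reduction $L^n_{4s+3}(\ZZ/2^k)\xrightarrow{\simeq} L^n_{4s+3}(\ZZ/2)$ (both canonically $\simeq L^n_{4s+3}$) is an isomorphism, forcing $j_k(D_0,E_0)\otimes N=0$ in $L^n_{4s+3}(\ZZ/2^k)$. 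Only $k=1$ remains, and writing $(D_0,E_0)=(D'_0,E'_0)\otimes G_{4s-4}$ reduces this to the single identity $(D'_0,E'_0)\otimes N=0$ in $L^n_7(\ZZ/2)$.

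For this base case, since $\delta(D'_0,E'_0)$ represents the generator of $L^n_3$, the Bockstein of $(D'_0,E'_0)\otimes N$ equals $N\otimes N\in L^n_6=0$; hence $(D'_0,E'_0)\otimes N$ lies in the image of the isomorphism $L^n_7\xrightarrow{\simeq} L^n_7(\ZZ/2)$, and is of the form $j_1(Y)$ for some $Y\in L^n_7\simeq\ZZ/2$. I would compute $\sigma^n_3((D'_0,E'_0)\otimes N)=\sigma^q_2(E_{\mathrm{prod}})\in\ZZ/2$ directly from Ranicki's sym-quad product rule, under which (up to cobordism) $E_{\mathrm{prod}}\simeq D'_0\otimes E_N\oplus E'_0\otimes D_N$; a bounding-pair argument in the spirit of Lemma~\ref{Keyproduct} applied to each Kervaire-$1$ contribution shows that $\sigma^q_2$ vanishes, so $Y=0$. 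The main obstacle is this final Kervaire-invariant computation, which requires unpacking the sym-quad product rule and keeping careful track of the quadratic structures on the tensor product of two nontrivial normal complexes.
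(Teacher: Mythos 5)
Your identification of the generators, the first-summand computation via the $\sigma^n_3$ product formula, and the reduction for $k\geq 2$ to the statement $\rho\circ j_k=0$ on $L^n_*(\ZZ/2)$ are all sound. The genuine gap is exactly where you flag it: the base case $(D'_0,E'_0)\otimes N=0$ in $L^n_7(\ZZ/2)$. You cannot invoke the product formula $\sigma^n_3\bigl((D,E)\otimes(D',E')\bigr)=\sigma^n_{0,2}(D,E)\,\sigma^n_3(D',E')+\sigma^n_3(D,E)\,\sigma^n_{0,2}(D',E')$ here, because in the paper that formula is \emph{derived from} the present proposition, not available before it; so you really would have to carry out the Kervaire computation from Ranicki's definition of the quadratic structure on the tensor product, and you have not done so. Your Bockstein argument only places the product in the image of $j_1\colon L^n_7\to L^n_7(\ZZ/2)$, which is the whole group $\ZZ/2$, so it gives no constraint on its own.

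The paper avoids this computation entirely, uniformly in $k$, by a further multiplication. Write $x$ for the second-summand generator of $L^n_{4s}(\ZZ/2^k)$, $N$ for the generator of $L^n_{4t+3}$, and $G$ for the generator of $L^s_1$. The product map $L^n_{4(s+t)+3}(\ZZ/2^k)\otimes L^s_1\to L^n_{4(s+t+1)}(\ZZ/2^k)$ is injective (it lands in the first summand, by the earlier proposition on $L^n_{4s+3}\otimes L^s_{4t+1}$), so it suffices to show $(x\otimes N)\otimes G=0$. By associativity this equals $x\otimes(N\otimes G)$, and $N\otimes G$ is the element $4\in\ZZ/8\simeq L^n_{4(t+1)}$ (again by the same earlier proposition, essentially Lemma~\ref{Keyproduct}); since $x$ is $2$-torsion, $x\otimes 4=(4x)\otimes 1=0$. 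This is what the phrase ``$(4\ZZ/8)\otimes\ZZ/2$ is $0$ in $\ZZ/8\otimes\ZZ/2$'' in the paper's proof refers to. I would recommend replacing your final step by this $\otimes\,L^s_1$ trick, which both closes the $k=1$ gap and makes the $k\geq 2$ coefficient-reduction detour unnecessary.
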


\begin{proof}
The map on the first direct summand $\ZZ/2^l$ is the $4$-periodicity. For the second direct summand $\ZZ/2$, it can be deduced by the diagram 
\[
\begin{tikzcd}
L^n_{4s}(\ZZ,\ZZ/2^k)\otimes L^n_{4t+3} \otimes L^s_1 \arrow[r] \arrow[d] & L^n_{4s}(\ZZ,\ZZ/2^k) \otimes L^n_{4(t+1)} \arrow[d] \\
L^n_{4(s+t)+3}(\ZZ,\ZZ/2^k)\otimes L^s_1 \arrow[r] & L^n_{4(s+t+1)}(\ZZ,\ZZ/2^k)
\end{tikzcd}
\]
and the fact that $(4\ZZ/8)\otimes \ZZ/2$ is $0$ in $\ZZ/8\otimes \ZZ/2$.

\end{proof}

So we have also proved the following.

\begin{lemma}
Let $(D,E)$ and $(D',E')$ be two $\ZZ/2$ Poincar\'e symmetric-quadratic pairs. Then
\[
\sigma^n_3((D,E)\otimes (D',E'))  =\sigma^n_{0,2}(D,E)\cdot \sigma^n_3(D',E')+\sigma^n_3(D,E)\cdot \sigma^n_{0,2}(D',E')\in\ZZ/2
\]
\end{lemma}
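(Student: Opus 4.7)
My plan is to reduce by cobordism invariance and bilinearity to generators of $L^n_*(\ZZ/2)$ in each degree modulo $4$, and then to assemble the multiplicative propositions already established in this subsection.

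First, $\sigma^n_3$ and $\sigma^n_{0,2}$ are cobordism invariants by construction, so both sides of the claimed identity descend to bilinear maps $L^n_*(\ZZ/2)\otimes L^n_*(\ZZ/2)\to \ZZ/2$. Splitting each pair into its dimension-homogeneous components, I may assume $|D|$ and $|D'|$ have definite residues modulo $4$. Since $\sigma^n_3$ is supported in degree $\equiv 3\pmod 4$, both sides vanish unless $|D|+|D'|\equiv 3\pmod 4$; this leaves the residue pairs $(0,3),(3,0),(1,2),(2,1)$.

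In the case $(|D|,|D'|)\equiv(0,3)\pmod 4$, the term $\sigma^n_3(D,E)\cdot \sigma^n_{0,2}(D',E')$ vanishes by dimension, so it suffices to show
\[
\sigma^n_3\bigl((D,E)\otimes(D',E')\bigr) \;=\; \sigma^n_{0,2}(D,E)\cdot \sigma^n_3(D',E').
\]
This is exactly the content of the preceding proposition identifying
\[
L^n_{4s}(\ZZ/2)\otimes L^n_{4t+3}(\ZZ/2)\to L^n_{4(s+t)+3}(\ZZ/2)
\]
with the projection onto the first summand of $L^n_{4s}(\ZZ/2)\simeq \ZZ/2\oplus\ZZ/2$ followed by the obvious map $\ZZ/2\otimes\ZZ/2\to\ZZ/2$. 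Indeed, the first summand is by construction the image of $L^n_{4s}\to L^n_{4s}(\ZZ/2)$, and on that image $\sigma^n_{0,2}$ is the detecting invariant (this is the $k=1$ specialization of the lemma that $\sigma^n_0$ is a projection onto the first summand); and $\sigma^n_3$ identifies $L^n_{4t+3}(\ZZ/2)\simeq\ZZ/2$ as well as the target group $L^n_{4(s+t)+3}(\ZZ/2)\simeq\ZZ/2$. The case $(3,0)$ is entirely symmetric.

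In the residue pairs $(1,2)$ and $(2,1)\pmod 4$, the RHS vanishes tautologically because $\sigma^n_{0,2}$ is only defined in degree $\equiv 0$ and $\sigma^n_3$ only in degree $\equiv 3$. The LHS vanishes by the preceding proposition
\[
L^n_{4s+1}(\ZZ/2)\otimes L^n_{4t+2}(\ZZ/2)\xrightarrow{0} L^n_{4(s+t)+3}(\ZZ/2),
\]
applied in either order. The only potentially delicate point is the identification in case $(0,3)$ of the first-summand projection of $L^n_{4s}(\ZZ/2)$ with $\sigma^n_{0,2}$; but since $\sigma^n_{0,2}$ was defined precisely so as to split off the integral image, no extra work beyond citing the earlier results is required.
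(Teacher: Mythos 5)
Your proposal is correct and takes essentially the same approach as the paper, which presents this lemma as an immediate corollary (``We have also proven the following'') of the preceding propositions computing the product pairings $L^n_i(\ZZ/2)\otimes L^n_j(\ZZ/2)\to L^n_{i+j}(\ZZ/2)$; your argument simply makes the case-by-dimension deduction explicit, including the identification of $\sigma^n_{0,2}$ with the first-summand projection of $L^n_{4s}(\ZZ/2)\simeq\ZZ/2\oplus\ZZ/2$.
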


\begin{proposition}
\[
L^n_{4s+1}(\ZZ,\ZZ/2^k)\otimes L^n_{4t+3}(\ZZ,\ZZ/2^k)\rightarrow L^n_{4(s+t+1)}(\ZZ,\ZZ/2^k)
\]
is isomorphic to the map 
\[
(\ZZ/2\oplus \ZZ/2^l)\otimes \ZZ/2\xrightarrow{\pi_1} \ZZ/2\otimes \ZZ/2\xrightarrow{\cong}\ZZ/2\xrightarrow{j_l} \ZZ/2^l\xrightarrow{i} \ZZ/2^l\oplus \ZZ/2
\]
where the first arrow is the projection onto the first direct summand and $i$ is the natural inclusion. $l=3$ if $k\geq 3$ and $l=k$ if $k=1,2$.
\end{proposition}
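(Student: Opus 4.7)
The plan mimics the proof strategy of the preceding proposition. First, the Bockstein long exact sequence
\[
L^n_{4t+3}\xrightarrow{\times 2^k}L^n_{4t+3}\to L^n_{4t+3}(\ZZ/2^k)\to L^n_{4t+2}=0,
\]
together with $L^n_{4t+3}=\ZZ/2$ and $\times 2^k=0$ on $\ZZ/2$, yields a canonical iso $L^n_{4t+3}(\ZZ/2^k)\simeq L^n_{4t+3}$, so the product factors through $L^n_{4s+1}(\ZZ/2^k)\otimes L^n_{4t+3}\to L^n_{4(s+t+1)}(\ZZ/2^k)$. I then split the source as $\ZZ/2\oplus\ZZ/2^l$, where the first summand is the canonical image of $L^n_{4s+1}$; the Ranicki sequence (using $L^q_{4s+1}=0$ and $L^n_{4s+1}\to L^q_{4s}=\ZZ$ trivial by torsion-freeness of the target) identifies this further with the image of the symmetrization iso $L^s_{4s+1}\xrightarrow{\simeq}L^n_{4s+1}$.

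For the first-summand contribution, tensor commutativity reduces the restricted map to $L^n_{4t+3}\otimes L^s_{4s+1}\to L^n_{4(s+t+1)}(\ZZ/2^k)$, which by the preceding proposition (with the indices $s,t$ exchanged) is an injection $\ZZ/2\hookrightarrow\ZZ/2^l$ landing in the first summand of the target as the unique element of order two, i.e.\ as $j_l(1)=2^{l-1}$. This precisely reproduces the composition $\pi_1\xrightarrow{\simeq}\ZZ/2\xrightarrow{j_l}\ZZ/2^l\xrightarrow{i}\ZZ/2^l\oplus\ZZ/2$ restricted to the first summand.

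For the second-summand contribution, the 4-periodicity iso $L^n_5(\ZZ/2^k)\otimes L^s_{4(s-1)}\xrightarrow{\sim}L^n_{4s+1}(\ZZ/2^k)$ established earlier in the section reduces matters to $s=1$. There the second-summand generator admits an explicit product form $(\widetilde D,\widetilde E)\otimes\widetilde G$ (for $k=1$ this is the $(\overline D_0,\overline E_0)$ constructed earlier; for $k\geq 2$ an analogous $\ZZ/2^k$-version). By associativity and commutativity of the modified product, the image factors as $((\widetilde D,\widetilde E)\otimes K)\otimes\widetilde G$, and the inner factor $(\widetilde D,\widetilde E)\otimes K$ lies in $L^n_{4(t+1)+2}$; since $L^n_{4(t+1)+2}=0$ absolutely and both $(\widetilde D,\widetilde E)$ and $K$ are absolute chains whose modified $\ZZ/2^k$-tensor coincides with the absolute one (the Bockstein of the product vanishes as a derivation), the inner factor represents zero in $L^n_{4(t+1)+2}(\ZZ/2^k)$, so the full product vanishes. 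The main obstacle is making this factorization work uniformly for $k\geq 2$: the Tor-complement generator requires a more delicate construction than the explicit $(\overline D_0,\overline E_0)$ of the $k=1$ case, and one must verify that an analogous decomposition through the vanishing group $L^n_{4(t+1)+2}=0$ remains available.
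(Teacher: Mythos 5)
Your treatment of the first-summand contribution matches the paper: both reduce it via commutativity to the already-computed product $L^n_{4t+3}\otimes L^s_{4s+1}\to L^n_{4(s+t+1)}(\ZZ/2^k)$ from the preceding proposition.

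Your second-summand argument, however, diverges from the paper and has the gap you already flag. You try to factor the second-summand generator as a tensor product and pass the generator $K$ of $L^n_{4t+3}$ inward so that $(\widetilde D,\widetilde E)\otimes K$ lands in $L^n_{4(t+1)+2}=0$. This is clean for $k=1$, where the paper does give the explicit form $(\overline D_0,\overline E_0)=(\widetilde D,\widetilde E)\otimes\widetilde G\otimes G$, but for $k\geq 2$ the second summand is $\ZZ/2^k$ (not $\ZZ/2$) and the generator has higher order; the paper never constructs an explicit $\ZZ/2^k$-representative, and the order obstruction means you cannot simply take $j_k$ of the $\ZZ/2$-class. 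So ``an analogous $\ZZ/2^k$-version'' is not available off the shelf, and your argument genuinely breaks at that point. The paper instead sidesteps explicit generators entirely: it tensors with $L^s_1$ and uses associativity in the commutative square
\[
\begin{tikzcd}
L^n_{4s+1}(\ZZ/2^k)\otimes L^n_{4t+3} \otimes L^s_1 \arrow[r] \arrow[d] & L^n_{4s+1}(\ZZ/2^k) \otimes L^n_{4(t+1)} \arrow[d] \\
L^n_{4(s+t+1)}(\ZZ/2^k)\otimes L^s_1 \arrow[r] & L^n_{4(s+t+1)+1}(\ZZ/2^k)
\end{tikzcd}
\]
together with the fact that $L^n_{4t+3}\otimes L^s_1\to L^n_{4(t+1)}$ hits $4\in\ZZ/8$ (and $4\ZZ/8\otimes\ZZ/2=0$), to pin down the restriction to the $\ZZ/2^l$-summand uniformly in $k$. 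If you want to complete your route, you would need either to construct the $\ZZ/2^k$-generator explicitly with the same product structure, or to adopt the paper's diagram chase for the $k\geq 2$ case.
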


\begin{proof}
The map on the first direct summand $\ZZ/2\otimes \ZZ/2$ is calculated before. The map on the second direct summand can be deduced from the fact that $(4\ZZ/8)\otimes \ZZ/2$ is $0$ in $\ZZ/8\otimes \ZZ/2$ and the diagram chasing on
\[
\begin{tikzcd}
L^n_{4s+1}(\ZZ,\ZZ/2^k)\otimes L^n_{4t+3} \otimes L^s_1 \arrow[r] \arrow[d] & L^n_{4s+1}(\ZZ,\ZZ/2^k) \otimes L^n_{4(t+1)} \arrow[d] \\
L^n_{4(s+t+1)}(\ZZ,\ZZ/2^k)\otimes L^s_1 \arrow[r] & L^n_{4(s+t+1)+1}(\ZZ,\ZZ/2^k)
\end{tikzcd}
\]
\end{proof}

\begin{proposition}
\[
L^n_{4s}(\ZZ,\ZZ/2^k)\otimes L^n_{4t}(\ZZ,\ZZ/2^k)\rightarrow L^n_{4(s+t)}(\ZZ,\ZZ/2^k)
\]
is isomorphic to the map 
\begin{multline*}
(\ZZ/2^l\oplus \ZZ/2)\otimes (\ZZ/2^l\oplus \ZZ/2)\rightarrow (\ZZ/2^l\otimes \ZZ/2^l)\oplus (\ZZ/2^l\otimes \ZZ/2)\oplus (\ZZ/2\otimes \ZZ/2^l) \\
\xrightarrow{(1+0+0,0+i+i)} \ZZ/2^l\oplus \ZZ/2    
\end{multline*}
where the first arrow is the projection and the second arrow means the identity on $\ZZ/2^l$ and the sum of two $\ZZ/2$'s onto $\ZZ/2$. $l=3$ if $k\geq 3$ and $l=k$ if $k=1,2$.
\end{proposition}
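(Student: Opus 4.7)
The plan is to verify the claimed product formula summand-by-summand using the unnatural splitting $L^n_{4s}(\ZZ/2^k) \simeq \ZZ/2^l \oplus \ZZ/2$. Recall the first summand is the image of $L^n_{4s}\to L^n_{4s}(\ZZ/2^k)$, which by surjectivity of $L^s_{4s}(\ZZ/2^k)\to L^n_{4s}(\ZZ/2^k)$ onto it can be represented by $\ZZ/2^k$ Poincar\'e symmetric chains; the second $\ZZ/2$ is generated by $(D_0,E_0)=(D'_0,E'_0)\otimes G$ with $G$ a Poincar\'e symmetric chain of signature one. I would check each of the four summand-to-summand products and assemble them into the claimed formula.

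For the (first, first) product, I would represent both factors by $\ZZ/2^k$ Poincar\'e symmetric chains $H_1,H_2$. Their modified product $H_1\otimes H_2$ is again a $\ZZ/2^k$ Poincar\'e symmetric chain, and by the multiplicativity of $\sigma^s_0$ (already established as $\sigma^s_0(C\otimes D)=\sigma^s_0(C)\sigma^s_0(D)$) the result sits in the first summand and matches the identity on $\ZZ/2^l$ after reduction $\ZZ/2^k\to\ZZ/2^l$. For the two mixed products, by symmetry it suffices to treat (first, second): take one factor as a $\ZZ/2^k$ Poincar\'e symmetric chain $H$ and write $(D_0,E_0)_{4t}=(D'_0,E'_0)\otimes G_t$ with $G_t$ symmetric of signature one; reorganize via associativity as $(D'_0,E'_0)\otimes(G_t\otimes H)$. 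Since $G_t\otimes H$ is a $\ZZ/2^k$ Poincar\'e symmetric chain of signature $\sigma^s_0(H)$, the whole expression is cobordant to $\sigma^s_0(H)$ copies of the canonical second-summand generator $(D_0,E_0)_{4(s+t)}$, which gives the map $\ZZ/2^l\otimes\ZZ/2\to\ZZ/2$ equal to multiplication followed by the inclusion $i$, exactly as claimed.

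The hard part will be the (second, second) case, namely showing $(D_0,E_0)_{4s}\otimes (D'_0,E'_0)_{4t}=0$ in $L^n_{4(s+t)}(\ZZ/2^k)$. I would check both coordinates vanish separately. For the $\sigma^n_0$ coordinate, the strategy is to use $4$-periodicity to reduce to the core case $s=t=1$ with $k=1$ and then apply the defining prescription of $\sigma^n_0$: after subtracting off the $\sigma^q_2$-correction, the signature ambiguity is a multiple of $8$ so the outcome lands in the first summand as zero. For the second-summand coordinate the natural move is to tensor with the de Rham generator $G'\in L^s_1$ and read off the result via the diagram
\begin{center}
\begin{tikzcd}
L^n_{4s}(\ZZ/2)\otimes L^n_{4t}(\ZZ/2)\otimes L^s_1 \arrow[r] \arrow[d] & L^n_{4s}(\ZZ/2)\otimes L^n_{4t+1}(\ZZ/2) \arrow[d] \\
L^n_{4(s+t)}(\ZZ/2)\otimes L^s_1 \arrow[r,"\simeq"] & L^n_{4(s+t)+1}(\ZZ/2)
\end{tikzcd}
\end{center}
so that the second-summand image of the product is detected by $(D_0,E_0)_{4s}\otimes (\overline{D}_0,\overline{E}_0)_{4t+1}$ in $L^n_{4(s+t)+1}(\ZZ/2)$.

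The main obstacle is this last detection: I would need to identify $(D_0,E_0)_{4s}\otimes (\overline{D}_0,\overline{E}_0)_{4t+1}$ with zero by a diagram chase analogous to those used for the $L^n_{4s+2}(\ZZ/2^k)\otimes L^n_{4t+2}(\ZZ/2^k)\to 0$ and $L^n_{4s+1}(\ZZ/2^k)\otimes L^n_{4t+2}(\ZZ/2^k)\to 0$ propositions proved earlier in this section. Concretely, after reshuffling via $4$-periodicity and the already-verified isomorphism $L^n_4(\ZZ/2)\otimes L^s_1\simeq L^n_5(\ZZ/2)$, the product reduces to a composition that factors through a vanishing piece in the symmetric range of $L^s$, exactly as in the $4s+2$ case. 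Once both coordinates are shown to vanish, the full product table coincides with the stated map $(\ZZ/2^l\oplus\ZZ/2)^{\otimes 2}\to\ZZ/2^l\oplus\ZZ/2$.
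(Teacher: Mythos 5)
Your overall plan tracks the paper closely: the paper's proof likewise uses exactly the commutative square obtained by tensoring with $L^s_1$ and reducing the problem to the multiplication into $L^n_{4(s+t)+1}(\ZZ/2^k)$, and the first three quadrants of the product table are indeed dispatched by the kind of associativity/signature-multiplicativity arguments you sketch. The place where your proposal has a genuine gap is the final step.

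You correctly isolate the remaining obligation as showing $(D_0,E_0)_{4s}\otimes(\overline{D}_0,\overline{E}_0)_{4t+1}=0$, but your proposed strategy for this --- that ``the product reduces to a composition that factors through a vanishing piece in the symmetric range of $L^s$, exactly as in the $4s+2$ case'' --- does not transplant. In the $4s+2$ argument the essential input was that $L^s_{4s+2}(\ZZ/2)\to L^n_{4s+2}(\ZZ/2)$ is an isomorphism (so the relevant $L^n$ classes can be represented by symmetric data) together with the vanishing of $L^s_{4s+2}(\ZZ/2)\otimes L^s_1\to L^s_{4(s+t)+3}(\ZZ/2)$. Here neither ingredient is available: the second summands of $L^n_{4s}(\ZZ/2^k)$ and $L^n_{4t+1}(\ZZ/2^k)$ are by construction \emph{not} in the image of $L^s$ (that is precisely what distinguishes them from the first summands), and the analogous symmetric product $L^s_{4s}(\ZZ/2^k)\otimes L^s_{4t+1}(\ZZ/2^k)\to L^s_{4(s+t)+1}(\ZZ/2^k)$ is an isomorphism rather than zero. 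So the $L^s$-factoring argument has nothing to latch onto, and the analogy breaks.

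The tool that actually closes the gap is the product formula for $\sigma^n_3$ proved just before this proposition, namely $\sigma^n_3((D,E)\otimes(D',E'))=\sigma^n_{0,2}(D,E)\,\sigma^n_3(D',E')+\sigma^n_3(D,E)\,\sigma^n_{0,2}(D',E')$. Writing $(D_0,E_0)_{4s}\otimes(\overline{D}_0,\overline{E}_0)_{4t+1}$ as $(D'_0,E'_0)\otimes(\widetilde D,\widetilde E)\otimes$(signature-one symmetric chains), it suffices to see $(D'_0,E'_0)\otimes(\widetilde D,\widetilde E)=0$ in $L^n_7(\ZZ/2)\simeq\ZZ/2$, which is detected by $\sigma^n_3$; and $\sigma^n_3((D'_0,E'_0)\otimes(\widetilde D,\widetilde E))=0$ because $\sigma^n_3(D'_0,E'_0)=0$ (wrong dimension) and $\sigma^n_{0,2}(D'_0,E'_0)=0$ ($(D'_0,E'_0)$ generates the complement of the first summand, onto which $\sigma^n_{0,2}$ projects). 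Replacing your $L^s$-factoring sketch with this $\sigma^n_3$ computation would complete the proof.

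Finally, your treatment of the $\sigma^n_0$ coordinate of the $(\text{second},\text{second})$ product is vaguer than it needs to be; the cleanest statement is simply that the contributing invariants $\sigma^n_{0,2},\sigma^n_1,\sigma^n_3$ of $(D_0,E_0)$ are all zero (by dimension or by the definition of the splitting), so the product lands in the kernel of $\sigma^n_0$ as well, which together with the $\sigma^n_3$ argument above gives the full vanishing of the $\ZZ/2\otimes\ZZ/2$ summand.
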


\begin{proof}
All the nontrivial maps are $4$-periodicity. The map follows from the diagram
\[
\begin{tikzcd}
L^n_{4s}(\ZZ,\ZZ/2^k)\otimes L^n_{4t}(\ZZ,\ZZ/2^k) \otimes L^s_1 \arrow[r] \arrow[d] & L^n_{4s}(\ZZ,\ZZ/2^k) \otimes L^n_{4t+1}(\ZZ,\ZZ/2^k) \arrow[d] \\
L^n_{4(s+t)}(\ZZ,\ZZ/2^k)\otimes L^s_1 \arrow[r] & L^n_{4(s+t)+1}(\ZZ,\ZZ/2^k)
\end{tikzcd}
\]
\end{proof}

We have also proved the following.

\begin{lemma}
Let $(D,E)$ and $(D',E')$ be two $\ZZ/2^k$ Poincar\'e symmetric-quadratic pairs with $k\geq 3$. Then
\begin{eqnarray*}
\sigma^n_0((D,E)\otimes (D',E'))   & = & \sigma^n_0(D,E)\cdot \sigma^n_0(D',E') \\
& & +j_8(\sigma^n_1(D,E)\cdot \sigma^n_3(D',E')
+\sigma^n_3(D,E)
\cdot \sigma^n_1(D',E'))
\in\ZZ/8
\end{eqnarray*}
where $j_8:\ZZ/2\rightarrow \ZZ/8$.
\end{lemma}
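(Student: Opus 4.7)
The plan is to reduce the identity to the product-structure propositions for $L^n_*(\ZZ/2^k)$ already established in this section, by splitting according to the mod-$4$ dimensions of $(D,E)$ and $(D',E')$. Both sides of the formula are $\ZZ/8$-valued cobordism invariants and additive in each of $(D,E)$ and $(D',E')$, so it suffices to check the identity on generators. Since $\sigma^n_0$ is defined to vanish unless the dimension is $\equiv 0\pmod 4$, and each $\sigma^n_i$ is extended by zero outside its natural dimension, only the dimension pairs $(m,m')\bmod 4 \in\{(0,0),(1,3),(3,1),(2,2)\}$ yield any nonzero terms and thus need to be checked.

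In the $(4s,4t)$ case only the first term on the right-hand side survives, and the proposition identifying
\[
L^n_{4s}(\ZZ/2^k)\otimes L^n_{4t}(\ZZ/2^k)\to L^n_{4(s+t)}(\ZZ/2^k)
\]
with the map $(\ZZ/8\oplus\ZZ/2)\otimes(\ZZ/8\oplus\ZZ/2)\to \ZZ/8\oplus\ZZ/2$ whose first component is the standard bilinear pairing $\ZZ/8\otimes\ZZ/8\to\ZZ/8$ gives exactly $\sigma^n_0(D,E)\cdot \sigma^n_0(D',E')$ after projection to the $\ZZ/8$ summand, because for $k\geq 3$ the map $\sigma^n_0$ is precisely that projection.

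In the $(4s+1,4t+3)$ case only the cross-term $j_8(\sigma^n_1(D,E)\cdot \sigma^n_3(D',E'))$ survives, and the earlier proposition identifies
\[
L^n_{4s+1}(\ZZ/2^k)\otimes L^n_{4t+3}(\ZZ/2^k)\to L^n_{4(s+t+1)}(\ZZ/2^k)
\]
with the composite $(\ZZ/2\oplus\ZZ/2^k)\otimes\ZZ/2 \xrightarrow{\pi_1} \ZZ/2\otimes\ZZ/2\xrightarrow{\simeq}\ZZ/2\xrightarrow{j_8}\ZZ/8\hookrightarrow \ZZ/8\oplus\ZZ/2$, using $l=3$ in the target for $k\geq 3$. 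Identifying the projection onto the first $\ZZ/2$ factor of $L^n_{4s+1}(\ZZ/2^k)$ with $\sigma^n_1$ and $L^n_{4t+3}(\ZZ/2^k)=\ZZ/2$ with $\sigma^n_3$, and composing with $\sigma^n_0$ on the target, one recovers $j_8(\sigma^n_1(D,E)\cdot \sigma^n_3(D',E'))$. The $(4s+3,4t+1)$ case is symmetric.

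In the $(4s+2,4t+2)$ case, the proposition $L^n_{4s+2}(\ZZ/2^k)\otimes L^n_{4t+2}(\ZZ/2^k)\xrightarrow{0}L^n_{4(s+t+1)}(\ZZ/2^k)$ kills $\sigma^n_0$ of the product, and $\sigma^n_0,\sigma^n_1,\sigma^n_3$ all vanish on a pair of dimension $\equiv 2\pmod 4$, so both sides are zero. The main obstacle is confirming that the first-factor projections appearing in the earlier propositions agree on the nose with the invariants $\sigma^n_1$ and $\sigma^n_3$; this requires unwinding the corrective clause in the definition of $\sigma^n_1$ involving $j_k(\overline{D}_0,\overline{E}_0)$, and verifying that the Bockstein $j_l$ in the $(4s+1,4t+3)$ product formula becomes the natural $j_8:\ZZ/2\to\ZZ/8$ once $k\geq 3$, both of which are direct unwindings of the constructions in Section 2.3.
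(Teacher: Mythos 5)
Your proposal is correct and follows the paper's route exactly: the paper introduces this lemma with ``We have also proven the following'' immediately after the proposition computing $L^n_{4s}(\ZZ/2^k)\otimes L^n_{4t}(\ZZ/2^k)\to L^n_{4(s+t)}(\ZZ/2^k)$, treating the formula as a direct translation of the preceding group-level product computations (the $(4s,4t)$, $(4s+1,4t+3)$, and $(4s+2,4t+2)$ propositions). Your dimension-by-dimension reduction, together with the identification of $\sigma^n_0$ and $\sigma^n_1$ with the first-factor projections and the observation that $j_l$ specializes to $j_8$ once $k\geq 3$, is precisely the argument the paper leaves implicit.
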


For the case when $k=1,2$, the result is similar.

\begin{proposition}
Let $(D,E)$ and $(D',E')$ be two $\ZZ/2^k$ Poincar\'e symmetric-quadratic pairs with $k\geq 3$. Then
\begin{eqnarray*}
\sigma^n_0(\delta((D,E)\otimes (D',E'))) & = & \sigma^n_0(\delta (D,E))\cdot \sigma^n_0(D',E')+\sigma^n_0(D,E)\cdot \sigma^n_0(\delta (D',E'))\\
 & & +j_8(\sigma^n_3(\delta (D,E))\cdot \sigma^n_1(D',E')+ \sigma^n_1(D,E)\cdot \sigma^n_3(\delta (D',E'))) \\
 & & +j_8(\sigma^n_3(D,E)\cdot \sigma^n_1(\delta(D',E')) +\sigma^n_1(\delta(D,E))\cdot \sigma^n_3(D',E'))
\end{eqnarray*}
\end{proposition}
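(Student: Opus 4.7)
The plan is to reduce the statement to the product formula for $\sigma^n_0$ already established in the preceding lemma, by first identifying the Bockstein of the tensor product with a direct sum.

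First, I would prove an extension of \ref{boundary} to $\ZZ/2^k$ Poincar\'e symmetric-quadratic pairs: namely, that the Bockstein $\delta((D,E)\otimes(D',E'))$ is cobordant, as a $\ZZ/2^k$ Poincar\'e symmetric-quadratic pair, to the direct sum
\[
\delta(D,E)\otimes (D',E')\oplus (D,E)\otimes\delta(D',E').
\]
The argument is formally identical to the proof of \ref{boundary}. Indeed, the modified product was built cell-by-cell by first forming $C\otimes n\delta D\otimes I\bigcup C\otimes D\bigcup n\delta C\otimes D\otimes I$ and then capping off with $\delta C\otimes\delta D\otimes W$; the combinatorial proposition governing the graph $\gamma$ on $\{1,\dots,k\}\times\{1,\dots,k\}$ forces $\delta(C\otimes D)$ to consist of a single component identifying the last copy of $\delta C\otimes D$ with the last copy of $C\otimes\delta D$. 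Applying this simplex-wise to the pairs $(D,E)$ and $(D',E')$, and using that the presheaf construction of the modified tensor product on $\Delta$-sets assembles correctly, yields the claimed cobordism.

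Next, I would invoke additivity of $\sigma^n_0$ under direct sums (which is immediate from the definition, since signatures of Poincar\'e symmetric chains are additive and the cobordism-invariance argument for $\sigma^n_0$ respects direct sums) to reduce the calculation to
\[
\sigma^n_0(\delta(D,E)\otimes (D',E'))+\sigma^n_0((D,E)\otimes\delta(D',E')).
\]
Now I apply the preceding product formula
\[
\sigma^n_0((A,B)\otimes(A',B'))=\sigma^n_0(A,B)\,\sigma^n_0(A',B')+j_8\bigl(\sigma^n_1(A,B)\sigma^n_3(A',B')+\sigma^n_3(A,B)\sigma^n_1(A',B')\bigr)
\]
to each of the two summands, with $(A,B)$ taken to be $\delta(D,E)$ in the first and $(D,E)$ in the second (and symmetrically for the primed variables). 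Collecting the eight resulting terms gives exactly the displayed expression for $\sigma^n_0(\delta((D,E)\otimes(D',E')))$.

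The main obstacle is verifying the first step carefully: the definition of $\sigma^n_0$ for a $\ZZ/2^k$ pair with $k\geq 3$ requires choosing a null-cobordism of $\delta E$ and then of the assembled quadratic complex, and one must check that the Bockstein identification above is not just a cobordism of presheaves but a cobordism of $\ZZ/2^k$ Poincar\'e symmetric-quadratic pairs in the sense used to define $\sigma^n_0$. Once the bookkeeping is done (essentially the same tricolored construction used in the proof of \ref{boundary}, now performed on both the symmetric piece $D$ and the quadratic piece $E$ at the same time and using that the modified tensor of a pair with a pair is again a pair), the remainder is a purely formal bilinear calculation, and the $j_8$ terms coming from the two summands combine without interference because $j_8$ is additive on $\ZZ/2$.
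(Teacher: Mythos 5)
The paper states this proposition without proof (it appears after the $\sigma^n_0$ product formula under the remark ``For the future use, we also need the following''), so there is no direct comparison available, but your derivation is exactly the argument one would expect. Your reduction---extend Lemma~\ref{boundary} to symmetric-quadratic pairs, use additivity of $\sigma^n_0$ under direct sums, then apply the preceding product lemma to each of $\delta(D,E)\otimes(D',E')$ and $(D,E)\otimes\delta(D',E')$---is sound, and the eight resulting terms collect into the displayed formula with no cancellation or interference. The Bockstein-Leibniz step you need for pairs is in fact used implicitly by the paper itself a few lines later (in the $\ZZ/2$ case: ``$\delta((D,E)\otimes G)$ is cobordant to $(\delta D,\delta E)\otimes G\oplus(D,E)\otimes\delta G$''), so you are on safe ground. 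One point worth making more explicit: $\delta(D,E)$ is a \emph{closed} pair, so applying the preceding lemma to it requires regarding a closed pair as a $\ZZ/2^k$ pair with zero Bockstein; this is harmless because the invariants $\sigma^n_0,\sigma^n_1,\sigma^n_3$ for a zero-Bockstein pair reduce to the closed-pair invariants, and the tensor product with a closed factor needs no modification. You might also note that the cobordism in your first step only needs to exist in total dimension $m+m'\equiv 1\pmod 4$ (where $\sigma^n_0$ of the Bockstein is actually nontrivial), in which case $\delta(D,E)\otimes\delta(D',E')$ has dimension $\equiv 3\pmod 4$, its cobordism group $L^n$ is $\ZZ/2$, and hence $2(\delta(D,E)\otimes\delta(D',E'))$ is null-cobordant---this resolves the bookkeeping concern you raise about matching the (zero) Bockstein of $\delta((D,E)\otimes(D',E'))$ with the doubled Bockstein on the direct-sum side.
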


\begin{proposition}
\[
L^n_{4s}(\ZZ,\ZZ/2^k)\otimes L^n_{4t+1}(\ZZ,\ZZ/2^k)\rightarrow L^n_{4(s+t)+1}(\ZZ,\ZZ/2^k)
\]
is the map isomorphic to the map 
\begin{multline*}
(\ZZ/2^l\oplus \ZZ/2)\otimes (\ZZ/2\oplus \ZZ/2^l)\rightarrow (\ZZ/2^l\otimes \ZZ/2)\oplus(\ZZ/2^l\otimes \ZZ/2^l)\oplus (\ZZ/2\otimes \ZZ/2) \\
\xrightarrow{(1+0+0,0+1+j_l)} \ZZ/2\oplus \ZZ/2^l    
\end{multline*}
where the first arrow is the projection. $l=3$ if $k\geq 3$ and $l=k$ if $k=1,2$.
\end{proposition}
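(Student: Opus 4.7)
The plan mirrors the preceding proposition on $L^n_{4s}(\ZZ/2^k)\otimes L^n_{4t}(\ZZ/2^k)$. Decompose both factors according to the canonical splittings: write $L^n_{4s}(\ZZ/2^k)\simeq\ZZ/2^l\oplus\ZZ/2$ where the first summand is the image of $L^n_{4s}\to L^n_{4s}(\ZZ/2^k)$ and the second is generated by $j_k(D_0,E_0)$, and write $L^n_{4t+1}(\ZZ/2^k)\simeq\ZZ/2\oplus\ZZ/2^l$ where the first summand is the image of $L^n_{4t+1}$ and the second is generated by (the analogue of) $(\widetilde{D},\widetilde{E})\otimes\widetilde{G}\otimes G$. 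We then handle the four resulting tensor pieces in turn.

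For the two pieces $(\ZZ/2^l)_{\mathrm{first}}\otimes(\ZZ/2)_{\mathrm{first}}$ and $(\ZZ/2^l)_{\mathrm{first}}\otimes(\ZZ/2^l)_{\mathrm{second}}$, each element of the left factor is represented by an integral normal class of dimension $4s$, which by the $4$-periodicity isomorphism $L^n_0\otimes L^s_{4s}\xrightarrow{\simeq}L^n_{4s}$ factors as $x\otimes G$ for some Poincaré symmetric chain $G$ with $\sigma^s_0(G)=1$. The commutative diagram
\begin{tikzcd}
L^n_0(\ZZ/2^k)\otimes L^s_{4s}\otimes L^n_{4t+1}(\ZZ/2^k) \arrow[r,"\simeq"] \arrow[d] & L^n_0(\ZZ/2^k)\otimes L^n_{4(s+t)+1}(\ZZ/2^k) \arrow[d] \\
L^n_{4s}(\ZZ/2^k)\otimes L^n_{4t+1}(\ZZ/2^k) \arrow[r] & L^n_{4(s+t)+1}(\ZZ/2^k)
\end{tikzcd}
reduces both pieces to the earlier isomorphism $L^n_{4s+1}(\ZZ/2^k)\otimes L^s_{4t}(\ZZ/2^k)\xrightarrow{\simeq}L^n_{4(s+t)+1}(\ZZ/2^k)$, which identifies these contributions as the $(1,0,0)$ and $(0,1,0)$ terms in the statement.

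For the piece $(\ZZ/2)_{\mathrm{second}}\otimes(\ZZ/2)_{\mathrm{first}}$, rewrite the left generator as $j_k((D_0',E_0')\otimes G)$ with $\sigma^s_0(G)=1$, slide $G$ past a generator $y$ of $L^n_{4t+1}$ via $4$-periodicity, and apply the product formula of Lemma~\ref{5+3=8normal} (together with its $k\geq 2$ analogue). The resulting class has nontrivial $\sigma^n_1$, hence lies in the image of $j_l\colon\ZZ/2\to\ZZ/2^l$ inside the second summand of the target; this gives the $(0,0,j_l)$ contribution. The remaining piece $(\ZZ/2)_{\mathrm{second}}\otimes(\ZZ/2^l)_{\mathrm{second}}$ must be shown to be projected to zero. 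Both generators are iterated tensor products of low-dimensional $\ZZ/2$-coefficient classes whose pairwise multiplications have already been shown to vanish in the proposition $L^n_{4s+2}(\ZZ/2^k)\otimes L^n_{4t+2}(\ZZ/2^k)\xrightarrow{0}L^n_{4(s+t+1)}(\ZZ/2^k)$ and companion diagrams, so a sequence of $4$-periodicity reductions pins the contribution on a product that is already zero. The main obstacle is precisely this last piece: one must carefully track which tensor slots are $\ZZ/2^k$-coefficient and which are integral, and the cleanest route is to reduce to the universal case $s=t=0$ in dimensions $4$ and $5$ and verify the vanishing there by explicit cobordisms, mirroring the treatment of the analogous second-second piece in the preceding proposition.
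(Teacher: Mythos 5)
Your overall strategy coincides with the paper's: decompose both factors along the canonical splittings, handle the four resulting tensor pieces, and reduce the three surviving ones to already-computed products via $4$-periodicity and the associativity of the normal ring structure. For those three pieces the reduction is correct. However, the content of the proposition is really the validity of the projection (the first arrow), i.e.\ that the $(\ZZ/2)_{\mathrm{second}}\otimes(\ZZ/2^l)_{\mathrm{second}}$ piece maps to zero, and here you explicitly stop short: you flag this as ``the main obstacle'' and propose to resolve it by ``explicit cobordisms in the universal case, mirroring the analogous second-second piece in the preceding proposition.'' This is a genuine gap, and the proposed remedy misremembers the paper: the preceding proposition ($L^n_{4s}(\ZZ/2^k)\otimes L^n_{4t}(\ZZ/2^k)$) handles \emph{its} dropped piece not by explicit cobordisms, but by a diagram transporting an $L^s_1$ factor through the product.

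The paper's actual argument for the vanishing uses a parallel associativity trick. By construction the generator $(\overline{D}_0,\overline{E}_0)$ of the $\ZZ/2^l$ summand of $L^n_{4t+1}(\ZZ/2^k)$ is a product $(\widetilde{D},\widetilde{E})\otimes(\widetilde{G}\otimes G)$ with $(\widetilde{D},\widetilde{E})\in L^n_3$ and $\widetilde{G}\otimes G\in L^s_{4t-2}(\ZZ/2^k)$, so the second-second product factors through
\[
L^n_{4s}(\ZZ/2^k)\otimes L^s_{4t-2}(\ZZ/2^k)\otimes L^n_3\longrightarrow L^n_{4(s+t)+1}(\ZZ/2^k).
\]
Commuting the $L^n_3$ factor to the left (and pushing $L^s\to L^n$) replaces this with the composite $L^n_{4s}(\ZZ/2^k)\otimes L^n_3\to L^n_{4s+3}(\ZZ/2^k)$ followed by $L^n_{4s+3}(\ZZ/2^k)\otimes L^n_{4t-2}(\ZZ/2^k)\to L^n_{4(s+t)+1}(\ZZ/2^k)$; the latter $(\equiv 3)\otimes(\equiv 2)$ product was dispatched just before the present proposition, and the commutative square delivers the required vanishing. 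You should supply this factorization rather than defer to an unspecified cobordism computation.
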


\begin{proof}
The map onto the second direct summand $\ZZ/2^l$ is either already proven. The map onto the first direct summand $\ZZ/2$ follows from the diagram
\[
\begin{tikzcd}
L^n_{4s}(\ZZ,\ZZ/2^k)\otimes L^s_{4t-2}(\ZZ,\ZZ/2^k) \otimes L^n_3 \arrow[r] \arrow[d] & L^n_{4s+3}(\ZZ,\ZZ/2^k) \otimes L^n_{4t-2}(\ZZ,\ZZ/2^k) \arrow[d] \\
L^n_{4(s+t)}(\ZZ,\ZZ/2^k)\otimes L^n_{4t+1}(\ZZ,\ZZ/2^k) \arrow[r] & L^n_{4(s+t)+1}(\ZZ,\ZZ/2^k)
\end{tikzcd}
\]
\end{proof}

The product $L^n_{4s+2}(\ZZ,\ZZ/2^k)\otimes L^n_{4t+3}(\ZZ,\ZZ/2^k)\rightarrow L^n_{4(s+t+1)+1}(\ZZ,\ZZ/2^k)$ is essentially proven above. Hence, we also get that

\begin{lemma}
Let $(D,E)$ and $(D',E')$ be $\ZZ/2$ Poincar\'e symmetric-quadratic pairs. Then
\[
\sigma^n_1((D,E)\otimes(D',E'))=\sigma^n_{0,2}(D,E)\cdot \sigma^n_1(D',E')+\sigma^n_1(D,E)\cdot \sigma^n_{0,2}(D',E')\in\ZZ/2
\]
\end{lemma}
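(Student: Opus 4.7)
The plan is to deduce the formula directly from the immediately preceding proposition, which has explicitly identified the product map $L^n_{4s}(\ZZ/2) \otimes L^n_{4t+1}(\ZZ/2) \to L^n_{4(s+t)+1}(\ZZ/2)$. First I would observe that both sides of the claimed equation are $\ZZ/2$-bilinear in $(D,E)$ and $(D',E')$, and that both sides vanish whenever the total dimension is not $\equiv 1 \pmod 4$: each term on the right-hand side then contains a factor that is $0$ by the dimension conventions, and $\sigma^n_1$ vanishes on the left as well. It therefore suffices to treat the mixed case $(\dim(D,E), \dim(D',E')) = (4s, 4t+1)$, since the swapped case $(4s+1, 4t)$ follows from graded commutativity of the modified tensor product.

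In the mixed case I would work with the fixed decompositions $L^n_{4s}(\ZZ/2) \simeq \ZZ/2 \oplus \ZZ/2$ and $L^n_{4t+1}(\ZZ/2) \simeq \ZZ/2 \oplus \ZZ/2$ used throughout the section, whose first summands are the images of the integral $L^n$-groups under $L^n \to L^n(\ZZ/2)$. By the definitions of $\sigma^n_{0,2}$ and $\sigma^n_1$ — each of which first subtracts a multiple of the distinguished class $(D_0, E_0)$ or $(\overline{D}_0, \overline{E}_0)$ so that the corrected class lies in the integral image, and then applies the corresponding integral invariant — these are precisely the projections onto the canonical first summands. Specializing the preceding proposition to $k=1$ (so $l=1$), the product map is identified with the concrete map $(1+0+0, 0+1+j_1)$ on the three surviving tensor summands. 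Reading off the first $\ZZ/2$ coordinate of the target (which is $\sigma^n_1$ of $(D,E)\otimes(D',E')$) isolates only the tensor of the first-summand components and yields $\sigma^n_{0,2}(D,E) \cdot \sigma^n_1(D',E')$. The symmetric term $\sigma^n_1(D,E)\cdot\sigma^n_{0,2}(D',E')$ appears from the swapped case, completing the formula.

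The main obstacle I expect is the bookkeeping needed to confirm that $\sigma^n_{0,2}$ and $\sigma^n_1$ really do compute projection onto the canonical first summands. The definitions of these invariants split into cases depending on whether an auxiliary bounding chain has vanishing $\sigma^q_0$ or $\sigma^q_2$ invariant, and in the nontrivial branches a correction by $(D_0, E_0)$ or $(\overline{D}_0, \overline{E}_0)$ must be subtracted. Unwinding these piecewise definitions to verify the clean projection statement — in particular, that the invariants are well-defined on $L^n_{4s}(\ZZ/2)$ and on $L^n_{4t+1}(\ZZ/2)$ and coincide with the abstract projections used in the preceding proposition — is the technical heart of the argument, but it reduces to checking the definitions rather than to any new computation.
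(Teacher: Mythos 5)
Your reduction step is where the gap lies. You correctly observe that both sides vanish unless the total dimension is $\equiv 1 \pmod 4$, and your handling of the mixed case $(4s,\,4t+1)$ via the immediately preceding proposition (identifying $\sigma^n_{0,2}$ and $\sigma^n_1$ with projections onto the canonical first summands) is sound, modulo the bookkeeping you flag. But from ``total dimension $\equiv 1 \pmod 4$'' it does not follow that the dimension pair is $(4s,\,4t+1)$ or its swap: the pairs $(4s+2,\,4t+3)$ and $(4s+3,\,4t+2)$ also contribute, and your proposal never addresses them. In those cases the right-hand side of the identity is identically $0$ (each term contains a factor $\sigma^n_{0,2}$ or $\sigma^n_1$ evaluated on a class of dimension $\equiv 2$ or $3 \pmod 4$, which vanishes by the dimension convention), so the lemma implicitly asserts that $\sigma^n_1$ kills the image of
\[
L^n_{4s+2}(\ZZ/2)\otimes L^n_{4t+3}(\ZZ/2)\longrightarrow L^n_{4(s+t+1)+1}(\ZZ/2).
\]
This is a genuine claim: unlike the integral group $L^n_{4s+2}=0$, the group $L^n_{4s+2}(\ZZ/2)\simeq\ZZ/2$ is nonzero, so the product can in principle be nontrivial and could in principle have a component in the first summand of $L^n_{4(s+t+1)+1}(\ZZ/2)\simeq\ZZ/2\oplus\ZZ/2$, which is exactly the summand $\sigma^n_1$ detects. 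The paper is aware of this and devotes the sentence immediately before the lemma to the $(4s+2,\,4t+3)$ product precisely so that the ``Hence'' covers all contributing dimension pairs; your argument uses only the $(4s,\,4t+1)$ proposition and therefore does not establish the formula. You would need to supply the analogous computation for the $(4s+2,\,4t+3)$ product (showing it lands off the first summand, e.g.\ by tracking the Bockstein $\delta$ and using $L^n_{4s+2}=0$) before the case analysis is complete.
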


\begin{proposition}
The only nontrivial part in the following maps
\[
L^n_{4s}(\ZZ,\ZZ/2^k)\otimes L^n_{4t+2}(\ZZ,\ZZ/2^k)\rightarrow L^n_{4(s+t)+2}(\ZZ,\ZZ/2^k)
\]
\[
L^n_{4s+1}(\ZZ,\ZZ/2^k)\otimes L^n_{4t+1}(\ZZ,\ZZ/2^k)\rightarrow L^n_{4(s+t)+2}(\ZZ,\ZZ/2^k)
\]
\[
L^n_{4s+3}(\ZZ,\ZZ/2^k)\otimes L^n_{4t+3}(\ZZ,\ZZ/2^k)\rightarrow L^n_{4(s+t+1)+2}(\ZZ,\ZZ/2^k)
\]
is the $4$-periodicity.
\end{proposition}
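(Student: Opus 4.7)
The plan is to reduce each of the three products, via diagram chases and the $L^s_*$-module structure (of which $4$-periodicity by $L^s_4$ is the main feature), to low-dimensional products already computed or to products that vanish for dimension reasons. In all three cases the target is $L^n_{4(s+t)+2}(\ZZ/2^k) \simeq \ZZ/2$, so the question is which components of the source hit this single $\ZZ/2$ nontrivially, and whether the answer is forced by the $L^s_4$-action alone.

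The third map $L^n_{4s+3}(\ZZ/2^k)\otimes L^n_{4t+3}(\ZZ/2^k)\to L^n_{4(s+t+1)+2}(\ZZ/2^k)$ is the cleanest: both factors are $\ZZ/2$, and by $4$-periodicity I would write each generator as $\alpha\cdot g_s$ and $\beta\cdot g_t$ with $\alpha,\beta\in L^n_3$ and $g_s\in L^s_{4s}$, $g_t\in L^s_{4t}$ the signature-one classes. Associativity of the ring and module structures then reduces the claim to identifying the low-dimensional product
\[
L^n_3(\ZZ/2^k)\otimes L^n_3(\ZZ/2^k)\to L^n_6(\ZZ/2^k),
\]
which I would compute directly on the Kervaire generator of $L^n_3$ using the formulas for $\sigma^n_1$ and $\sigma^n_{0,2}$ already established.

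For the first map, split $L^n_{4s}(\ZZ/2^k)\simeq \ZZ/2^l\oplus\ZZ/2$ into its canonical first summand (the image of $L^n_{4s}\to L^n_{4s}(\ZZ/2^k)$) and the unnatural $\ZZ/2$. The first-summand contribution is handled by the commutative square
\begin{center}
\begin{tikzcd}
L^n_{4s}\otimes L^n_{4t+2}(\ZZ/2^k) \arrow[r] \arrow[d] & L^n_{4(s+t)+2}(\ZZ/2^k) \arrow[d,equal] \\
L^n_{4s}(\ZZ/2^k)\otimes L^n_{4t+2}(\ZZ/2^k) \arrow[r] & L^n_{4(s+t)+2}(\ZZ/2^k)
\end{tikzcd}
\end{center}
combined with $L^n_{4s}\simeq L^n_4\cdot L^s_{4(s-1)}$ to exhibit this part as the $4$-periodicity. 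For the second $\ZZ/2$ summand, generated by $j_k(D_0,E_0)=j_k((D'_0,E'_0)\otimes G)$ with $G$ a signature-one Poincar\'e symmetric chain, I would peel off $G$ by associativity and reduce to $L^n_4(\ZZ/2)\otimes L^n_2(\ZZ/2^k)\to L^n_6(\ZZ/2^k)$, which is in turn controlled by the earlier vanishing of $L^n_{4s+2}(\ZZ/2^k)\otimes L^n_{4t+2}(\ZZ/2^k)$. The second map is essentially dual: products between the two canonical first summands factor through $L^n_{4s+1}\otimes L^n_{4t+1}\to L^n_{4(s+t)+2}=0$ and so vanish, while products involving the second summands reduce, by the same peeling trick on the auxiliary signature-one factor, to the same low-dimensional computations.

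The main obstacle is keeping track of the non-canonical second summand decompositions. The representatives $(D_0,E_0)$ and $(\overline{D}_0,\overline{E}_0)$ are only well-defined up to $\ZZ/2^k$-cobordism, so before each diagram chase concludes I must verify that the product of such a representative with an arbitrary class in the other factor agrees with the claimed $4$-periodicity formula independently of the choice of representative. This is careful bookkeeping, but it is where the most delicate work lies.
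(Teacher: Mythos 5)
Your overall strategy — split each source into its canonical first summand and the unnatural second summand, kill the first-summand contributions using $L^n_{4m+2}=0$, and reduce the remaining pieces via the $L^s_4$-module action — is the same philosophy the paper uses, and for the third map and the first summand of the first map your argument is essentially right (though for the third map the peeling is overkill: since $L^n_{4s+3}\to L^n_{4s+3}(\ZZ/2^k)$ is an isomorphism, the product factors through $L^n_{4s+3}\otimes L^n_{4t+3}\to L^n_{4(s+t+1)+2}=0$ directly, which is exactly the paper's one-line argument).

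The gap is in how you handle the non-canonical summands. For the first map you reduce the contribution of $j_k(D_0,E_0)$ to the product $L^n_4(\ZZ/2)\otimes L^n_2(\ZZ/2^k)\to L^n_6(\ZZ/2^k)$ and then assert that this is ``controlled by the earlier vanishing of $L^n_{4s+2}(\ZZ/2^k)\otimes L^n_{4t+2}(\ZZ/2^k)$.'' That vanishing statement does not apply: its sources are both in degrees $\equiv 2\pmod 4$, while $L^n_4(\ZZ/2)$ is in degree $0\pmod 4$ and its second summand is a genuinely different class (the one constructed from $(D'_0,E'_0)$, detected by $\sigma^q_2$ of the Bockstein). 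You would still need to compute this low-dimensional product by hand, or find a diagram that forces it. Similarly, for the second map the second summand of $L^n_{4s+1}(\ZZ/2^k)$ is $\ZZ/2^l$ (not $\ZZ/2$, for $k\ge 3$); the paper never exhibits it as ``something times a signature-one symmetric chain,'' so the ``same peeling trick on the auxiliary signature-one factor'' is not available to you as stated. What the paper actually does for the first map is tensor the whole problem with $L^n_3$ and compare against the already-computed product $L^n_{4s}(\ZZ/2^k)\otimes L^n_3\to L^n_{4s+3}(\ZZ/2^k)$ (which annihilates the second summand) and the already-established vanishing of the third map; that commutative square sidesteps having to identify a generator of the second summand at all.
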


\begin{proof}
The second and the third map can be essentially deduced from the fact $L^n_{4s+2}=0$ and the product structure on $L^n_*$. 

The rest follows from the diagram
\[
\begin{tikzcd}
L^n_{4s}(\ZZ,\ZZ/2^k)\otimes L^s_{4t+2}(\ZZ,\ZZ/2^k) \otimes L^n_3 \arrow[r] \arrow[d] & L^n_{4s+3}(\ZZ,\ZZ/2^k) \otimes L^n_{4t+2}(\ZZ,\ZZ/2^k) \arrow[d] \\
L^n_{4(s+t)+2}(\ZZ,\ZZ/2^k)\otimes L^n_3 \arrow[r] & L^n_{4(s+t+1)+2}(\ZZ,\ZZ/2^k)
\end{tikzcd}
\]
\end{proof}

\section{\texorpdfstring{$L$}{Lg}-theory Characteristic Classes and Bundle Lifting Problem}\label{Mainsection}

In this section, we first construct some cohomology classes for three $L$-spectra and then prove that these classes induce splittings of the spectra localized at $2$. The method we use is the a priori invariant method for cohomolgy classes introduced in Section \ref{Preliminaries}.

When we have the characteristic classes, we determine the relationship among characteristic classes of different $L$-theories under the fibration $\LL^q\rightarrow \LL^s\rightarrow \LL^n$.

Moreover, there are ring structures and module structures over $\LL$-spectra and we also calculate the induced coproducts of these classes. Note that, for the surgery theory, the ring structure on $\LL^q$ is induced from the Whitney sum structure on $G/TOP$. This ring structure can be induced from $\LL^s$-module structure on $\LL^q$. We show that the coproduct of the characteristic classes of $\LL^q$ induced by surgery theory and the coproduct induced from the module structure are essentially related.

Levitt-Ranicki's $L$-theory orientations of bundles (\cite[Proposition 16.1]{Ranicki1992}) imply that the cohomology classes of $\LL$-spectra we construct in this section induce some characteristic classes for spherical fibrations and for $TOP$-bundles. We prove that these classes are exactly the same classes constructed in \cite{Sullivan&Morgan} and \cite{Brumfiel&Morgan}.

Throughout this section, the symmetric spectrum $\LL^s$ is $0$-connective, the quadratic spectrum $\LL^q$ is $1$-connective and the normal spectrum $\LL^n$ is $1/2$-connective, as we clarified in section $2$. By abuse of notations, we still use $\LL^a$ to represent the $0$-th space for each spectrum ($a=s,q,n$) and a reader can easily distinguish the meanings by the context. 

\subsection{\texorpdfstring{$L$}{Lg}-theory Characteristic Classes and Splittings of \texorpdfstring{$L$}{Lg}-spectra at Prime \texorpdfstring{$2$}{Lg}}

\subsubsection {Quadratic \texorpdfstring{$L$}{Lg}-spectrum}

This subsection is a reproof of the splitting of $\LL^q(\simeq G/TOP)$ at prime $2$, with the technique of $\ZZ/n$ quadratic chains discussed in the previous section.

\begin{lemma}
\label{quadratic2modulo4}
The Hurewicz map $h:\pi_{4k+2}(\LL^q)\rightarrow H_{4k+2}(\LL^q;\ZZ)$ is an injection onto a direct summand.
\end{lemma}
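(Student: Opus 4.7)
The plan is to use the characteristic class machinery set up earlier. Since $\pi_{4k+2}(\LL^q) \cong L^q_{4k+2} \cong \ZZ/2$ (detected by the Kervaire invariant $\sigma^q_2$), showing that the Hurewicz map is a split injection onto a direct summand is equivalent to producing a class $\tilde{k}^q \in H^{4k+2}(\LL^q;\ZZ/2)$ whose evaluation on the mod-$2$ reduction of the Hurewicz image of the generator is nontrivial. I would therefore construct $\tilde{k}^q$ first, and then test it against the generator.

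To construct $\tilde{k}^q$, I would define a homomorphism $\sigma:\Omega^{SO}_{4k+2}(\LL^q;\ZZ/2)\to \ZZ/2$ as follows. Given $[M^{4k+2},f]$ with $M$ a $\ZZ/2$ manifold and $f:M\to\LL^q$, pull back the universal presheaf of Poincar\'e quadratic chains to $M$ and take its assembly $\mathcal{C}(M)$, which (by the remarks of Section~2.3) is a $(4k+2)$-dimensional $\ZZ/2$ Poincar\'e quadratic chain complex. Set $\sigma([M,f]) = \sigma^q_2(\mathcal{C}(M))\in\ZZ/2$. To apply Brumfiel--Morgan's last theorem above, I need to check the multiplicativity relation $\sigma([M,f]\cdot [N]) = \sigma([M,f])\cdot \chi_2(N)$ for $[N]\in\Omega^{SO}_*(\ZZ/2)$. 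This follows from the identification $\mathcal{C}(M\otimes N)\simeq \mathcal{C}(M)\otimes C_*(N)$ for the modified product (Section~2.4), together with the product formula $\sigma^q_2(C\otimes D)=\sigma^s_0(C)\cdot \sigma^q_2(D)$ established in Lemma~\ref{product-quadratic-Kervaire}, plus the classical congruence $\Sign(N)\equiv \chi_2(N)\pmod{2}$ for $\ZZ/2$ symmetric chains of dimension $4s$, which identifies $\sigma^s_0$ mod $2$ with $\chi_2$. Invariance under cobordism of the assembly guarantees that $\sigma$ descends to $\Omega^{SO}_{4k+2}(\LL^q;\ZZ/2)$.

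To test nontriviality, the generator of $\pi_{4k+2}(\LL^q)$ is represented by a map $g:S^{4k+2}\to\LL^q$ whose classified presheaf, viewed on $S^{4k+2}$ as a $\ZZ/2$ manifold with zero Bockstein, has assembly a Poincar\'e quadratic chain with Kervaire invariant $1$. Pairing $\tilde{k}^q$ against the mod-$2$ reduction of $h([g])$ therefore returns $\sigma([S^{4k+2},g])=1$, so the composition
\[
H_{4k+2}(\LL^q;\ZZ)\xrightarrow{\bmod 2} H_{4k+2}(\LL^q;\ZZ/2)\xrightarrow{\langle\tilde{k}^q,-\rangle} \ZZ/2
\]
is a retraction of $h$ onto its image, and $\pi_{4k+2}(\LL^q)\cong\ZZ/2$ splits off as a direct summand.

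The main obstacle is the verification of the multiplicativity axiom: one has to check that the assembly functor is compatible with the modified $\ZZ/2$ product of presheaves (rather than the ordinary tensor product), and that in dimension $4s$ the invariant $\sigma^s_0$ of the assembly of $N$ really does reduce mod $2$ to $\chi_2(N)$. Everything else is either a direct quotation of Brumfiel--Morgan's criterion or a dimension-count check on $S^{4k+2}$.
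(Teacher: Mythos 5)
Your proof is correct, but it is organized differently from the paper's. The paper's proof of this lemma is a two-step soft argument: first it shows the composite $\pi_{4k+2}(\LL^q)\to\Omega^{SO}_{4k+2}(\LL^q;\ZZ/2)$ is injective, because assembling the presheaf of quadratic chains over a $\ZZ/2$ manifold gives a retraction $\Omega^{SO}_{4k+2}(\LL^q;\ZZ/2)\rightarrow L^q_{4k+2}(\ZZ/2)\simeq L^q_{4k+2}$; then it appeals to Thom's result that $\Omega^{SO}_*(X;\ZZ/2)$ is detected by generalized Stiefel--Whitney numbers, and since all tangential $w_I$ of $S^{4k+2}$ vanish, the only generalized SW number that can survive on $(S^{4k+2},g)$ is a pure pullback class of $\LL^q$ evaluated against $h_2(g)$, forcing $h_2(g)\neq 0$. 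It never exhibits the detecting class explicitly. You, by contrast, go directly for the class: you construct $\tilde k^q$ via the Brumfiel--Morgan criterion applied to $\sigma^q_2$ of the assembly, and then pair it with $h_2(g)$. This is exactly what the paper does next, in Proposition~\ref{Kervaireclass}; you have in effect fused the Lemma and that Proposition into one argument. What you gain is directness and the avoidance of Thom's SW-number detection theorem; what the paper's route buys is that the Lemma is established before (and independently of) the characteristic-class machinery, which is stylistically useful since the Lemma is a prototype for the $\LL^s$ and $\LL^n$ cases where the relevant classes come later. One small caution worth spelling out: the step identifying $\sigma^s_0(C_*(N))$ with $\chi_2(N)$ must cover $\ZZ/2$ manifolds $N$ in all dimensions, not only $N$ of dimension $\equiv 0\pmod 4$; this works because of the convention (stated in the paper) that $\sigma^s_0$ is declared zero in off-dimensions, matched by the vanishing of $\chi_2$ on odd-dimensional and $(4l+2)$-dimensional $\ZZ/2$ manifolds. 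With that observed, the verification of the Brumfiel--Morgan multiplicativity axiom is complete and your argument closes.
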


\begin{proof}
It suffices to prove that the mod $2$ Hurewicz map $h_2$
\[
h_2: L^q_{4k+2}\cong\pi_{4k+2}(\LL^q)\xrightarrow{i} \Omega^{SO}_{4k+2}(\LL^q;\ZZ/2) \xrightarrow{j} H_{4k+2}(\LL^q;\ZZ/2)
\] 
is  injective.

Let us construct a splitting inverse of $i$ and then $i$ is obviously an injection. For any $(M^{4k+2},f)\in\Omega^{SO}_{4k+2}(\LL^q;\ZZ/2)$, it corresponds to a presheaf $\mathcal{C}_f$ of Poincar\'e quadratic chains over $M$. The assembly $\mathcal{C}_f(M)$ gives a splitting 
\[
\Omega^{SO}_{4k+2}(\LL^q;\ZZ/2)\rightarrow L^q_{4k+2}(\ZZ,\ZZ/2)\xleftarrow{\cong} L^q_{4k+2}
\]

To prove $h_2$ is injective, let $g:S^{4k+2}\rightarrow \LL^q$ be the generator of the homotopy group. Then the injectivity of $i$ implies that $(S^{4k+2},g)$ does not bound any $\ZZ/2$ singular manifold in $\LL^q$. Hence, there exists some nonvanishing generalized Stiefel-Whitney number of $(S^{4k+2},g)$. Since all the Stiefel-Whitney classes of a sphere vanish, it means that some cohomology class $H^{4k+2}(\LL^q;\ZZ_2)$ evaluates nontrivially on $h_2(S^{4k+2},g)$.
\end{proof}

\begin{proposition}
\label{Kervaireclass}
There exists a graded class 
\[
k^q=k^q_2+k^q_6+\cdots\in H^{4*+2}(\LL^q;\ZZ/2)
\]
which agrees with the Kervaire class of $G/TOP$ constructed in \cite[Theorem 4.6]{Sullivan&Rourke}.
\end{proposition}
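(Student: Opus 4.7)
The plan is to invoke the third Brumfiel-Morgan theorem (the characterization of classes in $H^*(X;\ZZ/2)$ by $\chi_2$-multiplicative cobordism homomorphisms), applied to $X = \LL^q$, using the Kervaire invariant of chains as the defining homomorphism.

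First, define $\sigma : \Omega^{SO}_*(\LL^q;\ZZ/2) \to \ZZ/2$ as follows. A class $(M^m,f)$ with $f : M \to \LL^q$ determines via the mock bundle picture a presheaf $\mathcal{C}_f$ of ads of Poincar\'e quadratic chains over $M$; its assembly is a $\ZZ/2$ Poincar\'e quadratic chain of dimension $m$. Set
\[
\sigma(M,f) \;=\; \begin{cases} \sigma^q_2(\mathcal{C}_f(M)), & m \equiv 2 \pmod 4, \\ 0, & \text{otherwise.}\end{cases}
\]
Cobordism invariance is automatic: a cobordism of singular $\ZZ/2$ manifolds in $\LL^q$ assembles to a cobordism of $\ZZ/2$ Poincar\'e quadratic chains, and $\sigma^q_2$ was verified to be a cobordism invariant in Chapter~2.

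The core step is to verify the multiplicativity $\sigma((M,f)\cdot N) = \sigma(M,f)\cdot \chi_2(N)$ demanded by Brumfiel-Morgan. For a closed oriented $N$, the assembly of the pullback presheaf on $M\times N$ is $\mathcal{C}_f(M)\otimes C_*(N)$, where $C_*(N)$ carries its Poincar\'e symmetric structure. The only genuinely nonzero case is $\dim M \equiv 2 \pmod 4$ and $\dim N \equiv 0 \pmod 4$, where Lemma~\ref{product-quadratic-Kervaire} gives
\[
\sigma^q_2(\mathcal{C}_f(M)\otimes C_*(N)) = \sigma^s_0(C_*(N))\cdot \sigma^q_2(\mathcal{C}_f(M)) = \Sign(N)\cdot \sigma(M,f),
\]
and the congruence $\Sign(N) \equiv \chi_2(N) \pmod 2$ for oriented $4k$-manifolds (both reduce to $b_{2k} \bmod 2$ via Poincar\'e duality) completes the equation. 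In every other combination of residues of $(\dim M,\dim N)$ producing a product of dimension $\equiv 2 \pmod 4$, either $\sigma(M,f)$ vanishes by definition while the relevant assembly lands in a $\ZZ/2$-coefficient $L$-group equal to zero, or the corresponding product map of $L$-groups with $\ZZ/2$ coefficients (computed in Section~2.5) vanishes; in each situation both sides of the multiplicativity relation are zero.

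Applying Brumfiel-Morgan then produces a unique graded class $k^q \in H^{4*+2}(\LL^q;\ZZ/2)$ such that $\sigma(M,f)=\langle V_M^2\cdot f^*k^q,[M]\rangle$ on every $\ZZ/2$ singular manifold. To identify $k^q$ with Rourke-Sullivan's Kervaire class, recall Ranicki's equivalence $G/TOP \simeq \LL^q$, under which the simply-connected surgery obstruction of a degree-one normal map pulled back over a $(4k+2)$-dimensional $\ZZ/2$ manifold coincides with $\sigma^q_2$ of the associated chain-level assembly. Rourke-Sullivan's class is uniquely pinned down by the identical formula $s(f)=\langle V_M^2\cdot f^*k,[M]\rangle$, so the two classes satisfy the same defining identity and must agree. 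The main obstacle in the above plan is the dimensional bookkeeping in the multiplicativity step, and in particular securing the mod $2$ identification of the signature with the Euler characteristic on oriented $4k$-manifolds so that the chain-level product formula matches the $\chi_2$-multiplicativity required by Brumfiel-Morgan.
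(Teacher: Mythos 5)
Your proposal is correct and follows essentially the same route as the paper: define $\sigma(M,f)=\sigma^q_2(\mathcal{C}_f(M))$ on $\Omega^{SO}_*(\LL^q;\ZZ/2)$, verify the $\chi_2$-multiplicativity via the chain-level formula $\sigma^q_2(C\otimes D)=\sigma^s_0(C)\cdot\sigma^q_2(D)$ from Lemma~\ref{product-quadratic-Kervaire}, and invoke the Brumfiel--Morgan criterion for $\ZZ/2$ classes. The paper's proof is terser on the dimensional bookkeeping and simply writes $\sigma^s_0(C_*(N))=\chi_2(N)$; your explicit note that $\Sign(N)\equiv\chi_2(N)\pmod 2$ on oriented $4k$-manifolds, and that the off-residue cases vanish on both sides, makes the same point with the details spelled out.
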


\begin{proof}
Let $M$ be a $\ZZ/2$ manifold with a map $f:M\rightarrow \LL^q$. Let $\mathcal{C}_f$ be the associated presheaf of quadratic chains over $M$. Define a map $\sigma^q_2:\Omega^{SO}(\LL^q;\ZZ/2)\rightarrow \ZZ/2$ by $\sigma^q_2(M,f)=\sigma^q_2(\mathcal{C}_f(M))$.

Let $N$ be another $\ZZ/2$ manifold. The product property follows from the chain-level product formula
\[
\sigma^q_2((M,f)\cdot N)=\sigma^q_2(\mathcal{C}_f(M)\otimes C_*(N))=\sigma^q_2(\mathcal{C}_f(M))\cdot \sigma^s_0(C_*(N))=\sigma^q_2(M,f)\cdot \chi_2(N)
\]
\end{proof}

Since $\pi_{4k}(\LL^q)\simeq L^q_{4k}\simeq \ZZ$, to prove the splitting injectivity of the Hurewicz map localized at prime $2$, it suffices to construct some characteristic class $l^q_{4k}\in H^{4k}(\LL^q_{4k},\ZZ_{(2)})$ such that its evaluation on the generator of $L^q_{4k}$ is an odd number.

Let $M$ be a $\ZZ$ or $\ZZ/2^k$ manifold with a map $f:M\rightarrow \LL^q$ and let $\mathcal{C}_f$ be the associated presheaf of quadratic chain complexes over $M$. Define 
\[
\sigma^q_0(M,f)=\sigma^q_0(\mathcal{C}_f(M))-j_k\langle\beta(V_MSq^1 V_M\cdot f^* k^q),[M]\rangle\in\ZZ \,\,\text{or}\,\, \ZZ/2^k
\]

Recall that the de Rham invariant of a $\ZZ/2^k$ manifold $M$ satisfies the equation (\cite[Lemma 8.2]{Sullivan&Morgan})
\[
\dR(M)=\langle V Sq^1 V,[M]\rangle\in\ZZ/2
\]
where $V Sq^1 V=(1+v_2+v_4+\cdots)\cdot Sq^1(1+v_2+v_4+\cdots)$ and $\{v_i\}$ is the Wu class. 

Then
\[
\dR(\delta M)=\langle \beta (V Sq^1 V),[M]\rangle\in\ZZ/2\simeq 2^{k-1}\cdot\ZZ/2^k
\]
where $\beta$ is the $\ZZ/2\rightarrow \ZZ/2^k$ Bockstein homomorphism.

The chain-level product formula implies the following.

\begin{lemma}
Let $M$ and $N$ be $\ZZ/2^k$ manifolds with a map $f:M\rightarrow \LL^q$. Then
\[
\sigma^q_0((M,f)\cdot N)=\sigma^q_0(M,f)\cdot \Sign(N) \in \ZZ \,\,\text{or}\,\, \ZZ/2^k
\]
\end{lemma}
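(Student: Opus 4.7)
The plan is to expand both sides using the definition of $\sigma^q_0(M,f)$ and match the correction terms against the chain-level product formula for $\sigma^q_0$ on $\ZZ/2^k$ quadratic chains proven earlier in the chapter. The associated presheaf of $(M,f)\cdot N$ is $\mathcal{C}_f\otimes C_*(N)$ over $M\otimes N$, whose assembly is the modified tensor product $\mathcal{C}_f(M)\otimes C_*(N)$. Unfolding the definition,
\[
\sigma^q_0((M,f)\cdot N)=\sigma^q_0\bigl(\mathcal{C}_f(M)\otimes C_*(N)\bigr)-j_k\langle\beta(V_{M\otimes N}Sq^1V_{M\otimes N}\cdot \widetilde f^{\,*}k^q),[M\otimes N]\rangle,
\]
where $\widetilde f:M\otimes N\to \LL^q$ is the composition of $f$ with the projection.

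First I would apply the chain-level product formula to the first summand, producing
\[
\sigma^q_0(\mathcal{C}_f(M))\cdot \Sign(N)+j_{2^k}\bigl(\sigma^q_2(\delta\mathcal{C}_f(M))\cdot \dR(N)+\sigma^q_2(\mathcal{C}_f(M))\cdot \dR(\delta N)\bigr),
\]
using $\sigma^s_0(C_*(N))=\Sign(N)$ and $\sigma^s_1(C_*(N))=\dR(N)$ on the symmetric chain of a manifold. Substituting $\sigma^q_0(\mathcal{C}_f(M))=\sigma^q_0(M,f)+j_k\langle\beta(V_MSq^1V_M\cdot f^*k^q),[M]\rangle$ isolates the desired main term $\sigma^q_0(M,f)\cdot \Sign(N)$ plus a residual $j_k$-correction.

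Next I would invoke Proposition~\ref{Kervaireclass} to rewrite Kervaire invariants as characteristic-number pairings $\sigma^q_2(\mathcal{C}_f(M))=\langle V^2_Mf^*k^q,[M]\rangle$ and $\sigma^q_2(\delta\mathcal{C}_f(M))=\langle V^2_{\delta M}(\delta f)^*k^q,[\delta M]\rangle$, and use the formula $\dR(P)=\langle V_PSq^1V_P,[P]\rangle$. In parallel, I would expand the $\beta$-correction on $M\otimes N$ using the Whitney product identity $V_{M\otimes N}=V_MV_N$ (the graph-correction region $\delta M\otimes\delta N\otimes W$ contributes only in codegrees too low to affect the relevant pairings, since $\dim W=2$), the Cartan formula $Sq^1(V_MV_N)=Sq^1(V_M)\cdot V_N+V_M\cdot Sq^1(V_N)$, and Lemma~\ref{boundary}, which together rewrite $\langle\beta(\cdot),[M\otimes N]\rangle$ as a sum of pairings on $[\delta M]\times[N]$ and $[M]\times[\delta N]$. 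The two resulting cross terms precisely cancel the $j_{2^k}(\cdots)$ contribution from the chain-level formula together with the residual $j_k$-correction, leaving $\sigma^q_0(M,f)\cdot \Sign(N)$ as claimed.

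The principal obstacle will be the delicate bookkeeping around the modified product and the interaction of two distinct Bocksteins. Concretely, I must verify that the $\delta M\otimes\delta N\otimes W$ piece of $M\otimes N$ contributes trivially to every characteristic-number pairing above (by a dimension count that uses $\dim W=2$ and the degrees of $V$, $Sq^1V$, and $k^q$), and that the $\ZZ/2\to\ZZ/2^k$ Bockstein $\beta$ interacts with the mod-$2$ Bockstein $Sq^1$ appearing inside $VSq^1V$ by the Leibniz rule with exactly the right $j_{2^k}$ and $j_k$ coefficients to secure the cancellation. Once these compatibilities are checked, the identity follows by direct term-matching.
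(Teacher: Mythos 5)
Your plan is the one the paper clearly intends — the paper states the lemma with the one-line hint ``the chain-level product formula implies the following'' and leaves the computation to the reader, and you have reconstructed exactly that computation: expand both sides via the definition of $\sigma^q_0(M,f)$, apply the chain-level formula
\[
\sigma^q_0(C\otimes D)=\sigma^q_0(C)\cdot\sigma^s_0(D)+j_{2^k}\bigl(\sigma^q_2(\delta C)\cdot\sigma^s_1(D)+\sigma^q_2(C)\cdot\sigma^s_1(\delta D)\bigr)
\]
with $C=\mathcal{C}_f(M)$, $D=C_*(N)$, and cancel the $\beta$-correction on $M\otimes N$ against the residual $j_{2^k}$-terms using the Morgan--Sullivan coproduct identity for $V\!Sq^1V$, the relation $\dR(P)=\langle V_PSq^1V_P,[P]\rangle$ and $\dR(\delta P)=\langle\beta(VSq^1V),[P]\rangle$, and Lemma~\ref{boundary}. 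That is the right skeleton, and the term-matching does close up (note that in one of the cross terms you will pick up $\chi_2(N)$ rather than $\Sign(N)$; this is harmless since $\chi(N)\equiv\Sign(N)\pmod 2$ and the whole expression is under $j_k$).

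One correction to your stated justification: the assertion that the graph-correction region $\delta M\otimes\delta N\otimes W$ ``contributes only in codegrees too low to affect the relevant pairings, since $\dim W=2$'' is not accurate — that region has dimension $(\dim M-1)+(\dim N-1)+2=\dim M+\dim N$, i.e.\ it is top-dimensional in $M\otimes N$, so a codegree count does not dispose of it. The fact you actually need is precisely Proposition~\ref{productcharacteristic} (Morgan--Sullivan), which the paper cites: $L_{M\otimes N}=L_M\otimes L_N$, $V^2_{M\otimes N}=V^2_M\otimes V^2_N$, and $V_{M\otimes N}Sq^1V_{M\otimes N}=V^2_M\otimes V_NSq^1V_N+V_MSq^1V_M\otimes V^2_N$ for the \emph{modified} product. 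You should invoke that proposition directly rather than try to rederive it from the Cartesian Whitney--Cartan formula plus a dimension estimate; with that citation in place, the cancellation argument you outline goes through.
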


Therefore,

\begin{proposition}
There exists a graded class 
\[
l^q=l^q_4+l^q_8+\cdots\in H^{4*}(\LL^q;\ZZ_{(2)})
\]
such that for any map $f:M\rightarrow \LL^q$,
\[
\sigma^q_0(M,f)=\langle L_M\cdot f^*l^q,[M]\rangle \in \ZZ \,\,\text{or}\,\, \ZZ/2^k
\]
where $M$ is a $\ZZ$ or $\ZZ/2^k$ manifold.
\end{proposition}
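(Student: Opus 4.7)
The plan is to apply the Morgan--Sullivan characterization of $2$-local integral cohomology classes (stated earlier in this section) to the homomorphism $\sigma^q_0$ on singular bordism of $\LL^q$, thereby directly producing the graded class $l^q \in H^{4*}(\LL^q;\ZZ_{(2)})$ with the stated defining equation. Three conditions must be verified to invoke that theorem: that $\sigma^q_0$ is cobordism-invariant on both $\Omega^{SO}_*(\LL^q)$ and $\Omega^{SO}_*(\LL^q;\ZZ/2^k)$; that the $\ZZ$- and $\ZZ/2^k$-versions fit into Morgan--Sullivan's commutative square; and that the two Morgan--Sullivan product formulae hold. The last is already the content of the preceding lemma.

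For cobordism invariance, I would take a $\ZZ/2^k$ singular manifold $W \to \LL^q$ with boundary $(M,f)$. The associated presheaf $\mathcal{C}_f$ extends to $W$, and its assembly is a $\ZZ/2^k$ Poincar\'e quadratic pair bounding $\mathcal{C}_f(M)$, so the first summand $\sigma^q_0(\mathcal{C}_f(M))$ vanishes by cobordism invariance of the chain-level invariant established in Section 2.3. The correction term $j_k\langle \beta(V_M Sq^1 V_M\cdot f^* k^q),[M]\rangle$ is a characteristic number that vanishes on $\ZZ/2^k$ boundaries via Morgan--Sullivan's Bockstein identity for $V Sq^1 V$. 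Compatibility with the integral version is immediate, because a $\ZZ$ singular manifold is a $\ZZ/2^k$ singular manifold with vanishing Bockstein, the chain-level $\sigma^q_0 = \tfrac{1}{8}\Sign$ reduces mod $2^k$, and the correction term reduces compatibly. Morgan--Sullivan's theorem then delivers the class $l^q$.

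The main obstacle is the preceding product-formula lemma itself, which asserts that the correction term is exactly what is needed to turn the chain-level identity
\[
\sigma^q_0(C\otimes D)=\sigma^q_0(C)\cdot\sigma^s_0(D)+j_{2^k}\bigl(\sigma^q_2(\delta C)\cdot\sigma^s_1(D)+\sigma^q_2(C)\cdot\sigma^s_1(\delta D)\bigr)
\]
into the clean bordism-level identity $\sigma^q_0((M,f)\cdot N) = \sigma^q_0(M,f)\cdot\Sign(N)$. Establishing this requires identifying $\sigma^q_2(\mathcal{C}_f(M))$ with $\langle V_M^2\cdot f^* k^q,[M]\rangle$ via Proposition \ref{Kervaireclass} and $\sigma^s_1$ of $C_*(N)$ with the de Rham invariant via Wu's formula, so that the Bockstein $\beta(V_M Sq^1 V_M\cdot f^* k^q)$ absorbs precisely the cross terms contributed by $\delta M$. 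Once that bookkeeping is done, the existence of $l^q$ follows by direct appeal to Morgan--Sullivan.
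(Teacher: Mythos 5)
Your proposal is correct and follows essentially the same route as the paper: define the homomorphism $\sigma^q_0$ on bordism with the Bockstein correction term, verify that the chain-level product formula from Section 2 converts into the clean multiplicative identity $\sigma^q_0((M,f)\cdot N)=\sigma^q_0(M,f)\cdot\Sign(N)$ needed for the Morgan--Sullivan characterization, and then invoke that theorem to produce $l^q$. The paper leaves the cobordism-invariance and $\ZZ\leftrightarrow\ZZ/2^k$ compatibility checks implicit; you spell them out explicitly, but the underlying argument is the same.
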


Recall that the set $[X,\LL^q]$ classifies the bordism classes of presheaves of Poincar\'e quadratic chains over $X$. The previous results can be restated as follows.

\begin{proposition}
For any presheaf $\mathcal{Q}$ of $0$-connective Poincar\'e quadratic chains over $X$, there exist graded characteristic classes
\[
k^q(\mathcal{Q})=k^q_2+k^q_6+\cdots\in H^{4*+2}(X;\ZZ/2)
\]
\[
l^q(\mathcal{Q})=l^q_4+l^q_8+\cdots\in H^{4*}(X;\ZZ_{(2)})
\]
which are invariant under presheaf bordism.
\end{proposition}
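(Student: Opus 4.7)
The plan is to reduce this statement to the universal classes $k^q \in H^{4*+2}(\LL^q;\ZZ/2)$ and $l^q \in H^{4*}(\LL^q;\ZZ_{(2)})$ already constructed in the two propositions immediately preceding this one, via the classifying property of $\LL^q$. Recall that $\LL^q$ was built as an $\Omega$-spectrum whose $0$-th space is the $\Delta$-set of ads of Poincar\'e quadratic chains, and (as the paper states a few lines earlier) cellular maps $X \to \LL^q$ correspond to presheaves of ads of Poincar\'e quadratic chains on $X$, with homotopies of such maps corresponding exactly to cobordisms of presheaves.

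First, for a presheaf $\mathcal{Q}$ over a finite $\Delta$-complex $X$, I would invoke this classifying property to produce a cellular map $f_{\mathcal{Q}}:X \to \LL^q$, well defined up to homotopy, whose induced presheaf recovers $\mathcal{Q}$ up to the natural notion of equivalence. For a general space $X$ one reduces to the finite case by taking skeleta or by approximating $X$ by its simplicial subcomplexes, since $\LL^q$ is a CW-spectrum and cohomology commutes with the appropriate limits. Then I would \emph{define}
\[
k^q(\mathcal{Q}) := f_{\mathcal{Q}}^{*}(k^q) \in H^{4*+2}(X;\ZZ/2), \qquad l^q(\mathcal{Q}) := f_{\mathcal{Q}}^{*}(l^q) \in H^{4*}(X;\ZZ_{(2)}).
\]

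Cobordism invariance then follows from naturality of cohomology: if $\mathcal{Q}$ and $\mathcal{Q}'$ are cobordant presheaves over $X$, then by the classifying property their classifying maps $f_{\mathcal{Q}}, f_{\mathcal{Q}'}: X \to \LL^q$ are homotopic, hence induce the same map on cohomology, so the pulled-back classes coincide. The assertion that these are indeed the ``characteristic classes'' in the intended sense (that is, that they agree on the universal example with the invariants $\sigma^q_2$ and $\sigma^q_0$ defined cellwise for the presheaf) is tautological from the construction of $k^q$ and $l^q$ on $\LL^q$ in the preceding propositions.

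The only real subtlety --- and what I would expect to be the main technical obstacle --- is making precise the statement ``$[X,\LL^q]$ classifies cobordism classes of presheaves of Poincar\'e quadratic chains over $X$,'' especially when $X$ is not already a finite $\Delta$-complex. In the finite case this is essentially built into the Kan $\Delta$-set definition of $\LL^q$ via Quinn's construction (so that simplices of $\LL^q$ literally \emph{are} ads, face maps are restriction to faces, and $\pi_n$ is computed by cobordism classes), but one must argue that the homotopy relation on maps $X \to \LL^q$ matches the geometric cobordism relation on presheaves; this is exactly the content of Ranicki's identification of Quinn's $\Omega$-spectrum structure with the $L$-spectrum. Once this identification is accepted (it is essentially the defining property of $\LL^q$ used throughout Chapter~2), the rest of the proof is a one-line pullback argument.
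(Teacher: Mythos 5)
Your proposal is correct and matches the paper's own argument exactly: the paper introduces this proposition with the single observation that $[X,\LL^q]$ classifies cobordism classes of presheaves of Poincar\'e quadratic chains over $X$, and the classes are then defined by pulling back the universal $k^q$ and $l^q$ from $\LL^q$ along the classifying map, with cobordism invariance following from homotopy invariance of the classifying map.
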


Now let $f:S^{4k}\rightarrow \LL^q$ be a generator of $\pi_{4k}(\LL^q)\simeq \ZZ$. Then the associated presheaf of quadratic chains $\mathcal{C}_f$ has $\sigma^q_0(\mathcal{C}_f(S^{4k}))=1$. Hence,
\[
\langle l^q_{4k},f_{*}[S^{4k}]\rangle=\langle L_{S^{4k}}\cdot f^*l^q_{4k},[S^{4k}]\rangle=\sigma^q_0(\mathcal{C}_f(S^{4k}))=1
\]
Then we have reproved that

\begin{theorem}
Localized at prime $2$, 
\[
\LL^q\simeq \prod_{k>0} (K(\ZZ_{(2)},4k)\times K(\ZZ/2,4k-2))
\]
where the homotopy equivalence is given by the classes $l^q$ and $k^q$.
\end{theorem}

\subsubsection{Symmetric \texorpdfstring{$L$}{Lg}-spectrum}

Applying the argument of \ref{quadratic2modulo4}, we can prove an analogous statement about $\LL^s$.

\begin{lemma}
\label{symmetric2modulo4}
The Hurewicz map $h:\pi_{4k+1}(\LL^s)\rightarrow H_{4k+1}(\LL^s,\ZZ)$ is an injection onto a direct summand.
\end{lemma}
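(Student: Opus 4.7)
The approach is to mirror the proof of Lemma \ref{quadratic2modulo4} verbatim, replacing the Kervaire-type invariant $\sigma^q_2$ for $\ZZ/2$ Poincar\'e quadratic chains with the de Rham invariant $\sigma^s_1$ for $\ZZ/2$ Poincar\'e symmetric chains. Since $\pi_{4k+1}(\LL^s)\simeq L^s_{4k+1}\simeq \ZZ/2$ is cyclic of order two, being a split monomorphism into $H_{4k+1}(\LL^s;\ZZ)$ is equivalent to the mod $2$ Hurewicz map
\[
h_2 : \pi_{4k+1}(\LL^s)\xrightarrow{i}\Omega^{SO}_{4k+1}(\LL^s;\ZZ/2)\xrightarrow{j} H_{4k+1}(\LL^s;\ZZ/2)
\]
being nonzero, so the entire problem reduces to detecting a generator mod $2$ by a homology class on $\LL^s$.

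First I would construct a splitting of $i$. For any $(M^{4k+1},f)\in\Omega^{SO}_{4k+1}(\LL^s;\ZZ/2)$, the map $f$ classifies a presheaf $\mathcal{C}_f$ of Poincar\'e symmetric chains over the $\ZZ/2$ manifold $M$; the assembly $\mathcal{C}_f(M)$ is a $(4k+1)$-dimensional $\ZZ/2$ Poincar\'e symmetric chain, and $\sigma^s_1(\mathcal{C}_f(M))$ provides a well-defined, cobordism-invariant homomorphism
\[
\Omega^{SO}_{4k+1}(\LL^s;\ZZ/2)\longrightarrow L^s_{4k+1}(\ZZ/2)\xleftarrow{\simeq} L^s_{4k+1}.
\]
The isomorphism $L^s_{4k+1}(\ZZ/2)\simeq L^s_{4k+1}$ comes from the Bockstein long exact sequence: multiplication by $2$ is zero on $L^s_{4k+1}=\ZZ/2$ and injective on $L^s_{4k}=\ZZ$, and in both groups the de Rham invariant $\sigma^s_1$ gives the isomorphism to $\ZZ/2$. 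Evaluating on a representative $g:S^{4k+1}\to\LL^s$ of the generator shows that the composite $\sigma^s_1\circ(\text{assembly})\circ i$ is the identity on $\pi_{4k+1}(\LL^s)$, which establishes split injectivity of $i$.

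With $i$ split injective, $(S^{4k+1},g)$ is nonzero in $\Omega^{SO}_{4k+1}(\LL^s;\ZZ/2)$, so it does not bound as a $\ZZ/2$ singular manifold in $\LL^s$. Exactly as in the quadratic case, some generalized Stiefel--Whitney number must be nontrivial; since every positive-degree Stiefel--Whitney class of $S^{4k+1}$ vanishes, the only possible contribution is a pairing $\langle g^*x,[S^{4k+1}]_{\ZZ/2}\rangle\neq 0$ for some $x\in H^{4k+1}(\LL^s;\ZZ/2)$. Equivalently, $g_*[S^{4k+1}]_{\ZZ/2}\neq 0$ in $H_{4k+1}(\LL^s;\ZZ/2)$, so $h_2$ is injective; as the source is $\ZZ/2$ and the target is an $\mathbb{F}_2$-vector space, it splits, and this splitting lifts to a splitting of the integral Hurewicz map onto a $\ZZ/2$-summand.

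The one point requiring care is the verification that $\sigma^s_1$ applied to the assembly of $\mathcal{C}_g$ genuinely computes the de Rham invariant of the $L$-theory class $[g]$, i.e., that the composite $\pi_{4k+1}(\LL^s)\to L^s_{4k+1}(\ZZ/2)$ coming from viewing a sphere map as a $\ZZ/2$ singular manifold with vanishing Bockstein is the natural map $L^s_{4k+1}\to L^s_{4k+1}(\ZZ/2)$. This is formal once one unwinds the $\Delta$-set structure of $\LL^s$ and the assembly functor, exactly as in the quadratic case, and I do not anticipate any genuine obstacle beyond this bookkeeping.
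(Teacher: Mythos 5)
Your proposal is correct and is essentially the paper's own argument: the paper proves this lemma by simply invoking ``the same argument as Lemma \ref{quadratic2modulo4},'' and you have faithfully reproduced that argument with the de Rham invariant $\sigma^s_1$ replacing $\sigma^q_2$, including the splitting of $i$ via assembly and the sphere/Stiefel--Whitney argument for injectivity of $j$ on the image.
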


We will define the $4*$-degree integral class before the $(4*+1)$-degree $\ZZ/2$ class.

It is different from $\LL^q$ that the space $\LL^s$ is not connected. Denote the $t$-th component by $\LL^s_t$. Notice that the $0$-cells of $\LL^s_t$ are just $0$-dimensional Poincar\'e symmetric chains with the $0$-th homology of rank $t$.

Let $M$ be a $\ZZ$ or $\ZZ/2^l$ manifold with a map $f:M\rightarrow \LL^s$ and let $\mathcal{C}_f$ be the associated presheaf of symmetric chains. Define 
\[
\sigma^s_0(M,f)=\sigma^s_0(\mathcal{C}_f(M))\in \ZZ \,\,\text{or}\,\, \ZZ/2^l
\]
It is immediate that the product formula holds, i.e.,
\[
\sigma^s_0((M,f)\cdot N)=\sigma^s_0(M,f)\cdot \Sign(N)
\]
where $N$ is another $\ZZ$ or $\ZZ/2^l$ manifold. Hence, we have

\begin{proposition}
There exists a graded class 
\[
l^s_{t}=l^s_{t,0}+l^s_{t,4}+l^s_{t,8}+\cdots\in H^{4*}(\LL^s_t;\ZZ_{(2)})
\]
such that for any map $f:M\rightarrow \LL^s_t$,
\[
\sigma^s_0(M,f)=\langle L_M\cdot f^*l^s_t,[M]\rangle \in \ZZ \,\,\text{or}\,\, \ZZ/2^l
\]
where $M$ is a closed manifold or a $\ZZ/2^l$ manifold.
\end{proposition}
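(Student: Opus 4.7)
The plan is to invoke the Morgan–Sullivan characterization theorem stated earlier, which asserts that a graded class in $H^*(X;\ZZ_{(2)})$ is uniquely determined by a compatible pair of functionals $(\sigma_{\QQ},\sigma_2)$ on $\Omega^{SO}_*(X)\otimes \QQ$ and $\Omega^{SO}_*(X;\ZZ/2^{\infty})$ satisfying the two multiplicativity conditions. I will construct such a pair on $X=\LL^s_k$ using the $\ZZ/2^l$ signature invariant $\sigma^s_0$ of the assembly of the tautological presheaf, and then read off the existence of $l^s_k$ and its defining formula directly.

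First I would set $\sigma_{\QQ}(M,f)=\sigma^s_0(\mathcal{C}_f(M))\in\ZZ\hookrightarrow\QQ$ for a closed oriented $M$ and $\sigma_2(M,f)=\sigma^s_0(\mathcal{C}_f(M))\in\ZZ/2^l\xrightarrow{j_{\infty}}\ZZ/2^{\infty}$ for a $\ZZ/2^l$ manifold. Commutativity of the Morgan–Sullivan square is immediate: if $M$ is a closed manifold viewed as a $\ZZ/2^l$ manifold with empty Bockstein, then the $\ZZ/2^l$ signature of the assembly is just the reduction of the integral signature modulo $2^l$. This is the content of the remark earlier in the paper that $\sigma^s_0$ on a closed manifold agrees with the signature invariant on a chain with zero Bockstein.

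Second, I would verify the two product axioms. Given an auxiliary closed or $\ZZ/2^l$ manifold $N$, the assembly of the modified product presheaf on $M\otimes N$ is the modified tensor product $\mathcal{C}_f(M)\otimes C_*(N)$ of $\ZZ/2^l$ symmetric chains, as explained in the section on tensor products. The multiplicativity of $\sigma^s_0$ established earlier,
\[
\sigma^s_0(C\otimes D)=\sigma^s_0(C)\cdot \sigma^s_0(D),
\]
immediately yields
\[
\sigma_{\QQ}((M,f)\cdot N)=\sigma_{\QQ}(M,f)\cdot \Sign(N), \qquad \sigma_2(j_{\infty}((M,f)\cdot N))=\sigma_2(j_{\infty}(M,f))\cdot \Sign(N),
\]
since $\sigma^s_0(C_*(N))=\Sign(N)$. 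Thus both hypotheses of the Morgan–Sullivan theorem are satisfied.

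Applying that theorem produces a unique graded class $l^s_k\in H^{4*}(\LL^s_k;\ZZ_{(2)})$ satisfying the defining equations
\[
\sigma^s_0(M,f)=\langle L_M\cdot f^*l^s_k,[M]\rangle
\]
for both closed manifolds and $\ZZ/2^l$ manifolds, which is exactly the claim. The only mildly delicate point I expect is not any calculation but the bookkeeping of components: one must restrict to $\LL^s_k$ so that the constant term $l^s_{k,0}\in H^0(\LL^s_k;\ZZ_{(2)})$ is well-defined (equal to $k$, the signature of the constant $0$-dimensional Poincaré symmetric chain), since on $\LL^s$ as a whole $\sigma^s_0$ is not constant on $\pi_0$. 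Once one works component-wise, the Morgan–Sullivan machinery applies verbatim.
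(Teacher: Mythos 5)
Your proposal is correct and follows essentially the same route as the paper: define $\sigma^s_0(M,f)=\sigma^s_0(\mathcal{C}_f(M))$ for $M$ a closed or $\ZZ/2^l$ manifold, observe that the product axiom follows immediately from the chain-level multiplicativity $\sigma^s_0(C\otimes D)=\sigma^s_0(C)\cdot\sigma^s_0(D)$ and the identification of the assembly of the modified product presheaf with the modified tensor product of assemblies, and then quote the Morgan--Sullivan characterization theorem. The paper is terser (it simply states the definition of $\sigma^s_0$, records the product formula, and says ``hence''), so the extra detail you supply---the explicit commuting square of $\sigma_{\QQ}$ and $\sigma_2$, and the remark on why one must work component-by-component in $\LL^s_k$ so that the degree-$0$ term is well defined---is a faithful unpacking of what the paper leaves implicit rather than a different argument.
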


$\LL^s$ also has an additional structure due to the infinite loop space structure, i.e.,
\[
a_{t,t'}:\LL^s_t\times\LL^s_{t'}\rightarrow \LL^s_{t+t'}
\]

The homotopy equivalence between different components is indeed the composition of maps
\[
a_t:\LL^s_0=\LL^s_0\times \pt\rightarrow \LL^s_0\times \LL^s_t\xrightarrow{a_{0,t}} \LL^s_t
\]

Let us write down the map explicitly, let $f:\sigma^m\rightarrow \LL^s_{t}$ and $g:\sigma^n\rightarrow \LL^s_{t'}$ be cellular maps from simplices of dimension $m$ and $n$ respectively. They correspond to presheaves $\mathcal{C}_f$ and $\mathcal{C}_g$ respectively. Now we consider the map $a\circ(f\times g):\sigma^m\times\sigma^n\rightarrow \LL_{t+t'}$ and its associated presheaf $\mathcal{C}_{a\circ (f\times g)}$. $\sigma^m\times\sigma^n$ has a natural regular cell decomposition of the form $\tau^{m'}\times\tau^{n'}$, where $\tau^{m'}$ and $\tau^{n'}$ are faces of $\sigma^m$ and $\sigma^n$ respectively. Then $\mathcal{C}_{a\circ (f\times g)}(\tau^{m'}\times\tau^{n'})\cong (\mathcal{C}_f(\tau^{m'})\otimes C_*(\tau^{n'}))\oplus (C_*(\tau^{m'})\otimes \mathcal{C}_f(\tau^{n'}))$, where $C_*(\tau^{m'})$ and $C_*(\tau^{n'})$ are cellular chain complexes. Hence, we have proved that

\begin{lemma}
Let $M$ and $N$ be two $\ZZ/2^l$ manifolds with maps $f:M\rightarrow \LL^s$ and $g:N\rightarrow \LL^s$. Let $\mathcal{C}_f$ and $\mathcal{C}_g$ be the associated presheves of Poincar\'e symmetric chains over $M$ and $N$ respectively. Let $\mathcal{C}_{a\circ (f\otimes g)}$ be the presheaf associated to $a\circ (f\otimes g)$. Then $\mathcal{C}_{a\circ (f\otimes g)}(M\otimes N)$ is bordant to $(\mathcal{C}_f(M)\otimes C_*(N))\oplus (C_*(M)\otimes\mathcal{C}_g(N))$
\end{lemma}

The $l^s$-classes in different components are connected by the additional structure.

\begin{proposition}
\[
a^*_{t,t'}l^s_{t+t'}=l^s_t\times 1+1\times l^s_{t'}
\]
\end{proposition}

\begin{proof}
Let $M$ and $N$ be $\ZZ/2^l$ manifolds with maps $f:M\rightarrow \LL^s_t$ and $g:N\rightarrow \LL^s_{t'}$ . By the lemma above we know that $\mathcal{C}_{a\circ(f\times g)}(M\times N)$ is bordant to $ (\mathcal{C}_f(M)\otimes C_*(N))\oplus (C_*(M)\otimes\mathcal{C}_g(N))$.

On one hand,
\[
\sigma^s_0(M\otimes N,a\circ (f\otimes g))=\langle L_{M\otimes N}\cdot(f\otimes g)^*a^*(l^s_{t+t'}),[M\otimes N]\rangle
\]
On the other hand,
\begin{eqnarray*}
\sigma^s_0((\mathcal{C}_f(M)\otimes C_*(N))\oplus (C_*(M)\otimes\mathcal{C}_g(N)))
& = & \langle L_M\cdot f^*l^s_t,[M]\rangle\cdot 
\langle L_N,[N]\rangle \\
& & +\langle L_M,[M] \rangle\cdot 
\langle L_N \cdot g^* l^s_{t'}, [N]\rangle    
\end{eqnarray*}

\end{proof}

By a similar argument we also have

\begin{proposition}
\[
a^*_t l^s_t=t+ l^s_0
\]
\end{proposition}

In particular,
\begin{proposition}
\[
l^s_{t,0}=t\in H^0(\LL^s_t;\ZZ_{(2)})\simeq\ZZ_{(2)}
\]
\end{proposition}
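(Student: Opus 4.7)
The plan is to evaluate $l^s_{k,0}$ directly on a single point of $\LL^s_k$. A $0$-cell of $\LL^s_k$ is, by the $\Delta$-set description of $\LL^s$, a $0$-dimensional Poincar\'e symmetric chain complex whose cobordism class is $k\in\pi_0(\LL^s)\simeq L^s_0\simeq\ZZ$; since this isomorphism is detected by the signature invariant $\sigma^s_0$, such a chain has signature exactly $k$. Pick any such $0$-cell and regard it as a map $f:\pt\to\LL^s_k$ from the closed oriented $0$-manifold $\pt$. The associated presheaf $\mathcal{C}_f$ on the point is just this $0$-dimensional Poincar\'e symmetric chain, so tautologically $\sigma^s_0(\pt,f)=k$.

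On the other hand, the defining equation for $l^s_k$ established in the previous proposition reads
\[
\sigma^s_0(\pt,f)=\langle L_{\pt}\cdot f^*l^s_k,[\pt]\rangle.
\]
Now $L_{\pt}=1$ and $[\pt]$ has degree $0$, so only the degree-$0$ component $l^s_{k,0}\in H^0(\LL^s_k;\ZZ_{(2)})$ can pair nontrivially against $[\pt]$. Since $\LL^s_k$ is path-connected, $H^0(\LL^s_k;\ZZ_{(2)})\simeq\ZZ_{(2)}$ canonically by evaluation on any point, so $\langle f^*l^s_{k,0},[\pt]\rangle=l^s_{k,0}$ independently of the choice of $0$-cell $f$. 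Comparing the two expressions for $\sigma^s_0(\pt,f)$ yields $l^s_{k,0}=k$.

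There is no real obstacle; the proof is just unwinding the definition of $l^s_k$ on the simplest possible closed manifold, together with the standard identification of $\pi_0(\LL^s)$ with $\ZZ$ via $\sigma^s_0$ that underlies the notation $\LL^s_k$ for the $k$-th component. As an internal consistency check, the previous lemma $a^*_k l^s_k=k+l^s_0$, applied in degree $0$ and combined with the fact that $a_k$ is a homotopy equivalence (so $a_k^*$ is an isomorphism on $H^0$), reduces the statement to the case $k=0$: the basepoint of $\LL^s_0$ is the zero chain, which has signature $0$, hence $l^s_{0,0}=0$ by the same point-evaluation argument, recovering $l^s_{k,0}=k$ in general.
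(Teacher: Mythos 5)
Your argument is correct, and it takes a slightly different route from the paper. The paper presents the proposition as an immediate consequence (``In particular'') of the preceding lemma $a_k^* l^s_k = k + l^s_0$: taking degree $0$ and using that $a_k$ is a homotopy equivalence gives $l^s_{k,0} = k + l^s_{0,0}$, after which one still needs $l^s_{0,0} = 0$, which is itself a point-evaluation at the basepoint of $\LL^s_0$. Your proof bypasses the $a_k$ lemma and handles all $k$ at once by evaluating the defining identity $\sigma^s_0(M,f)=\langle L_M\cdot f^*l^s_k,[M]\rangle$ on a $0$-cell of the $k$-th component, the key input being the identification $\pi_0(\LL^s)\simeq L^s_0\simeq\ZZ$ via the signature $\sigma^s_0$. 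Both approaches are sound and use the same underlying fact, but yours is self-contained and arguably cleaner; the paper's route has the advantage of exhibiting the statement as a formal consequence of the $H$-space structure already computed. One small correction to the surrounding text you implicitly made: the paper's remark that the $0$-cells of $\LL^s_k$ are $0$-dimensional Poincar\'e symmetric chains ``with the $0$-th homology of rank $k$'' should say ``of signature $k$'' (rank is not a cobordism invariant, and cells of different rank but equal signature lie in the same component); your reading in terms of signature is the correct one and is what the argument actually needs.
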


Now let $M$ be a $\ZZ/2$ manifold with a map $f:M\rightarrow \LL^s_t$ and let $\mathcal{C}_f$ be the associated presheaf. Define
\[
\sigma^s_1(M,f)=\sigma^s_1(\mathcal{C}_f(M))-\langle V_M Sq^1 V_M\cdot f^* l^s_t,[M]\rangle \in \ZZ/2
\]
Then the product formula follows from the chain-level formula. That is,

\begin{lemma}
Let $M$ and $N$ be $\ZZ/2$ manifolds with a map $f:M\rightarrow \LL_t^s$. Then
\[
\sigma^s_1((M,f)\cdot N)=\sigma^s_1(M,f)\cdot \chi_2(N)
\]
\end{lemma}

Therefore, we have

\begin{proposition}
There exists a graded class 
\[
r^s_t=r^s_{t,1}+r^s_{t,5}+\cdots\in H^{4*+1}(\LL^s_t;\ZZ/2)
\]
such that for any $f:M\rightarrow\LL^s_t$
\[
\sigma^s_1(M,f)=\langle V^2_M\cdot f^* r^s_t,[M]\rangle\in\ZZ/2
\]
where $M$ is a $\ZZ/2$ manifold.
\end{proposition}

With the same argument above, we have
\begin{proposition}
\[
a^*_t r^s_t=r^s_0
\]
\end{proposition}

Similarly to the quadratic case, we can pull back the classes to the characteristic classes of presheaves of Poincar\'e symmetric chains.

\begin{proposition}
For any presheaf $\mathcal{S}$ of $0$-connective Poincar\'e symmetric chains over $X$, there exist graded characteristic classes
\[
r^s(\mathcal{S})=r^s_1+r^s_5+\cdots\in H^{4*+1}(X;\ZZ/2)
\]
\[
l^s(\mathcal{S})=l^s_0+l^s_4+l^s_8+\cdots\in H^{4*}(X;\ZZ_{(2)})
\]
which are invariant under bordism.
\end{proposition}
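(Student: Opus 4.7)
The plan is to deduce this statement directly from the universal characteristic classes $r^s_k \in H^{4*+1}(\LL^s_k;\ZZ/2)$ and $l^s_k \in H^{4*}(\LL^s_k;\ZZ_{(2)})$ already constructed on each component of $\LL^s$, exactly as was done in the previous subsection for the quadratic case. The key input is the fact recalled earlier in the paper: a cellular map $X \to \LL^s$ is the same data as a presheaf of Poincar\'e symmetric chains of ads over $X$ (subject to the connectivity condition built into $\LL^s=\LL^s\{0\}$), and two such maps are homotopic if and only if the presheaves are cobordant. This is the symmetric analogue of the identification $[X,\LL^q] \simeq G/TOP$-style surgery obstructions.

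First, given a presheaf $\mathcal{S}$ of Poincar\'e symmetric chains over $X$, I would take its classifying map $f_{\mathcal{S}}\colon X \to \LL^s$. If $X$ is connected, $f_{\mathcal{S}}$ factors through a unique component $\LL^s_k$, and I set
\[
l^s(\mathcal{S})=f_{\mathcal{S}}^*(l^s_k), \qquad r^s(\mathcal{S})=f_{\mathcal{S}}^*(r^s_k).
\]
For general $X$, decompose into connected components; the lemma $a_k^* l^s_k = k + l^s_0$ and $a_k^* r^s_k = r^s_0$ shows that the classes on different components of $\LL^s$ are compatible under the infinite-loop translation, so the assembled class is well-defined as a graded class on $X$ (with $l^s_0(\mathcal{S})\in H^0(X;\ZZ_{(2)})$ recording the locally constant rank data).

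Second, cobordism invariance is formal: a cobordism between two presheaves $\mathcal{S}_0$ and $\mathcal{S}_1$ over $X$ is a presheaf over $X\times I$ (or rather, a map $X\times I \to \LL^s$) restricting to $f_{\mathcal{S}_0}$ and $f_{\mathcal{S}_1}$ on the ends, which is precisely a homotopy between the classifying maps. Pullback along homotopic maps induces the same map on cohomology, so $l^s(\mathcal{S}_0)=l^s(\mathcal{S}_1)$ and $r^s(\mathcal{S}_0)=r^s(\mathcal{S}_1)$.

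I do not expect a serious obstacle: the substantive content is all contained in the construction of the universal classes $l^s_k$ and $r^s_k$ (which used the chain-level product formulas $\sigma^s_0((M,f)\cdot N)=\sigma^s_0(M,f)\Sign(N)$ and $\sigma^s_1((M,f)\cdot N)=\sigma^s_1(M,f)\chi_2(N)$ together with the Brumfiel--Morgan recognition theorems). What needs to be verified, and is the only mildly subtle point, is that the universal definitions $\sigma^s_0$ and $\sigma^s_1$ are compatible under pullback, i.e.\ that for any $M$ mapping to $X$ we have $\sigma^s_i(\text{pullback of }\mathcal{S}\text{ along }M\to X)=\sigma^s_i(f_{\mathcal{S}}\circ(M\to X))$; this is immediate from the naturality of the assembly $\mathcal{C}(M)$ and the naturality of the Wu classes and $L$-genus appearing in the defining formulas.
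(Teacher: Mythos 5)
Your proof is correct and follows the same approach the paper intends: the paper simply states "Similar to the case of quadratic theory, we can pull back the classes," and your expansion — pulling $l^s_k, r^s_k$ back along the classifying map $X \to \LL^s_k$, handling components via the lemma $a_k^* l^s_k = k + l^s_0$, $a_k^* r^s_k = r^s_0$, and observing that cobordisms of presheaves are homotopies of classifying maps — is exactly the argument being invoked.
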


With the same proof as the quadratic case, we have that

\begin{theorem}\label{characteristic-classes-12}
Localized at prime $2$, 
\[
\LL^s\simeq K(\ZZ,0)\times \prod_{k>0} (K(\ZZ_{(2)},4k)\times K(\ZZ/2,4k-3))
\]
where the homotopy equivalence is given by the $l^s$ and $r^s$ classes.
\end{theorem}

\subsubsection{Normal \texorpdfstring{$L$}{Lg}-spectrum}

We have to notice that $\pi_*(\LL^n)$ has $4$-periodicity for $*\geq 1$ and for the lower degrees, $\pi_0(\LL^n)\cong \ZZ$ and $\pi_i(\LL^n)=0$ for $i<0$.

Since $\LL^n$ is not connected, we let $\LL^n_t$ be the $t$-th component.

Again, we apply the argument of \ref{quadratic2modulo4} and get that

\begin{lemma}
\label{normalodd}
The Hurewicz map $h:\pi_{2k+1}(\LL^n)\rightarrow H_{2k+1}(\LL^n;\ZZ)$ is an injection onto a direct summand.
\end{lemma}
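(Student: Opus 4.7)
The plan is to mirror the proof of Lemma~\ref{quadratic2modulo4} in the normal setting. Since $\pi_{2k+1}(\LL^n)\simeq L^n_{2k+1}\simeq \ZZ/2$ for $2k+1>0$ (by the calculation of $L^n_*$), it suffices to prove that the mod~$2$ Hurewicz map
\[
h_2\colon L^n_{2k+1}\simeq \pi_{2k+1}(\LL^n)\xrightarrow{i} \Omega^{SO}_{2k+1}(\LL^n;\ZZ/2)\xrightarrow{j} H_{2k+1}(\LL^n;\ZZ/2)
\]
is injective. Indeed, $h_2$ is the mod $2$ reduction of the integral Hurewicz map, so its nonvanishing on a group of order $2$ forces the Hurewicz image to be a primitive order-$2$ element of $H_{2k+1}(\LL^n;\ZZ)$, which generates a $\ZZ/2$ direct summand.

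The first step is to construct a left inverse for $i$ using the assembly. Given a class $(M^{2k+1},f)\in\Omega^{SO}_{2k+1}(\LL^n;\ZZ/2)$, let $\mathcal{C}_f$ denote the associated presheaf of normal chains on $M$; its assembly $\mathcal{C}_f(M)$ is a $\ZZ/2$ Poincar\'e normal chain, hence defines a class in $L^n_{2k+1}(\ZZ/2)$. Composing with a retraction $L^n_{2k+1}(\ZZ/2)\to L^n_{2k+1}$ --- which exists because the previous section identified the canonical first summand of $L^n_{2k+1}(\ZZ/2)$ as the image of the natural map $L^n_{2k+1}\to L^n_{2k+1}(\ZZ/2)$, detected by $\sigma^n_3$ when $2k+1\equiv 3\pmod 4$ and by $\sigma^n_1$ when $2k+1\equiv 1\pmod 4$ --- yields a homomorphism $\Omega^{SO}_{2k+1}(\LL^n;\ZZ/2)\to L^n_{2k+1}$. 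A sphere $S^{2k+1}$ viewed as a $\ZZ/2$ manifold has empty Bockstein, so its assembly already lies in the image of $L^n_{2k+1}\to L^n_{2k+1}(\ZZ/2)$, and the composition above is the identity on $\pi_{2k+1}(\LL^n)$. Hence $i$ is a split injection.

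For the second step, let $g\colon S^{2k+1}\to\LL^n$ represent the generator. By the first step, $(S^{2k+1},g)\neq 0$ in $\Omega^{SO}_{2k+1}(\LL^n;\ZZ/2)$. By Thom's description of oriented mod~$2$ bordism via Stiefel-Whitney numbers, some generalized Stiefel-Whitney number of $(S^{2k+1},g)$ must be nonzero; but all Stiefel-Whitney classes of $S^{2k+1}$ vanish, so any such nonzero number must have the form $\langle g^*\alpha,[S^{2k+1}]\rangle=\langle \alpha,h_2(g)\rangle$ for some $\alpha\in H^{2k+1}(\LL^n;\ZZ/2)$. Therefore $h_2(g)\neq 0$, completing the proof.

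The main technical point is the retraction $L^n_{4l+1}(\ZZ/2)\to L^n_{4l+1}$, since $L^n_{4l+1}(\ZZ/2)\simeq\ZZ/2\oplus\ZZ/2$ properly contains the copy of $L^n_{4l+1}\simeq\ZZ/2$. However, the decomposition was set up in the previous section precisely so that the first summand is canonical and coincides with the image of the natural map, so projection onto it furnishes the desired retraction and the argument goes through without additional work.
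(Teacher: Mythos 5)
Your proof is correct and follows the same approach the paper takes (the paper simply says ``apply the argument of Lemma~\ref{quadratic2modulo4}''), and you have correctly identified and handled the one new wrinkle in the normal case: for $m\equiv 1\pmod 4$, $m>1$, the assembly lands in $L^n_m(\ZZ/2)\simeq\ZZ/2\oplus\ZZ/2$ rather than in a copy of $L^n_m$, so one needs the projection onto the canonical first summand (the image of $L^n_m\to L^n_m(\ZZ/2)$, detected by $\sigma^n_1$) to get the splitting of $i$.
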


We will define the cohomology classes of $\LL^n$ in the order of $(4*+3)$-degree, $4*$-degree and $(4*+1)$-degree.

Let $M$ be a $\ZZ/2$ manifold with $f:M \rightarrow \LL^n$ and let $(\mathcal{D}_f,\mathcal{E}_f)$ be the associated presheaf  of symmetric-quadratic chain pairs. Define 
\[
\sigma^n_3(M,f)=\sigma^n_3(\mathcal{D}_f(M),\mathcal{E}_f(M))
\]

The product formula is immediate, i.e.,
\[
\sigma^n_3((M,f)\cdot N)=\sigma^n_3(M,f)\cdot \chi_2(N)
\]
where $N$ is a $\ZZ/2$ manifold.

\begin{theorem}
There exists a graded class 
\[
k^n_t=k^n_{t,3}+k^n_{t,7}+\cdots\in H^{4*+3}(\LL^n_t;\ZZ/2)
\]
such that for any map $f:M\rightarrow \LL^n_t$
\[
\sigma^n_3(M,f)=\langle V^2_M\cdot f^*k^n_t,[M]\rangle\in \ZZ/2
\]
where $M$ is a $\ZZ/2$ manifold.
\end{theorem}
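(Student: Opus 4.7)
The plan is to apply the Brumfiel-Morgan characterization theorem for $\ZZ/2$-coefficient graded classes, which was recalled earlier in this section. All the work is then to produce a homomorphism $\sigma^n_3 : \Omega^{SO}_*(\LL^n_k; \ZZ/2) \to \ZZ/2$ satisfying the required product formula with $\chi_2(N)$.

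First I would define the homomorphism directly. Given $f : M \to \LL^n_k$ with $M$ a $\ZZ/2$ manifold, the associated presheaf of symmetric-quadratic chain pairs $(\mathcal{D}_f, \mathcal{E}_f)$ assembles to a $\ZZ/2$ Poincar\'e symmetric-quadratic chain pair $(\mathcal{D}_f(M), \mathcal{E}_f(M))$ by the assembly property stated earlier, and I set
\[
\sigma^n_3(M,f) = \sigma^n_3(\mathcal{D}_f(M), \mathcal{E}_f(M)) \in \ZZ/2,
\]
which is already known to be a cobordism invariant of $\ZZ/2$ Poincar\'e symmetric-quadratic chain pairs. Cobordance of singular $\ZZ/2$ manifolds in $\LL^n_k$ translates through assembly into cobordance of $\ZZ/2$ Poincar\'e symmetric-quadratic pairs, so $\sigma^n_3$ descends to a homomorphism on $\Omega^{SO}_*(\LL^n_k;\ZZ/2)$.

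Next I would verify the product formula. For a closed oriented manifold $N$, the presheaf associated to $(M,f)\cdot N = (M\times N, f\circ \pi_M)$ assembles (using the modified product of presheaves) to the modified tensor product $(\mathcal{D}_f(M)\otimes C_*(N),\mathcal{E}_f(M)\otimes C_*(N))$. Applying the chain-level lemma
\[
\sigma^n_3(D\otimes H, E\otimes H) = \sigma^n_3(D,E)\cdot \sigma^s_0(H)
\]
with $H = C_*(N)$ yields $\sigma^n_3((M,f)\cdot N) = \sigma^n_3(M,f)\cdot \sigma^s_0(C_*(N))$. The dimension counts force the right-hand side to be nonzero only when $\dim N \equiv 0 \pmod 4$, in which case $\sigma^s_0(C_*(N)) = \Sign(N)\bmod 2 = \chi_2(N)$ by Poincar\'e duality; in other dimensions both $\sigma^s_0(C_*(N))$ and the contribution on the left vanish. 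Hence
\[
\sigma^n_3((M,f)\cdot N) = \sigma^n_3(M,f)\cdot \chi_2(N) \in \ZZ/2.
\]

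Finally, with the product formula in hand, the Brumfiel-Morgan criterion for $\ZZ/2$ classes produces a unique graded class $k^n_k = k^n_{k,3} + k^n_{k,7} + \cdots \in H^{4*+3}(\LL^n_k;\ZZ/2)$ with the defining equation $\sigma^n_3(M,f) = \langle V^2_M\cdot f^*k^n_k, [M]\rangle$, precisely as asserted. The plan essentially contains no real obstacle once the chain-level multiplicative formula for $\sigma^n_3$ is available; the only point requiring care is identifying $\sigma^s_0(C_*(N))$ with $\chi_2(N)$ in the relevant dimension, which is immediate from Poincar\'e duality.
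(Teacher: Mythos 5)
Your proposal is correct and follows essentially the same route as the paper: define $\sigma^n_3$ on $\Omega^{SO}_*(\LL^n_k;\ZZ/2)$ by assembling the associated presheaf, verify the multiplicative formula via the chain-level lemma $\sigma^n_3(D\otimes H,E\otimes H)=\sigma^n_3(D,E)\cdot\sigma^s_0(H)$ together with the identification $\sigma^s_0(C_*(N))=\chi_2(N)$, and then invoke the Brumfiel--Morgan characterization of $\ZZ/2$ graded classes. One small inconsistency: in the verification of the product formula you write ``for a closed oriented manifold $N$'' but then invoke the modified product; the Brumfiel--Morgan criterion requires $N\in\Omega^{SO}_*(\ZZ/2)$, i.e.\ $N$ a $\ZZ/2$ manifold, which is what the modified tensor product and the chain-level lemma (stated for $\ZZ/2^k$ Poincar\'e symmetric chains) actually handle, so the argument still goes through once the hypothesis on $N$ is corrected.
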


Next, let $M$ be a $\ZZ/8$ manifold with a map $f:M\rightarrow \LL^n_t$ and let $(\mathcal{D}_f,\mathcal{E}_f)$ be the associated presheaf. Define 
\[
\sigma^n_0(M,f)=\sigma^n_0((\mathcal{D}_f(M),\mathcal{E}_f(M)))-j_8\langle V_MSq^1V_M\cdot f^*k^n_t,[M]\rangle
\in \ZZ/8
\]
where $j_8:\ZZ/2\rightarrow \ZZ/8$. 

The following product formulae are directly proven by the chain-level formulae.

\begin{lemma}
Let $M$ be a $\ZZ/p$ manifold and $N$ be a $\ZZ/q$ manifold with a map $f:M\rightarrow \LL^n_k$.

(1) If $p=8$ and $q=0$, then 
\[
\sigma^n_0((M,f)\cdot N)=\sigma^n_0(M,f)\cdot \Sign(N)\in \ZZ/8
\]

(2) If $p=2$ and $q=2$, then 
\[
\sigma^n_0(j_8((M,f)\cdot N))=\sigma^n_0(j_8(M,f))\cdot \Sign(N)\in 4\cdot \ZZ/8
\]
where $j_8:\ZZ/2\rightarrow \ZZ/8$.

(3) If $p=2$ and $q=2$, then 
\[
\sigma^n_0(\delta((M,f)\cdot N))=\sigma^n_0(\delta(M,f))\cdot \Sign(N)\in 4\cdot \ZZ/8
\]
\end{lemma}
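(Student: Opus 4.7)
The plan is to reduce all three identities to the chain-level product formula established earlier,
\[
\sigma^n_0((D,E)\otimes G) = \sigma^n_0(D,E)\cdot\sigma^s_0(G) + j_8\bigl(\sigma^n_3(D,E)\cdot\sigma^s_1(G)\bigr),
\]
by combining it with the characteristic-number identity $\sigma^n_3(M,f)=\langle V_M^2\cdot f^*k^n_k,[M]\rangle$, the identification $\sigma^s_1(C_*(N))=\langle V_N Sq^1 V_N,[N]\rangle$ for a closed manifold $N$ regarded as a chain with zero Bockstein, and the Cartan formula for $Sq^1$. In each case I would translate the geometric invariant into the $\sigma^n_0$ of an assembly chain pair, apply the chain-level formula, and then absorb the Wu-class correction $-j_8\langle V Sq^1 V\cdot f^*k^n_k,[\cdot]\rangle$ built into the definition of the geometric $\sigma^n_0$.

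For case (1), since $N$ is closed the modified product equals $M\times N$ with Bockstein $\delta M\times N$, and the assembly presheaf is $(\mathcal{D}_f(M),\mathcal{E}_f(M))\otimes C_*(N)$. The chain-level formula produces the main $\sigma^n_0\cdot\Sign(N)$ term plus an error $j_8(\sigma^n_3(M,f)\cdot \sigma^s_1(C_*(N)))$. Using $V_{M\times N}=V_M\cdot V_N$ together with $Sq^1(V_M V_N) = Sq^1(V_M)\cdot V_N + V_M\cdot Sq^1(V_N)$, the Wu-class correction for $M\times N$ expands as
\[
\langle V_M Sq^1 V_M\cdot f^*k^n_k,[M]\rangle\cdot\langle V_N^2,[N]\rangle + \langle V_M^2\cdot f^*k^n_k,[M]\rangle\cdot\langle V_N Sq^1 V_N,[N]\rangle.
\]
Using $\langle V_N^2,[N]\rangle\equiv \Sign(N)\pmod 2$ and the identification of the second factor with $\sigma^s_1(C_*(N))$, this gives $\Sign(N)$ times the correction for $M$ plus precisely the term that cancels $j_8(\sigma^n_3\cdot\sigma^s_1)$, yielding the desired identity.

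For cases (2) and (3), both $M$ and $N$ are $\ZZ/2$ manifolds and the targets land in $4\cdot\ZZ/8\cong\ZZ/2$. Case (2) follows from case (1) applied to the $\ZZ/8$ manifold $j_8(M,f)=4(M,f)$, combined with the obvious commutation $j_8((M,f)\cdot N)=j_8(M,f)\cdot N$ at the level of assemblies. For case (3), Lemma \ref{boundary} identifies $\delta((M,f)\cdot N)$ with $\delta(M,f)\cdot N\oplus (M,f)\cdot\delta N$ up to $\ZZ/2$ cobordism; the second summand vanishes on dimensional grounds (the only regime in which $\sigma^n_0(\delta(\cdot))$ is nontrivial forces $\dim\delta N$ into a range where the relevant chain-level invariants of $\delta N$ are zero), and the first reduces to the closed case.

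The main obstacle is the Cartan-formula bookkeeping in case (1), i.e.\ verifying that the Stiefel-Whitney correction distributes across the product so that the resulting characteristic numbers precisely cancel the $j_8(\sigma^n_3\cdot\sigma^s_1)$ contribution from the chain-level formula. In cases (2) and (3) the subtlety is not computational but conceptual: one has to check that passage through $j_8$ or $\delta$ respects the $4\cdot\ZZ/8$-valued specialization $\sigma^n_{0,2}$, so that the reduction to case (1) genuinely captures the $\ZZ/2$ invariant on both sides.
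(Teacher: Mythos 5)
Your plan for case (1) is exactly the paper's intent: it says only that these formulae ``are directly proven by the chain-level formulae,'' and your expansion of the Wu-class correction $\langle V_{M\times N}Sq^1V_{M\times N}\cdot f^*k^n_k,[M\times N]\rangle$ by the Cartan formula, followed by matching $\langle V_N^2,[N]\rangle\equiv\Sign(N)\pmod 2$ and $\langle V_NSq^1V_N,[N]\rangle=\sigma^s_1(C_*(N))$ against the error term $j_8(\sigma^n_3\cdot\sigma^s_1)$ of the chain-level lemma, is the right bookkeeping.

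However, your treatment of cases (2) and (3) has genuine gaps. For (2), you assert that it ``follows from case (1) applied to the $\ZZ/8$ manifold $j_8(M,f)$.'' This cannot work: case (1) is the statement for a $\ZZ/8$ manifold times a \emph{closed} manifold $N$, and in case (2) $N$ is a genuine $\ZZ/2$ manifold with nonempty Bockstein, so the modified product $M\otimes N$ involves the correction chain $W$ and is not the ordinary product. The ``obvious commutation $j_8((M,f)\cdot N)=j_8(M,f)\cdot N$'' is not an instance of case (1), since $j_8(M,f)\cdot N$ is still a product with a $\ZZ/2$ manifold. The correct route is to invoke the separate $\ZZ/2$-coefficient chain-level lemma
\[
\sigma^n_{0,2}((D,E)\otimes G)=\sigma^n_{0,2}(D,E)\cdot\sigma^s_0(G)+\sigma^n_3(D,E)\cdot\sigma^s_1(G),
\]
identify $\sigma^n_0(j_8(\,\cdot\,))$ with $\sigma^n_{0,2}(\,\cdot\,)$ (noting that the Wu correction in the definition of $\sigma^n_0$ vanishes on $j_8(\text{anything})$ because its mod-$2$ fundamental class pushes forward to $4[\,\cdot\,]=0$), and then carry out the same characteristic-number cancellation you did in (1). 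That cancellation is precisely the work you need to exhibit; it does not come for free from case (1).

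For (3), the assertion that the second summand $(M,f)\cdot\delta N$ vanishes ``on dimensional grounds'' is not justified. The relevant chain-level statement is the $\delta$-product proposition, which expands $\sigma^n_0(\delta((D,E)\otimes(D',E')))$ into a sum of several terms (a Leibniz term plus $j_8$ correction terms), and the proof should match each of those against the Cartan expansion of $\langle V_{\delta(M\otimes N)}Sq^1 V_{\delta(M\otimes N)}\cdot g^*k^n_k,[\delta(M\otimes N)]\rangle$, using Lemma \ref{boundary} to split the Bockstein as $\delta M\otimes N\oplus M\otimes\delta N$. The degree count you gesture at does not on its own kill the $M\otimes\delta N$ contribution; one has to actually check that the chain-level correction terms coming from that summand cancel against the Wu-class correction, exactly as in (1).

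So: case (1) is right and on the paper's track; cases (2) and (3) need to be argued from the appropriate $\ZZ/2$ and $\delta$ chain-level formulae rather than reduced to (1), and the Wu-class cancellation in those cases is a real step you have not carried out.
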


Then we can deduce the existence of a $\ZZ/8$ class.

\begin{proposition}
There exists a graded class 
\[
l^n_t=l^n_{t,0}+l^n_{t,4}+l^n_{t,8}+\cdots\in H^{4*}(\LL_k^n;\ZZ/8)
\]
such that for any map $f:M\rightarrow \LL^n_t$ we have
\[
\sigma^n_0(M,f)=\langle L_M\cdot f^*l^n_t,[M]\rangle \in \ZZ/8
\]
where $M$ is a $\ZZ/8$ manifold.
\end{proposition}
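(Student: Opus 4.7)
The plan is to invoke the Brumfiel-Morgan characterization theorem (with $k = 3$) for the homomorphism
\[
\sigma^n_0: \Omega^{SO}_*(\LL^n_k;\ZZ/8) \to \ZZ/8
\]
defined in the discussion immediately preceding the proposition. Once the three multiplicativity axioms of that theorem are verified, it automatically produces the graded class $l^n_k \in H^{4*}(\LL^n_k;\ZZ/8)$ together with the defining evaluation formula $\sigma^n_0(M,f) = \langle L_M \cdot f^* l^n_k,[M]\rangle$.

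First I would confirm that $\sigma^n_0$ descends to a well-defined homomorphism on $\ZZ/8$-bordism. The chain-level invariant $\sigma^n_0$ on $\ZZ/8$ Poincar\'e symmetric-quadratic pairs is a cobordism invariant by the construction in Section 2.3, and the correction term $j_8\langle V_M Sq^1 V_M \cdot f^* k^n_k,[M]\rangle$ is a generalized Stiefel-Whitney number, hence also a bordism invariant. Additivity under disjoint union is immediate on both summands.

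Next I would verify the three product axioms demanded by Brumfiel-Morgan; these are exactly the three formulas gathered in the lemma immediately preceding the proposition. Each is obtained by combining the chain-level product formula for $\sigma^n_0$ of a tensor product of normal chains, which carries a cross term of the form $\sigma^n_3 \cdot \sigma^s_1$, with the Morgan-Sullivan identity $\dR(-) = \langle VSq^1V,[-]\rangle$. The correction term subtracted in the definition of $\sigma^n_0$ is designed precisely to absorb these cross terms: because $k^n_k$ represents $\sigma^n_3$ and $VSq^1V$ evaluates as the de Rham invariant of the complementary factor, the chain-level mixed terms match the Wu-class corrections term by term, and the corrected $\sigma^n_0$ becomes strictly multiplicative under the three operations in question.

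The main obstacle is the bookkeeping in the Bockstein axiom (condition (3)). Here one must match the chain-level cross terms $\sigma^n_3(\delta(D,E))\cdot \sigma^s_1(G) + \sigma^n_3(D,E)\cdot \sigma^s_1(\delta G)$ produced by $\delta((D,E)\otimes G)$ against the corresponding Stiefel-Whitney numbers on $M \times N$ and its Bockstein, via the relation $\dR(\delta M) = \langle \beta(VSq^1V),[M]\rangle$ with $\beta$ the $\ZZ/2 \to \ZZ/8$ Bockstein. This matching is essentially the content of the chain-level product formula for $\sigma^n_0(\delta(\,\cdot\,\otimes\,\cdot\,))$ already established in Section 2.5, so the verification reduces to translating between the chain-level cross terms and the generalized Wu numbers, after which Brumfiel-Morgan delivers the desired class $l^n_k$.
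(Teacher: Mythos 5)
Your proposal is correct and follows the same route as the paper: verify that the corrected homomorphism $\sigma^n_0$ on $\Omega^{SO}_*(\LL^n_k;\ZZ/8)$ satisfies the three multiplicativity axioms of the Brumfiel--Morgan characterization theorem (using the chain-level product formulae for $\sigma^n_0$ together with the Morgan--Sullivan identity $\dR = \langle VSq^1V,\cdot\rangle$ to cancel the cross terms against the Wu-class correction), and then let that theorem produce the class $l^n_k$. This is exactly what the paper does, with the three axioms recorded as the lemma immediately preceding the proposition.
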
 

Now let $M$ be a $\ZZ/2$ manifold again with a map $f:M\rightarrow \LL_t^n$ and let $(\mathcal{D}_f,\mathcal{E}_f)$ be the associated preheaf. Define 
\[
\sigma^n_1(M,f)=\sigma^n_1((\mathcal{D}_f(M),\mathcal{E}_f(M)))-\langle V_M Sq^1V_M\cdot f^*\rho_2l^n_t,[M]\rangle \in\ZZ/2
\]
where $\rho_2: \ZZ/8\rightarrow\ZZ/2$. 

Because of the chain-level product formula, we have

\begin{lemma}
Let $M$ and $N$ be $\ZZ/2$ manifolds with a map $f:M\rightarrow \LL^n_k$. Then 
\[
\sigma^n_1((M,f)\cdot N)=\sigma^n_1(M,f)\cdot \chi_2(N)\in \ZZ/2
\]
\end{lemma}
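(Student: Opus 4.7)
The plan is to expand both sides using the definition of $\sigma^n_1$, apply the chain-level product formula~\ref{5+3=8normal} to the assembly, and then match the resulting terms against the Wu-class correction using the Cartan formula. Identifying the assembly of the presheaf associated to $(M,f)\cdot N$ over $M\otimes N$ with the modified tensor product $(\mathcal{D}_f(M),\mathcal{E}_f(M))\otimes C_*(N)$, Lemma~\ref{5+3=8normal} gives
\[
\sigma^n_1\bigl((\mathcal{D}_f(M),\mathcal{E}_f(M))\otimes C_*(N)\bigr)=\sigma^n_1(\mathcal{D}_f(M),\mathcal{E}_f(M))\cdot\sigma^s_0(C_*(N))+\sigma^n_{0,2}(\mathcal{D}_f(M),\mathcal{E}_f(M))\cdot\sigma^s_1(C_*(N)),
\]
and for a $\ZZ/2$ manifold $N$ the Wu-class identities recalled earlier give $\sigma^s_0(C_*(N))=\chi_2(N)=\langle V_N^2,[N]\rangle$ and $\sigma^s_1(C_*(N))=\dR(N)=\langle V_N Sq^1 V_N,[N]\rangle$.

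Next, I expand the correction term in the definition of $\sigma^n_1((M,f)\cdot N)$. Since $V_{M\otimes N}=V_M\cdot V_N$ and $Sq^1$ is a derivation,
\[
V_{M\otimes N}Sq^1 V_{M\otimes N}=(V_M Sq^1 V_M)\cdot V_N^2+V_M^2\cdot(V_N Sq^1 V_N),
\]
so $\langle V_{M\otimes N}Sq^1 V_{M\otimes N}\cdot f^*\rho_2 l^n_k,[M\otimes N]\rangle$ splits into $\langle V_M Sq^1 V_M\cdot f^*\rho_2 l^n_k,[M]\rangle\cdot\chi_2(N)$ plus $\langle V_M^2\cdot f^*\rho_2 l^n_k,[M]\rangle\cdot\dR(N)$. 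The first chain term combined with the first Wu piece reconstructs $\sigma^n_1(M,f)\cdot\chi_2(N)$ exactly.

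For the second pieces to cancel, I need the auxiliary cohomological identity
\[
\sigma^n_{0,2}(\mathcal{D}_f(M),\mathcal{E}_f(M))=\langle V_M^2\cdot f^*\rho_2 l^n_k,[M]\rangle\in\ZZ/2.
\]
I would deduce this from the already-established formula $\sigma^n_0(M,f)=\langle L_M\cdot f^*l^n_k,[M]\rangle$ on $\ZZ/8$ manifolds via the definition $\sigma^n_{0,2}(D,E)=\sigma^n_0(4(D,E))$ together with the mod-$2$ reduction $\rho_2 L_M=V_M^2$, noting that the Kervaire correction involving $k^n_k$ lies in the image of $j_8:\ZZ/2\to\ZZ/8$ and so drops out under the factor-of-$4$ descent that defines $\sigma^n_{0,2}$. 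The main obstacle is precisely this last step: tracking the $\ZZ/2\to\ZZ/8$ passage carefully and verifying the vanishing of the $k^n_k$-contribution after reduction. Once the auxiliary identity is in hand, the two remaining pieces cancel identically and the product formula follows.
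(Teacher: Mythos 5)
Your proof is correct and is exactly the argument the paper leaves implicit behind the phrase ``Because of the chain-level product formula.'' You identify the assembly of the product presheaf with $(\mathcal{D}_f(M),\mathcal{E}_f(M))\otimes C_*(N)$, apply Lemma~\ref{5+3=8normal}, expand $V_{M\otimes N}Sq^1V_{M\otimes N}$, and cancel the cross-term against the auxiliary identity $\sigma^n_{0,2}(\mathcal{D}_f(M),\mathcal{E}_f(M))=\langle V_M^2\cdot f^*\rho_2 l^n_k,[M]\rangle$ --- precisely the right ingredients.

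Two small remarks on justification rather than substance. First, the expansion of $V_{M\otimes N}Sq^1V_{M\otimes N}$ should be cited as Morgan--Sullivan's formula (Proposition~\ref{productcharacteristic}) rather than derived from ``$V_{M\otimes N}=V_M\cdot V_N$ and Cartan,'' since $M\otimes N$ is the modified product, not the Cartesian product, and the equality holds only after pulling back along $\rho\colon M\otimes N\to M\times N$. Second, you correctly flag that the auxiliary $\sigma^n_{0,2}$ identity is the one potentially delicate step; the paper states and proves it explicitly only later (in the coproducts section), but as you observe it is logically available at this point, depending only on the already-established pairing formula $\sigma^n_0(M,f)=\langle L_M\cdot f^*l^n_k,[M]\rangle$ for $\ZZ/8$ manifolds, the identity $\rho_2 L=V^2$, and the vanishing of the $k^n$-correction after the factor-of-$4$ descent $j_8\colon\ZZ/2\to\ZZ/8$. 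There is no circularity.
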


Consequently,

\begin{proposition}
There exists a graded class $r^n_t=r^n_{t,1}+r^n_{t,5}+\cdots\in H^{4*+1}(\LL_t^n;\ZZ/2)$, such that for any $f:M\rightarrow\LL_t^n$
\[
\sigma^n_1(M,f)=\langle V^2_M\cdot f^* r^n_t,[M]\rangle\in \ZZ/2
\]
where $M$ is a $\ZZ/2$ manifold.
\end{proposition}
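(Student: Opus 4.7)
The plan is to invoke the Brumfiel--Morgan characterization of graded $\ZZ/2$ cohomology classes (the last of the three Brumfiel--Morgan theorems recalled in the introduction to Chapter 3), which says that any graded class $r^n_k \in H^{4*+1}(\LL^n_k;\ZZ/2)$ is uniquely determined by a homomorphism
\[
\sigma:\Omega^{SO}_*(\LL^n_k;\ZZ/2)\rightarrow \ZZ/2
\]
satisfying the product formula $\sigma((M,f)\cdot N)=\sigma(M,f)\cdot \chi_2(N)$. So the whole argument reduces to checking that $\sigma^n_1$, as defined in the paragraph preceding the statement, is indeed such a homomorphism. Under this correspondence, the defining formula for $r^n_k$ becomes precisely the claimed evaluation $\sigma^n_1(M,f)=\langle V^2_M\cdot f^*r^n_k,[M]\rangle$.

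The first step is to check that $\sigma^n_1(M,f)$ depends only on the $\ZZ/2$-cobordism class of $(M,f)$. The first summand $\sigma^n_1((\mathcal{D}_f(M),\mathcal{E}_f(M)))$ is a cobordism invariant because the assembly functor takes a $\ZZ/2$-cobordism of presheaves over a $\ZZ/2$-cobordism of manifolds to a $\ZZ/2$-cobordism of symmetric-quadratic pairs, and $\sigma^n_1$ was shown to be a cobordism invariant on $L^n_*(\ZZ/2)$ in Section 2.3. The second summand $\langle V_MSq^1V_M\cdot f^*\rho_2 l^n_k,[M]\rangle$ is also a $\ZZ/2$-bordism invariant since $V_MSq^1V_M$ is the Poincar\'e dual of the de Rham class of the Bockstein and $\rho_2 l^n_k$ is a genuine cohomology class on $\LL^n_k$; this is the standard fact (due to Morgan--Sullivan) that $\langle VSq^1V,[M]\rangle=\dR(\delta M)\in\ZZ/2$ for a $\ZZ/2$-manifold, and a bordism of $\ZZ/2$-manifolds gives a $\ZZ/2$-manifold whose Bockstein cobounds the two Bocksteins.

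The second step is the product axiom, which is the preceding lemma: for $\ZZ/2$-manifolds $M$ and $N$ with $f:M\to\LL^n_k$,
\[
\sigma^n_1((M,f)\cdot N)=\sigma^n_1(M,f)\cdot \chi_2(N).
\]
This was obtained from the chain-level multiplicative formula for $\sigma^n_1$ in Lemma \ref{5+3=8normal} together with the product formula for $\sigma^n_{0,2}$, where the correction term involving $l^n_k$ precisely cancels the $\sigma^n_{0,2}$ contribution coming from the chain-level product. With this in hand, Brumfiel--Morgan immediately produces the desired graded class $r^n_k\in H^{4*+1}(\LL^n_k;\ZZ/2)$.

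The main subtlety, and the part most likely to require care, is arranging the correction term so that the chain-level product formula for $\sigma^n_1$ (which a priori involves $\sigma^n_{0,2}$ contributions) reduces to the clean multiplicative rule by $\chi_2$. This is exactly why the definition subtracts $\langle V_MSq^1V_M\cdot f^*\rho_2 l^n_k,[M]\rangle$ rather than something coarser: the Morgan--Sullivan identity converts this cohomological correction into the $\dR(\delta M)\cdot \sigma^n_0$-type term that appears on the chain side, so the two cancel. Modulo this bookkeeping, the proposition follows formally from Brumfiel--Morgan exactly as in the proofs of the existence of $k^q$, $r^s$, and $k^n_k$ given above.
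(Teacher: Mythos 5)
Your proof is correct and follows the same route as the paper: establish that $\sigma^n_1$ defines a $\ZZ/2$-bordism-invariant homomorphism $\Omega^{SO}_*(\LL^n_k;\ZZ/2)\to\ZZ/2$ satisfying the $\chi_2$-multiplicativity axiom (which is exactly the preceding lemma, itself a consequence of Lemma \ref{5+3=8normal} and the design of the correction term), and then invoke the Brumfiel--Morgan $\ZZ/2$ characterization theorem. One small slip: the Morgan--Sullivan identity is $\langle VSq^1V,[M]\rangle=\dR(M)$, not $\dR(\delta M)$ (the latter is $\langle\beta(VSq^1V),[M]\rangle$), but this does not affect your argument since the point there is simply that a characteristic number built from Wu classes and a pulled-back class is a $\ZZ/2$-bordism invariant.
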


$\LL^n$ has an additional structure like $\LL^s$, namely, 
\[
b_{t,t'}:\LL^n_t\times\LL^n_{t'}\rightarrow \LL^n_{t+t'}
\]

It is like the symmetric case that the homotopy equivalence between different components is also given by a composition of maps
\[
b_t:\LL^n_0=\LL^n_0\times \pt\rightarrow \LL^n_0\times \LL^n_t\xrightarrow{b_{0,t}} \LL^n_t
\]

Then we have that
\begin{proposition}
\[
b^*_t k^n_t=k^n_0
\]
\[
b^*_t l^n_t=\rho_8(t)+l^n_0
\]
\[
b^*_t r^n_t=r^n_0
\]
where $\rho_8:\ZZ_{(2)}\rightarrow \ZZ/8$.    
\end{proposition}

In particular,

\begin{proposition}
\[
l^n_{t,0}=\rho_8(t)\in H^{0}(\LL_t^n;\ZZ/8)\simeq \ZZ/8
\]
\end{proposition}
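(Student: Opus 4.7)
The plan is to reduce the claim to a single point-evaluation using the lemma just established, namely $b_k^* l^n_k = \rho_8(k) + l^n_0$. Restricting this equality to $H^0(\LL^n_0;\ZZ/8)$, and observing that $b_k:\LL^n_0 \to \LL^n_k$ is a homotopy equivalence onto the path component $\LL^n_k$, so that $b_k^*$ is an isomorphism on $H^0(-;\ZZ/8) \simeq \ZZ/8$ that preserves constant classes, the identity becomes $l^n_{k,0} = \rho_8(k) + l^n_{0,0}$ in $\ZZ/8$. Thus it suffices to show $l^n_{0,0} = 0$.

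To verify $l^n_{0,0} = 0$, I would apply the defining formula of $l^n_0$ to the constant map $f:\pt \to \LL^n_0$ whose image is the basepoint (the zero $0$-cell). The associated presheaf of Poincaré symmetric-quadratic pairs over $\pt$ is the zero pair, so $\sigma^n_0(\pt,f) = 0$. On the other hand the defining formula gives
\[
\sigma^n_0(\pt,f) = \langle L_\pt \cdot f^* l^n_0, [\pt]\rangle = l^n_{0,0},
\]
where $L_\pt = 1$. Hence $l^n_{0,0} = 0$ and the proposition follows.

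Alternatively, and even more directly, one can bypass the infinite loop space lemma altogether by evaluating $l^n_k$ itself on a $0$-cell of $\LL^n_k$ whose underlying $0$-dimensional Poincaré normal chain is $\ZZ^k$ equipped with the standard symmetric form (and vanishing quadratic refinement). By the construction of $\sigma^n_0$ in Section 2.3, this cell has invariant $\rho_8(k)$, and the defining equation then reads $l^n_{k,0} = \rho_8(k)$ immediately. I do not expect a genuine obstacle here; the only subtlety is confirming that the $0$-dimensional case of the bounding construction used to define $\sigma^n_0$ collapses to ordinary signature mod $8$, which is built into the normalization of the invariant.
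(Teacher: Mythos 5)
Your proposal is correct and follows essentially the same route the paper intends: the proposition is a "degree-zero shadow" of the immediately preceding lemma $b_k^* l^n_k = \rho_8(k) + l^n_0$, combined with the normalization $l^n_{0,0}=0$, which you verify by evaluating the defining formula for $\sigma^n_0$ at the basepoint (zero chain) of $\LL^n_0$. Your alternative argument (evaluating $l^n_k$ directly on a $0$-cell of $\LL^n_k$ represented by the standard rank-$k$ symmetric form with trivial quadratic part, for which $\sigma^n_0$ reduces to the ordinary signature mod $8$) is also valid and gives the conclusion without invoking the infinite-loop-space lemma, though it is the same computation in essence.
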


We can also formulate these classes in terms of characteristic classes for presheaves.

\begin{proposition}
For any presheaf $\mathcal{N}$ of $0$-connective, $1$-Poincar\'e normal chains over $X$, there exist graded characteristic classes
\[
r^n(\mathcal{N})=r^n_1+r^n_5+\cdots\in H^{4*+1}(X;\ZZ/2)
\]
\[
k^n(\mathcal{N})=k^n_3+k^n_7+\cdots\in H^{4*+3}(X;\ZZ/2)
\]
\[
l^n(\mathcal{N})=l^n_0+l^n_4+l^n_8+\cdots\in H^{4*}(X;\ZZ/8)
\]
which are invariant under bordism.
\end{proposition}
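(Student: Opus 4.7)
The plan is to deduce this proposition directly from the universal characteristic classes $r^n_k, k^n_k, l^n_k$ constructed on the components $\LL^n_k$ in the preceding paragraphs, by pulling back along the classifying map of $\mathcal{N}$. No new chain-level calculation is needed: all of the product-formula work has already been done in the construction of the universal classes, and what remains is simply naturality.

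First, I would invoke the identification from Section 2.1 that cobordism classes of presheaves of normal chains over $X$ are in canonical bijection with $[X,\LL^n]$. Given $\mathcal{N}$, let $f_{\mathcal{N}}\colon X\rightarrow \LL^n$ be a classifying map; it restricts to a map $X_k\rightarrow \LL^n_k$ on the preimage $X_k\subset X$ of the $k$-th component. I would assemble the component-wise universal classes into a single triple $r^n\in H^{4*+1}(\LL^n;\ZZ/2)$, $k^n\in H^{4*+3}(\LL^n;\ZZ/2)$, $l^n\in H^{4*}(\LL^n;\ZZ/8)$, where in degree zero $l^n$ is the locally constant function taking value $\rho_8(k)$ on $\LL^n_k$, consistent with the previously established normalization $l^n_{k,0}=\rho_8(k)$.

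Then I would define
\[
r^n(\mathcal{N})=f_{\mathcal{N}}^{*}r^n,\qquad k^n(\mathcal{N})=f_{\mathcal{N}}^{*}k^n,\qquad l^n(\mathcal{N})=f_{\mathcal{N}}^{*}l^n.
\]
Cobordism invariance is then automatic: cobordant presheaves produce homotopic classifying maps, and ordinary cohomology pullback is a homotopy invariant.

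The only mild obstacle is verifying that the classes on the different components $\LL^n_k$ really do assemble coherently into classes on the whole spectrum, and that the resulting pullbacks reproduce the intrinsic description in terms of $\sigma^n_0,\sigma^n_1,\sigma^n_3$ evaluated on singular $\ZZ/2$ and $\ZZ/8$ manifolds mapping to $X$. Coherence across components is precisely the content of the preceding lemmas $b^*_k k^n_k=k^n_0$, $b^*_k l^n_k=\rho_8(k)+l^n_0$, $b^*_k r^n_k=r^n_0$, which show that the universal classes transform in the expected way under the infinite loop space translations between components. The intrinsic description is already forced by the defining equations of the universal classes on $\LL^n_k$, so nothing further is required beyond this assembly step.
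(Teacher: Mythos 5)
Your proposal is correct and matches the paper's (implicit) argument exactly: the paper derives this proposition, like its quadratic and symmetric analogues, by pulling back the universal classes on $\LL^n$ along the classifying map $X\to\LL^n$, with cobordism invariance following from homotopy invariance of the classifying map. Your extra remark about coherence across components via the lemmas $b^*_k k^n_k=k^n_0$, $b^*_k l^n_k=\rho_8(k)+l^n_0$, $b^*_k r^n_k=r^n_0$ is a reasonable and accurate observation that the paper leaves implicit.
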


Like the case of quadratic and symmetric case, we also have that

\begin{theorem}\label{characteristic-classes-11}
Localized at prime $2$, 
\[
\LL^n\simeq K(\ZZ,0)\times \prod_{k>0} (K(\ZZ/8,4k)\times K(\ZZ/2,4k-3)\times K(\ZZ/2,4k-1))
\]
where the homotopy equivalence is given by the classes $l^n,r^n,k^n$.
\end{theorem}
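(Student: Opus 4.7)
The plan is to follow the same template used above for $\LL^q$ and $\LL^s$. By Cooke's lemma, after 2-localization it suffices to verify that for every $m > 0$ the Hurewicz map $h : \pi_m(\LL^n) \to H_m(\LL^n; \ZZ)$ is a splitting monomorphism, so that all $k$-invariants of the Postnikov tower of $\LL^n_{(2)}$ vanish. One then assembles the classes $l^n$, $r^n$, $k^n$ into a single map $\LL^n_k \to K(\ZZ/8, \ldots) \times K(\ZZ/2, \ldots) \times K(\ZZ/2, \ldots)$ on each component and checks it induces an isomorphism on every homotopy group; combined with the equivalences $b_k : \LL^n_0 \xrightarrow{\simeq} \LL^n_k$, this gives the desired 2-local product decomposition, with the $K(\ZZ, 0)$ factor recording the components $\pi_0(\LL^n) \simeq \ZZ$ via $l^n_{k,0} = \rho_8(k)$.

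For the odd-dimensional homotopy groups $\pi_{4k+1}(\LL^n) \simeq \ZZ/2$ and $\pi_{4k+3}(\LL^n) \simeq \ZZ/2$, the splitting injectivity of the Hurewicz map has already been recorded as Lemma \ref{normalodd}. To see that $r^n$ and $k^n$ detect these groups, let $g : S^{4k+3} \to \LL^n_0$ represent a generator and let $(\mathcal{D}_g, \mathcal{E}_g)$ be the associated presheaf. Because $g$ is a generator of $L^n_{4k+3}$ and $\sigma^n_3$ is an isomorphism on this group, $\sigma^n_3(\mathcal{D}_g(S^{4k+3}), \mathcal{E}_g(S^{4k+3})) = 1 \in \ZZ/2$; since all Wu classes of a sphere vanish, the defining formula $\sigma^n_3(M,f) = \langle V^2_M \cdot f^* k^n, [M]\rangle$ forces $\langle k^n_{4k+3}, g_*[S^{4k+3}]\rangle = 1$. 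The same argument with $\sigma^n_1$ in place of $\sigma^n_3$ handles $r^n$ on $\pi_{4k+1}$.

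The delicate case is $\pi_{4k}(\LL^n) \simeq \ZZ/8$ for $k > 0$, where $l^n$ must detect a generator of the $\ZZ/8$. Take a $\ZZ/8$-manifold representative of a generator of $L^n_{4k}$ mapping to $\LL^n_0$; the cleanest choice is to compose a generator $g : S^{4k} \to \LL^n_0$ of the integral homotopy group with the canonical map $L^n_{4k} \to L^n_{4k}(\ZZ/8)$, since this picks out the $\ZZ/8$-summand of the unnatural splitting $L^n_{4k}(\ZZ/8) \simeq \ZZ/8 \oplus \ZZ/2$ which is precisely what $\sigma^n_0$ detects. The correction term $j_8 \langle V_M Sq^1 V_M \cdot f^* k^n, [M]\rangle$ in the definition of $\sigma^n_0(M,f)$ vanishes because the Wu classes of a sphere are trivial, so the pairing reduces to $\langle l^n_{4k}, g_*[S^{4k}]\rangle = \sigma^n_0(g) \in \ZZ/8$, which is a generator. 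Consequently $l^n_{4k}$ pairs to a unit with the image of the Hurewicz class in $H_{4k}(\LL^n_0; \ZZ/8)$, which is exactly what is needed for Cooke's criterion at this dimension.

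The main obstacle is the $\pi_{4k} \simeq \ZZ/8$ step, since among the three classes only $l^n$ carries $\ZZ/8$-coefficient information and the formula for $\sigma^n_0$ involves a mod-8 Bockstein correction; however the vanishing of all Wu classes on spheres kills this correction, so the argument goes through cleanly. Once all three Hurewicz calculations are in place, the product map
\[
(l^n, r^n, k^n) : \LL^n_0 \longrightarrow \prod_{k>0}\bigl(K(\ZZ/8, 4k) \times K(\ZZ/2, 4k-3) \times K(\ZZ/2, 4k-1)\bigr)
\]
is a 2-local equivalence by Cooke's lemma, and extending trivially across components using $b_k$ and the class $\rho_8(k) \in H^0$ yields the stated splitting of $\LL^n$ at prime $2$.
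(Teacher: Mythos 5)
Your proposal is correct and follows the same approach as the paper. The paper's own proof is left implicit (the text just says ``Under the same argument,'' referring to the $\LL^q$ and $\LL^s$ cases after Lemma \ref{normalodd}, the construction of $k^n$, $l^n$, $r^n$, and the computation $l^n_{k,0}=\rho_8(k)$); you have unwound that reference into exactly what the argument amounts to: Cooke's lemma reduces the splitting to split injectivity of the Hurewicz map, the odd degrees are Lemma \ref{normalodd}, the $\pi_{4k}\simeq\ZZ/8$ case uses the vanishing of Wu classes on spheres to make the $j_8\langle V_MSq^1V_M\cdot f^*k^n,[M]\rangle$ correction in $\sigma^n_0$ disappear so that $\langle l^n_{4k},g_*[S^{4k}]\rangle$ is a generator, and the components are handled via the $b_k$ equivalences together with the $K(\ZZ,0)$ factor.

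One very small imprecision worth flagging: calling $\pi_{4k}(\LL^n)\simeq L^n_{4k}\simeq\ZZ/8$ the ``integral homotopy group'' and ``composing $g$ with the canonical map $L^n_{4k}\to L^n_{4k}(\ZZ/8)$'' is slightly awkward phrasing — what you are really doing is regarding the closed sphere $S^{4k}$ as a $\ZZ/8$ manifold with empty Bockstein, which implements the map $L^n_{4k}\to L^n_{4k}(\ZZ/8)$ on the represented class, landing injectively in the canonical first summand on which $\sigma^n_0$ is the projection. With that reading, your argument is exactly what the paper intends.
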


\subsection{Relations of Characteristic Classes}

We show the relations of characteristic classes among different $L$-theories by the natural fibration
\[
\LL^q\xrightarrow{i} \LL^s\xrightarrow{p} \LL^n 
\]
One needs to be careful that the natural image of $i$ is contained in $\LL^s_0$.

\begin{proposition}
\[
i^* l^s_0=8l^q
\]
\[
i^* r^s_0=0
\]
\[
p^*k^n_t=0
\]
\[
p^* l^n_t=\rho_8 l^s_t
\]
\[
p^*r^n_t=r^s_t
\]

where $\rho_8:\ZZ_{(2)}\rightarrow\ZZ/8$.
\end{proposition}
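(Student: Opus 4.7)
The plan is to reduce each identity to a chain-level identification of the universal invariants $\sigma^q_*$, $\sigma^s_*$, $\sigma^n_*$ under the maps $i$ and $p$, and then invoke the Morgan--Sullivan and Brumfiel--Morgan characterization theorems which pin down a cohomology class by its evaluations on singular $\ZZ$ or $\ZZ/2^k$ manifolds. Since the characteristic classes $l^q,k^q,l^s,r^s,l^n,k^n,r^n$ were themselves constructed as the unique classes realising certain bordism homomorphisms, it will suffice, for each universal invariant $\sigma$ on the target, to check that $\sigma\circ i_*$ (resp.\ $\sigma\circ p_*$) computes the claimed combination of invariants on the source.

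First I would unwind what $i$ and $p$ do at the chain level. The fibration $\LL^q\to\LL^s\to\LL^n$ is modelled by: $i$ polarizes, sending a Poincar\'e quadratic chain $(C,\psi)$ to the symmetric chain $(C,(1+T)\psi)$; and $p$ sends a Poincar\'e symmetric chain $(C,\phi)$ to the normal chain whose symmetric-quadratic pair representative is $(D,E)=((C,\phi),0)$. From this the raw chain-level identities fall out. In dimension $4k$ the symmetric form $(1+T)\psi$ on $H_{2k}(C)$ is the bilinearization of the integral quadratic form, whose signature equals $8$ times the index, so $\sigma^s_0\circ i_*=8\sigma^q_0$. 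For the same reason $\sigma^s_1\circ i_*$ reduces to the de Rham invariant of a polarized quadratic linking form on $\Tor H_{2k+1}$, which is zero on the nose. On the other side, when $E=0$ the defining recipes for $\sigma^n_0$, $\sigma^n_3$, $\sigma^n_1$ collapse respectively to $\rho_8\Sign(D)$, $\sigma^q_2(0)=0$, and $\sigma^s_1(D)$.

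Next I would absorb the Wu-class correction terms appearing in the very definitions of $\sigma^s_1$, $\sigma^n_0$, $\sigma^n_1$, which involve subtractions such as $\langle V\,Sq^1V\cdot f^*l,[M]\rangle$ or $\langle\beta(V\,Sq^1V)\cdot f^*k,[M]\rangle$. Once the $l$-identities $i^*l^s_0=8l^q$ and $p^*l^n_k=\rho_8 l^s_k$ are proved (and $8l^q\equiv 0\pmod 2$, $\rho_2\rho_8=\rho_2$), these correction terms transform compatibly, so the raw chain-level equalities of the previous paragraph remain valid after all corrections are included. Applying the characterization theorem appropriate to each coefficient group then yields each of the five identities in turn; I would handle them in the order $i^*l^s_0=8l^q$, then $i^*r^s_0=0$, then $p^*k^n_k=0$, then $p^*l^n_k=\rho_8 l^s_k$, then $p^*r^n_k=r^s_k$, so that each later identity has access to its predecessors when the Wu corrections are reshuffled.

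The main obstacle I foresee is the verification that $\sigma^s_1$ really vanishes on presheaves of polarized Poincar\'e quadratic chains; this is the only step that is not purely formal, since it requires showing that the torsion de Rham invariant of a $(1+T)\psi$-linking form (and its $\ZZ/2^k$ variant built from cobounding pairs) is trivial, using the algebraic structure of quadratic refinements. Everything else is bookkeeping against the uniqueness clauses of the Morgan--Sullivan and Brumfiel--Morgan characterizations already recorded above.
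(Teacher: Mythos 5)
Your proposal follows essentially the same route as the paper's proof: reduce each identity to a chain-level statement about the universal invariants $\sigma^q_\ast,\sigma^s_\ast,\sigma^n_\ast$ under the maps $i$ (polarization) and $p$ (inclusion of the $E=0$ symmetric--quadratic pairs), carefully account for the Wu-class correction terms appearing in the definitions of the invariants, and then apply the Morgan--Sullivan and Brumfiel--Morgan characterization theorems to identify the universal classes. The specific ingredients match the paper's: $\Sign((1+T)\psi)=8\,I(\psi)$ with the correction term killed because it is $8$ times a $2$-torsion element; the de Rham invariant of a polarized quadratic chain vanishes (in the paper this is asserted as obvious, while you more cautiously flag it as the one non-formal step, which is fair --- it follows from $L^q_{4k+1}=L^q_{4k+1}(\ZZ/2)=0$); and when $E=0$ the normal invariants collapse to $\sigma^s_0$, $0$, $\sigma^s_1$ respectively. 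Your proposed ordering of the five identities, so that each $r$-identity has access to its preceding $l$-identity when reconciling the Wu corrections, is precisely the bookkeeping the paper carries out.
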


\begin{proof}
Let $M$ be a $\ZZ$ or $\ZZ/2^k$ manifold.

First let $f:M\rightarrow \LL^q$ be a map and let $\mathcal{C}^q$ be the associated presheaf of quadratic chains. Then
\[
\sigma^s_0(M,i\circ f) =\Sign(\mathcal{C}^q(M)) =8\sigma^q_0(M,f)+8\langle \beta(V_MSq^1 V_M\cdot f^* k^q),[M]\rangle
\]
The term $8\langle \beta(f^* k^qV_MSq^1V_M),[M]\rangle$ vanishes since $\langle \beta(V_MSq^1 V_M\cdot f^* k^q),[M]\rangle$ is a $2$-torsion. Thus we proved the first equation.

The second equality is obvious since the de Rham invariant of a Poincar\'e quadratic chain always vanishes.

Now let $g:M\rightarrow \LL^s_t$ be a map and let $\mathcal{D}^s$ be the associated presheaf of symmetric chains. Notice that $\mathcal{D}^s(M)$ does not have the quadratic part as a symmetric-quadratic pair. Then the third equation is trivial. 

By definition, when $k=3$,
\[
\sigma^n_0(M,p\circ g)=\Sign(\mathcal{D}^s(M))=\sigma^s_0(M,g)\in \ZZ/8
\]
which proves the fourth equation.

When $k=1$.
\begin{eqnarray*}
\sigma^n_1(M,p\circ g) & = & \sigma^n_1(\mathcal{D}^s(M),0)-\langle V_MSq^1V_m\cdot g^*p^*\rho_2l^n_t,[M]\rangle \\
& = & \sigma^s_1(\mathcal{D}^s(M))-\langle V_MSq^1V_m\cdot g^*\rho_2l^s_t,[M]\rangle
=\sigma^s_0(M,g)   
\end{eqnarray*}
where $\rho_2$ means either $\ZZ_{(2)}\rightarrow \ZZ/2$ or $\ZZ/8\rightarrow \ZZ/2$.

Thus the fifth equation holds.

\end{proof}

It comes to the relation of the classes of the quadratic and normal theories.

Let $\LL^q(1)$ be the first space in the spectrum $\LL^q$. $\LL^q(1)$ is also the delooping of $\LL^q$.

Mimic the discussion of the space $\LL^q$. Let $M^m$ be a $\ZZ$ or $\ZZ/2^k$ manifold with a map $f:M\rightarrow \LL^q(1)$ and let $\mathcal{C}_f$ the associated presheaf. Then $\mathcal{C}_f(M)$ is a $\ZZ/2^k$ Poincar\'e quadratic chain of dimension $m-1$.

If $k=1$, then define 
\[
\widetilde{\sigma}^q_3=\sigma^q_2(\mathcal{C}_f(M))\in \ZZ/2
\]

Like the previous section, we have that

\begin{proposition}
There exists a graded class 
\[
\widetilde{k}^q=\widetilde{k}^q_3+\widetilde{k}^q_7+\cdots\in H^{4*+3}(\LL^q(1);\ZZ/2)
\]
such that for any map $f:M\rightarrow \LL^q(1)$,
\[
\widetilde{\sigma}^q_3(M,f)=\langle V^2_M\cdot f^*\widetilde{k}^q,[M]\rangle \in \ZZ/2
\]
where $M$ is a $\ZZ/2$ manifold.
\end{proposition}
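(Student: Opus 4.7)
The plan is to mirror the construction of $k^q \in H^{4*+2}(\LL^q;\ZZ/2)$ from Proposition~\ref{Kervaireclass}, replacing $\LL^q$ by its delooping $\LL^q(1)$ and shifting all dimensions up by one. The key tool remains the Brumfiel–Morgan characterization of graded $\ZZ/2$ classes recalled in this chapter: it suffices to produce a cobordism-invariant homomorphism $\widetilde{\sigma}^q_3 : \Omega^{SO}_*(\LL^q(1);\ZZ/2)\rightarrow \ZZ/2$ obeying the multiplicativity $\widetilde{\sigma}^q_3((M,f)\cdot N) = \widetilde{\sigma}^q_3(M,f)\cdot \chi_2(N)$ for any $\ZZ/2$ manifold $N$.

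First, I would verify that the definition $\widetilde{\sigma}^q_3(M,f) = \sigma^q_2(\mathcal{C}_f(M))$ makes sense and is a cobordism invariant. The dimension bookkeeping is that a map $f:M^{4l+3}\rightarrow \LL^q(1)$ corresponds under the classifying property of the delooped spectrum to a presheaf $\mathcal{C}_f$ of ads of Poincaré quadratic chains over $M$ whose assembly $\mathcal{C}_f(M)$ is a $\ZZ/2$ Poincaré quadratic chain of dimension $4l+2$, precisely the dimension on which $\sigma^q_2$ is defined. A $\ZZ/2$ singular cobordism $(W,F)$ in $\LL^q(1)$ between $(M,f)$ and $(M',f')$ gives, by assembly, a $\ZZ/2$ Poincaré quadratic cobordism between $\mathcal{C}_f(M)$ and $\mathcal{C}_{f'}(M')$, and cobordism invariance of $\sigma^q_2$ (proven in the $L$-groups with $\ZZ/n$ coefficients section) yields the desired invariance. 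We extend by zero in dimensions $\not\equiv 3\pmod 4$.

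Second, I would check the product formula. For $(M,f)\in \Omega^{SO}_{4l+3}(\LL^q(1);\ZZ/2)$ and $N\in \Omega^{SO}_n(\ZZ/2)$, viewing $(M,f)\cdot N$ as a singular $\ZZ/2$ manifold in $\LL^q(1)$ via projection to $M$, the naturality of presheaf products (cell-wise modified tensor) identifies the assembly over $M\otimes N$ with the modified product $\mathcal{C}_f(M)\otimes C_*(N)$. Applying Lemma~\ref{product-quadratic-Kervaire} gives
\[
\sigma^q_2(\mathcal{C}_f(M)\otimes C_*(N)) = \sigma^q_2(\mathcal{C}_f(M))\cdot \sigma^s_0(C_*(N)).
\]
Since $\sigma^s_0(C_*(N))$ reduces to the mod-$2$ signature of $N$ in dimensions $4l$, which by the classical identity $\Sign\equiv \chi\pmod 2$ equals $\chi_2(N)$ (and both sides vanish in other dimensions under the zero-extension convention), this establishes the required Brumfiel–Morgan product identity.

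Finally, the Brumfiel–Morgan theorem for $\ZZ/2$ classes produces the graded cohomology class $\widetilde{k}^q \in H^{4*+3}(\LL^q(1);\ZZ/2)$ with $\widetilde{\sigma}^q_3(M,f)=\langle V_M^2\cdot f^*\widetilde{k}^q,[M]\rangle$. The main obstacle is purely bookkeeping — correctly tracking the degree shift induced by passing to the delooping $\LL^q(1)$, so that the Kervaire invariant is applied in the right chain-complex dimension — since the substantive algebraic input (the chain-level product formula and the existence of the classifying presheaf) has already been established earlier in the paper.
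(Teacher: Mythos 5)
Your proposal is correct and follows exactly the argument the paper intends, which it leaves implicit by saying "Mimic the discussion of the space $\LL^q$" and "Likewise"; you spell out the same three ingredients—the dimension shift $m\mapsto m-1$ coming from the delooping so that $\sigma^q_2$ is applied in degree $4l+2$, the chain-level product formula from Lemma~\ref{product-quadratic-Kervaire}, and the Brumfiel--Morgan theorem for graded $\ZZ/2$ classes—which is precisely how the paper's Proposition~\ref{Kervaireclass} is proved and is what "likewise" is deferring to.
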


Next, define 
\[
\widetilde{\sigma}^q_1(M,f)=\sigma^q_0(\mathcal{C}_f(M))-\langle\beta(V_MSq^1 V_M\cdot f^* \widetilde{k}^q),[M]\rangle\in\ZZ \,\,\text{or}\,\, \ZZ/2^k
\]

\begin{proposition}
There exists a graded class 
\[
\widetilde{l}^q=\widetilde{l}^q_5+\widetilde{l}^q_9+\cdots\in H^{4*+1}(\LL^q(1);\ZZ_{(2)})
\]
such that for any map $f:M\rightarrow \LL^q(1)$,
\[
\widetilde{\sigma}^q_1(M,f)=\langle L_M\cdot f^*\widetilde{l}^q,[M]\rangle \in \ZZ \,\,\text{or}\,\, \ZZ/2^k
\]
where $M$ is a $\ZZ$ or $\ZZ/2^k$ manifold.
\end{proposition}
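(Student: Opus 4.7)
The plan is to run the same argument that produced the class $l^q$ on $\LL^q$, now applied to the invariant $\widetilde{\sigma}^q_1$ on $\LL^q(1)$, and to appeal to the Morgan-Sullivan characteristic class recognition theorem for $\ZZ_{(2)}$-classes stated earlier in this section.

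First I would check that $\widetilde{\sigma}^q_1$ is a well-defined homomorphism from $\Omega^{SO}_{4l+1}(\LL^q(1))$ and $\Omega^{SO}_{4l+1}(\LL^q(1);\ZZ/2^k)$ to $\ZZ$ and $\ZZ/2^k$ respectively. For $M$ of dimension $4l+1$, the assembly $\mathcal{C}_f(M)$ is a (respectively $\ZZ/2^k$) Poincar\'e quadratic chain of dimension $4l$, so $\sigma^q_0(\mathcal{C}_f(M))$ is defined and cobordism invariant by the earlier $L^q_*(\ZZ/2^k)$ calculation; the Wu-class correction is a bordism invariant because $V_M$ and $[M]$ are pulled back along oriented cobordism.

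Next I would verify the two product formulas required by Morgan-Sullivan. For the closed case $(M,f)\cdot N$, both $\mathcal{C}_f(M)$ and $C_*(N)$ lie in degrees $\equiv 0\pmod 4$, so in the chain product formula
\[
\sigma^q_0(C\otimes D)=\sigma^q_0(C)\cdot \sigma^s_0(D)+j_{2^k}(\sigma^q_2(\delta C)\cdot \sigma^s_1(D)+\sigma^q_2(C)\cdot \sigma^s_1(\delta D))
\]
the two Bockstein correction terms vanish by dimension, yielding $\sigma^q_0(\mathcal{C}_f(M)\otimes C_*(N))=\sigma^q_0(\mathcal{C}_f(M))\cdot \Sign(N)$, while the Wu-class correction is zero on a closed manifold. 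For the $\ZZ/2^k$ case, the assembly on the modified product $M\otimes N$ is the modified chain product, and the same dimension-counting kills the chain corrections; the Wu-class correction is expanded by the Cartan formula for $Sq^1$ combined with the identities $\langle V_N^2,[N]\rangle\equiv \Sign(N)\pmod 2$ and $\dR(\delta N)=\langle\beta(V_N Sq^1 V_N),[N]\rangle$, so that the cross-terms coming from the de Rham invariant of $N$ are absorbed by the Bockstein correction. This yields the required multiplicativity $\widetilde{\sigma}^q_1(j_\infty((M,f)\cdot N))=\widetilde{\sigma}^q_1(j_\infty(M,f))\cdot \Sign(N)$.

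The main obstacle is bookkeeping the modified-product contribution $\delta M\times\delta N\times W$ in the Wu-class correction: one must verify that its pairing with $\beta(V Sq^1 V\cdot f^*\widetilde{k}^q)$ lands in the image of $j_k:\ZZ/2\to\ZZ/2^k$ and cancels exactly against the analogous term in $\sigma^q_0$ of the modified chain product. This is the same phenomenon that arose in constructing $l^q$, and the argument of \cite{Sullivan&Morgan} adapts verbatim after the shift in dimension. Once both product formulas are in hand, Morgan-Sullivan produces a unique graded class $\widetilde{l}^q=\widetilde{l}^q_5+\widetilde{l}^q_9+\cdots\in H^{4*+1}(\LL^q(1);\ZZ_{(2)})$ satisfying the defining equation $\widetilde{\sigma}^q_1(M,f)=\langle L_M\cdot f^*\widetilde{l}^q,[M]\rangle$.
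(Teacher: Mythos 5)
Your proposal matches the paper's implicit argument: define $\widetilde{\sigma}^q_1$ with the Wu-class Bockstein correction, verify the two Morgan--Sullivan/Brumfiel--Morgan multiplicativity conditions using the chain-level product formula for $\sigma^q_0$ together with the Whitney formula for $V Sq^1 V$, and then invoke the recognition theorem for graded $\ZZ_{(2)}$-classes. The paper does not spell out these verifications for $\widetilde{l}^q$ but explicitly refers back to the $l^q$ construction, which is exactly what you do after the one-degree shift; the observation that the $\sigma^q_2$ correction terms in the chain product formula are killed by the dimension shift ($\mathcal{C}_f(M)$ now has dimension $\equiv 0\pmod 4$, so $\delta\mathcal{C}_f(M)$ has dimension $\equiv 3\pmod 4$) is the right bookkeeping.
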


With the same argument as before, 

\begin{theorem}
Localized at prime $2$, 
\[
\LL^q(1) \simeq \prod_{k>0} (K(\ZZ_{(2)},4k+1)\times K(\ZZ/2,4k-1))
\]
where the homotopy equivalence is given by the $\widetilde{l}^q$ and $\widetilde{k}^q$  classes.
\end{theorem}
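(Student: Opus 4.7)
The plan is to mimic the splittings already established in the previous subsections for $\LL^q$, $\LL^s$, and $\LL^n$. Since $L^q_m = 0$ unless $m\equiv 0,2\pmod{4}$, the non-trivial homotopy groups of $\LL^q(1)$ are $\pi_{4k+1}(\LL^q(1)) = L^q_{4k}\simeq\ZZ$ and $\pi_{4k-1}(\LL^q(1)) = L^q_{4k-2}\simeq\ZZ/2$ for $k\geq 1$, which match exactly the factors of the product. The classes $\widetilde{l}^q$ and $\widetilde{k}^q$ produced in the previous two propositions assemble into a map
\[
\Phi : \LL^q(1) \longrightarrow \prod_{k>0} \bigl(K(\ZZ_{(2)},4k+1)\times K(\ZZ/2,4k-1)\bigr),
\]
and by Cooke's lemma it suffices to verify that the Hurewicz homomorphisms $h_n : \pi_n(\LL^q(1))\to H_n(\LL^q(1);\ZZ)$ are splitting monomorphisms at prime $2$ in these two families of degrees, with the splittings realized by evaluation against $\widetilde{l}^q$ and $\widetilde{k}^q$ respectively.

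First I would treat the $\ZZ/2$ layers, following the template of Lemma \ref{quadratic2modulo4}. Let $g:S^{4k-1}\to\LL^q(1)$ represent the generator of $\pi_{4k-1}(\LL^q(1))$. The associated presheaf $\mathcal{C}_g$ has as assembly a $(4k-2)$-dimensional $\ZZ/2$ Poincar\'e quadratic chain realizing the non-zero class of $L^q_{4k-2}\simeq\ZZ/2$, so $\sigma^q_2(\mathcal{C}_g(S^{4k-1}))=1$. Since all Wu classes of the sphere are trivial, the defining equation of $\widetilde{k}^q_{4k-1}$ reduces to
\[
\langle g^*\widetilde{k}^q_{4k-1},[S^{4k-1}]\rangle \;=\; \widetilde{\sigma}^q_3(S^{4k-1},g) \;=\; 1\in\ZZ/2,
\]
so $h_{4k-1}\otimes\ZZ/2$ is injective, and hence $h_{4k-1}$ is split mono at prime $2$.

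Next I would treat the $\ZZ$ layers. Let $f:S^{4k+1}\to\LL^q(1)$ represent a generator of $\pi_{4k+1}(\LL^q(1))$. Its assembly $\mathcal{C}_f(S^{4k+1})$ is a $4k$-dimensional Poincar\'e quadratic chain generating $L^q_{4k}\simeq\ZZ$, so $\sigma^q_0(\mathcal{C}_f(S^{4k+1})) = \pm 1$. The Wu classes of $S^{4k+1}$ again kill the correction term in the definition of $\widetilde{\sigma}^q_1$, and
\[
\langle L_{S^{4k+1}}\cdot f^*\widetilde{l}^q_{4k+1},[S^{4k+1}]\rangle \;=\; \widetilde{\sigma}^q_1(S^{4k+1},f) \;=\; \pm 1,
\]
a unit in $\ZZ_{(2)}$. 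Therefore $h_{4k+1}\otimes\ZZ_{(2)}$ is split injective onto a rank-one summand.

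With both Hurewicz splittings in place, Cooke's lemma kills every $k$-invariant of the $2$-local Postnikov tower of $\LL^q(1)$, so $\Phi$ is a weak equivalence at prime $2$. The main subtlety I anticipate is the bookkeeping around the delooping: one must confirm that a map $S^{4k+1}\to\LL^q(1)$ representing a generator of $\pi_{4k+1}(\LL^q(1))$ really produces an assembly whose chain-level invariant $\sigma^q_0$ matches the generator of $L^q_{4k}$. This amounts to tracing through the Kan $\Omega$-spectrum identification $\LL^q = \Omega\LL^q(1)$ together with the functoriality of the assembly, and is routine once one has accepted the analogous step used in the splitting of $\LL^q$ itself.
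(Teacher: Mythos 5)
Your proposal is correct and follows the same strategy the paper invokes when it says ``with the same argument as before'': identify the nonzero homotopy groups, verify via the defining equations of $\widetilde{k}^q$ and $\widetilde{l}^q$ (and the vanishing of positive-degree Wu classes of spheres) that the Hurewicz maps are split monomorphisms $2$-locally, and conclude by Cooke's lemma. The only point worth noting is that for the $\ZZ/2$ layers the paper's original template (Lemma~\ref{quadratic2modulo4}) proves Hurewicz splitting by a slightly more indirect argument involving generalized Stiefel--Whitney numbers before the characteristic class is constructed, whereas you use the already-constructed class $\widetilde{k}^q$ directly, which is cleaner and logically equivalent.
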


Consider the connecting map $\partial_t: \LL^n_t\rightarrow \LL^q(1)$.

\begin{proposition}
\[
\partial^*_t \widetilde{k}^q = k^n_t
\]
\[
\partial^*_t\widetilde{l}^q = - \beta l^n_t
\]
where $\beta$ is the $\ZZ/8\rightarrow \ZZ_{(2)}$ Bockstein.
\end{proposition}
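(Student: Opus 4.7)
The plan is to apply the uniqueness theorems of Brumfiel--Morgan (for $\ZZ/2$-coefficient classes) and Morgan--Sullivan (for $\ZZ_{(2)}$-coefficient classes) recalled earlier in this chapter. The essential geometric input in both cases is that a map $f:M\to\LL^n_k$ classifying a presheaf $(\mathcal{D}_f,\mathcal{E}_f)$ of Poincar\'e symmetric-quadratic pairs gives rise, via composition with $\partial_k$, to the presheaf $\mathcal{E}_f$ of its quadratic parts, viewed in one lower dimension (the shift being absorbed by $\LL^q(1)=\Sigma\LL^q$ being the delooping of $\LL^q$).

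For the first equation, Brumfiel--Morgan reduces $\partial^*_k\widetilde k^q=k^n_k$ to a pointwise check on $\Omega_*^{SO}(\LL^n_k;\ZZ/2)$. Given $f:M\to\LL^n_k$ with $M$ a $\ZZ/2$-manifold, the identification above gives
\[
\widetilde\sigma^q_3(M,\partial_k\circ f)=\sigma^q_2(\mathcal{E}_f(M))=\sigma^n_3(\mathcal{D}_f(M),\mathcal{E}_f(M))=\sigma^n_3(M,f),
\]
the middle equality being literally the definition of $\sigma^n_3$ from Section~2.3, and the outer equalities being the defining recipes for the two invariants in question. Hence the invariants agree and the classes coincide.

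For the second equation, Morgan--Sullivan asks for agreement of rational and $\ZZ/2^\infty$ periods. Rationally, $\beta l^n_k$ vanishes since $\beta$ is $8$-torsion-valued; and for a closed manifold $M^{4r+1}\to\LL^n_k$, the class $(\mathcal{D}_f(M),\mathcal{E}_f(M))$ lies in the finite group $L^n_{4r+1}\simeq\ZZ/2$, whose image under $\partial$ in $L^q_{4r}\simeq\ZZ$ is zero, forcing $\sigma^q_0(\mathcal{E}_f(M))=0$, while the Wu correction in $\widetilde\sigma^q_1$ is $2$-torsion, also zero. So both rational periods vanish.

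The main obstacle is the $\ZZ/2^\infty$-period computation. For a $\ZZ/2^l$-manifold $M^{4r+1}$ with $f:M\to\LL^n_k$, the cohomological Bockstein identity, together with $L_M|_{\delta M}=L_{\delta M}$ and the fact that $L_M$ has a canonical $\ZZ_{(2)}$-lift (so $\beta(L_M\cdot f^*l^n_k)=L_M\cdot f^*\beta l^n_k$), gives
\[
\bigl\langle L_M\cdot f^*\beta l^n_k,[M]\bigr\rangle=2^{l-3}\bigl\langle L_{\delta M}\cdot(f|_{\delta M})^*l^n_k,[\delta M]\bigr\rangle=2^{l-3}\sigma^n_0(\delta M,f|_{\delta M})
\]
for $l\ge 3$, with analogous modifications for $l=1,2$ in terms of $\sigma^n_{0,2}$ or $\sigma^n_{0,4}$. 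On the other side, $\widetilde\sigma^q_1(M,\partial_k\circ f)$ is computed by choosing a quadratic pair $C'$ bounding $\delta\mathcal{E}_f(M)=\mathcal{E}_f(\delta M)$; this same $C'$ underlies the definition of $\sigma^n_0(\delta M,f|_{\delta M})$ via the closed symmetric chain $\mathcal{D}_f(\delta M)\cup_{\mathcal{E}_f(\delta M)}C'$. I will apply Novikov additivity to identify the signature of the closed quadratic chain $\mathcal{E}_f(M)\cup_{2^l\mathcal{E}_f(\delta M)}2^lC'$ (which equals $8\sigma^q_0(\mathcal{E}_f(M))$) with $-2^l$ times the signature of $\mathcal{D}_f(\delta M)\cup_{\mathcal{E}_f(\delta M)}C'$; the sign will come from the mismatch between the boundary-orientation convention in the Bockstein and the gluing convention in the Thom construction relating normal pairs to symmetric chains. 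Combined with the already-established first equation to match the Wu-class corrections appearing in $\widetilde\sigma^q_1$ and $\sigma^n_0$ via the cohomological Bockstein, this yields $\widetilde\sigma^q_1(M,\partial_k\circ f)=-\langle L_M\cdot f^*\beta l^n_k,[M]\rangle$ in $\ZZ/2^l$. The delicate part is the sign analysis in Novikov additivity and the careful matching of Wu corrections under the Bockstein---tedious but routine---after which Morgan--Sullivan concludes $\partial^*_k\widetilde l^q=-\beta l^n_k$.
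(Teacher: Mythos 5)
Your proposal follows essentially the same approach as the paper: the first equation is a direct unwinding of the definitions of $\sigma^n_3$ and $\widetilde\sigma^q_3$; the second is checked period-by-period via the Morgan--Sullivan uniqueness criterion, with the rational periods both vanishing (because $\beta l^n_k$ is $8$-torsion and because $\mathcal E_f(M)$ bounds the symmetric chain $\mathcal D_f(M)$ for $M$ closed), and with the mod-$2^\infty$ periods matched by Novikov additivity applied to the chain $\mathcal E_f(M)\cup_{2^l\delta\mathcal E_f(M)}2^l\mathcal D_f(\delta M)$, which bounds and hence forces the negative sign. The level of detail left implicit (verification of the Bockstein factor $2^{l-3}$, the exact sign in Novikov additivity, and the matching of Wu-class corrections via the already-established first equation) is comparable to what the paper itself glosses over, so the proposal is a faithful reconstruction of the argument.
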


\begin{proof}
Let $M^m$ be $\ZZ/2^p$ manifold with a map $f:M\rightarrow \LL^n_t$ and let $(\mathcal{D},\mathcal{E})$ be the associated presheaf of symmetric-quadratic pairs.

If $p=1$, then $\sigma^n_3(M,f)=\sigma^q_2(\mathcal{E}(M))=\widetilde{\sigma}^q_3(M,\partial\circ f)$. Hence, the first equation holds.

If $p=0$, then $\widetilde{\sigma}^q_1(M,\partial\circ f)=\sigma^q_0(\mathcal{E}(M))=0$, since $\Sign(\mathcal{E}(M))=0$.

If $p=3$, then $\delta \mathcal{E}(M)$ as a Poincar\'e quadratic chain must be the boundary of some $F$ when $m\equiv 1 \pmod{4}$, namely $-\delta \mathcal{E}(M)\rightarrow F$. Thus, $\sigma^q_0(\mathcal{E}(M))=\frac{1}{8}\Sign(\mathcal{E}(M)\bigcup_{8\delta\mathcal{E}(M)} 8F)\in \ZZ/2$.

To define $\beta l^n_t$, we consider 
\[
\sigma^n_0(\delta \mathcal{D}(M),-\delta\mathcal{E}(M))=\Sign(\delta \mathcal{D}(M)\bigcup_{-\delta \mathcal{E}(M)} (-F))\in \ZZ/2
\]
Since $\mathcal{E}(M)\bigcup_{8\delta \mathcal{E}(M)} 8\delta\mathcal{D}(M)$ is the boundary of a Poincar\'e symmetic pair,
\[
8\Sign(\delta \mathcal{D}(M))+\Sign(\mathcal{E}(M))=0
\]
Then
\[
\sigma^n_0(\delta \mathcal{D}(M),-\delta\mathcal{E}(M))+\sigma^q_0(\mathcal{E}(M))=0
\]

It is not hard to check that the modified terms in defining $l$-classes are the same. The case for a general $p$ follows from the $p=3$ case. Therefore, the second equation also holds.
\end{proof}

\subsection{Coproducts of Characteristic Classes}

In this section, we calculate the coproducts of the characteristic classes induced by the ring structure or the module structure of the $L$-theories, that is,
\[
m^s_{t,t'}:\LL^s_{t}\times\LL^s_{t'}\rightarrow \LL^s_{tt'}
\]
\[
m^{q,s}_t:\LL^q\times\LL^s_t\rightarrow \LL^q
\]
\[
m^n_{t,t'}:\LL^n_t\times\LL^n_{t'}\rightarrow \LL^n_{tt'}
\]

Recall that for any two $\ZZ/n$ manifolds $M$ and $N$ there is a natural map $\rho: M\otimes N\rightarrow M\times N$. For any cohomology classes $a\in H^*(M;R),b\in H^*(N;R)$, where $R$ is a commutative ring, define $a\otimes b=\rho^*(a\times b)$. We use the notation $f\otimes g:M\otimes N\xrightarrow{\rho} M\times N\xrightarrow{f\times g} X\times Y $ for any two maps $f:M\rightarrow X,g:N\rightarrow Y$.

Recall that

\begin{proposition}[\cite{Sullivan&Morgan}*{Proposition 3.1, Proposition 8.3}]
\label{productcharacteristic}
\[
L_{M\otimes N}=L_M\otimes L_N
\]
\[
V_{M\otimes N}Sq^1V_{M\otimes N}=V^2_M\otimes V_{N}Sq^1V_{N}+V_{M}Sq^1V_{M}\otimes V^2_N
\]
In particular,
\[
V^2_{M\otimes N}=V^2_M\otimes V^2_N
\]
\end{proposition}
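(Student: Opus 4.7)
The plan is to reduce everything to a stable tangent bundle comparison $T(M\otimes N)\simeq \rho^*T(M\times N)$; granting this, the three formulas will follow from standard product/Cartan identities on $M\times N$ together with the definition $a\otimes b := \rho^*(a\times b)$.

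First I would analyze $T(M\otimes N)$ piece by piece, using the decomposition \eqref{modifiedproduct}. On the interior piece $M\times N$, $\rho$ is the identity and there is nothing to check. On a collar piece such as $M\times n\delta N\times I$, the tangent bundle is $TM\oplus T\delta N\oplus \mathbb{R}$, which matches the restriction $TM\oplus(T\delta N\oplus\mathbb{R})$ of $\rho^*T(M\times N)$ via the collar of $\partial N$ in $N$; the other collar piece is symmetric. On the correction piece $\delta M\times\delta N\times W$, the tangent bundle is $T\delta M\oplus T\delta N\oplus TW$, and since $W$ is an orientable surface with non-empty boundary (hence parallelizable) one has $TW\cong\mathbb{R}^2$, matching the two normal directions inherited from $\partial M$ and $\partial N$ in $M\times N$. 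A standard collar-neighbourhood patching argument then glues the local identifications into a global stable isomorphism $T(M\otimes N)\simeq \rho^*T(M\times N)$.

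Given this bundle equivalence, every characteristic class $c$ of the stable oriented tangent bundle satisfies $c(M\otimes N)=\rho^*c(M\times N)$. Combined with the multiplicativity $L_{M\times N}=L_M\times L_N$, this gives the first identity $L_{M\otimes N}=L_M\otimes L_N$. Similarly $w(M\otimes N)=\rho^*w(M\times N)$; since the total Wu class is determined from $w$ via $Sq(V)=w$ and satisfies $V_{M\times N}=V_M\times V_N$ by the Cartan formula, we obtain $V_{M\otimes N}=V_M\otimes V_N$. The two remaining formulas now follow formally, because $\rho^*$ commutes with cup product and with $Sq^1$:
\[
V^2_{M\otimes N}=(V_M\otimes V_N)(V_M\otimes V_N)=V_M^2\otimes V_N^2,
\]
and expanding $(V_M\otimes V_N)\cdot Sq^1(V_M\otimes V_N)$ using $Sq^1(V_M\otimes V_N)=Sq^1 V_M\otimes V_N+V_M\otimes Sq^1 V_N$ yields
\[
V_{M\otimes N}\,Sq^1 V_{M\otimes N}=V_M\,Sq^1 V_M\otimes V_N^2+V_M^2\otimes V_N\,Sq^1 V_N.
\]

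The main obstacle is the bundle comparison on the correction piece: one has to verify that the chosen trivialization of $TW$ is compatible, across $\delta M\times\delta N\times\partial W$, with the collar identifications on the two adjacent collar pieces $M\times n\delta N\times I$ and $n\delta M\times N\times I$. The combinatorial graph $\gamma$ recorded in the earlier proposition (prescribing how $W$ caps off $n*n$) is precisely what dictates this matching, and the whole check is local and stable, hence tractable. Once that step is in place, everything else reduces to routine multiplicativity of $L$ and $w$ and a one-line application of the Cartan formula for $Sq^1$.
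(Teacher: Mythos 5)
The paper does not prove this proposition; it is cited verbatim to Morgan--Sullivan \cite{Sullivan&Morgan}, so there is no in-text argument to compare against. Evaluating your proposal on its own terms: the strategy of establishing a stable bundle identification $T(M\otimes N)\simeq\rho^*T(M\times N)$ and then pushing multiplicativity of $L$, $w$, and hence $V$ through $\rho^*$ is sound, and it in fact yields the stronger statement $V_{M\otimes N}=V_M\otimes V_N$, from which the two displayed $\ZZ/2$ identities follow by the Cartan formula for $Sq^1$ exactly as you compute. The local analysis of the pieces in \eqref{modifiedproduct} is correct, and the observation that $W$ is an orientable surface with nonempty boundary and hence parallelizable is the right reason the correction piece does not perturb the stable tangent bundle.

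The one place you should be more careful is precisely the step you flag: it is not enough to produce piecewise stable isomorphisms, since the difference of two stable trivializations of the $\RR^2$-factor over $\delta M\times\delta N\times W$ is classified by an element of $[\delta M\times\delta N\times W,\,O]$, and you must check that the trivialization of $TW$ can be chosen so as to agree along $\delta M\times\delta N\times\partial W$ with the collar-induced identifications on the two adjacent collar pieces. This is true, but it is a genuine verification rather than a formality; the relevant fact is that the boundary arcs of $W$ mapping to $n*n$ carry canonical collar framings induced from the $\ZZ/n$-manifold structure, and these extend to a framing of $W$ because the obstruction lives in a relative $H^1$ that vanishes by the choice of $W$ bounding the link (this is where the combinatorics you allude to actually enter). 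Also note that $M\times N$ is a priori a manifold with corners, so the comparison should be read stably after smoothing, and one should say explicitly that $V_M$ here means the tangential Wu class $V(TM)=Sq^{-1}w(TM)$ (as in \cite{Sullivan&Morgan}), which is what makes the reduction to a bundle comparison legitimate in the $\ZZ/n$ setting. With that phrasing tightened, your argument is complete and, I believe, essentially what Morgan--Sullivan do.
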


\begin{proposition}
\[
(m^s_{t,t'})^* l^s_{tt'}=l^s_t\times l^s_{t'}
\]
\[
(m^s_{t,t'})^* r^s_{tt'}=r^s_t\times l^s_{t'}+l^s_t\times r^s_{t'}
\]
\end{proposition}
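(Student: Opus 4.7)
The plan is to verify both coproduct formulas by the same strategy that governs the entire chapter: characteristic classes on an $L$-space are determined by their defining equations on $\ZZ$ or $\ZZ/2^k$ singular manifolds (Morgan--Sullivan in the $\ZZ_{(2)}$ case, Brumfiel--Morgan in the $\ZZ/2$ case). So I would pick $\ZZ$ or $\ZZ/2^l$ manifolds $M, N$ with maps $f\colon M\to\LL^s_k$ and $g\colon N\to\LL^s_l$, form the map $m^s_{k,l}\circ(f\otimes g)\colon M\otimes N\to\LL^s_{kl}$, and compute both sides of the defining equations. The crucial structural input is the observation made at the end of Section 2.4: the assembly of the modified product presheaf $\mathcal{C}_f\otimes\mathcal{C}_g$ on $M\otimes N$ agrees with the modified product of the assemblies $\mathcal{C}_f(M)\otimes\mathcal{C}_g(N)$ as $\ZZ/2^l$ Poincar\'e symmetric chains.

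For $(m^s_{k,l})^*l^s_{kl}=l^s_k\times l^s_l$, I would expand $\sigma^s_0(M\otimes N, m^s_{k,l}\circ(f\otimes g))$ using the chain-level multiplicativity $\sigma^s_0(C\otimes D)=\sigma^s_0(C)\cdot\sigma^s_0(D)$ (proved earlier in Section 2.5), then rewrite each factor via the defining equation for $l^s_k$ and $l^s_l$. Applying Proposition~\ref{productcharacteristic} to replace $L_M\cdot L_N$ by $L_{M\otimes N}$ (pulled back along $\rho\colon M\otimes N\to M\times N$) yields
\[
\sigma^s_0(M\otimes N,\ldots)=\langle L_{M\otimes N}\cdot(f\otimes g)^*(l^s_k\times l^s_l),[M\otimes N]\rangle,
\]
and uniqueness in the Morgan--Sullivan theorem gives the equality. (One must check the two multiplicative conditions for $\sigma^s_0$ on $\LL^s_{kl}$, but these follow immediately from $\sigma^s_0(C\otimes C')=\sigma^s_0(C)\cdot\sigma^s_0(C')$ applied to a manifold factor.)

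For $(m^s_{k,l})^*r^s_{kl}=r^s_k\times l^s_l+l^s_k\times r^s_l$, I would work on $\ZZ/2$ manifolds and use the defining equation for $r^s$ which involves the correction term $\langle V_MSq^1V_M\cdot f^*l^s_k,[M]\rangle$. Expanding
\[
\sigma^s_1(M\otimes N,\ldots)=\sigma^s_1(\mathcal{C}_f(M)\otimes\mathcal{C}_g(N))-\langle V_{M\otimes N}Sq^1V_{M\otimes N}\cdot(f\otimes g)^*(l^s_k\times l^s_l),[M\otimes N]\rangle
\]
(using the already-proved $(m^s_{k,l})^*l^s_{kl}=l^s_k\times l^s_l$), I would substitute the chain-level Leibniz formula $\sigma^s_1(C\otimes D)=\sigma^s_1(C)\sigma^s_0(D)+\sigma^s_0(C)\sigma^s_1(D)$, then replace $\sigma^s_1(\mathcal{C}_f(M))$ by $\langle V^2_M\cdot f^*r^s_k,[M]\rangle+\langle V_MSq^1V_M\cdot f^*l^s_k,[M]\rangle$ and $\sigma^s_0$ of a $\ZZ/2$-manifold presheaf by its $V^2$-formula. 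The identity $V_{M\otimes N}Sq^1V_{M\otimes N}=V^2_M\otimes V_NSq^1V_N+V_MSq^1V_M\otimes V^2_N$ from Proposition~\ref{productcharacteristic} then cancels the two mixed correction terms, leaving exactly $\langle V^2_{M\otimes N}\cdot(f\otimes g)^*(r^s_k\times l^s_l+l^s_k\times r^s_l),[M\otimes N]\rangle$, and the Brumfiel--Morgan uniqueness for $\ZZ/2$ classes concludes.

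The main obstacle is not any single computation but the careful bookkeeping in the $r^s$ step: one has to track that the ``bad'' $V\cdot SqV$ cross-terms produced by the chain-level Leibniz rule applied to $\sigma^s_1(\mathcal{C}_f(M))$ (which contains a $V_MSq^1V_M\cdot l^s_k$ correction) combine with the single subtracted term $V_{M\otimes N}Sq^1V_{M\otimes N}\cdot l^s_k\times l^s_l$ in exactly the right way to cancel, using the Cartan-type formula for $VSq^1V$ on a product. Verifying this cancellation, and checking that the resulting homomorphism satisfies the multiplicative hypothesis of the Brumfiel--Morgan $\ZZ/2$ theorem (product with a $\ZZ/2$ manifold $P$ picks up $\chi_2(P)$), is the only delicate point; once done, the uniqueness statements do the rest.
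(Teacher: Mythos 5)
Your proposal follows exactly the paper's strategy: pick singular $\ZZ/2^q$ (resp.\ $\ZZ/2$) manifolds, invoke the assembly isomorphism $\mathcal{C}_{f\otimes g}(M\otimes N)\simeq\mathcal{C}_f(M)\otimes\mathcal{C}_g(N)$ from Section~2.4, expand via the chain-level multiplicativity/Leibniz formulas for $\sigma^s_0,\sigma^s_1$ and the Cartan-type identities of Proposition~\ref{productcharacteristic}, then conclude by the Morgan--Sullivan/Brumfiel--Morgan uniqueness theorems. The paper only writes out the $l^s$ computation and remarks that the $r^s$ case is the same; your spelled-out treatment of the $r^s$ cancellation is the correct filling-in of that omitted step.
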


\begin{proof}
The methods to prove the equations are the same. So we only give the proof for the first one.

Let $M$ and $N$ be $\ZZ/2^q$ manifolds with maps $f:M\rightarrow \LL^s_t$ and $g:N\rightarrow\LL^s_{t'}$. Let $\mathcal{C}_f$ and $\mathcal{C}_g$ be the associated presheaves. Let $\mathcal{C}_{f\otimes g}$ be the presheaf associated to $m^s_{t,t'}\circ (f\otimes g): M\otimes N\rightarrow \LL^s_{tt'}$. By \ref{ProducAssembly}, $\mathcal{C}_{f\otimes g}(M\otimes N)$ is bordant to
$\mathcal{C}_f(M)\otimes\mathcal{C}_g(N)$.

On the one hand,
\[
\sigma^s_0(M\otimes N,m^s_{t,t'}\circ(f\otimes g))=\Sign(\mathcal{C}_{f\otimes g}(M\otimes N))=\langle L_{M\otimes N}\cdot(f\otimes g)^*(m^s_{t,t'})^*l^s_{tt'},[M\otimes N]\rangle
\]
On the other hand,
\[
\Sign(\mathcal{C}_{f\otimes g}(M\otimes N))=\Sign(\mathcal{C}_f(M))\cdot \Sign(\mathcal{C}_g(N))=\langle L_M\cdot f^*l^s_t,[M]\rangle\cdot \langle L_N\cdot g^*l^s_{t'},[N]\rangle
\]
Hence, the first equation follows.
\end{proof}

\begin{proposition}
\[
(m^{q,s}_t)^* k^q=k^q\times l^s_t
\]
\[
(m^{q,s}_t)^* l^q=l^q\times l^s_t+\beta(k^q\times r^s_t)
\]
where $\beta$ is the $\ZZ/2\rightarrow \ZZ_{(2)}$ Bockstein. 
\end{proposition}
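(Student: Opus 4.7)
The plan is to imitate the proof structure of the previous proposition (for $(m^s_{k,l})^*l^s_{kl}$), testing both identities by pairing with $\ZZ/2^l$ manifolds and applying the chain-level product formulae from the second chapter.

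For the first identity, take any $\ZZ/2$ manifolds $M$ and $N$ with maps $f:M\to \LL^q$ and $g:N\to \LL^s_k$, with associated presheaves $\mathcal{C}_f$ and $\mathcal{C}_g$. The modified product gives an identification $\mathcal{C}_{f\otimes g}(M\otimes N)\simeq \mathcal{C}_f(M)\otimes \mathcal{C}_g(N)$. Evaluating $\sigma^q_2$ and applying Lemma~\ref{product-quadratic-Kervaire} yields $\sigma^q_2(M\otimes N, m^{q,s}_k\circ(f\otimes g))=\sigma^q_2(M,f)\cdot \sigma^s_0(N,g)$. By the defining formulae $\sigma^q_2(M,f)=\langle V^2_M\cdot f^*k^q,[M]\rangle$ and $\sigma^s_0(N,g)=\langle L_N\cdot g^*l^s_k,[N]\rangle$, together with the congruence $L_N\equiv V^2_N \pmod 2$ on a $\ZZ/2$ manifold and Morgan--Sullivan's identity $V^2_{M\otimes N}=V^2_M\otimes V^2_N$ from Proposition~\ref{productcharacteristic}, the right-hand side equals $\langle V^2_{M\otimes N}\cdot (f\otimes g)^*(k^q\times \rho_2 l^s_k),[M\otimes N]\rangle$. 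Since $\ZZ/2^l$ Stiefel-Whitney numbers detect $\ZZ/2$ cohomology of $\LL^q$ (Fact applied to $\LL^q$), this forces $(m^{q,s}_k)^*k^q=k^q\times l^s_k$.

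For the second identity, I would test against $\ZZ/2^l$ manifolds $M,N$ and evaluate $\sigma^q_0$. The chain-level formula
\[
\sigma^q_0(C\otimes D)=\sigma^q_0(C)\cdot \sigma^s_0(D)+j_{2^l}\bigl(\sigma^q_2(\delta C)\cdot \sigma^s_1(D)+\sigma^q_2(C)\cdot \sigma^s_1(\delta D)\bigr)
\]
expands $\sigma^q_0(\mathcal{C}_f(M)\otimes \mathcal{C}_g(N))$ into a principal term and two correction terms. Unpacking the principal term via the definitions of $\sigma^q_0(M,f)$ and $\sigma^s_0(N,g)$ produces $\langle L_M\otimes L_N\cdot (f\otimes g)^*(l^q\times l^s_k),[M\otimes N]\rangle$, matching the first summand once we invoke $L_{M\otimes N}=L_M\otimes L_N$. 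The correction terms need to be rewritten as cohomological pairings on $M\otimes N$: for the $j_{2^l}$ terms, $\sigma^q_2(\delta C)$ and $\sigma^s_1(D)$ correspond to $\langle V^2_{\delta M}\cdot (\delta f)^*k^q,[\delta M]\rangle$ and $\langle V^2_N\cdot g^*r^s_k\rangle$ (plus a $V_N Sq^1V_N\cdot g^*l^s_k$ term), and similarly with $M,N$ swapped. Using $\dR$-type Bockstein identities $\langle V^2_{\delta M}\cdot \alpha,[\delta M]\rangle=\langle \beta(V_MSq^1V_M\cdot \alpha),[M]\rangle$ converts each correction into a $\beta$-pairing on $M\otimes N$. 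One then subtracts the $j_k\langle \beta(V_{M\otimes N}Sq^1V_{M\otimes N}\cdot (f\otimes g)^*(k^q\times \rho_2l^s_k)),[M\otimes N]\rangle$ coming from the normalization of $\sigma^q_0$, and uses the formula $V_{M\otimes N}Sq^1V_{M\otimes N}=V^2_M\otimes V_N Sq^1V_N+V_M Sq^1V_M\otimes V^2_N$ from Proposition~\ref{productcharacteristic} together with the first identity $(m^{q,s}_k)^*k^q=k^q\times l^s_k$ already proved.

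The main obstacle is the bookkeeping in the second identity: the terms coming from the $j_{2^l}$ part of the chain-level product formula, the $VSq^1V$-correction in the definition of $\sigma^q_0$, and the $\sigma^s_1$-correction in the definition of $\sigma^s_1(N,g)$ all contribute $\ZZ/2$-valued pieces that must be shown to cancel pairwise, leaving only $\langle \beta(V^2_{M\otimes N}\cdot (f\otimes g)^*(k^q\times r^s_k)),[M\otimes N]\rangle$. Once this cancellation is checked, the characterization of integral $2$-local cohomology via $\ZZ/2^l$ bordism periods (Morgan--Sullivan) identifies $(m^{q,s}_k)^*l^q$ with $l^q\times l^s_k+\beta(k^q\times r^s_k)$ uniquely. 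The subtlety is that the extra term is $\ZZ/2$-torsion, so one has to verify the $\ZZ/2$ periods match separately from the rational ones; the rational side is just multiplicativity of $L$-genus, while the torsion side is exactly the computation above.
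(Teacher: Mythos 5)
Your proposal follows essentially the same route as the paper: establish the first identity by pairing with $\ZZ/2$ bordism and Lemma~\ref{product-quadratic-Kervaire}, then prove the second by expanding $\sigma^q_0(\mathcal{C}_f(M)\otimes\mathcal{C}_g(N))$ via the chain-level product formula, subtracting the $V\,Sq^1V$ normalization of $\sigma^q_0$, and checking cancellation. One small caution: the ``$\dR$-type Bockstein identity'' as you wrote it, $\langle V^2_{\delta M}\cdot\alpha,[\delta M]\rangle=\langle\beta(V_M Sq^1 V_M\cdot\alpha),[M]\rangle$, is not quite the relation actually used; what enters is the paper's identity $\dR(\delta M)=\langle\beta(V_M Sq^1V_M),[M]\rangle$ together with the correction built into the definition of $\sigma^q_0(M,f)$, and in the paper's bookkeeping the Bockstein lands on $f^*k^q$ (giving terms like $\langle L_M\cdot\beta f^*k^q,[M]\rangle$) rather than on a product $V_M Sq^1V_M\cdot\alpha$, so you should double-check that the $\ZZ/2$-torsion pieces really pair up as claimed before declaring the cancellation.
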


\begin{proof}
The proof is like the previous proposition. We only focus on the second one with the assumption that the first equation holds.

Let $M$ and $N$ be $\ZZ/2^q$ manifolds with maps $f:M\rightarrow \LL^q$ and $g:N\rightarrow\LL^s_t$ and let $\mathcal{C}_f$ and $\mathcal{C}_g$ be the associated presheaves. Again we also have the presheaf $\mathcal{C}_{f\otimes g}$.

On one hand,
\[
\sigma^q_0(M\otimes N,m^{s,q}_t\circ (f\otimes g)) = \langle L_{M\otimes N}\cdot (f\otimes g)^*(m^{q,s}_t)^*k^q,M\otimes N\rangle
\]
On the other hand,
\begin{eqnarray*}
& &\sigma^q_0(\mathcal{C}_f(M)\otimes\mathcal{C}_g(N)) -\langle\beta(V_{M\otimes N}Sq^1V_{M\otimes N}\cdot (f\otimes g)^*(m^{q,s}_t)^*k^q),[M\otimes N]\rangle\\
 & = &\sigma^q_0(\mathcal{C}_f(M))\cdot \sigma^s_0(\mathcal{C}_g(N)) + j_q(\sigma^q_2(\delta\mathcal{C}_f(M))\cdot \sigma^s_1(\mathcal{C}_g(N)) +\sigma^q_2(\mathcal{C}_f(M))\cdot \sigma^s_1(\delta\mathcal{C}_g(N)))\\
 & & - \langle \beta(V_MSq^1V_M\cdot f^*k^q),[M]\rangle\cdot \langle L_N\cdot g^*l^s_t,[N]\rangle  - \langle L_M\cdot\beta f^*k^q,[M]\rangle \cdot \langle V_NSq^1V_N\cdot g^*l^s_t,[N]\rangle \\
& & - \langle L_M\cdot f^*k^q,[M]\rangle\cdot\langle \beta(V_NSq^1V_N)\cdot g^*l^s_t,[N]\rangle
\end{eqnarray*}
where $j_q:\ZZ/2\rightarrow \ZZ/2^q$.

Furthermore,
\[
\sigma^q_0(\mathcal{C}_f(M))\cdot \sigma^s_0(\mathcal{C}_g(N)) 
=\langle L_M\cdot f^*l^q+  \beta(V_MSq^1V_M\cdot f^*k^q),[M]\rangle\cdot\langle L_N\cdot g^*l^s_t,[N]\rangle
\]
\[
j_q(\sigma^q_2(\delta\mathcal{C}_f(M))\cdot \sigma^s_1(\mathcal{C}_g(N)) )
=\langle  L_M\cdot \beta f^*k^q,[M]\rangle \cdot \langle  L_N\cdot g^*r^s_t +  V_NSq^1V_N\cdot g^*l^s_t,[N]\rangle 
\]
\[
\sigma^q_2(\mathcal{C}_f(M))\cdot \sigma^s_1(\delta\mathcal{C}_g(N)) 
=\langle  L_M\cdot f^*k^q,[M]\rangle \cdot \langle  L_N\cdot \beta g^*r^s_t +  \beta (V_NSq^1V_N)\cdot g^*l^s_t,[N]\rangle 
\]

Carefully comparing all the terms, then the second equation holds.
\end{proof}

There is a product structure on $\LL^q$ defined by the tensor product of chains
\[
m^q:\LL^q\times\LL^q\xrightarrow{1\times i} \LL^q\times\LL^s_0\xrightarrow{m^{s,q}_0} \LL^q
\]

Then

\begin{lemma}
(1) $(m^q)^* k^q=0$

(2) $(m^q)^* l^q=8\cdot l^q\times l^q$
\end{lemma}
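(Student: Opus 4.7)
The idea is to exploit the given factorization $m^q = m^{q,s}_0 \circ (1 \times i)$ together with functoriality of pullback. Both claims will then follow mechanically by substituting the already-established relations $i^* l^s_0 = 8 l^q$, $i^* r^s_0 = 0$ (from the proposition on relations between $\LL^q$, $\LL^s$, $\LL^n$ classes) into the coproduct formulas $(m^{q,s}_0)^* k^q = k^q \times l^s_0$ and $(m^{q,s}_0)^* l^q = l^q \times l^s_0 + \beta(k^q \times r^s_0)$ proved in the previous proposition.

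For (1), I would compute
\[
(m^q)^* k^q = (1 \times i)^*(m^{q,s}_0)^* k^q = (1 \times i)^*(k^q \times l^s_0) = k^q \times (i^* l^s_0) = k^q \times 8 l^q.
\]
Since $k^q \in H^{4*+2}(\LL^q;\ZZ/2)$, the external product $k^q \times 8 l^q$ lands in cohomology with coefficients $\ZZ/2 \otimes \ZZ_{(2)} = \ZZ/2$, so the factor $8$ acts as zero and the class vanishes.

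For (2), I would compute similarly
\[
(m^q)^* l^q = (1 \times i)^*\bigl(l^q \times l^s_0 + \beta(k^q \times r^s_0)\bigr) = l^q \times (i^* l^s_0) + \beta\bigl(k^q \times (i^* r^s_0)\bigr) = 8 \cdot l^q \times l^q + 0,
\]
where the first term survives because $l^q \times l^s_0$ has $\ZZ_{(2)}$ coefficients (so $8$ is not a zero divisor), and the second term vanishes because $i^* r^s_0 = 0$.

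There is no serious obstacle here — the argument is purely formal once the two input formulas are in hand. The only point requiring a little care is the bookkeeping of coefficients: the asymmetry that lets the $8$ annihilate in part (1) but persist in part (2) is exactly the difference between the mod $2$ coefficients of $k^q$ and the $2$-local integer coefficients of $l^q$.
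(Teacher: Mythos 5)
Your proof is correct and follows precisely the route the paper intends: unwinding $m^q = m^{q,s}_0 \circ (1\times i)$ and substituting the earlier formulas for $(m^{q,s}_0)^*$ and $i^*$. The paper states the lemma without proof because this is exactly the obvious formal computation; your accompanying remarks on coefficient bookkeeping (why the factor $8$ kills the $\ZZ/2$ class $k^q\times l^q$ but survives in the $\ZZ_{(2)}$ class $l^q\times l^q$) are accurate and complete the argument.
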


The infinite loop structure on $\LL^q$ induces an addition $a^q:\LL^q\times \LL^q\rightarrow \LL^q$ and an inversion $\tau:\LL^q\rightarrow \LL^q$. Like the symmetric case, we have 

\begin{lemma}
\[
a^*k^q=1\times k^q+k^q\times 1
\]
\[
a^*l^q=1\times l^q+l^q\times 1
\]
\[
\tau^* k^q= k^q
\]
\[
\tau^* l^q=-l^q
\]
\end{lemma}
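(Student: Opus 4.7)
The plan is to adapt the method of the immediately preceding coproduct computations (for $m^s_{k,l}$ and $m^{q,s}_k$) to the additive infinite-loop-space structure $a\colon\LL^q\times\LL^q\to\LL^q$. The main simplification is that $a$ is a genuine H-space sum on the ordinary cartesian product, so no modified tensor product of $\ZZ/2^k$ chains enters. Given $f\colon M\to \LL^q$ and $g\colon N\to\LL^q$ with $M,N$ either $\ZZ$- or $\ZZ/2^k$-manifolds, the composite $a\circ(f\times g)\colon M\times N\to\LL^q$ realizes a pointwise direct sum of presheaves — the pullbacks of $\mathcal{C}_f$ and $\mathcal{C}_g$ along the two projections — and its global sections assemble as
\[
\mathcal{C}_{a\circ(f\times g)}(M\times N)\ \simeq\ \bigl(\mathcal{C}_f(M)\otimes C_*(N)\bigr)\ \oplus\ \bigl(C_*(M)\otimes \mathcal{C}_g(N)\bigr),
\]
exactly the identification already used in the symmetric computation that gave $a_{m,k}^{\ast}l^s_{m+k}=l^s_m\times 1+1\times l^s_k$. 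I will feed this into the $\sigma^q_2$ and $\sigma^q_0$ invariants and read the coproducts off via the uniqueness characterizations of Brumfiel--Morgan (mod $2$) and Morgan--Sullivan ($\ZZ_{(2)}$).

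For $a^{\ast}k^q$ take $M,N$ to be $\ZZ/2$ manifolds and apply Lemma~\ref{product-quadratic-Kervaire} to each summand, using that $\sigma^s_0$ of a manifold chain is the mod $2$ signature, namely $\chi_2$. This gives
\[
\sigma^q_2\bigl(M\times N,\,a\circ(f\times g)\bigr)=\sigma^q_2(M,f)\cdot\chi_2(N)+\chi_2(M)\cdot\sigma^q_2(N,g),
\]
which by the Brumfiel--Morgan characterization of mod $2$ classes forces $a^{\ast}k^q=1\times k^q+k^q\times 1$. For $a^{\ast}l^q$ one runs the same argument with the full product formula for $\sigma^q_0$ of a quadratic--symmetric tensor product, and unfolds $\sigma^q_0(M,f)=\sigma^q_0(\mathcal{C}_f(M))-j_k\langle\beta(V_MSq^1V_M\cdot f^{\ast}k^q),[M]\rangle$ on both sides. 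The cross terms $j_k\bigl(\sigma^q_2(\delta\mathcal{C}_f)\cdot\sigma^s_1(N)+\sigma^q_2(\mathcal{C}_f)\cdot\sigma^s_1(\delta N)\bigr)$ and their mirror in $C_*(M)\otimes\mathcal{C}_g(N)$ are designed to be absorbed by the correction contributed by $a^{\ast}k^q$ through Proposition~\ref{productcharacteristic} (the primitivity $V^2_{M\times N}=V^2_M\otimes V^2_N$ and the coproduct of $V Sq^1V$). Verifying that this mod-$2^k$ bookkeeping cancels cleanly is the main obstacle; the pattern is identical to the $(m^{q,s}_k)^{\ast}l^q$ calculation carried out just above, only now with purely ``primitive'' contributions on the right, which is what I aim to confirm.

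The $\tau$-statements are then formal consequences of primitivity. The inverse axiom for the H-space structure on $\LL^q$ gives a null-homotopy $a\circ(\mathrm{id},\tau)\simeq\ast$; since $\LL^q$ is connected and $l^q,k^q$ live in positive degree, pulling such a class $x$ with $a^{\ast}x=1\times x+x\times 1$ back through $(\mathrm{id},\tau)$ yields $\tau^{\ast}x+x=\bigl(a\circ(\mathrm{id},\tau)\bigr)^{\ast}x=0$, whence $\tau^{\ast}x=-x$. Applied to $l^q$ this gives $\tau^{\ast}l^q=-l^q$; applied to $k^q$ it gives $\tau^{\ast}k^q=-k^q=k^q$ since $k^q$ is a mod $2$ class.
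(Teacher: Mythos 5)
Your proof is correct and follows the same approach the paper sketches for the symmetric case: decompose the assembled presheaf over $M\otimes N$ as a direct sum, apply the chain-level product formulae for $\sigma^q_2$ and $\sigma^q_0$ to each summand, and read off the coproducts via the Brumfiel--Morgan and Morgan--Sullivan uniqueness theorems, with $\tau^*$ following formally from the H-space inverse axiom. The paper itself gives no explicit proof here (only ``Like the symmetric case''), and your filling-in is consistent with that, including the correct absorption of the $\beta(V Sq^1 V\cdot k^q)$ correction term mirroring the $(m^{q,s}_k)^*l^q$ computation (though note that modified tensor products of $\ZZ/2^k$ chains do still enter via the two summands $\mathcal{C}_f(M)\otimes C_*(N)$ and $C_*(M)\otimes\mathcal{C}_g(N)$, so the opening remark is slightly overstated, but this does not affect the argument).
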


However, historically the product structure $\widetilde{m}^q:\LL^q\times\LL^q\rightarrow \LL^q$ people used is a different one, namely, the Whitney sum of trivializations of bundles on $G/TOP\simeq \LL^q$, or equivalently, the product of surgery problems $M\times N\rightarrow K\times L$.

The the product structures $\widetilde{m}^q$ and $m^q$ are not the same, but we can reproduce $\widetilde{m}^q$ by the module structure $m^{q,s}$.

First, define the map
\[
i_1:\LL^q\simeq \LL^q\times \pt\xrightarrow{i} \LL^s_0\times \pt\rightarrow \LL^s_0\times\LL^s_1\xrightarrow{a} \LL^s_1
\]
Then $\widetilde{m}^q$ is indeed a composition of maps
\begin{eqnarray*}
\LL^q\times\LL^q\xrightarrow{(i_1\times 1)\times (1\times i_1)\times (1\times 1)} (\LL^s_1\times\LL^q)\times(\LL^q\times\LL^s_1)\times(\LL^q\times\LL^q) \\
\xrightarrow{m^{s,q}\times m^{q,s}\times (\tau\circ m^q)} \LL^q\times\LL^q\times \LL^q\xrightarrow{a} \LL^q
\end{eqnarray*}

Then we reprove the coproducts of the characteristic classes of $\LL^q$ in \cite{Sullivan&Rourke}\cite{Sullivan&Morgan}.

\begin{corollary}[\cite{Sullivan&Rourke}*{p.~407};\cite{Sullivan&Morgan}*{p.~539 and Theorem 8.8}]

(1)$(\widetilde{m}^q)^*k^q=1\times k^q+k^q\times 1$

(2)$(\widetilde{m}^q)^* l^q=1\times l^q+l^q\times 1+8\cdot l^q\times l^q$
\end{corollary}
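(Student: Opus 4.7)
The plan is to pull $k^q$ and $l^q$ back through the explicit three-factor decomposition of $\widetilde{m}^q$ given above, using the coproduct formulas already established in this section, and to assemble the three contributions via the additivity of $a^*$. The key preliminary calculation is the pullback through $i_1 = a_1 \circ i$: combining $a_1^* l^s_1 = 1 + l^s_0$ and $a_1^* r^s_1 = r^s_0$ from the symmetric subsection with $i^* l^s_0 = 8 l^q$ and $i^* r^s_0 = 0$ yields
\[
i_1^* l^s_1 = 1 + 8\, l^q, \qquad i_1^* r^s_1 = 0.
\]

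Next I would compute the pullback of $k^q$ and $l^q$ through each of the three constituent maps separately. For the first factor $m^{s,q} \circ (i_1 \times 1)$, the formulas $(m^{q,s})^* k^q = k^q \times l^s_1$ and $(m^{q,s})^* l^q = l^q \times l^s_1 + \beta(k^q \times r^s_1)$ (and their versions with coordinates swapped for $m^{s,q}$), followed by pullback through $i_1 \times 1$, give $(1 + 8 l^q) \times k^q$ and $(1 + 8 l^q) \times l^q$ respectively; the $\beta$-term dies because $i_1^* r^s_1 = 0$. The second factor $m^{q,s} \circ (1 \times i_1)$ contributes, by the same token, $k^q \times (1 + 8 l^q)$ and $l^q \times (1 + 8 l^q)$. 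The third factor $\tau \circ m^q$, using $(m^q)^* k^q = 0$ and $(m^q)^* l^q = 8\, l^q \times l^q$ together with $\tau^* k^q = k^q$ and $\tau^* l^q = -l^q$, contributes $0$ and $-8\, l^q \times l^q$.

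Finally I would sum via the triple addition $a$, which on the primitive classes $k^q$ and $l^q$ acts by $a^* y = y \times 1 \times 1 + 1 \times y \times 1 + 1 \times 1 \times y$. For $k^q$ this produces
\[
1 \times k^q + k^q \times 1 + 8(l^q \times k^q + k^q \times l^q),
\]
and the $8$-multiples vanish because $k^q$ is a $\ZZ/2$-class, recovering equation (1). For $l^q$ the three $l^q \times l^q$ contributions combine with coefficients $8 + 8 - 8 = 8$, yielding $1 \times l^q + l^q \times 1 + 8\, l^q \times l^q$, which is equation (2). No step is genuinely hard once the decomposition of $\widetilde{m}^q$ is in hand; the main piece of bookkeeping worth flagging is that the $\beta(k^q \times r^s_1)$ corrections coming from the two module-structure factors both vanish thanks to $i_1^* r^s_1 = 0$, so no uncontrolled $\beta$-torsion survives, and the cancellation $8 + 8 - 8 = 8$ is precisely what forces the Whitney-sum coproduct on $\LL^q \simeq G/TOP$ to differ from the tensor-product coproduct by the $8\, l^q \times l^q$ term.
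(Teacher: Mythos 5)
Your proposal is correct and takes exactly the approach the paper intends: the paper introduces the decomposition of $\widetilde{m}^q$ as a composition through $i_1$, $m^{s,q}$, $m^{q,s}$, $\tau\circ m^q$ and the triple addition precisely so that the corollary follows from the already-established coproduct formulas, and you have carried out the bookkeeping (including the vanishing of the $\beta(k^q\times r^s_1)$ corrections via $i_1^*r^s_1=0$ and the $8+8-8$ cancellation) faithfully.
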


Lastly, let us calculate coproducts of the characteristic classes of $\LL^n$. Before we prove the coproduct formulae, we need two lemmas.

\begin{lemma}
Let $M$ be a $\ZZ/2$ manifold with a map $f:M\rightarrow \LL^n_t$ and let $(\mathcal{D}_f,\mathcal{E}_f)$ be the associated presheaf of symmetric-quadratic pairs. Then
\[
\sigma^n_{0,2}(M,f)=\langle V^2_M\cdot f^*\rho_2l^n_t,[M]\rangle
\]
where $\rho_2:\ZZ/8\rightarrow \ZZ/2$.
\end{lemma}

\begin{proof}
Recall the definition of $\sigma^n_{0,2}$. Four copies $4M$ is a $\ZZ/8$ manifold and 
\[
\sigma^n_{0,2}(\mathcal{D}_f(M),\mathcal{E}_f(M)) =\sigma^n_0(4(\mathcal{D}_f(M),\mathcal{E}_f(M)))
\]
Also,
\[
\sigma^n_0(4M,4f)=\sigma^n_0(4(\mathcal{D}(M),\mathcal{E}(M)))-j_8\langle V_{4M}Sq^1V_{4M}\cdot f^*k^n_t,4[M]\rangle\in 4\ZZ/8\simeq \ZZ/2
\]
Obviously, 
\[
\langle V_{4M}Sq^1V_{4M}\cdot f^*k^n_t,4[M]\rangle= 4\cdot \langle V_{M}Sq^1V_{M}\cdot f^*k^n_t,[M]\rangle=0\in\ZZ/2
\] 
Hence, the equation holds.
\end{proof}

For the same reason, we also have
\begin{lemma}
Let $M$ be a $\ZZ/4$ manifold with a map $f:M\rightarrow \LL^n_t$ and let $(\mathcal{D}_f,\mathcal{E}_f)$ be the associated presheaf of symmetric-quadratic pairs. Then
\[
\sigma^n_{0,4}(M,f)=\langle \rho_4 L_M\cdot f^*\rho_4l^n_t,[M]\rangle
\]
where $\rho_4:\ZZ/8\rightarrow \ZZ/4$.
\end{lemma}

\begin{proposition}
$(m^n_{t,t'})^*k^n_{tt'}=k^n_t\times \rho_2 l^n_{t'}+\rho_2 l^n_t\times k^n_{t'}$, where $\rho_2:\ZZ/8\rightarrow\ZZ/2$
\end{proposition}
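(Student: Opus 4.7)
The plan is to follow the scheme of the preceding coproduct computations in this subsection. Let $M$ and $N$ be $\ZZ/2$ manifolds with maps $f:M\to\LL^n_k$ and $g:N\to\LL^n_l$, and denote by $(\mathcal{D}_f,\mathcal{E}_f)$, $(\mathcal{D}_g,\mathcal{E}_g)$ the associated presheaves of symmetric-quadratic pairs. As established in the subsection on modified tensor products, the assembly of the presheaf associated to $m^n_{k,l}\circ(f\otimes g)$ over $M\otimes N$ coincides with the modified tensor product $(\mathcal{D}_f(M),\mathcal{E}_f(M))\otimes(\mathcal{D}_g(N),\mathcal{E}_g(N))$. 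Then the chain-level product formula
\[
\sigma^n_3((D,E)\otimes(D',E'))=\sigma^n_{0,2}(D,E)\cdot\sigma^n_3(D',E')+\sigma^n_3(D,E)\cdot\sigma^n_{0,2}(D',E')
\]
gives
\[
\sigma^n_3(M\otimes N,\,m^n_{k,l}\circ(f\otimes g))=\sigma^n_{0,2}(M,f)\,\sigma^n_3(N,g)+\sigma^n_3(M,f)\,\sigma^n_{0,2}(N,g).
\]

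Next I substitute the defining formula $\sigma^n_3(M,f)=\langle V^2_M\cdot f^*k^n_k,[M]\rangle$ together with the immediately preceding lemma $\sigma^n_{0,2}(M,f)=\langle V^2_M\cdot f^*\rho_2 l^n_k,[M]\rangle$, and use Proposition \ref{productcharacteristic} ($V^2_{M\otimes N}=V^2_M\otimes V^2_N$). The right-hand side collapses to
\[
\langle V^2_{M\otimes N}\cdot(f\otimes g)^*(k^n_k\times\rho_2 l^n_l+\rho_2 l^n_k\times k^n_l),\,[M\otimes N]\rangle,
\]
while by construction the left-hand side is $\langle V^2_{M\otimes N}\cdot(f\otimes g)^*(m^n_{k,l})^*k^n_{kl},\,[M\otimes N]\rangle$. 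Invoking the Brumfiel–Morgan $\ZZ/2$ characterization of graded cohomology classes, combined with the observation that pullbacks along maps of factored form $(f\otimes g)$ suffice to detect classes on $\LL^n_k\times\LL^n_l$, the desired equality of cohomology classes follows. Note that dimensions match up as one expects: the summand $k^n_k\times\rho_2 l^n_l$ picks up contributions with $\dim M\equiv 3,\ \dim N\equiv 0\pmod 4$ and vice versa, which matches the two terms of the chain-level formula.

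The only genuinely subtle point, and hence the main (mild) obstacle, is the Künneth reduction in the last step: that testing on factored singular $\ZZ/2$ manifolds determines a class on the product $\LL^n_k\times\LL^n_l$. This is the standard device used tacitly in all the preceding coproduct propositions of this subsection and is justified by the Künneth splitting of the $2$-local cohomology of the product of split $L$-spectra together with Lemma \ref{Kernellemma}. Everything else amounts to transcribing the chain-level product formula and plugging in the already-established cohomological expressions for $\sigma^n_3$ and $\sigma^n_{0,2}$.
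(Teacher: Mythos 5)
Your proposal is correct and follows essentially the same route as the paper: assemble the product presheaf, identify it with the modified tensor product of the assemblies, apply the chain-level product formula for $\sigma^n_3$, substitute the Brumfiel--Morgan-style cohomological expressions for $\sigma^n_3$ and $\sigma^n_{0,2}$ using $V^2_{M\otimes N}=V^2_M\otimes V^2_N$, and conclude by the uniqueness of graded $\ZZ/2$ classes. Your remark about the K\"unneth reduction being the tacit step is a fair observation, but it is the same reduction the paper's other coproduct computations rely on, so your argument does not diverge from the paper's.
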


\begin{proof}
Let $M$ and $N$ be $\ZZ/2$ manifolds with maps $f:M\rightarrow \LL^n_t$ and $g:N\rightarrow\LL^n_{t'}$. Let $(\mathcal{D}_f,\mathcal{E}_f)$ and $(\mathcal{D}_{g},\mathcal{E}_g)$ be the associated presheaves. There is also a presheaf $(\mathcal{D}_{f\otimes g},\mathcal{E}_{f\otimes g})$ associated to $m^n_{t,t'}\circ (f\otimes g)$.

Then 
\[
(\mathcal{D}_{f\otimes g}(M),\mathcal{E}_{f\otimes g}(N))=(\mathcal{D}_f(M),\mathcal{E}_f(M))\otimes (\mathcal{D}_{g}(N),\mathcal{E}_g(N))
\]

On one hand, 
\[
\sigma^n_3(M\otimes N,m^n_{t,t'}\circ (f\otimes g))=\langle V^2_{M\otimes N}\cdot (f\otimes g)^*(m^n_{t,t'})^*k^n_{tt'},[M\otimes N]\rangle 
\]

On the other hand,
\begin{eqnarray*}
\sigma^n_3(M\otimes N,m^n_{t,t'}\circ (f\otimes g))
& = & \langle V^2_M \cdot f^\rho_2l^n_t,[M]\rangle \cdot \langle V^2_N \cdot g^*k^n_{t'},[N]\rangle \\
& & +\langle V^2_M \cdot f^*k^n_t,[M]\rangle \cdot\langle V^2_N \cdot g^*\rho_2l^n_{t'},[N]\rangle    
\end{eqnarray*}
\end{proof}

To show the coproduct of the $\ZZ/8$ class $l^n$, we need the following lemma in \cite{Brumfiel&Morgan}.

\begin{lemma}[\cite{Brumfiel&Morgan}*{Lemma 9.3}]
$H_*(X\times Y,\ZZ/2^k)$ is generated by the Hurewicz image of the followings.

(1) $j_{2^k}(f_*[M]\times g_*[N])$, where $M$ and $N$ are $\ZZ/2^l$ manifolds with $l\leq k$, $f:M\rightarrow X$, $g:N\rightarrow Y$ and $j_{2^k}:\ZZ/2^l\rightarrow \ZZ/2^k$.

(2) $\rho_{2^k}\delta (f_*[P]\times g_*[Q])$, where $P$ and $Q$ are $\ZZ/2^l$ manifolds with $l<k$, $f:P\rightarrow X$, $g:Q\rightarrow Y$, $\rho_{2^k}:\ZZ\rightarrow \ZZ/2^k$.
\end{lemma}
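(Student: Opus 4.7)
The plan is to deduce the lemma from the algebraic Kunneth formula together with Lemma \ref{Kernellemma}. Starting from the Kunneth short exact sequence
\[
0 \to \bigoplus_{i+j=n} H_i(X;\ZZ)\otimes H_j(Y;\ZZ/2^k) \to H_n(X\times Y;\ZZ/2^k) \to \bigoplus_{i+j=n-1}\Tor(H_i(X;\ZZ), H_j(Y;\ZZ/2^k)) \to 0,
\]
obtained from the chain model $C_*(X;\ZZ)\otimes C_*(Y;\ZZ/2^k)$, it is enough to realize classes from each outer summand. For a tensor generator $\alpha\otimes \beta$ with $\alpha\in H_i(X;\ZZ)$ and $\beta\in H_j(Y;\ZZ/2^k)$, its image in $H_n(X\times Y;\ZZ/2^k)$ is the cross product $\rho_{2^k}(\alpha)\times \beta$. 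Applying Lemma \ref{Kernellemma} at $q=k$ one writes $\rho_{2^k}(\alpha)=f_*[M]$ and $\beta=g_*[N]$ for $\ZZ/2^k$-manifolds $M,N$, so the class equals $j_{2^k}(f_*[M]\times g_*[N])$ with $l=k$, which is of type (1).

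For a Tor generator coming from $\alpha\in H_i(X;\ZZ)$ with $2^l\alpha=0$ and $\beta\in H_j(Y;\ZZ/2^k)$ with $2^l\beta=0$, for some minimal $l<k$, the plan is to lift both factors: choose $\widetilde\alpha\in H_{i+1}(X;\ZZ/2^l)$ with integer Bockstein $\delta\widetilde\alpha=\alpha$, and choose a lift $\widetilde\beta\in H_j(Y;\ZZ/2^l)$ of $\beta$ via a compatible choice in the Bockstein long exact sequence. Realize $\widetilde\alpha=f_*[P]$ and $\widetilde\beta=g_*[Q]$ as singular $\ZZ/2^l$-manifolds through Lemma \ref{Kernellemma} with $q=l$. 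The standard chain-level description of the Kunneth Tor term, combined with the compatibility of the Bockstein with the Eilenberg--Zilber cross product, then identifies the original Tor class with $\rho_{2^k}\delta(f_*[P]\times g_*[Q])\in H_n(X\times Y;\ZZ/2^k)$, of type (2). The middle term of the short exact sequence is reached by taking sums and preimages, so an arbitrary class in $H_n(X\times Y;\ZZ/2^k)$ is a sum of classes of types (1) and (2).

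The main obstacle will be the verification that every Tor generator admits simultaneous lifts $\widetilde\alpha,\widetilde\beta$ to $\ZZ/2^l$ coefficients and that the geometric class $\rho_{2^k}\delta(f_*[P]\times g_*[Q])$ agrees with the algebraic Tor generator under the Kunneth identification. This amounts to a careful diagram chase through the Bockstein long exact sequences of $X$, $Y$ and $X\times Y$, together with the standard model of $\Tor(\ZZ/2^l,-)$ as the $2^l$-torsion subgroup; the chase is routine but requires attention to the interplay between coefficient changes, Bocksteins, and the Eilenberg--Zilber map.
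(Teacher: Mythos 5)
The paper cites this lemma from Brumfiel--Morgan without reproducing a proof, so there is nothing internal to compare against; I will assess your argument on its own merits.

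Your treatment of the tensor summand is reasonable: $\alpha\otimes\beta$ maps to $\rho_{2^k}(\alpha)\times\beta$, and both factors are realized by $\ZZ/2^k$-manifolds via the surjectivity in Lemma~\ref{Kernellemma}, giving a class of type~(1) with $l=k$. The Tor part, however, has a genuine degree error. In the K\"unneth sequence a summand $\Tor(H_i(X;\ZZ),H_j(Y;\ZZ/2^k))$ sits inside $H_{i+j+1}(X\times Y;\ZZ/2^k)$, i.e.\ \emph{one degree above} $i+j$. With your choices $\widetilde\alpha\in H_{i+1}(X;\ZZ/2^l)$ and $\widetilde\beta\in H_{j}(Y;\ZZ/2^l)$, the class $\rho_{2^k}\delta(f_*[P]\times g_*[Q])$ lives in degree $(i+1)+j-1=i+j$, since the bordism Bockstein $\delta$ lowers degree by one and $\rho_{2^k}$ preserves it. You land in $H_{i+j}$, not $H_{i+j+1}$: an off-by-one mismatch. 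The chain-level description of the K\"unneth $\Tor$ map forces \emph{both} inputs to be raised one degree --- choose $z$ representing $\alpha$ with $2^lz=\partial w$ and $y$ representing $\beta$ with $2^ly=\partial v$; the $\Tor$ class is represented by $w\otimes y\pm z\otimes v$ in degree $i+j+1$. Thus the correct analogue would need $\widetilde\beta$ in degree $j+1$, coming from the ``$v$'' above, not a coefficient-level lift of $\beta$ in degree $j$.

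Separately, the lift $\widetilde\beta\in H_j(Y;\ZZ/2^l)$ with $j_*\widetilde\beta=\beta$ that you invoke need not exist. For $Y=\RR P^2$, $l=1$, $k=2$: the map $H_1(\RR P^2;\ZZ/2)\to H_1(\RR P^2;\ZZ/4)$ induced by $\ZZ/2\hookrightarrow\ZZ/4$ (multiplication by~2) is the zero map, yet the $2$-torsion of $H_1(\RR P^2;\ZZ/4)\cong\ZZ/2$ is nonzero. So $2^l$-torsion classes are not in general in the image of $j_*$, and the ``compatible choice in the Bockstein long exact sequence'' you gesture at is not available. Finally, you restrict silently to the case where the minimal torsion order $2^l$ of $\alpha$ satisfies $l<k$; if $H_i(X;\ZZ)$ has a $\ZZ/2^a$ summand with $a\geq k$, the corresponding $\Tor$ contribution consists of full-order elements of $H_j(Y;\ZZ/2^k)$, and it needs a separate argument (e.g.\ absorbing such contributions into a carefully chosen unnatural splitting, or into type (1) classes) to reconcile with the requirement $l<k$ in case~(2). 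The overall K\"unneth-plus-bordism strategy is sensible, but the degree bookkeeping, the construction of the degree-$(j+1)$ lift, and the $l\geq k$ case all need to be repaired before this is a proof.
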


\begin{lemma}
Let $M$ and $N$ be two $\ZZ/8$ manifolds with maps $f:M\rightarrow \LL^n_t$ and $g:N\rightarrow \LL^n_{t'}$.
Then
\begin{multline*}
\langle L_{M\otimes N}\cdot (f\otimes g)^*(m^n_{t,t'})^*l^n_{tt'},[M\otimes N]\rangle= \\
\langle L_{M\otimes N}\cdot (f^*l^n_t\otimes g^*l^n_{t'}),[M\otimes N]\rangle +j_8\langle V^2_{M\otimes N}\cdot (f^*r^n_t\otimes g^*k^n_{t'}+f^*k^n_t\otimes g^*r^n_{t'})),[M\otimes N]\rangle  
\end{multline*}
where $j_8:\ZZ/2\rightarrow \ZZ/8$.
\end{lemma}

\begin{proof}
Let $(\mathcal{D}_f,\mathcal{E}_f)$ and $(\mathcal{D}_g,\mathcal{E}_g)$ be the associated presheaves. Let $(\mathcal{D}_{f\otimes g},\mathcal{E}_{f\otimes g})$ the presheaf associated to $m^n_{t,t'}\circ (f\otimes g)$.

On one hand,
\begin{eqnarray*}
& & \sigma^n_0(\mathcal{D}_{f\otimes g}(M\otimes N),\mathcal{E}_{f\otimes g}(M\otimes N)) \\
& = & \langle L_{M\otimes N}\cdot (f\otimes g)^*(m^n_{t,t'})^*l^n_{tt'},[M\otimes N]\rangle \\
& & + j_8\langle (V_{M}Sq^1V_{M}\otimes L_N+L_M\otimes V_N Sq^1V_N)\cdot (f^*k^n_t\otimes g^*l^n_{t'}+f^*l^n_t\otimes g^*k^n_{t'}),[M\otimes N]\rangle    
\end{eqnarray*}

On the other hand, applying the chain-level product formula to $\sigma^n_0(\mathcal{D}_{f\otimes g}(M\otimes N),\mathcal{E}_{f\otimes g}(M\otimes N))$ and noticing that 
\[
j_8\langle V_{M} Sq^1V_M f^*k^n_{t},[M]\rangle\cdot j_8\langle V_{N}Sq^1V_N g^*k^n_{t'},[N]\rangle=0\in \ZZ/8
\]
(since $4\times 4=0\in \ZZ/8$), we can check that the equation holds.
\end{proof}

We can prove the $\ZZ/2$ and $\ZZ/4$ cases by reducing to the $\ZZ/8$ case. Notice that the $j_8$-term vanishes in these two cases.

\begin{lemma}
Let $M$ and $N$ be two $\ZZ/2$ manifolds of dimension $m$ and $n$ with maps $f:M\rightarrow \LL^n_t$ and $g:N\rightarrow \LL^n_{t'}$. Then
\[
\langle L_{M\otimes N}\cdot (f\otimes g)^*(m^n_{t,t'})^*\rho_2 l^n_{tt'},[M\otimes N]\rangle=\langle L_{M\otimes N}\cdot (f^*\rho_2 l^n_t\otimes g^*\rho_2 l^n_{t'}),[M\otimes N]\rangle
\]
where $\rho_2:\ZZ/8\rightarrow \ZZ/2$.
\end{lemma}

\begin{lemma}
Let $M$ and $N$ be two $\ZZ/4$ manifolds of dimension $m$ and $n$ with maps $f:M\rightarrow \LL^n_k$ and $g:N\rightarrow \LL^n_l$. Then
\[
\langle L_{M\otimes N}\cdot (f\otimes g)^*(m^n_{t,t'})^*\rho_4 l^n_{tt'},[M\otimes N]\rangle=\langle L_{M\otimes N}\cdot (f^*\rho_4 l^n_t\otimes g^*\rho_4 l^n_{t'}),[M\otimes N]\rangle
\]
where $\rho_4:\ZZ/8\rightarrow \ZZ/4$.
\end{lemma}

For the Bockstein case, we have

\begin{lemma}
Let $M$ and $N$ be two $\ZZ/8$ manifolds with maps $f:M\rightarrow \LL^n_t$ and $g:N\rightarrow \LL^n_{t'}$.
Then
\begin{multline*}
\langle L_{M\otimes N}\cdot (f\otimes g)^*(m^n_{t,t'})^*l^n_{tt'},[\delta(M\otimes N)]\rangle \\
= \langle L_{M\otimes N}\cdot f^*l^n_t\otimes g^*l^n_{t'},[\delta (M\otimes N)] \rangle +j_8\langle V^2_{M\otimes N}\cdot (f^*r^n_t\otimes g^*k^n_{t'}+f^*k^n_t\otimes g^*r^n_{t'}),[\delta(M\otimes N)]\rangle    
\end{multline*}
where $j_8:\ZZ/2\rightarrow \ZZ/8$.
\end{lemma}

\begin{proof}
As before, let $(\mathcal{D}_f,\mathcal{E}_f)$ and $(\mathcal{D}_g,\mathcal{E}_g)$ be the associated presheaves. Let $(\mathcal{D}_{f\otimes g},\mathcal{E}_{f\otimes g})$ the presheaf associated to $m^n_{t,t'}\circ (f\otimes g)$.

On one hand,
\begin{eqnarray*}
 & & \sigma^n_0(\mathcal{D}_{\delta(f\otimes g)}(\delta(M\otimes N)),\mathcal{E}_{\delta(f\otimes g)}(\delta(M\otimes N))) \\
& = & \langle L_{M\otimes N}\cdot (f\otimes g)^*(m^n_{t,t'})^*l^n_{tt'},[\delta (M\otimes N)]\rangle \\
& & + j_8\langle (V_{M}Sq^1V_{M}\otimes V^2_N+V^2_M\otimes V_N Sq^1V_N)\cdot (f^*k^n_t\otimes g^*l^n_{t'}+f^*l^n_t\otimes g^*k^n_{t'}),[\delta (M\otimes N)]\rangle
\end{eqnarray*}

Notice that
\[
[\delta (M\otimes N)]= [\delta M]\otimes [N]+[M] \otimes [\delta N] \in H_{m+n-1}(M\otimes N;\ZZ/8)   
\]

On the other hand, applying the chain-level product formula to $\sigma^n_0(\mathcal{D}_{\delta(f\otimes g)}(\delta(M\otimes N)),\mathcal{E}_{\delta(f\otimes g)}(\delta(M\otimes N)))$, we can check that the equation holds.
\end{proof}

The $\ZZ/2$ and $\ZZ/4$ Bockstein cases reduce to the $\ZZ/8$ Bockstein case as well. It follows that

\begin{proposition}
\[
(m^n_{t,t'})^*l^n_{tt'}=l^n_k\times l^n_{t'}+j_8 (r^n_t\times k^n_{t'}+k^n_t\times r^n_{t'})
\]
where $j_8:\ZZ/2\rightarrow\ZZ/8$.
\end{proposition}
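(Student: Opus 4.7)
The plan is to invoke the Brumfiel-Morgan uniqueness theorem for $\ZZ/8$-valued graded cohomology classes, which states that a class $x_*\in H^{4*}(Z;\ZZ/8)$ is uniquely determined by the homomorphism $\sigma:\Omega^{SO}_*(Z;\ZZ/8)\to \ZZ/8$ defined by $\sigma(M,h)=\langle L_M\cdot h^*(\sum x_{m-4i}),[M]\rangle$, provided $\sigma$ satisfies the three product axioms. Both sides of the claimed identity are classes in $H^{4*}(\LL^n_k\times \LL^n_l;\ZZ/8)$, and the $\sigma$'s they induce automatically satisfy those axioms since they come from cohomology-class evaluations; so it suffices to show the two induced $\sigma$'s coincide.

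By the kernel lemma \ref{Kernellemma}, the kernel of $\Omega^{SO}_*(\LL^n_k\times \LL^n_l;\ZZ/8)\to H_*(\LL^n_k\times \LL^n_l;\ZZ/8)$ is generated by elements on which the axioms force both $\sigma$'s to vanish, so both factor through homology. By the generating lemma for $H_*(X\times Y;\ZZ/2^k)$ recalled just above, $H_*(\LL^n_k\times \LL^n_l;\ZZ/8)$ is generated by Hurewicz images of $f\otimes g\colon M\otimes N\to \LL^n_k\times \LL^n_l$ (for $\ZZ/2^l$-manifolds $M,N$ with $l\leq 3$) together with Bocksteins $\delta(M\otimes N)$ (for $l<3$). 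Hence it suffices to verify equality of the two $\sigma$'s on these product generators.

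This verification is exactly the content of the three lemmas proved immediately before the proposition: the $\ZZ/8$-evaluation lemma handles the case $l=3$; the $\ZZ/2$- and $\ZZ/4$-reduction lemmas handle the cases $l=1,2$ via compatibility with the reductions $\rho_2,\rho_4\colon\ZZ/8\to\ZZ/2,\ZZ/4$; and the Bockstein lemma handles the $\delta(M\otimes N)$ generators. In each case the RHS pairing is expanded using the Morgan-Sullivan product formulas (Proposition \ref{productcharacteristic}) $L_{M\otimes N}=L_M\otimes L_N$ and $V_{M\otimes N}Sq^1V_{M\otimes N}=V_M^2\otimes V_NSq^1V_N+V_MSq^1V_M\otimes V_N^2$, and matches the LHS computed in those lemmas term by term. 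The main obstacle was already surmounted inside those lemmas---in particular, the modulo-$8$ cancellation $j_8\langle V_MSq^1V_M\cdot f^*k^n_k,[M]\rangle\cdot j_8\langle V_NSq^1V_N\cdot g^*k^n_l,[N]\rangle=0$ (from $4\cdot 4=0\in\ZZ/8$) and the careful bookkeeping of the $VSq^1V$ correction terms in the definition of $\sigma^n_0$. Given those lemmas, the present proposition follows immediately by Brumfiel-Morgan uniqueness without further calculation.
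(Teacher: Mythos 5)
Your proposal is correct and follows the paper's approach exactly: the proposition is stated in the paper immediately after the four verification lemmas (the $\ZZ/8$ case, the $\ZZ/2$ and $\ZZ/4$ reduction cases, and the $\ZZ/8$ Bockstein case) with only the words ``At last, it follows that,'' and the logical skeleton you supply---Brumfiel-Morgan uniqueness for $\ZZ/2^k$-valued graded classes, reduction via the kernel and product-generating lemmas to evaluation on $M\otimes N$ and $\delta(M\otimes N)$, then appeal to those four lemmas---is precisely the intended argument.
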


In particular,
\begin{proposition}
\[
(m^n_{t,t'})^*\rho_4 l^n_{tt'}=\rho_4 l^n_t\times \rho_4 l^n_{t'}
\]
\[
(m^n_{t,t'})^*\rho_2 l^n_{tt'}=\rho_2 l^n_t\times \rho_2 l^n_{t'}
\]
where $\rho_4:\ZZ/8\rightarrow\ZZ/4$ and $\rho_2:\ZZ/8\rightarrow\ZZ/2$.
\end{proposition}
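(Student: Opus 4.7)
The plan is to derive both reductions as immediate consequences of the preceding proposition
\[
(m^n_{k,l})^* l^n_{kl} = l^n_k \times l^n_l + j_8\bigl(r^n_k \times k^n_l + k^n_k \times r^n_l\bigr) \in H^*(\LL^n_k\times\LL^n_l;\ZZ/8),
\]
by applying the coefficient reductions $\rho_4$ and $\rho_2$ to both sides. By naturality of coefficient reduction with respect to the pullback $(m^n_{k,l})^*$, the left-hand side becomes $(m^n_{k,l})^* \rho_m l^n_{kl}$ for $m=4$ and $m=2$ respectively.

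For the leading term on the right-hand side, I would use the fact that the cross product on $\ZZ/8$-cohomology is compatible with the ring homomorphisms $\rho_m:\ZZ/8\to\ZZ/m$; since these are ring maps, they intertwine the bilinear pairings defining the external product, so $\rho_m(l^n_k \times l^n_l) = \rho_m l^n_k \times \rho_m l^n_l$ in $H^*(\LL^n_k\times\LL^n_l;\ZZ/m)$.

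The key arithmetic observation is that the correction term $j_8(r^n_k \times k^n_l + k^n_k \times r^n_l)$ is annihilated by both $\rho_4$ and $\rho_2$: indeed, $j_8:\ZZ/2\to\ZZ/8$ sends the nonzero generator to $4\in\ZZ/8$, while $\rho_4(4)=0$ in $\ZZ/4$ and $\rho_2(4)=0$ in $\ZZ/2$, so the composites $\rho_4\circ j_8$ and $\rho_2\circ j_8$ both vanish. Combining these three steps yields the two claimed identities at once. I do not anticipate any substantive obstacle here; all the real work is already packaged into the preceding proposition on $l^n_{kl}$, and what remains is the compatibility of cup products with coefficient reduction together with the arithmetic fact $4\equiv 0 \pmod{4}$. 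An alternative route would be to invoke the earlier $\rho_4$- and $\rho_2$-coefficient lemmas on periods against $\ZZ/4$- and $\ZZ/2$-manifolds together with the Brumfiel--Morgan characterization of $\ZZ/2^k$-coefficient cohomology classes by their periods, but this is more circuitous than the direct coefficient-reduction approach outlined above.
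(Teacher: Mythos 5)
Your proof is correct and matches the paper's implicit argument: the proposition is prefaced by ``In particular,'' indicating it is obtained from $(m^n_{k,l})^* l^n_{kl}=l^n_k\times l^n_l+j_8 (r^n_k\times k^n_l+k^n_k\times r^n_l)$ by applying the coefficient reductions and observing that $\rho_4\circ j_8$ and $\rho_2\circ j_8$ both vanish since $4\equiv 0$ in $\ZZ/4$ and $\ZZ/2$. Your alternative route via periods is indeed what the paper's two preceding lemmas (on $\ZZ/2$- and $\ZZ/4$-manifold periods) carry out, but as you note, the direct coefficient-reduction argument is the cleaner one.
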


\begin{proposition}
\[
(m^n_{t,t'})^*r^n_{tt'}=\rho_2 l^n_t\times r^n_{t'}+r^n_t\times\rho_2 l^n_{t'}
\]
where $\rho_2:\ZZ/8\rightarrow \ZZ/2$.
\end{proposition}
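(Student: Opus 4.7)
The plan is to mimic the preceding three coproduct calculations for $l^n$, $k^n$, and $\rho_2 l^n$, now applied to the class $r^n$. Let $M$ and $N$ be $\ZZ/2$ manifolds with maps $f:M\rightarrow \LL^n_k$ and $g:N\rightarrow \LL^n_l$, and let $(\mathcal{D}_f,\mathcal{E}_f)$ and $(\mathcal{D}_g,\mathcal{E}_g)$ be the associated presheaves of symmetric-quadratic chain pairs. The presheaf $(\mathcal{D}_{f\otimes g},\mathcal{E}_{f\otimes g})$ associated to $m^n_{k,l}\circ(f\otimes g):M\otimes N\rightarrow \LL^n_{kl}$ has assembly equal to the modified tensor product of the two assemblies. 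I would compute $\sigma^n_1(M\otimes N, m^n_{k,l}\circ(f\otimes g))$ in two different ways and equate them.

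On one hand, the defining equation of $r^n_{kl}$ gives
\[
\sigma^n_1(M\otimes N, m^n_{k,l}\circ(f\otimes g))=\langle V^2_{M\otimes N}\cdot (f\otimes g)^*(m^n_{k,l})^*r^n_{kl},[M\otimes N]\rangle.
\]
On the other hand, unfolding the correction term built into the definition of $\sigma^n_1$ on manifolds and applying the chain-level product formula $\sigma^n_1((D,E)\otimes (D',E'))=\sigma^n_{0,2}(D,E)\cdot \sigma^n_1(D',E')+\sigma^n_1(D,E)\cdot \sigma^n_{0,2}(D',E')$ proven earlier, the same invariant also decomposes into a sum of products of chain invariants of $(\mathcal{D}_f(M),\mathcal{E}_f(M))$ and $(\mathcal{D}_g(N),\mathcal{E}_g(N))$ minus the correction $\langle V_{M\otimes N}Sq^1V_{M\otimes N}\cdot (f\otimes g)^*(m^n_{k,l})^*\rho_2 l^n_{kl},[M\otimes N]\rangle$. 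Each chain invariant is then rewritten as a manifold invariant using the previously established identity $\sigma^n_{0,2}(\mathcal{D}_f(M),\mathcal{E}_f(M))=\langle V^2_M\cdot f^*\rho_2 l^n_k,[M]\rangle$ and the defining equation $\sigma^n_1(\mathcal{D}_f(M),\mathcal{E}_f(M))=\langle V^2_M\cdot f^*r^n_k,[M]\rangle+\langle V_MSq^1V_M\cdot f^*\rho_2 l^n_k,[M]\rangle$, and similarly for $g$ on $N$.

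To match the two expressions, I would invoke Proposition \ref{productcharacteristic} for $V^2_{M\otimes N}=V^2_M\otimes V^2_N$ and $V_{M\otimes N}Sq^1V_{M\otimes N}=V^2_M\otimes V_NSq^1V_N+V_MSq^1V_M\otimes V^2_N$, together with the already-proven $(m^n_{k,l})^*\rho_2 l^n_{kl}=\rho_2 l^n_k\times\rho_2 l^n_l$. The two cross terms of the form $V_MSq^1V_M\otimes V^2_N$ and $V^2_M\otimes V_NSq^1V_N$ arising from the chain-level expansion cancel exactly against the corresponding pieces of the $V_{M\otimes N}Sq^1V_{M\otimes N}$ correction, leaving the clean expression $\langle V^2_{M\otimes N}\cdot (f^*\rho_2 l^n_k\otimes g^*r^n_l+f^*r^n_k\otimes g^*\rho_2 l^n_l),[M\otimes N]\rangle$. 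Comparing the two computations then forces the desired identity upon evaluation on arbitrary $\ZZ/2$ singular manifolds in $\LL^n_k\times\LL^n_l$, which by Lemma \ref{Kernellemma} and the $\ZZ/2$ version of the Brumfiel-Morgan characterization of cohomology classes uniquely determines the class $(m^n_{k,l})^*r^n_{kl}\in H^{4*+1}(\LL^n_k\times\LL^n_l;\ZZ/2)$.

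The main obstacle is not conceptual but rather the bookkeeping of cross terms: three distinct mixed contributions appear, two from the splitting of $V_{M\otimes N}Sq^1V_{M\otimes N}$ and one from the Wu-class correction embedded in $\sigma^n_1$. Verifying that these arrange themselves to cancel in pairs, so that only the two surviving cross terms carrying $V^2_M\otimes V^2_N$ remain, is the crux of the calculation. Once this check is performed the proposition follows immediately.
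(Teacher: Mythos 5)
Your proposal is correct and follows essentially the same strategy as the paper's proof: evaluate $\sigma^n_1$ on the modified product $M\otimes N$ both via the defining equation for $r^n_{kl}$ and via the chain-level product formula of Lemma \ref{5+3=8normal}, then use Proposition \ref{productcharacteristic} and the already-established $(m^n_{k,l})^*\rho_2 l^n_{kl}=\rho_2 l^n_k\times\rho_2 l^n_l$ to cancel the $VSq^1V$ cross terms. The cancellation bookkeeping you flag is exactly what the paper's two-sided computation carries out.
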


\begin{proof}
Let $M$ and $N$ be $\ZZ/2$ manifolds with maps $f:M\rightarrow \LL^n_t$ and $g:N\rightarrow \LL^n_{t'}$. Let $(\mathcal{D}_f,\mathcal{E}_f)$ and $(\mathcal{D}_g,\mathcal{E}_g)$ be the associated presheaves. Let $(\mathcal{D}_{f\otimes g},\mathcal{E}_{f\otimes g})$ be the presheaf associated to $m^n_{t,t'}\circ (f\otimes g)$.

On one hand,
\begin{eqnarray*}
& & \sigma^n_1(\mathcal{D}_{f\otimes g}(M\otimes N),\mathcal{E}_{f\otimes g}(M\otimes N)) \\
& = & \langle V^2_{M\otimes N}\cdot (f\otimes g)^*(m^n_{k,l})^*r^n_{tt'},[M\otimes N]\rangle \\
& & + \langle (V_{M}Sq^1V_{M}\otimes V^2_N+V^2_M\otimes V_NSq^1V_N)\cdot (f^*\rho_2 l^n_t\otimes g^*\rho_2 l^n_{t'}),[M\otimes N]\rangle
\end{eqnarray*}

On the other hand,
\begin{eqnarray*}
& & \sigma^n_{0,2}(\mathcal{D}_{f}(M),\mathcal{E}_{f}(M))\cdot \sigma^n_1(\mathcal{D}_{g}(N),\mathcal{E}_{g}(N)) 
 +\sigma^n_{1}(\mathcal{D}_{f}(M),\mathcal{E}_{f}(M))\cdot \sigma^n_{0,2}(\mathcal{D}_{g}(N),\mathcal{E}_{g}(N)) \\
 & = & \langle V^2_M\cdot f^*\rho_2l^n_t,[M]\rangle \langle V^2_N \cdot g^*r^n_{t'} + V_NSq^1V_N \cdot g^*\rho_2l^n_{t'},[N]\rangle) \\
 & & + \langle V^2_M \cdot f^*r^n_t + V_M Sq^1V_M \cdot f^*\rho_2l^n_t,[M]\rangle\cdot\langle V^2_N\cdot g^*\rho_2l^n_{t'},[N]\rangle
\end{eqnarray*}

\end{proof}

\subsection{Characteristic Classes of Bundle Theory}

Because of the equivalence of Ranicki's formulation and Wall's formulation of $L$-groups, it is quite obvious that the classes $k^q$ and $l^q$ of $\LL^q$ are equivalent to the Kervaire class and the $l$-class for the surgery space $G/TOP$.

Recall from Section $2$, there is a graded characteristic class $l^{TOP}\in H^{4*}(B;\ZZ_{(2)})$ for any $TOP$ bundle over $B$. Moreover, there are graded characteristic classes $l^{G}\in H^{4*}(B;\ZZ/8)$ and $k^G\in H^{4*+3}(B;\ZZ/2)$ for any spherical fibration over $B$. In the $2$-local sense, the spherical fibration has a $TOP$ bundle structure if and only if the $k^G$-class vanishes and the $\ZZ/8$ class $l^G$ has a $\ZZ_{(2)}$ lifting. For $2$-local $TOP$ bundles, the $\ZZ/8$ reduction of $l^{TOP}$ is $l^G$.

In Section $2$, we said that the bundle theory also has an integral description.

\begin{theorem}(\cite[Proposition 16.1]{Levitt&Ranicki})
\label{RanickiLevitt}
Suppose $X$ is a finite simplicial complex.

(1) A (k-1)-spherical fibration $\nu:X\rightarrow BSG(k)$ has a canonical $\LL^n$-orientation $U^n(\nu):T(\nu)\rightarrow \Sigma^k\LL^n$, where $T(\nu)$ is the Thom space of $\nu$.

(2) A topological block bundle $\mu:X\rightarrow B\widetilde{STOP}(k)$ has a canonical $\LL^s$-orientation $U^s(\mu):T(\mu)\rightarrow \Sigma^k\LL^s$ so that its $\LL^n$-reduction is $U^n(\mu)$.

(3) A difference between any two stable topological bundle liftings $\mu,\mu'$ of the same spherical fibration $\nu$ is represented by an element $d(\mu,\mu')\in (\LL^q)^0(X)$.
\end{theorem}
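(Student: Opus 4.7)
The plan is to leverage the classifying property of the $\LL$-spectra developed earlier in the paper: cellular maps from a finite simplicial (or $\Delta$-set) complex $Y$ to $\LL^n$, $\LL^s$, or $\LL^q$ are equivalent to presheaves of ads of normal, Poincar\'e symmetric, or Poincar\'e quadratic chain complexes on $Y$ respectively. Consequently, producing the $\LL^n$-orientation in (1) and the $\LL^s$-orientation in (2) reduces to constructing natural presheaves of the appropriate chain type on the Thom space, while (3) will follow from the fibration $\LL^q \to \LL^s \to \LL^n$.

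For part (1), I would use normal transversality (Levitt, Quinn, Jones), as recalled in the introduction. For each simplex $\sigma:\Delta^m \to T(\nu)$, homotope $\sigma$ rel boundary to be normal transverse to the zero section $X \hookrightarrow T(\nu)$; the preimage $N_\sigma$ is a Quinn-normal space, and chain-level it inherits a canonical normal structure induced by the spherical fibration $\nu$. The key point is that normal transversality is \emph{unobstructed}, so these simplex-wise choices can be made compatibly across faces. Assembling them yields a presheaf of normal chain ads on a cellular approximation of $T(\nu)$, whose classifying map is the desired $U^n(\nu):T(\nu)\to\Sigma^k\LL^n$.

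For part (2), given a TOP block bundle $\mu$ I would apply the algebraic form of Poincar\'e transversality simplex-wise. Because the fibres of a block bundle have the combinatorial structure of a disk with boundary sphere, the preimage $\sigma^{-1}(X)$ can be refined from a normal chain to a Poincar\'e symmetric chain by running Ranicki's algebraic surgery machinery on the chain-level fibration data rather than by geometric transversality (which is not available for TOP bundles). This produces a presheaf of Poincar\'e symmetric chain ads refining the normal presheaf of (1), and hence an $\LL^s$-orientation $U^s(\mu)$. Its $\LL^n$-reduction agrees with $U^n(\mu)$ since a Poincar\'e symmetric chain, viewed as a symmetric-quadratic pair with zero quadratic part, is exactly the underlying normal chain.

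For part (3), two stable liftings $\mu,\mu'$ of the same $\nu$ give $\LL^s$-orientations whose $\LL^n$-reductions both equal $U^n(\nu)$. Applying the Thom isomorphism for $\LL^s$ on $T(\nu)$ to pass to $(\LL^s)^0(X)$, the difference $U^s(\mu)-U^s(\mu')$ lies in the kernel of $J_*:(\LL^s)^0(X)\to(\LL^n)^0(X)$, so by the long exact sequence of the fibration $\LL^q\xrightarrow{1+T}\LL^s\xrightarrow{J}\LL^n$ it admits a (not necessarily unique) lift $d(\mu,\mu')\in(\LL^q)^0(X)$. The main obstacle in the whole argument is the coherent simplex-wise construction in part (2): one must show that the chain-level Poincar\'e symmetric refinements on individual preimages glue across all face inclusions into an honest ad-structure without recourse to geometric transversality. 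This algebraic coherence is the heart of Levitt-Ranicki's contribution and is where most of the technical work lives; granted this, parts (1) and (3) follow by comparatively routine manipulations with the classifying property and the fiber sequence.
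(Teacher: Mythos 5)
Your proposal matches, in outline, the paper's brief review of Levitt--Ranicki's construction: the paper forms the subcomplex $N(M(\xi))\subset S(M(\xi))$ of normally transversal singular simplices (the inclusion being a homotopy equivalence because normal transversality is unobstructed, which is your ``choices can be made compatibly across faces''), takes chains of preimages to get $N(M(\xi))\to\Sigma^k\LL^n_1$, and for a $TOP$ bundle passes to the subcomplex $T(M(\xi))$ of Poincar\'e transversal simplices together with a canonical homotopy inverse of $T(M(\xi))\hookrightarrow N(M(\xi))$; your dual picture of a presheaf of chain ads on a cellular approximation and your handling of (3) via the fiber sequence $\LL^q\to\LL^s\to\LL^n$ after dividing by a Thom class are the same content. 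One substantive correction: your parenthetical that geometric transversality ``is not available for $TOP$ bundles'' is misleading --- topological transversality of manifold maps to $TOP$ bundles \emph{is} available (Kirby--Siebenmann, Freedman--Quinn, which this paper itself invokes later to identify $(U^s)^*l^s_1$ with $l^{TOP}\cdot U$). What actually forces the algebraic, simplex-wise form of Poincar\'e transversality in part (2) is that the domains in the universal construction are simplices $\Delta^n$ rather than manifolds, so manifold transversality does not apply to them; the ``canonical homotopy inverse of $T\hookrightarrow N$'' in the paper's review is exactly the packaging of the algebraic surgeries you describe, and its existence (given a $TOP$ lift) is the technical heart of Levitt--Ranicki's argument, as you correctly identify.
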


\begin{remark}
The universal $\LL^s$-orientation $U^s:M\widetilde{STOP}(k)\rightarrow \Sigma^k\LL^s$ of the block bundle theory induces a universal $\LL^s$-orientation $U^s:MSTOP(k)\rightarrow \Sigma^k\LL^s$ for the (micro-)TOP bundle theory, by the natural inclusion $STOP(k)\rightarrow \widetilde{STOP}(k)$. Moreover, in the stable range $STOP\rightarrow \widetilde{STOP}$ is a homotopy equivalence (\cite[Corollary 4.11]{Rourke-Sanderson1970II}).
\end{remark}

In this subsection, we prove that the localization at prime $2$ of Levitt-Ranicki's theory is equivalent to Brumfiel-Morgan and Morgan-Sullivan's theories \cite{Sullivan&Morgan}\cite{Brumfiel&Morgan}. That is, we will prove that the classes of $\LL^s,\LL^n$ we constructed before correspond to the characteristic classes defined in \cite{Sullivan&Morgan}\cite{Brumfiel&Morgan} under Levitt-Ranicki' $\LL$-theory orientations. 

Before doing that, let us briefly review how to construct the $\LL$-orientations $U^n$ and $U^s$. 

Let $\xi\rightarrow B$ be an oriented spherical fibration. Let $D(\xi)$ be the corresponding disc bundle (the mapping cylinder) and let $Th(\xi)$ be the Thom space. In the singular simplicial complex $S(Th(\xi))$, there is a subcomplex $N(Th(\xi))$ which consists of maps $f:\Delta^n\rightarrow Th(\xi)$ such that $f^{-1}(D(\xi))$ is an $n$-ad of normal spaces of dimension $n-k$ with respect to the pullback bundle. Notice that the inclusion map $N(Th(\xi))\rightarrow S(Th(\xi))$ is a homotopy equivalence. Then the singular chains of $n$-ads of normal spaces induce a simplicial map $N(Th(\xi))\rightarrow \Sigma^k \LL^n_1$, which is the canonical $\LL^n$-orientation.

In $N(Th(\xi))$, let $T(Th(\xi))$ be the subcomplex consisting of maps $f:\Delta^n\rightarrow Th(\xi)$ such that $f$ is Poincar\'e transversal, namely, $f^{-1}(D(\xi))$ is an $n$-ad of $\ZZ$-coefficient homology Poincar\'e space of dimension $n-k$ with the fundamental class induced from the normal structure. Similarly, the singular chains of $n$-ads of homology Poincar\'e spaces induce a simplicial map $T(Th(\xi))\rightarrow \LL^s_1$.

There exists a $TOP$ structure of $\xi$ if and only $\xi$ has a theory of transversality (see section $2$ or \cite{Levitt&Morgan}), if and only if there is a canonical homotopy inverse of the inclusion $T(Th(\xi))\rightarrow N(Th(\xi))$ (\cite[Theorem 1.11]{Levitt&Ranicki}). Furthermore, $T(Th(\xi))\rightarrow \LL^s_1$ is the canonical $\LL^s$-orientation.
 
\begin{theorem}\label{characteristic-classes-4}
\[
(U^s)^*l^s_1=l^{TOP} \cdot U\in \widetilde{H}^{4*}(M\widetilde{STOP};\ZZ_{(2)})
\]
\end{theorem}

\begin{proof}
Let $M$ be a $\ZZ/2^q$ $PL$ manifold with a map $f:M\rightarrow M\widetilde{STOP}(h)$. Due to the transversality theorem of topological manifolds (\cite[(9.6C)]{Freedman&Quinn}), we can homotope $f$ so that it is transversal to the zero section over each simplex, i.e., for each simplex $\Delta^n$ of $M$, $f^{-1}(B\widetilde{STOP}(h))$ is an $n$-ad of $\ZZ/2^q$ topological manifolds of dimension $n-h$. The assembly of the $n$-ads is a $\ZZ/2$ topological submanifold $L$ of $M$.

By our construction,
\[
\Sign(L)=\sigma^s_0(M,U^s\circ f)=\langle L_M\cdot f^* (U^s)^*\Sigma^h l^s_1,[M]\rangle
\]

Also,
\[
\Sign(L)=\langle L_M\cdot f^* (l^{TOP}\cdot  U),[M]\rangle
\]
where $U\in \widetilde{H}^h(M\widetilde{STOP}(h))$ is the universal Thom class and $l^{TOP}\in H^{4*}(B\widetilde{STOP};\ZZ_{(2)})$ is the class defined in\cite{Sullivan&Morgan}.

After passage to the stable range, we complete the proof.    
\end{proof}

\begin{theorem}\label{characteristic-classes-5}
\[
(U^s)^* r^s_1= VSq^1V\cdot U \in \widetilde{H}^{4*+1}(M\widetilde{STOP};\ZZ/2)
\]
In particular, $(U^s)^* r^s_{1,1}=0$.
\end{theorem}

\begin{proof}
Let $M$ be a $\ZZ/2$ $PL$ manifold. Then
\begin{eqnarray*}
\dR(L) & = & \sigma^s_1(M,U^s\circ f) \\
 & = & \langle V^2_M\cdot f^*(U^s)^*\Sigma^h r^s_1,[M]\rangle
+\langle V_MSq^1V_M\cdot f^*\rho_2(l^{TOP} \cdot U),[M]\rangle
\end{eqnarray*}

Let $\tau_M$ be the tangent bundle of $M$ and $\nu$ be the normal bundle of $L\subset M$. Then 
\begin{eqnarray*}
\dR(L) & = & \langle V_LSq^1V_L,[L]\rangle \\
 & = & \langle V^2_{\tau_M\vert_L}\cdot f^*(V_{\nu}Sq^1V_{\nu}),[L]\rangle
+\langle V_{\tau_M\vert_L}Sq^1V_{\tau_M\vert_L}\cdot f^*L_{\nu},[L]\rangle \\
 & = & \langle L_M\cdot f^*(VSq^1V\cdot U),[M]\rangle
+\langle V_MSq^1V_M\cdot f^*\rho_2(l^{TOP} \cdot U),[M]\rangle
\end{eqnarray*}   
\end{proof}

These two theorems prove the following.

\begin{theorem}\label{main-result-1}
At prime $2$, Ranicki-Levitt's symmetric $L$-theory orientation for $TOP$ bundles is equivalent to Morgan-Sullivan's $2$-local characteristic classes.
\end{theorem}

\begin{theorem}\label{characteristic-classes-1}
\[
(U^n)^* k^n_1=k^G\cdot U\in \widetilde{H}^{4*+3}(MSG;\ZZ/2)
\]
\end{theorem}

\begin{proof}
Let M be a $\ZZ/2^q$ manifold of dimension $m+h$ with a map $f:M\rightarrow MSG(h)$. By a slight homotopy we can assume that $f$ is transversal to the spherical fibration $S(ESG(h))\subset  MSG(h)$ and let $I=f^{-1}(D(ESG(h)))$.

Embed $M$ in a sphere $D^{N+m+h}$, where there is a $\ZZ/2^q$-action on the boundary $S^{N+m+h}$ so that $\partial M=\bigcup_{2^q}\delta M$ is equivariantly embedded in $S^{N+m+h}$. Consider the corresponding Pontryagin-Thom construction $F:D^{N+m+h}\rightarrow MSG(h)\wedge MSPL(N)$. Let $N(M)$ be the tubular neighborhood of $M$ in $D^{N+m+h}$ such that the preimage of the disc bundle in the $MSPL(N)$ is $N(M)$ under $F$. Since $MSG(h)\wedge MSPL(N)\simeq M(D(ESG(h))\times ESPL(N))$ is again the Thom space of some spherical fibration, the preimage of the total disc bundle in $MSG(h)\wedge MSPL(N)$ is the restriction $N(M)\vert_I$.

Let $U_{MSG(h)}\in \widetilde{H}^{h}(MSG(h);\ZZ)$ and $U_{MSG(h)\wedge MSPL(N)}\in \widetilde{H}^{h+N}(MSG(h)\wedge MSPL(N);\ZZ)$ both be the Thom class. Let $x=[M]\cap U_{MSG(h)}\in H_m(I,I\bigcap \partial M;\ZZ)$ and $y= [D^{N+m+h}]\cap U_{MSG(h)\wedge MSPL(N)}\in H_m(N(M)\vert_I,N(M)\vert_{I\bigcap \partial M};\ZZ)$. Then $x$ and $y$ induce $\ZZ/2^q$ symmetric structures on $C_*(I)$ and $C_*(N(M)\vert_I)$ respectively. But the natural inclusion $I\rightarrow N(M)\vert_I$ induces a chain homotopy equivalence between the two $\ZZ/2^q$ symmetric chains.
 
Brumfiel-Morgan's obstruction for cobording $f$ to be Poincar\'e transversal is their obstruction class for $F:S^{N+m+h}\rightarrow MSG(h)\wedge MSPL(N)$ (see section $2$). The obstruction is equivalent to whether $C_*(N(M)\vert_I)\simeq C_*(I)$ satisfies $\ZZ/2^q$ Poincar\'e duality. By Ranicki's miracle lemma \ref{Ranickimiracle}, it is equivalent to the bordism class of $\partial C_*(I)$ in $L^q_{m-1}(\ZZ,\ZZ/2^q)$ (for the quadratic structure on $\partial C_*(I)$, see \cite[Proposition 7.4.1]{Ranicki1981}).

On the other hand, associated to the composition map $U^n\circ f$, there is a presheaf $(\mathcal{D}_f,\mathcal{E}_f)$ of Poincar\'e symmetric-quadratic pair over $M$, whose assembly is exactly $(C^{m-*}(I),\partial C_*(I))$.

When $m\equiv 2\pmod{2}$, the bordism class of $\partial C_*(I)$ in $L^q_{m-1}$ is determined by the Kervaire invariant $\sigma^n_3(\mathcal{D}_f,\mathcal{E}_f)$. Hence, we may assume that $M$ is a $\ZZ/2$ manifold.

Then the rest is obvious by recalling the construction of $k^G\in H^{4*+3}(BSG;\ZZ/2)$ in \cite[Theorem 5.4]{Brumfiel&Morgan}
\end{proof}

\begin{theorem}\label{characteristic-classes-2}
\[
(U^n)^* l^n_1=l^G\cdot U\in \widetilde{H}^{4*}(MSG;\ZZ/8)
\]

\end{theorem}

\begin{proof}
Recall the construction of $l^G$ in \cite[Section 8]{Brumfiel&Morgan}. Let $M$ be a $\ZZ/8$ manifold. When the map $f:M\rightarrow MSG(h)$ is Poincar\'e transversal, i.e., the preimage $I$ is a presheaf of ads of $\ZZ/8$ homology Poincar\'e spaces over $M$. Then
\[
\Sign(I)=\langle L_M \cdot f^*(l^G\cdot U),[M]\rangle+j_8\langle V_MSq^1V_M \cdot f^*(k^G\cdot U),[M]\rangle \in\ZZ/8
\]
where $j_8:\ZZ/2\rightarrow \ZZ/8$.

We also know that
\begin{eqnarray*}
\Sign(I) & = & \sigma^n_0(\mathcal{D}_f(M),\mathcal{E}_f(M))+j_8\langle V_MSq^1V_M\cdot f^*(U^n)^*\Sigma^hk^n_1,[M]\rangle \\
 & = & \langle L_M\cdot f^*(U^n)^*\Sigma^hl^n_1,[M]\rangle + j_8\langle V_MSq^1V_M\cdot f^*(k^G\cdot U),[M]\rangle
\end{eqnarray*}

Hence, under the assumption of Poincar\'e transversality for $f$,
\[
\langle L_M f^*(l^G\cdot U),[M]\rangle=\langle L_M f^*(U^n)^*\Sigma^hl^n_1,[M]\rangle
\]

Recall from section $2$ that \cite[p.~61]{Brumfiel&Morgan} constructed a map $a:K^{h+4}\rightarrow MSG(h)$ for a $\ZZ/2$ manifold $K^{h+4}=S^{h+3}\times I/ (x,0)\sim (-x,1)$ such that

(1) the Kervaire obstruction to the Poincar\'e transversality of $a\vert_{\delta K}$ is $1\in\ZZ/2$;

(2) $\langle a^*(V^2\cdot U),[K]\rangle=0\in\ZZ/2$, where $V$ is the sum of even Wu classes. 

We are left to prove that the assembled $\ZZ/2$ symmetric-quadratic Poincar\'e chain pair $(\mathcal{D}_a(K),\mathcal{E}_a(K))$ is bordant to the chosen $(D'_0,E'_0)$ in the previous section. It suffices to prove that $\sigma^n_0(K,a)=0=\sigma^n_0(D'_0,E'_0)$ since $\sigma^q_2(\mathcal{E}_a(K))=\sigma^q_2(E'_0)=1$.

Let $N$ be a $\ZZ/2$ manifold of dimension congruent to $h'+3$ modulo $4$ together with a map $b:N\rightarrow MSG(h')$ such that it has nonvanishing Kervaire obstruction to Poincar\'e transversality, i.e., $\sigma^n_3(N,b)=1\in\ZZ/2$. Due to the product formulae of $L^n_*(\ZZ,\ZZ/2)$, it suffices that the Kervaire obstruction for cobording the following map to Poincar\'e transversality vanishes 
\[
K\otimes N\xrightarrow{a\otimes b}MSG(h)\times MSG(h')\xrightarrow{\Delta} MSG(h+h')
\]
Recall the coproduct formula for $k^G$ in \cite[9.2]{Brumfiel&Morgan}
\[
\Delta^* (k^G\cdot U)=(k^G\cdot U)\times (V^2\cdot U)+(V^2\cdot U)\times (k^G\cdot U)
\]
Then 
\begin{eqnarray*}
\sigma^n_3(K\otimes N,\Delta\circ(a\otimes b)) & = & \langle V^2_{K\otimes N} \cdot (a\otimes b)^*(\Delta)^*(k^G\cdot U),[K\otimes N]\rangle \\
& = & \langle (V^2_{K}\otimes V^2_N)\cdot (a^* (V^2\cdot U)\otimes b^* (k^G\cdot U)),[K\otimes N]\rangle \\
& = & \langle a^* (V^2\cdot U),[K]\rangle\cdot \langle V^2_N\cdot b^* (k^G\cdot U),[N]\rangle=0
\end{eqnarray*}

But 
\begin{eqnarray*}
\sigma^n_3(K\otimes N,\Delta\circ(a\otimes b))=\sigma^n_0(K,a)\cdot \sigma^n_3(N,b)
\end{eqnarray*}

Hence, $\sigma^n_0(K,a)=0=\sigma^n_0(D'_0,E'_0)$.

Recall from section $2$ and \cite[p.~61]{Brumfiel&Morgan} that, for a non Poincar\'e transversal $f$, the key step to define $l^G\in H^{4*}(BSG;\ZZ/8)$ is to subtract the bordism class $[K,a]\cdot [\CC P^{\frac{m-4}{2}}]$ from $[M,f]$ as a modification, which is exactly the same modification as what we did in the chain level.    
\end{proof}

Recall from \cite[8.1(ii)1]{Brumfiel&Morgan} that $\rho_2 l^G=V^2$.

\begin{corollary}
\[
\rho_2 (U^n)^* l^n_1=V^2 \cdot U \in \widetilde{H}^{4*}(MSG;\ZZ/2)
\]
\end{corollary}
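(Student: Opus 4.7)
The plan is to deduce this corollary directly by applying the mod-$2$ reduction homomorphism $\rho_2 \colon \ZZ/8 \rightarrow \ZZ/2$ to both sides of the previous proposition, $(U^n)^* l^n_1 = l^G \cdot U \in \widetilde{H}^{4*}(MSG; \ZZ/8)$. Since $\rho_2$ is a coefficient homomorphism, it commutes with pullbacks and with the cup product by the (integral, hence all-torsion-invariant) Thom class $U$, so the right-hand side becomes $\rho_2(l^G) \cdot U$, and the left-hand side becomes $\rho_2 (U^n)^* l^n_1 = (U^n)^* \rho_2 l^n_1$.

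The only remaining ingredient is the identity $\rho_2 l^G = V^2$ recorded in Brumfiel–Morgan and already quoted in the excerpt immediately before the corollary. Plugging this in gives $\rho_2 (U^n)^* l^n_1 = V^2 \cdot U$ in $\widetilde{H}^{4*}(MSG; \ZZ/2)$, which is exactly the claim.

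There is essentially no obstacle; the content has been done in establishing $(U^n)^* l^n_1 = l^G \cdot U$, and the corollary is a formal consequence. If one wanted a self-contained verification of $\rho_2 l^G = V^2$, one could alternatively evaluate both sides of the desired equality on generators of $H_{4*}(MSG; \ZZ/2)$ via the Thom isomorphism: for $(M, f)$ a $\ZZ/2$ manifold mapping to $MSG(h)$ transverse to the zero section with preimage $L$, the $\ZZ/2$ signature of $L$ equals $\chi_2(L) = \langle V^2_L, [L]\rangle = \langle V^2_M \cdot f^*(V^2 \cdot U), [M]\rangle$ on the Brumfiel–Morgan side, and also equals $\rho_2 \sigma^n_0(M, U^n \circ f) = \langle V^2_M \cdot f^*(U^n)^*\rho_2 l^n_1, [M]\rangle$ on the $L$-theoretic side (the $k^n_1$ correction term drops out after mod-$2$ reduction because it appears with coefficient $j_8$, and $\rho_2 \circ j_8 = 0$). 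Comparing pairings and using the Thom isomorphism yields the stated equality.
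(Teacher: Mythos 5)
Your argument is exactly the paper's: the corollary is obtained by applying the coefficient reduction $\rho_2\colon\ZZ/8\to\ZZ/2$ to the immediately preceding proposition $(U^n)^*l^n_1=l^G\cdot U$ and invoking the Brumfiel--Morgan identity $\rho_2 l^G=V^2$, which the paper records in the sentence just before the corollary. Your optional direct verification via the Thom isomorphism and the vanishing of the $j_8$-correction under $\rho_2$ is a nice sanity check but not needed.
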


Finally, We prove that $r^n_1$ corresponds to $VSq^1 V\in H^{4*+1}(BSG;\ZZ/2)$. 

Let $M$ be a $\ZZ/2$ manifold. First assume that $(M,f:M\rightarrow MSG(h))$ is Poincar\'e transversal. Then $I$ is a presheaf of $\ZZ/2$ homology Poincar\'e spaces over $M$. The presheaf is assembled to a $\ZZ/2$ Poincar\'e space $L$. The preimage of $ESG(h)$ is a spherical fibration of $L$ which induces a normal structure compatible with the Poincar\'e duality. 

Then
\begin{eqnarray*}
\dR(L) & = & \langle V_LSq^1V_L,[L]\rangle \\
 & = & \langle V^2_M \cdot f^*(VSq^1V \cdot U),[M]\rangle +\langle V_MSq^1V_M \cdot f^*(V^2 \cdot U), [M]\rangle
\end{eqnarray*}

So, under the assumption of Poincar\'e transversality for $f$,
\[
\langle V^2_M \cdot f^*(VSq^1V \cdot U),[M]\rangle=\langle V^2_M\cdot  f^*(U^n)^*r^n_1,[M]\rangle
\]

For the general case, there exists a $\ZZ/2$ manifold $P$ of dimension $h+5$ with a map $b:P\rightarrow MSG(h)$ such that 
\[
\langle b^*\beta (l^G\cdot U),[P]\rangle=1\in\ZZ/2
\]
where $\beta$ is the $\ZZ/2\rightarrow\ZZ_{(2)}$ Bockstein. It means that the map $b$ does not admit Poincar\'e transversality. 

We can assume that the associated presheaf $(\mathcal{D}_b,\mathcal{E}_b)$ has vanishing $\sigma^n_1$. Otherwise, note that the de Rham invariant of the Wu manifold $SU(3)/SO(3)$ is $1$. Then we may subtract $[P,b]$ by a map $S^{h+5}\rightarrow MSTOP(h)$ which is the Thom-Pontryagin construction associated to the Wu manifold $SU(3)/SO(3)$.

\begin{lemma}
\[
\langle V^2_P\cdot b^*(VSq^1V\cdot U),[P]\rangle=\langle V^2_P\cdot f^*(U^n)^*r^n_1,[P]\rangle\in \ZZ/2
\]
\end{lemma}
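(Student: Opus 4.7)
The plan is to unwind the definition of $\sigma^n_1$ on the test pair $(P,b)$, use the normalization arranged just before the lemma statement, and then reduce the claim to a cohomological identity on $P$ which can be settled either by a Wu-type cancellation or, failing that, by cobording back to the Poincar\'e transversal case already established.

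First I would observe that, by the defining property of $r^n_1$, the right-hand side is nothing other than $\sigma^n_1(P, U^n\circ b)$. Substituting the definition
\[
\sigma^n_1(P, U^n\circ b) = \sigma^n_1(\mathcal{D}_b(P), \mathcal{E}_b(P)) - \langle V_P Sq^1 V_P \cdot b^*(U^n)^*\rho_2 l^n_1,\,[P]\rangle
\]
and invoking both the standing assumption $\sigma^n_1(\mathcal{D}_b(P),\mathcal{E}_b(P))=0$ and the already-established identity $(U^n)^*\rho_2 l^n_1 = V^2\cdot U$ on $MSG$, the right-hand side collapses to $\langle V_P Sq^1 V_P\cdot b^*(V^2\cdot U),\,[P]\rangle$ (signs are irrelevant in $\ZZ/2$).

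Thus the lemma reduces to the purely cohomological identity
\[
\langle V^2_P\cdot b^*(VSq^1V\cdot U) + V_P Sq^1 V_P\cdot b^*(V^2\cdot U),\,[P]\rangle = 0.
\]
Setting $u = V_P\cdot b^*V$ and applying the Leibniz rule for $Sq^1$, the bracketed integrand factors as $u\cdot Sq^1(u)\cdot b^*U$. Since $b^*U$ is the mod $2$ reduction of the integral Thom class one has $Sq^1 b^*U = 0$, so $u\cdot Sq^1(u)\cdot b^*U = u\cdot Sq^1(u\cdot b^*U)$, and the orientability of $P$ gives the Wu identity $\langle Sq^1 z,[P]\rangle = \langle v_1(P)\cdot z,[P]\rangle = 0$ for every $z$.

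The main obstacle is that the chain of identities above does not on its own force $\langle u\cdot Sq^1(u)\cdot b^*U,[P]\rangle$ to vanish: a naive application of the Wu formula only returns a tautology. I expect one closes the argument by comparing $(P,b)$ to a Poincar\'e transversal representative inside $\Omega^{SO}_{h+5}(MSG;\ZZ/2)$. Specifically, the normalization $\sigma^n_1(\mathcal{D}_b,\mathcal{E}_b)=0$, together with the Wu-manifold subtraction already performed in the paragraph preceding the lemma (using that $\dR(SU(3)/SO(3))=1$), precisely kills the unique $L^n_{4*+1}$-obstruction to Poincar\'e transversality in this dimension. Hence $(P,b)$ becomes cobordant in $\Omega^{SO}_{h+5}(MSG;\ZZ/2)$ to a transversal class, on which both sides of the lemma coincide by the computation already carried out for Poincar\'e transversal $\ZZ/2$ manifolds earlier in this subsection; cobordism invariance of each side then finishes the proof.
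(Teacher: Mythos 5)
Your reduction is correct: using the definition of $\sigma^n_1$, the normalization $\sigma^n_1(\mathcal{D}_b,\mathcal{E}_b)=0$, and the already-established identity $(U^n)^*\rho_2 l^n_1=V^2\cdot U$, the lemma does collapse to proving
\[
\langle V^2_P\cdot b^*(VSq^1V\cdot U) + V_P Sq^1 V_P\cdot b^*(V^2\cdot U),\,[P]\rangle = 0,
\]
and this is exactly the identity the paper proves. You also correctly observe that a naive Wu-formula manipulation only returns a tautology. However, the fallback argument you sketch is not available. You propose to cobord $(P,b)$ to a Poincar\'e transversal representative and invoke cobordism invariance. But $(P,b)$ was chosen, in the paragraph preceding the lemma, precisely so that $\langle b^*\beta(l^G\cdot U),[P]\rangle=1\in\ZZ/8$, which is the obstruction to Poincar\'e transversality for this map and which is itself a cobordism invariant. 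The Wu-manifold subtraction that arranges $\sigma^n_1(\mathcal{D}_b,\mathcal{E}_b)=0$ is a correction by a class factoring through $MSTOP$; it leaves the transversality obstruction intact. So $(P,b)$ is not cobordant to any transversal class, and your proposed reduction to the transversal case cannot work.

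What the paper does instead is a multiplicative trick. Choose a Kervaire class $c:S^{h'+3}\rightarrow MSG(h')$ with $\langle c^*(k^G\cdot U),[S^{h'+3}]\rangle=1$ and form the modified product $(P\otimes S^{h'+3},\Delta\circ(b\otimes c))$. The $\sigma^n_0$ of the assembled presheaf over the product vanishes: the product formula $L^n_{4s+1}(\ZZ/2^k)\otimes L^n_{4t+3}(\ZZ/2^k)\rightarrow L^n_{4(s+t+1)}(\ZZ/2^k)$ established in Chapter~2 factors through $\sigma^n_1(\rho_2(\cdot))$, which is zero by the normalization. Expanding $\sigma^n_0$ of the product using the coproduct formula $\Delta^*(l^G\cdot U)=(l^G\cdot U)\times(l^G\cdot U)+j_8\bigl((k^G\cdot U)\times(VSq^1V\cdot U)+(VSq^1V\cdot U)\times(k^G\cdot U)\bigr)$ and the multiplicativity of $L_{(\cdot)}$ and $V_{(\cdot)}Sq^1V_{(\cdot)}$, almost all terms cancel or vanish on the sphere factor, and what survives is exactly $j_8\bigl(\langle V^2_P\cdot b^*(VSq^1V\cdot U),[P]\rangle+\langle V_PSq^1V_P\cdot b^*(V^2\cdot U),[P]\rangle\bigr)=0$, which is the identity you reduced to. This is the idea your proposal is missing: instead of trying to straighten out $(P,b)$ itself, one pairs it against the Kervaire generator in a complementary degree and lets the $\ZZ/8$ product and coproduct structure force the cancellation.
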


\begin{proof}
Considering the definition of $\sigma^n_1$ and the assumption that $\sigma^n_1(\mathcal{D}_b,\mathcal{E}_b)=0$, it suffices to prove that 
\[
\langle V^2_P\cdot b^*(VSq^1 V\cdot U),[P]\rangle+\langle V_PSq^1V_P\cdot b^*(V^2 \cdot U),[P]\rangle=0
\]
According to \cite{Brumfiel&Morgan}, there exists a map $c:S^{h'+3}\rightarrow MSG(h')$ such that 
\[
\langle c^*(k^G\cdot U),[S^{h'+3}]\rangle=1
\]

Also recall the coproduct formula of $l^G$ (\cite[9.1]{Brumfiel&Morgan}), i.e.,
\[
\Delta^* (l^G\cdot U)=(l^G\cdot U)\times (l^G\cdot U)+j_8((k^G\cdot U)\times (VSq^1V\cdot U)+(VSq^1V\cdot U)\times (k^G\cdot U))
\]
where $j_8:\ZZ/2\rightarrow \ZZ/8$.

Considering the product formulae for $L^n_*(\ZZ,\ZZ/8)$, we have
\begin{eqnarray*}
0 & = & \sigma^n_0(\mathcal{D}_{b\otimes c}(P\otimes S^{h'+3}),\mathcal{E}_{b\otimes c}(P\otimes S^{h'+3})) \\
 & = & j_8(\langle V^2_P\cdot b^*(VSq^1V\cdot U),[P]\rangle+\langle V_PSq^1V_P \cdot b^*(V^2\cdot U),[P]\rangle) 
\end{eqnarray*}
\end{proof}

If $f:M\rightarrow MSG(h)$ is not Poincar\'e transversal, modify it by the map 
\[
b\otimes \pt:P\otimes \CC P^{2i}\rightarrow MSG(h)
\]
when the dimension of $M$ is $m=h+4i+1$, so that the new map is conbordant to be Poincar\'e transversal. Therefore,

\begin{theorem}\label{characteristic-classes-3}
\[
(U^n)^* r^n_1=VSq^1V\cdot U\in \widetilde{H}^{4*+1}(MSG;\ZZ/2)
\]

In particular, $(U^n)^* r^n_{1,1}=0$.
\end{theorem}

Therefore, we have proved the following:

\begin{theorem}\label{main-result-2}
At prime $2$, Ranicki-Levitt's normal $L$-theory orientation for $TOP$ bundles is equivalent to Brumfiel-Morgan's $2$-local characteristic classes.
\end{theorem}

\subsection{\texorpdfstring{$L$}{Lg}-theory at Odd Primes}

To complete the story, let us briefly consider the odd-prime localization of $L$-theories and the bundle theory at odd primes. 

Since the homotopy groups of $\LL^n_0$ have only $2$-primary torsion, the odd-localization of $\LL^n$ is simply homotopy equivalent to $\ZZ$.

Sullivan (\cite[p.~218]{Sullivan2009}) proved that, localized at odd primes, $G/PL$ is homotopy equivalent to $BSO$. Since $G/PL$ and $G/TOP$ are differed by a $\ZZ/2$ twisting, $\LL^q\simeq G/TOP$ is also homotopy equivalent to $BSO$. Therefore, the same is true for $\LL^s$.

However, we want to use the Poincar\'e chain description of $\LL$-theory to reprove the same result. Logically, the reproof is essentially the same as Sullivan's proof, which is based on the a priori invariant method for $K$-theory. 

In the rest of this section, we only consider $\LL^s$. The first goal is to construct an $H$-space map $\sigma^s_{\odd}:\LL^s_1\rightarrow BSO^{\otimes}_{(\odd)}$, where the superscript $\otimes$ means that the product structure on $BSO^{\otimes}$ is induced by the tensor product of vector bundles.

Let $M$ be a $\ZZ$ or $\ZZ/n$ manifold with a map $f:M\rightarrow \LL^s_1$. Let $\mathcal{C}_f$ be the associated presheaf. Define 
\[
\sigma^s_{\odd}(M,f)=\sigma^s_0(\mathcal{C}_f(M))\in\ZZ \,\,\text{or}\,\, \ZZ/n
\]

Then the product formula of $\sigma^s_{\odd}$ holds, i.e.,
\[
\sigma^s_{\odd}((M,f)\cdot(N,g))=\sigma^s_{\odd}(M,f)\cdot \sigma^s_{\odd}(N,g)
\]
where $N$ is another $\ZZ$ or $\ZZ/n$ manifold with a map $g:N\rightarrow \LL^s_1$

In particular, we get a map 
\[
\sigma^s_{\odd}:\Omega^{SO}_*(\LL^s_1)\otimes_{\Omega_*^{SO}}\ZZ_{(\odd)} \rightarrow \ZZ_{(\odd)}
\]
where $\Omega^{SO}(*)\rightarrow  \ZZ_{(\odd)}$ is the signature map. 

Under the same argument as \cite[Lemma 4.26]{Madsen&Milgram}, $\sigma^s_{\odd}$ induces a map $\sigma^s_{\odd}:\LL^s_1\rightarrow BSO^{\otimes}_{(\odd)}$. The proof of the following lemma is essentially the same as that of \cite[Lemma 4.27]{Madsen&Milgram}

\begin{lemma}
\[
\ph(\sigma^s_{\odd})=l^s_1\otimes \QQ\in H^*(\LL^s_1;\QQ)
\]
\end{lemma}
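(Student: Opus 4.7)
The plan is to show the two rational cohomology classes agree by pairing both against arbitrary oriented bordism classes $(M,f)$ of $\LL^s_1$ and then invoking rational surjectivity of the bordism Hurewicz map. Concretely, to check equality in $H^*(\LL^s_1;\QQ)$ it suffices to verify that the pairings with $f_*[M]$ coincide for every closed oriented manifold $M$ with map $f:M\to \LL^s_1$.

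First I would compute the chain side. By the definition of $\sigma^s_{\odd}$ on bordism and the defining equation of $l^s_1$ from the symmetric $L$-spectrum section,
\[
\sigma^s_{\odd}(M,f)=\sigma^s_0(\mathcal{C}_f(M))=\langle L_M\cdot f^* l^s_1,[M]\rangle\in\ZZ_{(\odd)}.
\]
This is immediate once one unpacks the construction of $l^s_1$ via $\Omega^{SO}_*(\LL^s_1;\ZZ/2^l)\rightarrow \ZZ/2^l$ style invariants, rationally trivialized.

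Next I would compute the $K$-theory side. By the Anderson-duality / Madsen-Milgram construction used to produce $\sigma^s_{\odd}:\LL^s_1\rightarrow BSO^{\otimes}_{(\odd)}$, the original homomorphism on bordism factors through the Conner-Floyd map $\Delta$ as a Kronecker pairing, giving $\sigma^s_{\odd}(M,f)=\langle \sigma^s_{\odd},\Delta_*[M,f]\rangle$. Rationally, the identity $\ph(\Delta)=U/L$ combined with multiplicativity of $\ph$ and the Thom isomorphism yields the Riemann-Roch identity $\ph(\Delta_*[M,f])=f_*(L_M\cap[M])$ in $H_*(\LL^s_1;\QQ)$, so
\[
\sigma^s_{\odd}(M,f)=\langle \ph(\sigma^s_{\odd}),\,f_*(L_M\cap[M])\rangle=\langle L_M\cdot f^*\ph(\sigma^s_{\odd}),\,[M]\rangle.
\]

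Comparing the two computations gives $\langle L_M\cdot f^*(\ph(\sigma^s_{\odd})-l^s_1\otimes\QQ),[M]\rangle=0$ for every $(M,f)$. Since $L_M$ has leading term $1$, a degree-by-degree induction upgrades this to $\langle f^*(\ph(\sigma^s_{\odd})-l^s_1\otimes\QQ),[M]\rangle=0$ for every $(M,f)$, and the rational surjection $\Omega^{SO}_*(\LL^s_1)\otimes\QQ\twoheadrightarrow H_*(\LL^s_1;\QQ)$ then forces $\ph(\sigma^s_{\odd})=l^s_1\otimes\QQ$. The main obstacle will be pinning down the Riemann-Roch identity $\ph(\Delta_*[M,f])=f_*(L_M\cap[M])$ cleanly, and verifying that the Anderson-duality pairing really does reproduce $\sigma^s_{\odd}(M,f)$ on bordism; both steps are standard after Conner-Floyd but require careful invocation of the Thom isomorphism together with $\ph(\Delta)=U/L$.
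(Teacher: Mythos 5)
Your proposal is correct and follows essentially the same route as the paper: both compute $\sigma^s_{\odd}(M,f)$ twice --- once via the chain-level definition giving $\langle L_M\cdot f^*l^s_1,[M]\rangle$, once via the Conner-Floyd map $\Delta$ and the Pontryagin character giving $\langle L_M\cdot f^*\ph(\sigma^s_{\odd}),[M]\rangle$ --- and then conclude the two classes agree by evaluating against all bordism classes. The paper makes the $K$-theory side concrete by embedding $M$ in a sphere and chasing a Pontryagin--Thom diagram using $\ph(\Delta)=U/L$, which is precisely the "Riemann--Roch identity" you flag as the main thing to pin down; your explicit invocation of rational surjectivity of the bordism Hurewicz map plus the degree-by-degree induction using the leading term of $L_M$ is the (implicit) finishing step in the paper.
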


\begin{proof}
Let $M^m$ be a closed manifold with a map $f:M^{4m}\rightarrow \LL^s_1$. Suppose $M$ is embedded in some sphere $S^{4(m+N)}$. Then consider the diagram

\begin{tikzcd}
 M \arrow[r,"f\times \nu_M"] \arrow[d] & \LL^s_1\times BSO(4N) \arrow[d]  \\
 S^{4(m+N)} \arrow[r] & (\LL^s_1)^{+}\wedge MSO(4N) \arrow[r,"\sigma^s_{\odd}\wedge \Delta"] & BSO_{(\odd)}\wedge BSO_{(\odd)} \arrow[r,"\otimes"] & BSO_{(\odd)}
\end{tikzcd}

Let $g:S^{4(m+N)}\rightarrow BSO_{(\odd)}$ be the composition of the lower horizontal arrows. Then 
\begin{multline*}
\sigma^s_{\odd}(M,f)= \langle g^*\ph,[S^{4(m+N)}]\rangle =\langle f^*\ph(\sigma^s_{\odd})\cdot L_M\cdot U_{\nu_M},[S^{4(m+N)}]\rangle \\ =\langle f^*\ph(\sigma^s_{\odd})\cdot L_M,[M]\rangle   
\end{multline*}

On the other hand,
\[
\sigma^s_{\odd}(M,F)=\sigma^s_0(\mathcal{C}_f(M))=\langle L_M\cdot f^*l^s_1,[M]\rangle
\]
\end{proof}

It remains to show that $\sigma^s_{(\odd)}$ induces an isomorphism of homotopy group. Take a generator $f:S^{4n}\rightarrow \LL^s_0$ of $\pi_{4n}(\LL^s_1)$ so that the signature of the assembly of the associated presheaf is $1$. Then
\[
1=\sigma^s_{\odd}(S^{4n})=\langle L_{S^{4n}}\cdot f^*l^s_1,[S^{4n}]\rangle=\langle f^*\ph(\sigma^s_{\odd}),[S^{4n}]\rangle
\]
So $f\circ \sigma^s_{\odd}:S^{4n}\rightarrow BSO_{(\odd)}$ is also a generator of $\pi_{4n}(BSO_{(\odd)})$

\begin{theorem}
Localized at odd primes, there is an $H$-space homotopy equivalence 
\[
\sigma^s_{\odd}:\LL^s_1\rightarrow BSO^{\otimes}_{(\odd)}
\]
\end{theorem}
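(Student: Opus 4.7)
The plan is to assemble the three ingredients the excerpt has already put in place and then apply Whitehead's theorem. Namely: $(i)$ the construction of the $H$-space map $\sigma^s_{(\odd)}:\LL^s_1\to BSO^{\otimes}_{(\odd)}$ out of a Conner-Floyd style bordism-level functional on $\Omega^{SO}_*(\LL^s_1)\otimes_{\Omega^{SO}_*}\ZZ_{(\odd)}$; $(ii)$ the Pontryagin character identification $\ph(\sigma^s_{(\odd)})=l^s_1\otimes\QQ$; and $(iii)$ the fact that $\sigma^s_{(\odd)}$ sends a signature-one generator of $\pi_{4n}(\LL^s_1)$ to a generator of $\pi_{4n}(BSO_{(\odd)})$, which follows from $\langle f^*\ph(\sigma^s_{(\odd)}),[S^{4n}]\rangle=\langle L_{S^{4n}}\cdot f^*l^s_1,[S^{4n}]\rangle=\sigma^s_0(\mathcal{C}_f(S^{4n}))=1$.

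First I would compare homotopy groups after inverting $2$. By the computation of $L^s_m$ in Chapter~2, $L^s_m\simeq\ZZ$ when $m\equiv 0\pmod 4$, $L^s_m\simeq\ZZ/2$ when $m\equiv 1\pmod 4$, and vanishes otherwise; the odd-primary torsion is therefore zero, so $\pi_m(\LL^s_1)_{(\odd)}$ is concentrated in degrees $m=4k$ with $k\geq 1$ and equals $\ZZ_{(\odd)}$ there (the component $\LL^s_1$ itself being $0$-connected since we work on a single path component). Bott periodicity gives the same answer for $BSO_{(\odd)}$: homotopy groups $\ZZ_{(\odd)}$ in degrees $4k$ with $k\geq 1$ and zero otherwise.

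Second, for each $n\geq 1$, I would run the generator-to-generator check of ingredient $(iii)$ in every such degree. Since both $\LL^s_1$ and $BSO_{(\odd)}$ are simple (they are infinite loop spaces) and have the homotopy type of CW complexes, Whitehead's theorem promotes this pointwise isomorphism of homotopy groups into a weak, hence genuine, homotopy equivalence. The already-verified product formula $\sigma^s_{(\odd)}((M,f)\cdot(N,g))=\sigma^s_{(\odd)}(M,f)\cdot\sigma^s_{(\odd)}(N,g)$ (which in turn comes from the multiplicativity $\sigma^s_0(C\otimes D)=\sigma^s_0(C)\cdot\sigma^s_0(D)$ of the chain-level signature, together with the fact that the $H$-space structure on $BSO^{\otimes}_{(\odd)}$ represents multiplication in $KO^0$) then upgrades the equivalence to an $H$-space equivalence.

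The only place where something could go wrong is if there were odd torsion in some other degree of $\pi_*(\LL^s_1)$ that had not been accounted for, but this is precluded by the explicit calculation of $L^s_*$ in Chapter~2, where all nontrivial torsion is $2$-primary. I do not expect any real obstacle here; the theorem is essentially a packaging step, with the technical content already carried by the construction of $\sigma^s_{(\odd)}$ and the identification of its Pontryagin character.
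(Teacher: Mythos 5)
Your proposal is correct and is essentially the argument the paper gives: construct $\sigma^s_{(\odd)}$ via the Conner--Floyd/Anderson duality machinery, compute $\ph(\sigma^s_{(\odd)})=l^s_1\otimes\QQ$, and then verify that a signature-one generator of $\pi_{4n}(\LL^s_1)$ is sent to a generator of $\pi_{4n}(BSO_{(\odd)})$, with Whitehead's theorem (left implicit in the paper, made explicit by you) finishing the job. The only thing you add is the explicit comparison of the odd-local homotopy groups of both sides and the remark that they are infinite loop spaces, which the paper takes for granted.
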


\begin{proof}
We already proved the homotopy equivalence. Note that the chain-level tensor product makes the restriction of the multiplicative structure of $\LL^s$ to $\LL^s_1$. The map $\sigma^s_{\odd}$ is an $H$-space map since $\ph(\sigma^s_{\odd})=l^s_1\otimes \QQ$ is multiplicative.
\end{proof}

Hence, we finish the proof of the following:

Therefore, we have proved the following:

\begin{theorem}\label{main-result-3}
At odd primes, Ranicki-Levitt's $L$-theory orientations for $TOP$ bundles and spherical fibrations are equivalent to Sullivan's real $K$-theory orientation.
\end{theorem}

We can also restate the theorem in terms of presheaves.

\begin{proposition}
For any presheaf $\mathcal{S}$ of $0$-connective Poincar\'e symmetric chains over $X$, there exists an odd-prime  real $K$-theory characteristic class $\gamma^s(\mathcal{S})\in \widetilde{KO}(X)_{(\odd)}$,
which is invariant under bordism.
\end{proposition}

Analogously, 

\begin{proposition}
For any presheaf $\mathcal{Q}$ of $0$-connective Poincar\'e quadratic chains over $X$, there exists an odd-prime real $K$-theory characteristic class $\gamma^q(\mathcal{Q})\in \widetilde{KO}(X)_{(\odd)}$,
which is invariant under bordism.  
\end{proposition}

With considerations of the $L$-theory orientations for spherical fibrations and $TOP$ bundles, we reprove the following (\cite[Theorem 6.5]{Sullivan2009}).

\begin{corollary}
Localized at odd primes, the obstruction for lifting a spherical fibration to a $TOP$ bundle is the existence of a real $K$-theory orientation. 
\end{corollary}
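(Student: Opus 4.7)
The plan is to combine Levitt-Ranicki's lifting criterion (Theorem \ref{RanickiLevitt}) with the odd-local identifications of the $L$-spectra established in this section. Two inputs are needed: (i) at odd primes, the canonical $\LL^n$-orientation of any spherical fibration is automatic, and (ii) at odd primes, an $\LL^s$-orientation of a bundle is equivalent to a real $K$-theory orientation.

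For (i), I would use the fact that $L^n_m$ is $2$-primary torsion in positive degrees and $\ZZ$ in degree $0$, so the odd-localization $\LL^n_{(\odd)}$ splits as an Eilenberg-MacLane spectrum $H\ZZ_{(\odd)}$. Consequently, for a rank-$k$ oriented spherical fibration $\nu$, the canonical orientation $U^n(\nu) \colon T(\nu) \to \Sigma^k \LL^n$ becomes, after odd-localization, nothing but the ordinary integral Thom class, which always exists.

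For (ii), I would promote the $H$-space equivalence $\sigma^s_{(\odd)} \colon \LL^s_1 \to BSO^{\otimes}_{(\odd)}$ just proved to a spectrum-level equivalence $\LL^s_{(\odd)} \simeq KO_{(\odd)}$: the additive $H$-space structure on $\LL^s$ identifies the other components of $\LL^s$ with those of $KO_{(\odd)}$, and the $\Omega$-spectrum structure of $\LL^s$ propagates the equivalence to higher deloopings. Using that the canonical $\LL^s$-orientation $U^s(\mu)$ of a $TOP$ bundle $\mu$ is multiplicative, it should correspond under the equivalence to Sullivan's canonical $KO$-Thom class of $\mu$.

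Granting (i) and (ii), the statement follows quickly: by Levitt-Ranicki, a spherical fibration $\nu$ lifts to a $TOP$ bundle if and only if $U^n(\nu)$ lifts along $J \colon \LL^s \to \LL^n$; at odd primes, by (i) this lifting problem has the same data as lifting the integral Thom class to an $\LL^s$-orientation, and by (ii) the latter is the same as a $KO$-orientation of $\nu$. The main obstacle I anticipate is step (ii) — specifically, verifying at the spectrum level that the Thom-class picture is compatible with the equivalence $\LL^s_{(\odd)} \simeq KO_{(\odd)}$, and not just that the $0$-th spaces match on components. This compatibility is essentially the content of the subsequent theorem identifying $\gamma^s$ on a $TOP$ bundle with Sullivan's $KO$-theoretic orientation, which I would invoke directly to close the argument.
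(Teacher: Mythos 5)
Your proposal is correct and follows essentially the same route the paper takes (and indeed the paper gives no explicit proof of the corollary, deducing it from exactly the three ingredients you cite: Theorem \ref{RanickiLevitt}, the collapse of $\LL^n_{(\odd)}$ to $H\ZZ_{(\odd)}$, and the odd-local identification of $\LL^s$ with real $K$-theory). One small remark: for the corollary as stated you only need the \emph{existence} of a $KO_{(\odd)}$-lifting of the Thom class to be equivalent to an $\LL^s_{(\odd)}$-lifting, which follows already from the spectrum-level equivalence; the finer comparison of $\gamma^s(\mu)$ with Sullivan's specific $KO$-orientation is not logically required here.
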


\bibliographystyle{amsalpha}
\bibliography{ref}

\Addresses

\end{document}